\theoremstyle{plain} 
\newtheorem{theorem}{Theorem}[section]
\newtheorem*{theorem*}{Theorem}
\newtheorem{lemma}[theorem]{Lemma}
\newtheorem{proposition}[theorem]{Proposition}
\newtheorem{corollary}[theorem]{Corollary}
\theoremstyle{definition}
\newtheorem{definition}[theorem]{Definition}
\newtheorem{example}[theorem]{Example}
\newtheorem{remark}[theorem]{Remark}
\newcommand{\ep}{\epsilon}
\renewcommand{\dim}{\operatorname{dim}}
\newcommand{\Sym}{\operatorname{Sym}}
\newcommand{\End}{\operatorname{End}}
\newcommand{\Hom}{\operatorname{Hom}}
\newcommand{\RHom}{\operatorname{RHom}}
\newcommand{\Ext}{\operatorname{Ext}}
\newcommand{\du}{\operatorname{d}}
\newcommand{\C}{{\mathbb{C}}}
\newcommand{\Z}{{\mathbb{Z}}}
\newcommand{\Q}{{\mathbb{Q}}}
\renewcommand{\H}{{\mathbb{H}}}
\newcommand{\R}{{\mathbb{R}}}
\newcommand{\Ve}[1]{V_{#1}}
\newcommand{\Pro}[1]{P_{#1}}
\newcommand{\Si}[1]{L_{#1}}
\newcommand{\Tilt}[1]{T_{#1}}
\newcommand{\Pot}{\Phi_{12}}
\newcommand{\Pto}{\Phi_{2\bar{1}}}
\newcommand{\bigmid}{\hs\Big{|}\hs}
\newcommand{\subs}{\subseteq}
\renewcommand{\iff}{\Leftrightarrow}
\newcommand{\impl}{\Rightarrow}
\newcommand{\hs}{\hspace{3pt}}
\newcommand{\tPi}{\boldsymbol{\Pi}_\a^\bullet}
\newcommand{\M}{\mathfrak{M}}
\newcommand{\eX}{\EuScript{X}}
\newcommand{\vp}{\varphi}
\renewcommand{\a}{\alpha}
\renewcommand{\b}{\beta}
\newcommand{\de}{\delta}
\newcommand{\ga}{\gamma}
\newcommand{\cN}{\mathcal{N}}
\newcommand{\mfD}{\mathfrak{D}}
\newcommand{\qd}{!}
\newcommand{\gd}{\vee}
\newcommand{\cal}{\mathcal}
\newcommand{\cB}{{\cal B}}
\newcommand{\cC}{{\cal C}}
\newcommand{\cF}{{\cal F}}
\renewcommand{\cH}{{\cal H}}
\newcommand{\cI}{{\cal I}}
\newcommand{\cO}{{\cal O}}
\newcommand{\cP}{{\cal P}}
\newcommand{\cV}{{\cal V}}
\newcommand{\conv}{\star}
\newcommand{\ori}{\mathit{or}}
\newcommand{\la}{\langle}
\newcommand{\ra}{\rangle}
\newcommand{\lra}{\leftrightarrow}
\newcommand{\lrao}[1]{\overset{#1}\lra}
\newcommand{\wt}{\widetilde}
\newcommand{\bdy}{{\partial}}
\newcommand{\udot}{{\scriptscriptstyle \bullet}}
\newcommand{\ssm}{\smallsetminus}
\renewcommand{\la}{\lambda}
\renewcommand{\emptyset}{\varnothing}
\newcommand{\PI}{\mathsf{P}}
\newcommand{\PIS}{\mathsf{S}}
\newcommand{\Bas}{{\mathbb{B}}}
\newcommand{\Aeone}{{$\mathrm{A}_{\ext}$1}}
\newcommand{\Aetwo}{{$\mathrm{A}_{\ext}$2}}
\newcommand{\Aethree}{{$\mathrm{A}_{\ext}$3}}
\newcommand{\Aone}{{\rm A1}}
\newcommand{\Atwo}{{\rm A2}}
\newcommand{\Athree}{{\rm A3}}
\newcommand{\ecore}{\eX_\ext}
\newcommand{\ext}{{\mathrm{ext}}}
\begin{document}
\title{Gale duality and Koszul duality}
\author[T. Braden]{Tom Braden}
\author[A. Licata]{Anthony Licata}
\author[N. Proudfoot]{Nicholas Proudfoot}
\author[B. Webster]{Ben Webster}

\address{Department of Mathematics and Statistics,
University of Massachusetts} 
\email{braden@math.umass.edu}
\urladdr{http://www.math.umass.edu/$\sim$braden}

\address{Department of Mathematics, Stanford University} 
\email{amlicata@math.stanford.edu}
 \urladdr{http://math.stanford.edu/$\sim$amlicata}

\address{Department of Mathematics,
University of Oregon}
\email{njp@uoregon.edu}
\urladdr{http://www.uoregon.edu/$\sim$njp}

\address{Department of Mathematics, Massachusetts Institute of Technology} 
\email{bwebster@math.mit.edu}
\urladdr{http://math.mit.edu/$\sim$bwebster}

\begin{abstract}
  Given a hyperplane arrangement in an affine space equipped with a linear functional,
  we define two finite-dimensional, noncommutative algebras, both of which
  are motivated by the geometry of hypertoric varieties.
  We show that these algebras are Koszul dual to each other,
  and that the roles of the two algebras are reversed by Gale duality.
  We also study the centers and representation categories
  of our algebras, which are in many ways analogous to integral blocks of
  category $\mathcal O$.
  \end{abstract}

\begin{spacing}{1.2}
\maketitle
\section{Introduction}
\label{sec:introduction}

In this paper we define and study a class of finite-dimensional graded
algebras which are related to the combinatorics of hyperplane arrangements
and to the geometry of hypertoric varieties.
The categories of representations of these algebras are similar in
structure to the integral blocks of category $\cO$, originally introduced
by Bernstein-Gelfand-Gelfand \cite{BGG}.  Our categories share many
important properties with such blocks, including
a highest weight structure, a Koszul
grading, and a relationship with the geometry of a symplectic variety.
As with category $\cO$, there is a nice interpretation of Koszul duality in our setting,
and there are interesting families of functors between our categories.
In this paper we take a combinatorial approach, analogous to that
taken by Stroppel in her study of category $\cO$ \cite{Str03}.
In a subsequent paper \cite{BLPW2} we will take an approach to these
categories more analogous to the classical perspective on category
$\cO$; we will realize them as categories of modules over an infinite
dimensional algebra, and as a certain category of sheaves on a hypertoric
variety, related by a localization theorem extending
that of Beilinson-Bernstein \cite{BB}. 

\subsection{} To define our algebras, we take as our input what we call a
{\bf polarized arrangement} $\cV = (V, \eta,\xi)$, consisting of
a linear subspace $V$ of a coordinate vector space $\R^I$,
a vector $\eta\in\R^I/V$, and a covector $\xi\in V^*$.
It is convenient to think of these data as describing
an affine space $V_\eta \subs \R^I$ given by translating 
$V$ away from the origin by $\eta$, together with an affine linear
functional on $V_\eta$ given by $\xi$ and a finite 
hyperplane arrangement $\cH$ in $V_\eta$, whose
hyperplanes are 
the restrictions of the coordinate hyperplanes in $\R^I$.

If $\cV$ is {\bf rational}, meaning that $V$, $\eta$, and $\xi$ are
all defined over $\Q$, then we may associate to $\cV$ a
hyperk\"ahler orbifold $\M_\cH$ 
called a {\bf hypertoric variety}.
The hypertoric variety depends only on the arrangement $\cH$ (that is, on $V$ and $\eta$).  
It is defined as a hyperk\"ahler quotient of the
quaternionic vector space $\H^n$ by an $(n-\dim V)$-dimensional real torus
determined by $V$, where the quotient parameter is specified by
$\eta$.  By fixing one complex structure on $\M_\cH$ we obtain an algebraic symplectic variety
which carries a natural hamiltonian action
of an algebraic torus with Lie algebra $V^*_\C$, and $\xi$ determines
a one-dimensional subtorus.  The definitions and results of this paper
do not require any knowledge of hypertoric varieties (indeed, they
will hold even if $\cV$ is not rational, in which case there are no
varieties in the picture).  They will, however, be strongly motivated
by hypertoric geometry, and we will take every opportunity to point
out this motivation.  
The interested reader can learn more about
hypertoric varieties in the survey \cite{Pr07}.

Given a polarized arrangement $\cV$, we give combinatorial\footnote{Here and elsewhere in the paper
we use the term "combinatorial" loosely to refer to 
constructions involving finite operations on linear algebraic data.}  definitions
of two quadratic algebras, which we denote by $A = A(\cV)$ and $B = B(\cV)$.  
If $\cV$ is rational, both rings have geometric interpretations.
The $\C^*$-action on $\M_\cH$ given by $\xi$ determines a 
lagrangian subvariety $\eX \subset \M_\cH$, consisting of 
all points $x$ for which $\lim_{\lambda \to \infty} \lambda \cdot x$ exists.
In this case, $B$ is isomorphic to the direct sum
of the cohomology rings of all pairwise intersections of components
of $\eX$, equipped with a convolution product (Proposition \ref{geom b}).
On the other hand, we will show 
in a forthcoming paper \cite{BLPW2} that $A$ is the endomorphism algebra of a projective generator of 
a category of modules over a quantization
of the structure sheaf of $\M_\cH$ which are supported
on $\eX$.  This construction is motivated by geometric 
representation theory: the analogous category of modules 
on $T^*(G/B)$, the cotangent bundle of a flag variety, is equivalent to a regular block 
of category $\cO$ for the Lie algebra $\mathfrak g$.


\subsection{} There are two forms of duality lurking in this picture, one 
coming from combinatorics
and the other from ring theory.  We define the {\bf Gale dual} of a polarized
arrangement $\cV = (V,\eta,\xi)$ as the triple $$\cV^\gd = (V^\perp,
-\xi, -\eta),$$ where $V^\perp\subs(\R^I)^*$ is the space of linear
forms on $\R^I$ that vanish on $V$.  
On the other hand, to any quadratic algebra $E$ we may
associate its {\bf quadratic dual} algebra $E^\qd$.  
We show that the algebras $A$ and $B$ are
dual to each other in {\em both} of these senses:

\begin{theorem*}[A]
There are ring isomorphisms 
$A(\cV)^\qd \cong A(\cV^\gd)$ and $A(\cV^\gd) \cong B(\cV)$.
\end{theorem*}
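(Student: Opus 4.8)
The plan is to produce the two isomorphisms by first giving explicit presentations of $A(\cV)$ and $B(\cV)$ by generators and relations, and then matching presentations combinatorially. Since neither algebra has been defined in the excerpt, I will assume we have in hand: a description of $A(\cV)$ with a basis indexed by pairs of "feasible and bounded" sign vectors (chambers of $\cH$, in the two arrangements $\cH$ and $\cH^\gd$ simultaneously) together with multiplication by "crossing a single hyperplane," and a similar description of $B(\cV)$; both algebras should be quadratic, generated in degree $1$ by such elementary crossings $e_i$ subject to quadratic relations coming from (i) commuting crossings at distinct indices and (ii) a relation expressing how two consecutive crossings through the same hyperplane pair up, weighted by the relevant linear-dependence data of $V$ (for $A$) or of the quotient $\R^I/V$ (for $B$).

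The first isomorphism $A(\cV)^\qd \cong A(\cV^\gd)$ I would prove directly from the presentation. The quadratic dual $E^\qd$ has generator space the dual $V_1^*$ of the degree-one part and relation space the annihilator $R^\perp \subs V_1^*\otimes V_1^*$ of the span $R$ of the original quadratic relations. Concretely this swaps the two families of quadratic relations: the relations that said "these two crossings commute" become, after dualizing, relations that say "this composite of same-hyperplane crossings is zero or is a multiple of the identity-type element," with the roles of a subspace and its orthogonal complement exchanged. But $\cV^\gd = (V^\perp, -\xi, -\eta)$ is exactly the polarized arrangement in which $V$ is replaced by $V^\perp$, which is precisely the linear-algebraic operation that swaps "linear dependences among the coordinate functions restricted to $V$" with "linear dependences in $\R^I/V$," and simultaneously swaps the roles of bounded and feasible chambers (via $\eta\leftrightarrow-\xi$). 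So I would set up a bijection on generators sending the dual of the $i$-th crossing to the $i$-th crossing for $\cV^\gd$ (with appropriate signs, to be pinned down so that the oriented-matroid circuit/cocircuit signs come out right), and check that it carries $R^\perp$ onto the relation space of $A(\cV^\gd)$. The bookkeeping of signs — making the circuit signs of $(V,\eta)$ match the cocircuit signs of $(V^\perp,-\xi)$ consistently across all relations — is the part that requires care, but it is a finite check once the conventions are fixed.

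For the second isomorphism $A(\cV^\gd)\cong B(\cV)$, the cleanest route is to observe that both algebras should admit presentations whose generators are again elementary crossings $e_i$, $i\in I$, but where $A(\cV^\gd)$ "sees" the subspace $V^\perp$ of $\R^I$ and $B(\cV)$ "sees" the quotient $\R^I/V$ — and of course $V^\perp$ and $(\R^I/V)^*$ are canonically the same space. Thus the quadratic relations of type (ii), which in $A(\cV^\gd)$ are weighted by circuits of $V^\perp$ and in $B(\cV)$ by cocircuits of $V$ (equivalently circuits of $V^\perp$), literally coincide; likewise the chamber-indexing data agree because a sign vector is feasible-and-bounded for $\cV^\gd$ iff it satisfies the dual feasibility/boundedness conditions built from $(-\xi,-\eta)$, which are the ones appearing in the definition of $B(\cV)$. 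So I would exhibit the identity-on-generators map and verify it is a well-defined algebra isomorphism by comparing relations term by term.

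The main obstacle I anticipate is not the structural idea — it is entirely combinatorial — but getting the sign conventions to line up globally. Gale duality is only well-defined up to choices of orientation, and each elementary crossing relation carries a sign coming from an oriented circuit; ensuring that a single consistent choice makes \emph{all} the relations match simultaneously (rather than up to a sign that varies from relation to relation) is the delicate point, and it is where I expect to spend most of the effort, likely by choosing compatible orientations on $V$ and on $\R^I/V$ at the outset and tracking them through both constructions.
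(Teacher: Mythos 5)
Your overall instinct for the first isomorphism --- quadratic duality exchanges a relation space with its annihilator, and Gale duality exchanges $V$ with $V^\perp$ --- is the right one, but the proposal rests on a guessed presentation that does not match the actual algebras, and this hides where the real work lies. The degree-one generators of $A(\cV)$ are not ``elementary crossings $e_i$, $i\in I$'': they are the arrows $p(\a,\b)$ of a quiver whose vertices are the bounded feasible sign vectors $\cP$, one generator per adjacent pair $\a\lra\b$ in $\cP$, with $\Sym V^*$ entering only through the loop relations $p(\a,\a^i,\a)=t_ie_\a$. Consequently the identification of the two degree-one spaces is not a sign-decorated bijection $i\mapsto i$ but rests on the nontrivial facts that $\cP^\gd=\cP$ and that the adjacency relation agrees --- i.e.\ on the strong duality theorem of linear programming ($\cF^\gd=\cB$, $\cB^\gd=\cF$). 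Moreover, matching $W^\perp$ with the dual relation space is not a term-by-term formal check: the delicate blocks are exactly those where a length-two path from $\a$ to $\ga$ passes through a vertex $\b$ that is feasible but unbounded, or bounded but infeasible, and there one needs the lemma that an infeasible neighbor of a bounded feasible vector is feasible for the dual (and, for the diagonal blocks $\a=\ga$, the resulting identity $I_\a\ssm J_\a=I\ssm I^\gd_\a$ identifying one relation space as the orthogonal complement of the other inside $\R^{J_\a}$). None of this appears in your plan, whereas the sign issue you single out as the main obstacle is dispatched in the paper by a one-line device: choose a set $X$ of edges of the quiver meeting every square in an odd number of edges (possible since the quiver sits inside the edge graph of a cube) and twist the pairing by $-1$ on $X$.

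The second isomorphism is where the proposal genuinely fails as stated. $B(\cV)$ is not defined by generators and relations; it is the convolution algebra $\bigoplus_{(\a,\b)\in\cP\times\cP}R_{\a\b}[-d_{\a\b}]$ built from quotients of $\Sym\R^I$. So ``exhibit the identity-on-generators map and compare relations term by term'' can at best produce a well-defined homomorphism $A(\cV^\gd)\to B(\cV)$ (sending $e_\a\mapsto 1_{\a\a}$, $p(\a,\b)\mapsto 1_{\a\b}$, $t_i\mapsto u_i$) and its surjectivity; it gives no mechanism for injectivity, which is the substantive half. The paper handles this with a normal form for elements of the quiver algebra: every path equals a taut path times a monomial in the $t_i$, any two taut paths with the same endpoints are equal, and a path can be rerouted through any chamber compatible with its crossing data --- whence each block $e_\a A(\cV^\gd)e_\b$ is spanned by $\chi(u_S)$ with $\chi(u_S)=0$ whenever $H_S\cap\Delta_\a\cap\Delta_\b=\emptyset$ (again via LP duality applied to a restriction of the arrangement), so $\dim e_\a A(\cV^\gd)e_\b\le\dim R_{\a\b}$. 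Without this taut-path machinery, or an independent proof that $B(\cV)$ admits the quadratic presentation you posit, the argument has a genuine gap.
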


We prove the following three facts about the rings $A$ and $B$, all of which are analogous
to results about category $\cO$ and the geometry
of the Springer resolution \cite{Spa76,Irv85,Bru06,Str06b,SW08}.

\vspace{\baselineskip}
\begin{theorem*}[B]~\begin{enumerate}
  \item The algebras $A$ and $B$ are quasi-hereditary and Koszul (and thus are Koszul dual).
  \item If $\cV$ is rational, then the center of $B$ is 
  canonically isomorphic to the cohomology ring of $\M_\cH$.
\item There is a canonical bijection between indecomposible
  projective-injective $B$-modules and compact chambers of the
  hyperplane arrangement $\cH$; if $\cV$ is rational, these are in bijection with
  the set of all irreducible projective lagrangian subvarieties of $\M_{\cH}$.
\end{enumerate}
\end{theorem*}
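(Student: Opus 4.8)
The plan is to read all three parts off a careful analysis of the combinatorially defined categories of graded modules over $A$ and $B$, with two inputs: the identification of $B$ with the convolution algebra $\bigoplus_{\alpha,\beta}H^*(X_\alpha\cap X_\beta)$ of Proposition~\ref{geom b}, and the quadratic duality $A^\qd\cong B$ of Theorem~A. Since $\cV^\gd$ is again a polarized arrangement and $A(\cV^\gd)\cong B(\cV)$, it suffices for part (1) to treat $B$: quasi-heredity and Koszulity of $B(\cV^\gd)$ then give those of $A(\cV)$, and $A$ and $B$ are Koszul dual by Theorem~A. Throughout, the simple modules $L_\alpha$, indecomposable projectives $P_\alpha$, and indecomposable injectives $I_\alpha$ are indexed by the finite set of sign vectors that are simultaneously feasible and bounded for $\cV$; the associated (generally unbounded) chambers $\D_\alpha\subset V_\eta$, and for rational $\cV$ the components $X_\alpha$ of $\eX$, will carry all the bookkeeping.

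For part (1) I would equip the index set with the natural partial order $\preceq$ coming from the combinatorics of $\cV$ (informally, $\alpha$ is small when $\D_\alpha$ lies "near'' the $\xi$-maximal chambers), and construct the standard modules $\Delta_\alpha$ together with the canonical surjections $P_\alpha\twoheadrightarrow\Delta_\alpha\twoheadrightarrow L_\alpha$ inside the combinatorial model. That model provides sign-vector bases of $P_\alpha$, $\Delta_\alpha$, $L_\alpha$ and of all $\Hom$-spaces between projectives, from which one checks directly that $\End(\Delta_\alpha)=\C$, that $\ker(P_\alpha\to\Delta_\alpha)$ has a filtration with sections $\Delta_\beta$, $\beta\succ\alpha$, and that $\ker(\Delta_\alpha\to L_\alpha)$ has composition factors $L_\gamma$, $\gamma\prec\alpha$; hence $B$ is quasi-hereditary. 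For Koszulity I would prove that $B$ is \emph{standard Koszul}: each $\Delta_\alpha$ should admit a linear projective resolution whose $i$-th term is a direct sum of copies of projectives indexed by the rank-$i$ flats of $\cV$ compatible with $\alpha$, placed in internal degree $i$ (and dually for the costandard modules). By the theorem of \'Agoston--Dlab--Luk\'acs that a standard-Koszul quasi-hereditary algebra is Koszul, $B$ is then Koszul, and with Theorem~A part (1) follows. I expect the linearity and exactness of this "arrangement complex'' — in essence a statement about the broken-circuit homology of the matroid of $\cV$, parallel to the exactness used for Orlik--Solomon algebras — to be the main obstacle; the remaining verifications in part (1) are bookkeeping with the combinatorial bases.

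For part (2), Proposition~\ref{geom b} lets one compute $Z(B)$ directly: it is the ring of tuples $(c_\alpha)_\alpha$ with $c_\alpha\in H^*(X_\alpha)$ whose restrictions to each pairwise intersection $X_\alpha\cap X_\beta$ agree, and this ring has an explicit presentation as a quotient of $\Z[e_i\mid i\in I]$ by the linear forms cutting out $V$ together with the squarefree monomials read off from the arrangement. Matching this with the Stanley--Reisner-type presentation of $H^*(\M_\cH)$ due to Konno and to Hausel--Sturmfels gives a canonical isomorphism $Z(B)\cong H^*(\M_\cH)$; the same isomorphism is realized geometrically by restricting cohomology classes from $\M_\cH$ to the components of $\eX$.

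For part (3), $B$ carries a contravariant duality fixing the simples, induced by the symmetry $\alpha\leftrightarrow\beta$ of the convolution product, so $I_\alpha\cong\phi(P_\alpha)$ and $P_\alpha$ is injective precisely when it has simple socle. Testing this on the combinatorial $\Delta$-flag of $P_\alpha$ via BGG reciprocity shows that it happens exactly when the chamber $\D_\alpha$ is compact, which produces the bijection between indecomposable projective-injective $B$-modules and compact chambers of $\cH$. When $\cV$ is rational, $X_\alpha$ is the toric variety of $\D_\alpha$ and is therefore complete — hence an irreducible projective Lagrangian in $\M_\cH$ — exactly when $\D_\alpha$ is compact; conversely every irreducible projective Lagrangian subvariety of $\M_\cH$ lies in the extended core $\bigcup_\alpha X_\alpha$ and, being irreducible of half the dimension of $\M_\cH$, must be one of the complete $X_\alpha$. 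This matches compact chambers of $\cH$ with irreducible projective Lagrangians and completes the proof.
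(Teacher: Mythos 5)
Your outline for part (1) is essentially the paper's own route (the paper works on the $A$-side, with $A(\cV)$, which by Theorem A is the same as your $B$-side model for the dual arrangement): quasi-heredity via explicit bases of the standard modules and standard filtrations of the projectives (Theorem \ref{CA-highest-weight}), Koszulity via the \'Agoston--Dlab--Luk\'acs criterion (Theorem \ref{ADL}) applied to a linear projective resolution of each standard module, and Koszul duality from Theorem A. Two caveats. First, the quasi-heredity check is not pure bookkeeping: the injectivity of the maps from standard modules onto the filtration subquotients of $P_\alpha$ rests on the dimension count $\dim A = \#\{(\alpha,\beta,b) \mid \alpha,\beta\in\cB_b\}$, which the paper gets from Theorem \ref{convolution equals quiver} together with the freeness statement of Lemma \ref{formality}. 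Second, the exactness you rightly flag as the main obstacle is handled in the paper not through broken-circuit or Orlik--Solomon homology, but by a Koszul-type multicomplex whose terms are the projectives $P_{\alpha^S}$ indexed by subsets $S$ of the basis $\mu^{-1}(\alpha)$, with exactness proved after passing to the associated graded of the standard filtration.

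The genuine gaps are in parts (2) and (3). In (2), the identification of $Z(B)$ with the ring of tuples $(c_\alpha)_{\alpha\in\cP}$ agreeing on pairwise intersections is indeed easy, but your next assertion --- that this ring has the Stanley--Reisner-type presentation $R_\cH$ --- is the entire theorem. The compatibility conditions are imposed only over the bounded feasible chambers (the components of the relative core), and there is no general Mayer--Vietoris or descent principle forcing this inverse limit to equal $H^*(\eX)\cong H^*(\M_\cH)$; a priori it could be strictly larger. The paper proves it by a cellular complex computing, monomial by monomial, the Borel--Moore homology of neighborhoods of the flats $H_S$ (Lemma \ref{chamber complex}), followed by a separate acyclicity argument (Lemma \ref{center lemma}) showing that discarding the unbounded chambers and non-compact faces does not change the limit; your sketch offers no substitute for either step. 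In (3), BGG reciprocity applied to the standard flag of $P_\alpha$ yields only composition multiplicities, i.e.\ Grothendieck-group data, which cannot detect whether $P_\alpha$ has simple socle or is injective. The hard direction --- compactness of the chamber implies $P_\alpha$ is projective-injective --- is proved in the paper (Theorem \ref{self-dual}, implication $(6)\Rightarrow(3)$, applied to $\cV^\gd$ in your $B$-side formulation) by showing that the face ring of a compact simple polytope is Gorenstein and invoking an injectivity theorem of Timorin to construct a perfect pairing exhibiting $P_\alpha$ as self-dual; the converse direction uses a geometric argument that a simple module in the socle of a standard module forces the recession cone $\Sigma_\alpha$ to be full-dimensional. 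As written, your reciprocity step would not deliver either implication, so part (3) needs a genuinely different argument; the final identification of compact chambers with core components is fine as a citation.
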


Part (2) of Theorem (B) is analogous to a result of \cite{Bru06,Str06b}, which 
says that the center of a regular block of parabolic category $\cO$ for 
$\mathfrak{g} = \mathfrak{sl}_n$ is isomorphic to the cohomology of a Springer fiber.
Note that the cohomology of the hypertoric variety $\M_\cH$ is independent
of \emph{both} parameters $\eta$ and $\xi$ \cite{Ko,HS,Pr07}.  This leads us to ask to what
extent the algebras themselves depend on $\eta$ and $\xi$.  It turns out that 
the algebras for polarized arrangements 
with the same underlying vector space $V$ may not be
isomorphic or Morita equivalent, but they are {\em derived} Morita equivalent.

\begin{theorem*}[C]
The bounded derived category of graded modules over $A(\cV)$ or $B(\cV)$
depends only on the subspace $V\subset\R^I$.
\end{theorem*}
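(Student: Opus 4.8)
The plan is to deduce the theorem from Theorems (A) and (B)(1) together with a family of wall-crossing functors. By Theorem (B)(1) the algebras $A(\cV)$ and $B(\cV)$ are Koszul dual, so $D^b(A(\cV))\simeq D^b(B(\cV))$; by Theorem (A) there is a ring isomorphism $B(\cV)\cong A(\cV^\gd)$; composing, $D^b(A(\cV))\simeq D^b(A(\cV^\gd))$. Since $\cV^\gd=(V^\perp,-\xi,-\eta)$ and $V^\perp$ is determined by $V$, it is enough to prove the statement for the single algebra $A$. Moreover, the passage from $(V,\eta,\xi)$ to $(V,\eta',\xi')$ can be performed by first varying $\xi$ and then varying $\eta$, and varying $\eta$ for $\cV$ is --- through the equivalence $D^b(A(\cV))\simeq D^b(A(\cV^\gd))$ --- the same as varying the last parameter of $\cV^\gd=(V^\perp,-\xi,-\eta)$. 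So it suffices to treat one type of move: I am reduced to proving that, for $V$ and $\eta$ fixed and $\xi,\xi'\in V^*$ arbitrary, $D^b(A(V,\eta,\xi))\simeq D^b(A(V,\eta,\xi'))$.

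Next I would cut this down to a single wall-crossing. The combinatorial data on which $A(V,\eta,\xi)$ depends --- for fixed $V$ and $\eta$, essentially the set of sign vectors whose cell of $\cH$ is feasible and bounded above in the direction of $\xi$, together with its incidence structure --- is locally constant as $\xi$ moves within a chamber of a finite hyperplane arrangement in $V^*$ (the walls being the covectors for which some feasible face of $\cH$ is unbounded but not ``strictly'' so). The complement of these walls is a disjoint union of chambers, any two of which are connected by a gallery crossing one wall at a time, so it is enough to construct an equivalence $D^b(A(\cV))\simeq D^b(A(\cV'))$ whenever $\cV$ and $\cV'$ agree outside their $\xi$-parameters and the latter lie in chambers sharing a single wall.

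For such a pair I would produce a wall-crossing functor $\Phi\colon D^b(A(\cV))\to D^b(A(\cV'))$ of ``shuffling'' type, in the spirit of the translation and shuffling functors on category $\cO$. Concretely, $\Phi$ would be $-\otimes^L_{A(\cV)}T$ for an explicit two-term complex $T$ of projective $(A(\cV'),A(\cV))$-bimodules, built from the combinatorics of the shared wall (heuristically: translate from $\cV$ onto the polarized arrangement living on the wall and back off to $\cV'$), with the wall-crossing in the reverse direction providing the candidate quasi-inverse. To see that $\Phi$ is an equivalence I would invoke Rickard's theorem: it is enough to check that $\bigoplus_\a\Phi(P_\a)$ split-generates $D^b(A(\cV'))$, that it has no self-extensions in nonzero homological degree, and that its endomorphism algebra is $A(\cV')$. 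The Ext-vanishing is best handled by showing that $\Phi$ takes standard modules for the quasi-hereditary structure attached to $\cV$ (Theorem (B)(1)) to standard modules for that attached to $\cV'$, up to homological shift, after which the highest-weight formalism and BGG reciprocity finish the job; alternatively one checks $\Psi\Phi\cong\id$ and $\Phi\Psi\cong\id$ directly on projectives and on standard modules.

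The crux, and the step I expect to be the main obstacle, is the construction of $T$ and the proof that $\Phi$ is invertible. Since the theorem is to hold with no rationality hypothesis, both the definition of $T$ and the verification of the tilting condition must be carried out combinatorially, without leaning on the hypertoric geometry (where $\Phi$ would arise from a flop of $\M_\cH$) that motivates the picture. And the invertibility is intrinsically a derived-category phenomenon: across a wall the algebras $A(\cV)$ and $A(\cV')$ are in general neither isomorphic nor Morita equivalent, so $\Phi$ cannot be exact, and one must work inside the triangulated category and with the highest-weight combinatorics to verify that $\bigoplus_\a\Phi(P_\a)$ really is a tilting complex. A subsidiary annoyance is the bookkeeping in the reduction: one must choose galleries crossing exactly one wall at a time, with both sides of each crossing genuine polarized arrangements whose feasible-and-bounded data behaves as expected, which requires some care with the genericity of $\eta$ and $\xi$.
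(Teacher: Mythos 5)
Your opening reductions are sound and essentially coincide with the paper's: Koszul duality (Theorem \ref{koszul-equiv} together with Theorem \ref{Koszul dual}) plus the isomorphism $A(\cV)^!\cong A(\cV^\gd)\cong B(\cV)$ let one trade a change of $\xi$ for a change of $\eta$ on the Gale dual side, so only one parameter needs to be moved. But from that point on your proof has a genuine gap: the entire content of the theorem is concentrated in the wall-crossing functor, and you never construct it. The two-term complex $T$ of bimodules is left undefined ("heuristically: translate onto the wall and back off"), and the verification that $\bigoplus_\a\Phi(\Pro\a)$ is a tilting complex --- the Ext-vanishing, the generation, the identification of its endomorphism ring with $A(\cV')$ --- is exactly the hard part, which you explicitly defer. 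Note also that "translating onto the wall" does not produce a polarized arrangement in the sense of this paper, since on the wall the genericity condition (b) on $\xi$ fails, so even the intermediate objects in your sketch are not yet defined. As written, the argument is a plausible program, not a proof.

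It may help to see how the paper sidesteps both difficulties. No gallery induction is used: for \emph{any} two parameters $\eta_1,\eta_2$ (with $\xi$ fixed) one takes the single bimodule $N=e_{\eta_1}A_{\ext}\,e_{\eta_2}$ inside the extended algebra $A_{\ext}$ (equivalently, the convolution bimodule built from all feasible chambers), concentrated in homological degree zero, and sets $\Phi(M)=M\overset{L}\otimes_{A_1}N$. One then shows $\Phi(\Pro\a^1)$ has a standard filtration and that $\Phi$ matches Grothendieck groups; invertibility is \emph{not} proved via Rickard's criterion but by first treating the special case $\eta_2=-\eta_1$, where $\Phi^-$ is identified (up to the duality $\du$) with the Ringel duality functor --- the proof using the self-dual projectives of Theorem \ref{self-dual}, the double centralizer property, and an Euler-form dimension count --- and then deducing the general case from the factorization $\Phi^-\cong\Pto\circ\Pot$ of Lemma \ref{compose-to-reverse}: since the composite is an equivalence, $\Pot$ is faithful and $\Pto$ is full and essentially surjective, and by symmetry each is an equivalence. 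If you want to salvage your route, the work you still owe is precisely an analogue of this: an explicit bimodule (or complex) for a single crossing together with a complete tilting verification, and that is not easier than the paper's global argument.
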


The functors that realize these equivalences are analogues of twisting and shuffling 
functors on category $\cO$.  

\subsection{} The paper is structured as follows.  In Section \ref{sec:lp}, we lay out the 
combinatorics and linear algebra of polarized arrangements,
introducing definitions and constructions upon which we will rely throughout the
paper.  Section \ref{sec:quiver} is devoted to the algebra $A$, and contains a proof
of the first isomorphism of Theorem (A).
In Section \ref{sec:convolution} we turn to the algebra $B$; in it we complete the proof of Theorem
(A), as well as part (2) of Theorem (B).  Section \ref{sec:repr-categ} begins
with a general overview of quasi-hereditary and Koszul algebras, culminating in the
proofs of parts (1) and (3) of Theorem (B).  In Section \ref{derived equivalences}
we prove Theorem (C), and along the way we study Ringel duality, Serre functors,
and mutations of exceptional collections of $A$-modules.

Let $\cH^\vee$ be the hyperplane arrangement associated to $\cV^\gd$.
The relationship between the hypertoric varieties 
$\M_\cH$ and $\M_{\cH^\vee}$ implied by our results
is a special case of a duality relating pairs of symplectic algebraic varieties.
This duality, which we call {\bf symplectic duality}, will be 
explored in a more general context in future papers \cite{BLPW2, BLPW3}.  
Other examples of symplectic
dual pairs include Springer resolutions for Langlands dual groups and
certain pairs of moduli spaces of instantons on surfaces. These
examples all appear as the Higgs branches of the moduli space of vacua
for mirror dual 3-d $\cN=4$ super-conformal field theories, or as the Higgs and Coulomb branches of a single such theory. 
For hypertoric varieties, this was shown by Strassler and Kapustin \cite{KS99}.
We anticipate that our results on symplectic duality will ultimately
be related to the structure of these field theories.

\bigskip
\noindent{\em Acknowledgments}.  The authors would like to thank Jon
Brundan, Michael Falk, Davide Gaiotto, Sergei Gukov, Christopher Phan,
Catharina Stroppel, and Edward Witten for invaluable conversations.

T.B. was supported in part by NSA grant H98230-08-1-0097.  A.L. was supported in part by a Clay Liftoff Fellowship.
N.P. was supported in part by NSF grant DMS-0738335.
B.W. was supported by a Clay Liftoff Fellowship and
an NSF Postdoctoral Research Fellowship.
T.B. would like to thank the Institute for Advanced Studies of the
Hebrew University, Jerusalem, and Reed College for their hospitality. 

\section{Linear programming}\label{sec:lp}
\subsection{Polarized arrangements}
Let $I$ be a finite set.  
\begin{definition}
  A {\bf polarized arrangement} indexed by $I$ is a triple $\cV = (V, \eta, \xi)$
  consisting of
  \begin{itemize}
  \item a vector subspace $V \subset \R^I$,
  \item a vector $\eta \in \R^I/V$, and
  \item a covector $\xi \in V^*=(\R^I)^*/V^\perp$,
  \end{itemize}
such that
  \begin{enumerate}
 \item[(a)] every lift of $\eta$ to $\R^I$ has at least $|I|-\dim V$ non-zero entries, and
\item[(b)] every lift of $\xi$ to $(\R^I)^*$ has at least $\dim V$ non-zero entries.
\end{enumerate}
(Note that for $V$ fixed, a generic $\eta$ will satisfy (a), and a generic $\xi$ will
satisfy (b).)
If $V$, $\eta$, and $\xi$ are all defined over $\Q$, then $\cV$ is called
{\bf rational}.
\end{definition}

Associated to a (not necessarily rational) polarized arrangement 
$\cV = (V,\eta,\xi)$ is an arrangement $\cH$ of  $|I|$ hyperplanes 
in the affine space 
\[V_\eta = \{x \in \R^I \mid \eta = x + V\},\]
whose $i^\text{th}$
hyperplane is given by $$H_i = \{x \in V_\eta \mid x_i = 0\}.$$ 
Note that $H_i$ could be empty if $V$ is contained in the 
coordinate hyperplane $\{x_i = 0\}$.  In that case we refer to
$i$ as a \textbf{loop} of $\cV$, since it represents a loop 
in the matroid associated to $V$.

For any subset $S\subset I$, we let $$H_S = \bigcap_{i\in S} H_i$$
be the {\bf flat} spanned by the set $S$.
Condition (a) implies that $\cH$ is simple, meaning that 
$\operatorname{codim} H_S = |S|$
whenever $H_S$ is nonempty.  Observe that $\xi$ may be regarded as an 
affine-linear functional on $V_\eta;$ it does not have well-defined values,
but it may be used to compare any pair of points.
Condition (b) implies that $\xi$
is generic with respect to the arrangement,
in the sense that it is not constant on any positive-dimensional flat $H_S$.

\subsection{Boundedness and feasibility}\label{b and f}
Given a sign vector $\a \in \{\pm 1\}^I$, let
\[
	\Delta_\a = V_\eta \cap \{x \in \R^I \mid \a(i)x_i \ge 0 \;\text{for all $i$}\}
\] 
and 
$$\Sigma_\a = V \cap \{x \in \R^I \mid \a(i)x_i \ge 0 \;\text{for all $i$}\}.$$
If $\Delta_\a$ is nonempty, it is the closed chamber of the arrangement 
$\cH$ where the defining
equations of the hyperplanes are replaced by 
inequalities according to the signs in $\a$.
The cone $\Sigma_\a$ is the corresponding chamber of the central arrangement 
given by translating the hyperplanes of $\cH$ to the origin. 
It is always nonempty, as it contains $0$.  If $\Delta_\a$
is nonempty, then $\Sigma_\a$ is the recession cone of $\Delta_\a$ --- the set of direction vectors of rays 
contained in $\Delta_\a$ (see \cite[\S1.5]{Z}).  Note that $\Sigma_\a$ is independent of $\eta$, so even
if $\Delta_\a = \emptyset$, it is possible to change $\eta$
(in terms of $\cH$, this corresponds to translating the hyperplanes)
to obtain a nonempty $\Delta_\a$, and 
then take its cone of unbounded directions.

We now define subsets $\cF, \cB, \cP \subset\{\pm 1\}^I$ as follows.  First we let
\[
	\cF = \{\a \in \{\pm 1\}^I \mid \Delta_\a \ne \emptyset\}.
\]
Elements of $\cF$ are called {\bf feasible}.  It is clear that $\cF$ depends only
on $V$ and $\eta$.
Next, we let $$\cB = \{\a \in \{\pm 1\}^I \mid \xi(\Sigma_\a)\,\text{is bounded above}\}.$$
Elements of $\cB$ are called {\bf bounded}, and it is clear that $\cB$ depends
only on $V$ and $\xi$.
Elements of the intersection
$$\cP:= \cF\cap \cB = \{\a \in \{\pm 1\}^I \mid \xi(\Delta_\a)\,\text{is 
nonempty and bounded above}\}$$ are called {\bf bounded feasible};
here $\xi(\Delta_\a)$ is regarded as a subset of the affine line.
Our use of these terms comes from linear programming, where we consider
$\alpha$ as representing the linear program ``find the maximum of
$\xi$ on the polyhedron $\Delta_\alpha$''.  

\begin{example}\label{bounded and feasible example}
Let $I = \{1, 2, 3, 4\}$, and let $\cV = (V, \eta,\xi)$
be the polarized arrangement where 
\[V = \{(y, y, x, x+y) \mid (x, y) \in \R^2\},\]
$\eta$ is the image of $(-1, 0, 0, -2)$ in $\R^4/V$, and 
$\xi$ is the image of 
$(2, 0, 1, 0) \in (\R^4)^*$ in $V^* = (\R^4)^*/V^\bot$.
In terms of the $(x, y)$ coordinates on $V_\eta$, 
the inequalities defining the positive sides of the 
four hyperplanes are $y \ge 1$, $y \ge 0$, $x \ge 0$, and
$x + y \ge 2$.  The functional $\xi$, up to an additive constant, 
is $\xi(x, y) = x + 2y$.  See Figure \ref{first figure}, where
we label all of the feasible regions with the appropriate 
sign vectors.  The bounded feasible regions are shaded.
Note that besides the five unbounded feasible regions
pictured, there is one more unbounded sign vector,
namely $+-++$, which is infeasible.

\begin{figure}[htb]
\includegraphics[totalheight=2.6in]{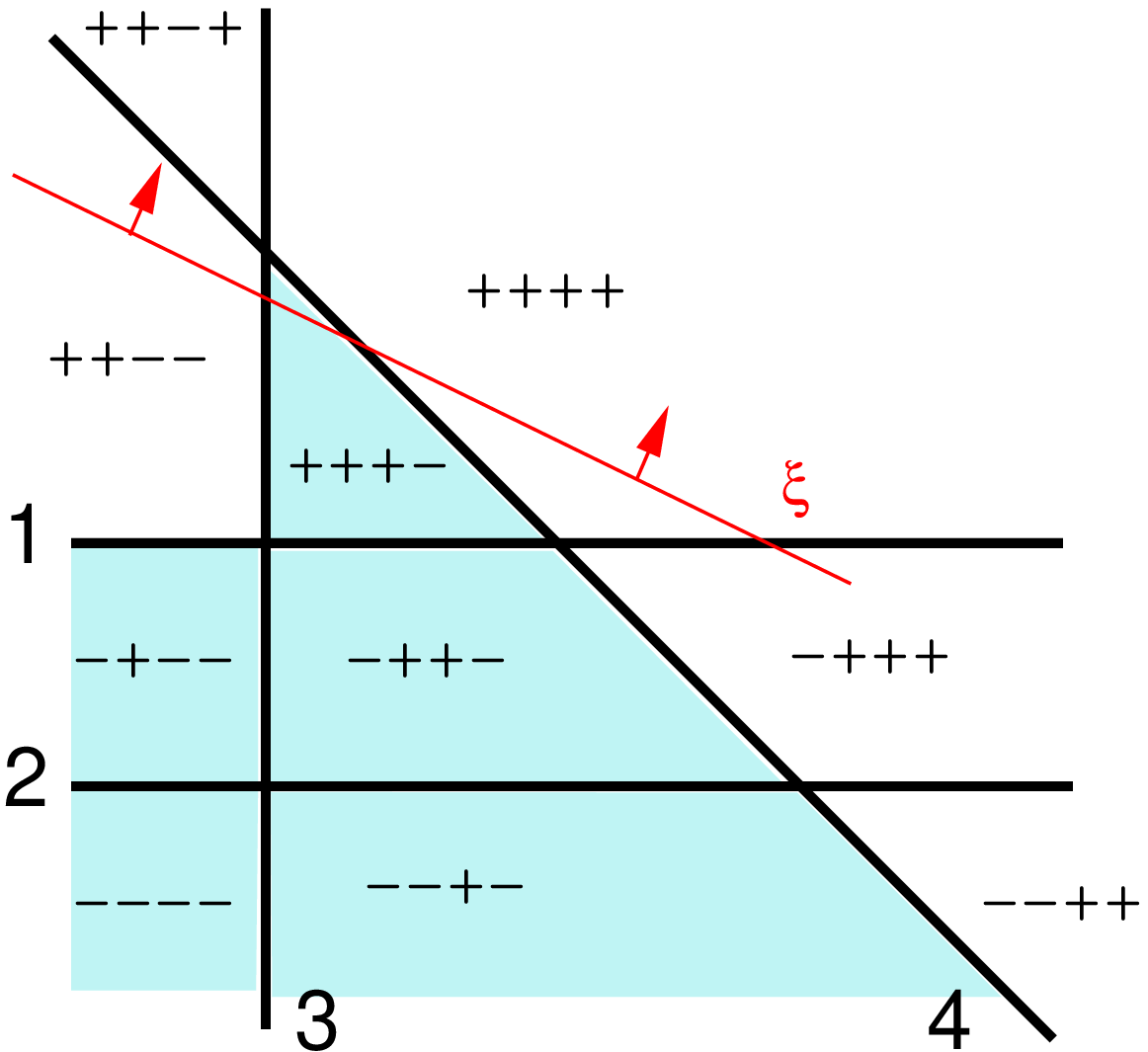}
\caption{Example of bounded and feasible chambers}\label{first figure}
\end{figure}

\end{example}

\subsection{Gale duality}
Here we introduce one of the two main dualities of this paper.
\begin{definition}
The {\bf Gale dual} $\cV^\gd$ of a polarized arrangement 
$\cV=(V,\eta,\xi)$ is given by the triple $(V^\perp, -\xi, -\eta)$.
We denote by $\cF^\gd$, $\cB^\gd$, and $\cP^\gd$ the
feasible, bounded, and bounded feasible sign vectors for $\cV^\gd$,
and we denote by $V^\bot_{-\xi}$ the affine space for 
the corresponding hyperplane arrangement $\cH^\vee$.
\end{definition}

This definition agrees with the notion of duality in linear programming:
the linear programs for $\cV$ and $\cV^\gd$ and a fixed sign vector
$\alpha$ are dual to each other.  The following key result is 
a form of the strong duality theorem of linear programming.

\begin{theorem}\label{ftlp}
$\cF^\gd = \cB$, $\,\cB^\gd = \cF$, and therefore $\cP^\gd = \cP$.
\end{theorem}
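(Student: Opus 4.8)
The plan is to reduce the entire theorem to the single equality $\cF^\gd = \cB$. First note that Gale duality is an involution: $(\cV^\gd)^\gd = \cV$, since $(V^\perp)^\perp = V$, $-(-\xi)=\xi$ and $-(-\eta)=\eta$ (a quick check shows that conditions (a) and (b) are interchanged, so $\cV^\gd$ is again a polarized arrangement). Granting $\cF^\gd = \cB$ for every polarized arrangement and applying it with $\cV^\gd$ in place of $\cV$ yields $\cF = \cF^{\gd\gd} = \cB^\gd$, and then $\cP^\gd = \cF^\gd \cap \cB^\gd = \cB \cap \cF = \cP$. So it suffices to prove $\cF^\gd = \cB$.

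Next I would unwind each side into a concrete condition. Since $\Sigma_\a$ is a convex cone containing $0$ and $\xi$ is linear on $V \supseteq \Sigma_\a$, the image $\xi(\Sigma_\a)$ is a cone in $\R$, hence bounded above if and only if $\xi \le 0$ on $\Sigma_\a$; thus $\a \in \cB$ if and only if $\xi(x) \le 0$ for every $x \in V$ with $\a(i)x_i \ge 0$ for all $i$. On the other side, unwinding the definition of the polyhedron ``$\Delta$'' for $\cV^\gd = (V^\perp,-\xi,-\eta)$ shows that $\a \in \cF^\gd$ if and only if $-\xi$ admits a lift $u \in (\R^I)^*$ with $\a(i)u_i \ge 0$ for all $i$, equivalently $\xi$ admits a lift $w \in (\R^I)^*$ with $\a(i)w_i \le 0$ for all $i$. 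To compare the two, I would apply the harmless coordinate change $x_i \mapsto \a(i)x_i$ on $\R^I$, which carries $V$ to a subspace $V'$, carries $\Sigma_\a$ to $V' \cap \R^I_{\ge 0}$, carries $V^\perp$ to $(V')^\perp$, and identifies $\xi$ (and any lift $w$) with an honest linear functional $w'$ on $\R^I$ whose restriction to $V'$ is $\xi$. After this substitution, $\a \in \cB$ becomes ``$w' \le 0$ on $V' \cap \R^I_{\ge 0}$'', while $\a \in \cF^\gd$ becomes ``$w' \in (V')^\perp - \R^I_{\ge 0}$'', i.e. some translate of $w'$ by an element of $(V')^\perp$ is entrywise nonpositive.

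The content is then the elementary convex-geometry fact that these two conditions coincide: a linear functional is nonpositive on the polyhedral cone $V' \cap \R^I_{\ge 0}$ exactly when it lies in $(V')^\perp - \R^I_{\ge 0}$. This is precisely the dual-cone identity $(C_1 \cap C_2)^* = C_1^* + C_2^*$ applied to the subspace $C_1 = V'$ and the orthant $C_2 = \R^I_{\ge 0}$ --- both polyhedral, so $C_1^* + C_2^*$ is automatically closed and no closure operation is needed --- or, equivalently, a single application of Farkas's lemma. This is the step where the strong duality theorem of linear programming enters, and I expect it to be the only real obstacle: it is entirely standard, but it genuinely requires Farkas/polyhedral closedness rather than naive linear algebra. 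Once it is in hand, $\cF^\gd = \cB$ is immediate, and $\cB^\gd = \cF$ and $\cP^\gd = \cP$ follow by the involution argument of the first paragraph.

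An alternative packaging, matching the ``strong duality'' slogan more directly, is to fix a sign vector $\a$, form the primal linear program ``maximize $\xi$ on $\Delta_\a$'' together with its LP-dual ``minimize the dual objective on the corresponding polyhedron for $\cV^\gd$'', and invoke the standard dichotomy for linear programs: the primal is bounded (whether or not it is feasible) if and only if the dual is feasible, and the primal is feasible if and only if the dual is bounded. These two assertions are exactly $\cB = \cF^\gd$ and $\cF = \cB^\gd$, so either route completes the proof.
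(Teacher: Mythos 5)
Your proposal is correct and follows essentially the same route as the paper: reduce (via sign changes and the Gale-duality involution) to a single sign vector, reformulate feasibility and boundedness concretely, and invoke the Farkas lemma as the key input. The only cosmetic difference is that the paper proves the statement in the form ``$\a\in\cF$ iff $\a\in\cB^\gd$'' by quoting the Farkas alternative for the coset of $\eta$, whereas you prove the equivalent mirror statement $\cB=\cF^\gd$ by packaging Farkas as the polyhedral dual-cone identity $(C_1\cap C_2)^*=C_1^*+C_2^*$.
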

\begin{proof}
It is enough to show that $\alpha = (+1, \dots, +1)$ is feasible 
for $\cV$ if and only if it is bounded for $\cV^\gd$.
The Farkas lemma \cite[1.8]{Z} says that exactly one of the following statements
holds: 
\begin{enumerate}
 \item there exists a lift of $\eta$ to $\R^I$ which lies in 
$\R^I_{\ge 0}$,
\item there exists $c \in V^\perp\subset(\R^I)^*$ 
which is positive on $\R^I_{\ge 0}$ and negative on $\eta$.
\end{enumerate}
Statement (1) is equivalent to $\alpha \in \cF$,
while a vector $c$ satisfying (2) lies in $\Sigma^\vee_\alpha$, so $c(\eta) < 0$ means that 
$-\eta$ is not bounded above on $\Sigma^\vee_\alpha$.
\end{proof}

\begin{example}\label{Gale example}
Continuing with polarized arrangement $\cV$ of Example 
\ref{bounded and feasible example}, we have
\[V^\gd = V^\perp = \{(X+Y, -X, Y, -Y) \mid (X, Y) \in \R^2\}.\]
So in $(X, Y)$ coordinates, 
the inequalities defining the positive sides of the four 
hyperplanes are, in order,  $X + Y \ge 2$, $X \le 0$, $Y \ge 1$, $Y \le 0$
(the fact that these are the same as for
$\cV$ up to sign and reordering is a coincidence).  
The covector $\xi^\vee = -\eta$ gives
the function $(X, Y) \mapsto X - Y$.  Figure \ref{second figure}
shows the feasible and bounded feasible regions for $\cV^\gd$.

\begin{figure}[htb]
 \includegraphics[totalheight=2.6in]{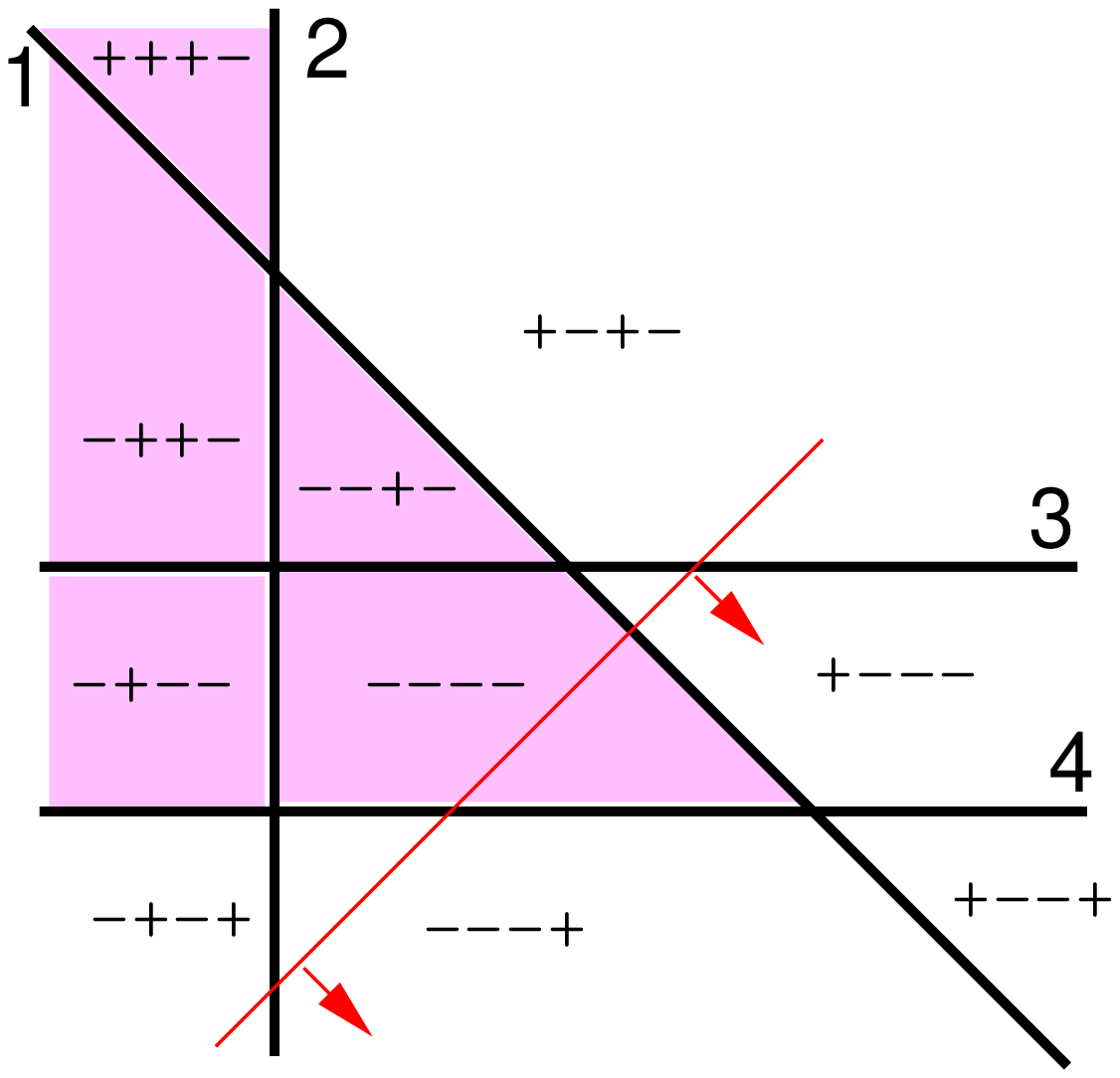}
\caption{Bounded and feasible chambers for the Gale dual arrangement}\label{second figure}
\end{figure}

\end{example}

\subsection{Restriction and deletion}  

We define two operations which reduce the number of hyperplanes in a polarized 
arrangement as follows.  First, consider a 
subset $S$ of $I$ such that $V + \R^{I\ssm S} = \R^I$.
Since $\eta$ is assumed to be generic, this condition is equivalent to saying 
that $H_S \ne \emptyset$.  
Consider the natural isomorphism
\[
	i: \R^{I\ssm S}/(V\cap\R^{I\ssm S}) \longrightarrow \R^I/V
\]
induced by the inclusion of $\R^{I\ssm S}$ into $\R^I$.
We
define a new polarized arrangement $\cV^S = (V^S,\xi^S,\eta^S)$, 
indexed by the set $I\ssm S$, as follows:
\begin{align*}
  V^S &:= V \cap \R^{I \ssm S}\subset \R^{I\ssm S}\\ 
  \xi^S &:= \xi|_{V^S}\\ \eta^S &:= i^{-1}(\eta).
\end{align*}
The arrangement $\cV^S$ is called the
{\bf restriction} of $\cV$ to $S$, since the associated hyperplane
arrangement is isomorphic to the
hyperplane arrangement obtained by restricting to the subspace $H_{S}$.

Dually, suppose that $S \subset I$ is a subset such that $\R^S \cap V = \{0\}$, and 
let
$$\pi \colon \R^I \to \R^{I\ssm S}$$ be the coordinate projection, which 
restricts to an isomorphism 
\[
	\pi|_V: V \longrightarrow \pi(V).
\]
We define another polarized arrangement $\cV_S = (V_S,\eta_S,\xi_S)$, 
also indexed by $I \ssm S$,  as follows:
\begin{align*}
  V_S &= \pi(V) \subset \R^{I\ssm S}\\
  \eta_S &= \pi(\eta)\\
  \xi_S &= \xi \circ \pi|_V^{-1}.
\end{align*}
The arrangement $\cV_S$ is called the {\bf deletion} of $S$ from $\cV$, 
since the associated hyperplane arrangement
is obtained by removing the hyperplanes $\{H_i\}_{i\in S}$ from 
the arrangement associated to $\cV$.
The following lemma is an easy consequence of the definitions.

\begin{lemma} \label{Gale dual system}
$(\cV^S)^\gd$ is equal to $(\cV^\gd)_S$.
\end{lemma}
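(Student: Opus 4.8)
The plan is to unwind both sides of the claimed equality $(\cV^S)^\gd = (\cV^\gd)_S$ directly from the definitions and check that the three pieces (vector subspace, $\eta$-datum, $\xi$-datum) agree. The first thing to verify is that both operations are legal: restriction of $\cV$ to $S$ requires $V + \R^{I\ssm S} = \R^I$, while deletion of $S$ from $\cV^\gd = (V^\perp, -\xi, -\eta)$ requires $\R^S \cap V^\perp = \{0\}$; I would observe that these two conditions are equivalent, since $\R^S \cap V^\perp = (V + \R^{I\ssm S})^\perp$ under the standard pairing on $\R^I$. So the two sides are defined under the same hypothesis, which is reassuring.

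Next I would compute the underlying subspaces. On one side, $(\cV^S)^\gd$ has underlying subspace $(V^S)^\perp = (V\cap\R^{I\ssm S})^\perp$, where the orthogonal complement is now taken inside $(\R^{I\ssm S})^*$. On the other side, $(\cV^\gd)_S$ has underlying subspace $\pi(V^\perp)\subset \R^{I\ssm S}$, where $\pi\colon\R^I\to\R^{I\ssm S}$ is the coordinate projection — note that here $\pi$ is applied to $(\R^I)^*$, identified with $\R^I$. The key linear-algebra identity is that for a subspace $W\subset\R^I$ with $W\cap\R^S = \{0\}$ (automatic here with $W = V^\perp$, by the equivalence above), one has $\pi(W) = (W^\perp \cap \R^{I\ssm S})^\perp$ inside $(\R^{I\ssm S})^*$. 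Applying this with $W = V^\perp$ gives $\pi(V^\perp) = ((V^\perp)^\perp \cap \R^{I\ssm S})^\perp = (V\cap\R^{I\ssm S})^\perp = (V^S)^\perp$, as desired. This is the one computation I'd actually want to lay out carefully, since it is the crux of why Gale duality interchanges the two operations.

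Finally I would match up the remaining data. The $\eta$-datum of $(\cV^S)^\gd$ is $-(\xi^S) = -(\xi|_{V^S})$, while the $\xi$-datum of $(\cV^\gd)_S$ is $(-\xi)\circ(\pi|_{V^\perp})^{-1}$ precomposed with the identification — wait, more carefully: $\cV^\gd$ has $\xi$-datum $-\eta\in\R^I/V^\perp$, so $(\cV^\gd)_S$ has $\eta$-datum $\pi(-\eta) = -\pi(\eta)$ in $\R^{I\ssm S}/\pi(V^\perp)$, which should be matched against the $\eta$-datum of $(\cV^S)^\gd$, namely $-(\eta^S) = -i^{-1}(\eta)$. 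Under the identification $\R^{I\ssm S}/(V\cap\R^{I\ssm S}) \cong \R^{I\ssm S}/\pi(V^\perp)^\perp{}^\perp$... these compatibilities all come down to checking that the inclusion-induced map $i$ and the projection $\pi$ are mutually transpose in the appropriate sense, which again is the same piece of linear algebra dualized. I would organize this as: (i) equivalence of hypotheses; (ii) the subspace identity via orthogonal complements; (iii) the $\xi$/$\eta$ bookkeeping, which follows formally once the subspace identification is pinned down as a transpose of the relevant inclusion/projection.

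The main obstacle, such as it is, is bookkeeping rather than mathematics: keeping straight which orthogonal complement lives in which space, and making sure the canonical isomorphisms $i$ and $\pi|_V$ used to define restriction and deletion are identified with the duals of each other so that the $\xi$- and $\eta$-data transport correctly. There is no hard step — as the excerpt says, this is an easy consequence of the definitions — so the write-up should emphasize the single clean identity $\pi(W) = (W^\perp\cap\R^{I\ssm S})^\perp$ and then note that everything else is its transpose, rather than carrying out each verification in coordinates.
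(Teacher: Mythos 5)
Your approach is the right one and matches the paper's intent: the paper gives no argument beyond calling the lemma an easy consequence of the definitions, and your plan --- equivalence of the two hypotheses via $\R^S\cap V^\perp=(V+\R^{I\ssm S})^\perp$, the identity $\pi(V^\perp)=(V\cap\R^{I\ssm S})^\perp$ inside $(\R^{I\ssm S})^*$, and transport of the remaining data using the fact that $i$ and $\pi|_{V^\perp}$ are mutually transpose --- is exactly the definition-unwinding that is meant. One correction to your final paragraph, where the two slots get crossed: the $\eta$-datum of $(\cV^\gd)_S$ is the coordinate projection applied to the $\eta$-datum $-\xi$ of $\cV^\gd$, hence is $\pi(-\xi)\in(\R^{I\ssm S})^*/\pi(V^\perp)$, and this is what must match $-\xi^S$; the $\xi$-datum of $(\cV^\gd)_S$ is $(-\eta)\circ(\pi|_{V^\perp})^{-1}$, and this is what must match $-\eta^S=-i^{-1}(\eta)$ under the transpose identification. (Note also that $-\eta$, the $\xi$-datum of $\cV^\gd$, lives in $\R^I/V=(V^\perp)^*$, not in $\R^I/V^\perp$, and the expression $\pi(-\eta)$ does not typecheck and should be deleted.) With the slots realigned in this way, the bookkeeping closes exactly as you describe, so the proposal is correct up to these fixable slips.
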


\begin{example} We continue with 
Examples \ref{bounded and feasible example} and \ref{Gale example}.
The deletion $\cV_{\{3, 4\}}$ is not defined, since
$\R^{\{3, 4\}} \cap V^\bot = \R \cdot (0, 0, -1, 1) \ne 0$;
this can be seen in Figure \ref{first figure} because removing
the hyperplanes $3$ and $4$ leaves hyperplanes whose normal
vectors do not span.  Dually, the restriction $(\cV^\gd)^{\{3, 4\}}$ is
not defined, since $V^\bot + \R^{\{1, 2\}} \ne \R^4$.  This can be 
seen geometrically because $H_{\{3, 4\}} = V^\bot_{- \xi} \cap \R^{\{1, 2\}}$
is empty.

On the other hand, we can form the deletion $\cV_{\{4\}}$; it gives
an arrangement of three lines in a plane, with the first two 
parallel.  On the dual side, $(\cV^\gd)^{\{4\}}$ is an arrangement
of two points on a line; the third hyperplane is a loop or 
empty hyperplane, since in the original arrangement $\cV^\gd$
the hyperplanes $H_3$ and $H_4$ are parallel.
\end{example}

\subsection{The adjacency relation}

Define a relation $\lra$ on $\{\pm 1\}^I$ by 
saying
 \[
 	\a \lra \b
\]
if and only if $\a$ and $\b$ differ in exactly one entry; if $\a,\b \in \cF$, this means that
 $\Delta_\a$ and $\Delta_\b$ are obtained from each other by flipping across a single hyperplane.  
We will write $\a\lrao i\b$ to indicate that $\a$ and $\b$ differ in the $i^\text{th}$ component of 
$\{\pm 1\}^I$.  We will also denote this by $\beta = \alpha^i$.  
The following lemma says that an infeasible neighbor of a bounded
feasible sign vector is feasible for the Gale dual system.

\begin{lemma} \label{duality lemma}
 Suppose that $\a \notin \cF$, $\b \in \cP$, and $\a \lra \b$.  
Then $\a \in \cF^\gd$.
\end{lemma}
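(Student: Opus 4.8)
The plan is to reduce to the case $\alpha = (+1,\dots,+1)$, using the symmetry of the whole setup under sign changes: replacing each coordinate $x_i$ by $\alpha(i)x_i$ is a linear automorphism of $\R^I$ that carries $V$ to another subspace, $\eta$ and $\xi$ to corresponding data, and simultaneously relabels all sign vectors by multiplication by $\alpha$. Under this relabeling $\alpha \mapsto (+1,\dots,+1)$ and $\beta \mapsto \alpha^i \cdot \alpha = -e_i$ (the sign vector that is $+1$ everywhere except in slot $i$), and all of $\cF, \cB, \cP, \cF^\gd$ transform compatibly. So it suffices to prove: if $(+1,\dots,+1) \notin \cF$ but $-e_i \in \cP$, then $(+1,\dots,+1) \in \cF^\gd$. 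By Theorem \ref{ftlp}, $\cF^\gd = \cB$, so what I actually need to show is that $(+1,\dots,+1)$ is \emph{bounded} for $\cV$, i.e.\ that $\xi$ is bounded above on the cone $\Sigma_{(+1,\dots,+1)} = V \cap \R^I_{\ge 0}$.

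Next I would extract the content of the two hypotheses. Since $-e_i \in \cP \subset \cF$, the polyhedron $\Delta_{-e_i} = V_\eta \cap \{x : x_i \le 0,\ x_j \ge 0 \text{ for } j \ne i\}$ is nonempty and $\xi$ is bounded above on it; in particular $\Sigma_{-e_i} = V \cap \{x_i \le 0, x_j \ge 0\}$ is its recession cone and $\xi \le 0$ on $\Sigma_{-e_i}$ (boundedness of a linear functional on a cone forces it to be nonpositive). On the other hand, $(+1,\dots,+1) \notin \cF$ means $V_\eta \cap \R^I_{\ge 0} = \emptyset$. I want to combine these to control $\xi$ on the cone $\Sigma_{(+1,\dots,+1)} = V \cap \R^I_{\ge 0}$. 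The key geometric point is that $\Sigma_{(+1,\dots,+1)}$ and $\Sigma_{-e_i}$ are adjacent chambers of the central arrangement $V \cap \{\text{coordinate hyperplanes}\}$, sharing the facet $V \cap \{x_i = 0, x_j \ge 0 \text{ for } j \ne i\} = \Sigma_{(+1,\dots,+1)} \cap \{x_i = 0\}$. So any ray in $\Sigma_{(+1,\dots,+1)}$ not lying in $\{x_i=0\}$, together with the facet, spans a piece of the picture; the hope is to show that any such ray would, if $\xi$ were positive on it, either contradict boundedness on $\Sigma_{-e_i}$ or contradict infeasibility of $(+1,\dots,+1)$.

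Concretely, I would argue as follows. Suppose for contradiction that $\xi$ is not bounded above on $\Sigma_{(+1,\dots,+1)} = V \cap \R^I_{\ge 0}$; then there is $v \in V$ with $v_j \ge 0$ for all $j$ and $\xi(v) > 0$. Pick any point $p$ in the nonempty set $\Delta_{-e_i}$, so $p \in V_\eta$, $p_i \le 0$, and $p_j \ge 0$ for $j \ne i$. Consider the ray $p + tv$ for $t \ge 0$: it lies in $V_\eta$, its $j$-th coordinate $p_j + tv_j \ge 0$ stays nonnegative for $j \ne i$, and $\xi(p + tv) = \xi(p) + t\xi(v) \to +\infty$. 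The only coordinate that can obstruct membership in $\R^I_{\ge 0}$ is the $i$-th, $p_i + tv_i$. If $v_i > 0$, then for large $t$ this becomes positive too, so $p + tv \in V_\eta \cap \R^I_{\ge 0}$, contradicting $(+1,\dots,+1)\notin\cF$. If $v_i = 0$, then $v$ itself lies in $\Sigma_{-e_i}$ (all $v_j \ge 0$, and $v_i = 0 \le 0$), yet $\xi(v) > 0$, contradicting boundedness of $\xi$ above on $\Sigma_{-e_i}$ (which we noted follows from $-e_i \in \cP$). Either way we reach a contradiction, so $\xi$ is bounded above on $\Sigma_{(+1,\dots,+1)}$, i.e.\ $(+1,\dots,+1) \in \cB = \cF^\gd$, completing the reduced case and hence the lemma.

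The main obstacle I anticipate is bookkeeping rather than conceptual: making sure the sign-change reduction genuinely transports \emph{all four} of the relevant combinatorial sets ($\cF$, $\cB$, $\cP$, and $\cF^\gd$ via its definition through $V^\perp, -\xi, -\eta$) in the expected equivariant way, so that the special case really does imply the general one. I would double-check that under $x_i \mapsto \alpha(i) x_i$ the Gale dual transforms by the corresponding dual sign change, so that $\cF^\gd$ relabels by $\alpha$ as well. Once that is in hand, the core argument above is just the elementary "perturb along a ray" trick and the observation that a linear functional bounded above on a cone is nonpositive on it; no genuinely hard step remains. (One could alternatively phrase the whole thing directly via Farkas/Theorem \ref{ftlp} without the reduction, but I find the ray argument more transparent.)
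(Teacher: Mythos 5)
Your proof is correct and follows essentially the same route as the paper's: both reduce the statement to showing $\a\in\cB$ and then invoke Theorem \ref{ftlp} ($\cB=\cF^\gd$), with boundedness of $\xi$ on $\Sigma_\a$ extracted from $\Delta_\b\neq\emptyset$, $\Delta_\a=\emptyset$, and the adjacency across $H_i$. Your ray-and-case-split argument is just an explicit unwinding of the paper's one-line claim that $\Sigma_\a=\Sigma_\b\cap\{x_i=0\}\subset\Sigma_\b$, and the sign normalization to $\a=(+1,\dots,+1)$ is harmless but unnecessary.
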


\begin{proof}
Suppose that $\a \lrao i\b$.
The fact that $\b\in\cP\subs\cF$ tells us that $\Delta_\b\neq\emptyset$,
while $\Delta_\a = \emptyset$, thus
$$H_i \cap \Delta_\b = \Delta_\a\cap \Delta_\b = \emptyset.$$
From this we can conclude that
$$\Sigma_\a = \Sigma_\b\cap\{x_i=0\}\subset\Sigma_\b.$$
The fact that $\b\in\cP\subset\cB$, tells us that $\xi(\Sigma_\b)$ is bounded above,
thus so is $\xi(\Sigma_\a)$.
This in turn tells us that $\a\in\cB$, which is equal to $\cF^\gd$ by Theorem \ref{ftlp}.
\end{proof}

\subsection{Bases and the partial order}\label{sec:partial order}
Let $\Bas$ be the set of subsets
$b\subset I$ of order $\dim V$ such that $H_b\neq \emptyset$.
Such a subset is called a {\bf basis} for the matroid associated to $\cV$,
and in fact this property depends only on the subspace $V\subset \R^I$.
A set $b$ is a basis if and only if the composition $V\hookrightarrow\R^I\twoheadrightarrow\R^b$
is an isomorphism, which is equivalent to saying that
we have a direct sum decomposition $\R^I = V \oplus \R^{b^c}$,
where we put $b^c = I\ssm b$.
We have a bijection $$\mu:\Bas\to\cP$$ taking $b$ to the 
unique sign vector $\a$ such that $\xi$ attains its maximum on 
$\Delta_\a$ at the point $H_b$.

The covector $\xi$ induces a partial order $\le$ on $\Bas \cong \cP$.  It is 
the transitive closure of the relation $\preceq$, where $b_1 \prec b_2$
if $|b_1 \cap b_2| = |b_1| - 1 = \dim V - 1$ and $\xi(H_{b_1}) < \xi(H_{b_2})$.
The first condition means that $H_{b_1}$ and $H_{b_2}$ 
lie on the same one-dimensional flat, so $\xi$ cannot take the same 
value on these two points.

Let $\Bas^\gd$ denote the set of bases of $\cV^\gd$.
We have a bijection $b \mapsto b^c$ from $\Bas$ to $\Bas^\gd$,
since $\R^I = V \oplus \R^{b^c}$ if and only if
$\R^I = V^\bot \oplus \R^b$.
The next result says that this bijection is compatible with
the equality $\cP = \cP^\gd$ and the bijections 
$\mu\colon \Bas\to \cP$ and $\mu^\gd\colon \Bas^\gd\to \cP^\gd$.

\begin{lemma}\label{complement}
For all $b\in\Bas$, $\mu(b) = \mu^\vee(b^c)$.
\end{lemma}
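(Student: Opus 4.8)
The plan is to unwind both bijections $\mu$ and $\mu^\vee$ in terms of linear programming and show they single out the same sign vector. Fix $b \in \Bas$, and let $\a = \mu(b) \in \cP$, so that $\xi$ attains its maximum on $\Delta_\a$ at the unique vertex $H_b$. Since $H_b$ is a single point (because $b$ is a basis, $\R^I = V \oplus \R^{b^c}$), the optimal solution of the linear program ``maximize $\xi$ on $\Delta_\a$'' is a nondegenerate vertex, and $b$ is exactly the set of coordinates $i$ whose defining hyperplane $H_i$ passes through that vertex. The dual linear program for $\cV^\gd$ with the same sign vector $\a$ (by the discussion after the definition of Gale duality, and by Theorem \ref{ftlp}) is ``maximize $-\eta$ on $\Delta^\vee_\a$'', and by complementary slackness its optimal vertex is supported precisely on the complementary set of coordinates $b^c$. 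The point with $x_i = 0$ for all $i \in b^c$ in $V^\bot_{-\xi}$ is exactly $H_{b^c}$ (intersection of the hyperplanes indexed by $b^c$ in $\cH^\vee$), which is a single point since $b^c$ is a basis for $\cV^\gd$. Hence $\mu^\vee(b^c)$ is the sign vector whose optimal vertex is $H_{b^c}$, and by complementary slackness this must equal $\a$.

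First I would make the reduction to checking a single, well-chosen sign vector: using the $\{\pm 1\}^I$-action that flips signs of coordinates (which acts compatibly on $V$, $\eta$, $\xi$, on $\cP$, on $\Bas$, and on the Gale dual data), it suffices to prove the identity for one basis $b$ and, after the flip, we may assume $\mu(b) = (+1,\dots,+1)$. Then $\Delta_\a = V_\eta \cap \R^I_{\ge 0}$, and the hypothesis $\a = \mu(b)$ says $\xi$ is maximized over this polyhedron at the unique point $p$ with $p_i = 0$ for $i \in b$ and $p_i > 0$ for $i \in b^c$ (the strict inequality because the arrangement is simple and the vertex lies on exactly the $\dim V$ hyperplanes indexed by $b$; the fact that it is a vertex rather than a higher-dimensional face uses genericity of $\xi$).

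Next I would invoke strong duality / complementary slackness explicitly. Optimality of $p$ means there is a dual certificate: a covector $c \in V^\perp$ lying in $\Sigma^\vee_\a$ (the recession cone on the Gale side) with $c = \xi$ modulo $V^\perp$ in the appropriate sense and $c_i = 0$ whenever $p_i \ne 0$, i.e. $c_i = 0$ for all $i \in b^c$. Such a $c$ is then a feasible point for the Gale dual linear program ``maximize $-\eta$ on $\Delta^\vee_\a$,'' and the joint conditions ($p$ feasible for one, $c$ feasible for the other, complementary slackness) force $c$ to be optimal on the dual side. Because $c_i = 0$ exactly for $i \in b^c$ and $c$ is a vertex of $\Delta^\vee_\a$ lying on precisely those hyperplanes, $c = H_{b^c}$, so $-\eta$ attains its maximum on $\Delta^\vee_\a$ at $H_{b^c}$. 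By definition of $\mu^\vee$ this says $\mu^\vee(b^c) = \a = \mu(b)$, as desired.

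The main obstacle is bookkeeping: keeping the affine-versus-linear distinction straight (the LP for $\cV$ lives on the affine space $V_\eta$ and its recession cone is $\Sigma_\a$, while the dual certificate $c$ lives in the linear space $V^\perp$ and is simultaneously a point of the affine space $V^\bot_{-\xi}$), and checking that the dual certificate produced by strong duality is genuinely a \emph{vertex} of $\Delta^\vee_\a$ supported on exactly $b^c$ rather than merely a point of some larger face. This last point is where I would use that $b^c \in \Bas^\gd$ (from $\R^I = V^\bot \oplus \R^b$, as noted just before the lemma) together with genericity of $\eta$ — equivalently genericity of $\xi^\vee = -\eta$ — for the Gale dual arrangement, which guarantees $H_{b^c}$ is a single point and that no other vertex of $\Delta^\vee_\a$ can carry the same support. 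Everything else is a direct translation of Theorem \ref{ftlp} and the linear-programming dictionary already set up in this section.
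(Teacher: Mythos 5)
Your argument is correct in substance, but it takes a different route from the paper's. You run the linear-programming duality machine directly: $\mu(b)=\alpha$ means $H_b$ is optimal for the primal program, strong duality supplies a dual optimum $c\in\Delta^\vee_\alpha$, complementary slackness forces $c_i=0$ for every $i\in b^c$ (since, by genericity of $\eta$, the coordinates of $H_b$ are nonzero exactly on $b^c$), and because $\R^I=V^\perp\oplus\R^b$ this already pins $c$ down to the single point $H^\vee_{b^c}$, whence $\mu^\vee(b^c)=\alpha$. The paper never quotes strong duality or complementary slackness: it characterizes $\mu(b)=\alpha$ by the pair of conditions that the projection of $\alpha$ to $\{\pm1\}^{b^c}$ is feasible for the restriction $\cV^b$ (i.e.\ $H_b\in\Delta_\alpha$) and that the projection to $\{\pm1\}^{b}$ is bounded for the deletion $\cV_{b^c}$, and then observes that Theorem \ref{ftlp} together with Lemma \ref{Gale dual system} interchanges these two conditions under Gale duality and $b\leftrightarrow b^c$, so the characterization is manifestly self-dual. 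These are two packagings of the same fact --- the paper's second condition is, via Theorem \ref{ftlp}, precisely dual feasibility of $H^\vee_{b^c}$ --- but the paper stays inside the tools it has already proved (its Farkas-based Theorem \ref{ftlp} plus restriction/deletion), while you import the strong duality theorem as a black box; that is legitimate and arguably more transparent, though it should be cited precisely. One point to clean up: your description of the dual certificate is internally inconsistent --- $c$ is not ``a covector in $V^\perp$ lying in $\Sigma^\vee_\alpha$ with $c=\xi$ modulo $V^\perp$''; it is a point of the affine slice $V^\perp_{-\xi}$, so $c\equiv-\xi$ modulo $V^\perp$, lying in the chamber $\Delta^\vee_\alpha$ (your later sentences show this is what you mean). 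With that fixed, and noting that no separate vertex or uniqueness-of-support argument is needed once $c_i=0$ for all $i\in b^c$, your proof is complete; the sign-flip reduction to $\alpha=(+1,\dots,+1)$ is harmless but also unnecessary.
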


\begin{proof}
Let $b \in \Bas$.  It will be enough to show that
$\mu(b) = \a$ if and only if 
\begin{enumerate}
\item the projection of $\a$ to $\{\pm 1\}^{b^c}$ is feasible 
for the restriction $\cV^b$, and
\item the projection of $\alpha$ to 
$\{\pm 1\}^{b}$ is bounded for the deletion $\cV_{b^c}$,
\end{enumerate} 
since Theorem \ref{ftlp} and Lemma \ref{Gale dual system}
tell us that these conditions are interchanged by dualizing and
swapping $b$ with $b^c$.

Note that $\cV^b$ represents the restriction of the 
arrangement to $H_b$, which is a point.  All of 
the remaining hyperplanes are therefore loops,
whose positive and negative sides are either 
all of $H_b$ or empty.  Condition (1) just says then
that $H_b$ lies in the chamber $\Delta_\a$.
Given that, in order for $H_b$ to be the $\xi$-maximum on 
$\Delta_\a$, it is enough that when all the hyperplanes not 
passing through $H_b$ are removed from the arrangement,
the chamber containing $\alpha$ is bounded.
But this is exactly the statement of condition (2),
which completes the proof.
\end{proof}

The following lemma demonstrates that this bijection is compatible with
our partial order.

\begin{lemma}\label{op}
Under the bijection $b \mapsto b^c$, the partial order on $\Bas^\gd$ is the opposite of
the partial order on $\Bas$.  That is, $b_1 \leq b_2$ if and only if $b_1^c\geq b_2^c$.
\end{lemma}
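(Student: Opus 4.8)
The plan is to reduce the claim to the generating relation $\preceq$ of the partial orders and to show that this relation is reversed under $b \mapsto b^c$. So suppose $b_1 \prec b_2$ in $\Bas$, meaning $|b_1 \cap b_2| = \dim V - 1$ and $\xi(H_{b_1}) < \xi(H_{b_2})$. First I would observe that the complementation map interacts nicely with the covering condition: $|b_1 \cap b_2| = \dim V - 1$ is equivalent to $|b_1^c \cap b_2^c| = (|I| - \dim V) - 1 = \dim V^\perp - 1$, since $|b_i^c| = |I| - \dim V$ and $b_1^c \cap b_2^c = I \ssm (b_1 \cup b_2)$, whose size is $|I| - |b_1 \cup b_2| = |I| - (\dim V + 1)$. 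Hence $b_1^c$ and $b_2^c$ lie on a common one-dimensional flat of $\cH^\vee$, and the task is to show that the Gale-dual functional $\xi^\gd = -\eta$ satisfies $\xi^\gd(H^\vee_{b_1^c}) > \xi^\gd(H^\vee_{b_2^c})$, i.e. that the inequality flips.

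The main work is to pin down the sign. Write $\{i\} = b_1 \ssm b_2$ and $\{j\} = b_2 \ssm b_1$, so that $b_1$ and $b_2$ differ by swapping $i$ for $j$, and dually $b_1^c$ and $b_2^c$ differ by swapping $j$ for $i$. Using Lemma \ref{complement}, set $\a = \mu(b_1) = \mu^\vee(b_1^c)$ and $\a' = \mu(b_2) = \mu^\vee(b_2^c)$. Since $H_{b_1}$ and $H_{b_2}$ are on the same one-dimensional flat $H_{b_1 \cap b_2}$, the chambers $\Delta_\a$ and $\Delta_{\a'}$ meet this line in adjacent segments, and $\a, \a'$ differ in exactly one coordinate — which coordinate, and the direction of the sign change, is what encodes whether $\xi$ increases or decreases along the line as we pass from the $\xi$-max vertex of one chamber to that of the other. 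Concretely, I expect $\a$ and $\a'$ to differ in exactly the coordinate $j$ (the hyperplane $H_j$ separates the two vertices on the flat $H_{b_1\cap b_2}$), and the condition $\xi(H_{b_1}) < \xi(H_{b_2})$ translates into a definite statement about whether $\a(j) = +1$ or $-1$. I would make this precise by working in the one-dimensional flat $H_{b_1 \cap b_2}$: parametrize it by the single coordinate $x_j$ (valid since $b_1 \cup b_2 \supset (b_1\cap b_2) \cup \{j\}$ and $b_1$ is a basis), express $\xi$ restricted to this line as an affine function of $x_j$, and read off from $\xi(H_{b_1}) < \xi(H_{b_2})$ the sign of the $x_j$-coefficient of $\xi$ on the line.

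The crux is then the following linear-algebraic identity: the sign of the $x_j$-coefficient of $\xi$ along $H_{b_1\cap b_2}$ and the sign of the $x_i$-coefficient of $\xi^\gd = -\eta$ along $H^\vee_{b_1^c \cap b_2^c}$ are opposite. This should follow from the standard Gale-duality pairing between the Cramer's-rule expressions: writing $V = \operatorname{span}$ of the rows of a matrix in the form $[\,\id_{\dim V}\ \mid\ M\,]$ adapted to a basis, the relevant slopes are entries of $M$ and $-M^T$ respectively, and the sign swap is exactly the transpose-with-minus-sign built into the definition $\cV^\gd = (V^\perp, -\xi, -\eta)$. I anticipate this sign-tracking to be the main obstacle — not deep, but fiddly, since one must keep straight the two minus signs (one from $V^\perp$ versus $V$, one from $\xi^\gd = -\eta$) and confirm they conspire to produce a single net reversal. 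Once the generating relation is shown to reverse, the statement for the full transitive closures $\le$ and $\ge$ is immediate, completing the proof.
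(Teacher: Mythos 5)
Your first step --- reducing to the covering relation $\prec$ and checking that complementation preserves the covering condition --- is exactly the paper's first step, and your plan of verifying the sign reversal by a computation on the one-dimensional flat, using the presentation of $V$ as the row span of $[\,\id\mid M\,]$ and of $V^\perp$ as the row span of $[\,-M^T\mid\id\,]$, is a workable substitute for what the paper actually does (the paper replaces $\cV$ by the restriction-deletion $\cV^{b_1\cap b_2}_{(b_1\cup b_2)^c}$ and uses Lemma \ref{Gale dual system} so that the same two-hyperplane, rank-one reduction happens simultaneously on the dual side, where the check is immediate). However, the identity you isolate as the crux is false as stated, and this is a genuine gap. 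The inequality $\xi(H_{b_1})<\xi(H_{b_2})$ is \emph{not} equivalent to a condition on the sign of the $x_j$-coefficient of $\xi$ along $H_{b_1\cap b_2}$ alone: it is equivalent to a sign condition on that coefficient \emph{times} $x_j(H_{b_1})$, i.e.\ you must also record on which side of $H_j$ the vertex $H_{b_1}$ lies, and that datum depends on $\eta$; dually, the inequality for $\xi^\gd=-\eta$ involves the position $y_i(H^\gd_{b_1^c})$, which depends on $\xi$. In particular the two slope-signs you want to compare need not be opposite. A concrete counterexample to your crux (but not to the lemma): $I=\{1,2\}$, $V=\R\cdot(1,-1)$, $\eta$ the image of $(0,1)$, $\xi$ the image of $(1,0)$, $b_1=\{1\}$, $b_2=\{2\}$. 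Then $\xi(H_{b_1})=0<1=\xi(H_{b_2})$, so $b_1\prec b_2$, yet the $x_2$-coefficient of $\xi$ along $V_\eta$ and the $y_1$-coefficient of $\xi^\gd$ along $V^\perp_{-\xi}$ are both $-1$; the lemma nevertheless holds because $\xi^\gd(H^\gd_{b_1^c})=0>-1=\xi^\gd(H^\gd_{b_2^c})$. Your side remark that $\mu(b_1)$ and $\mu(b_2)$ should differ only in the coordinate $j$ is also false in general (for three points on a line the $\xi$-maximal chambers at the two extreme points differ in two coordinates), but nothing in your argument uses it.

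The repair is within the spirit of your computation, and what it reveals is that under Gale duality the \emph{slope} of the functional and the \emph{position} of the vertex trade places (up to sign), rather than the slopes being opposite. Adapting coordinates to $b_1$, with $\{i\}=b_1\ssm b_2$, $\{j\}=b_2\ssm b_1$, and $s$ denoting the $x_i$-coefficient of $\xi$ along $H_{b_1\cap b_2}$, one finds $\xi(H_{b_2})-\xi(H_{b_1})=-s\,x_j(H_{b_1})/M_{ij}$, while on the dual side the $y_j$-coefficient of $\xi^\gd$ along $H^\gd_{b_1^c\cap b_2^c}$ equals $-x_j(H_{b_1})$ and $y_i(H^\gd_{b_1^c})=-s$, giving $\xi^\gd(H^\gd_{b_2^c})-\xi^\gd(H^\gd_{b_1^c})=+s\,x_j(H_{b_1})/M_{ij}$. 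The two differences are exact negatives, which is the desired reversal of $\prec$; transitive closure then finishes the argument as you say. So your route closes once the positional terms are tracked; alternatively, you can avoid the bookkeeping entirely by following the paper and invoking the restriction and deletion operations together with Lemma \ref{Gale dual system}, which reduces both $\cV$ and $\cV^\gd$ at once to the trivial case of two points on a line.
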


\begin{proof}
It will be enough to show that the generating relations $\prec$ are 
reversed.  So take bases $b_1$, $b_2$ with $|b_1 \cap b_2| = |b_1| - 1$.
Then there exist $i_1\in b_1$
and $i_2 \in b_2$ such that $b_2 = b_1\cup\{i_2\}\ssm\{i_1\}$.
We need to show that $\xi(H_{b_1}) < \xi(H_{b_2})$ holds
if and only if $\xi^\gd(H^\gd_{b^c_1}) > \xi^\gd(H^\gd_{b^c_2})$.
This reduces to the (easy) case where $|I|= 2$ by
replacing $\cV$ with the polarized
arrangement $\cV^{b_1\cap b_2}_{(b_1\cup b_2)^c}$ 
obtained by restricting 
to the one-dimensional flat spanned by $H_{b_1}$ and
$H_{b_2}$ and then deleting all hyperplanes but $H_{i_1}$ and $H_{i_2}$.
\end{proof}

For $b\in \Bas$, define
$$\cB_b = \{\a\in\{\pm 1\}^I\mid \a(i) = \mu(b)(i)\,\,\text{for all $i\in b$}\}.$$
Note that $\cB_b\subset\cB$, and
$\cB_b$ depends only on $V$ and $\xi$.  Geometrically, the feasible sign vectors in $\cB_b$ are those such that $\Delta_\a$ lies in the ``negative cone'' defined by $\xi$ with vertex $H_b$.  
Dually, we define 
$$\cF_b= \{\a\in\{\pm 1\}^I\mid \a(i)=\mu(b)(i)\,\,\text{for all $i\notin b$}\}=\cB^\gd_{b^c}$$
to be the set of sign vectors such that $H_b\in\Delta_\a$.  In particular, $\cF_b\subset\cF$.
We will need the following lemma in Section \ref{sec:repr-categ}.

\begin{lemma}\label{order lemma}
If $\mu(a)\in\cB_b$, then $a\leq b$.
\end{lemma}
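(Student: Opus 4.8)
The plan is to unwind the definitions and translate the statement $\mu(a)\in\cB_b$ into a geometric statement about chambers, then run an induction that decreases the symmetric difference $|a\bigtriangleup b|$ (equivalently, decreases the number of coordinates in which the sign vectors $\mu(a)$ and $\mu(b)$ differ). First I would recall what the hypotheses say: writing $\alpha=\mu(a)$ and $\gamma=\mu(b)$, the condition $\alpha\in\cB_b$ means $\alpha(i)=\gamma(i)$ for all $i\in b$, so $\alpha$ and $\gamma$ can only differ in coordinates indexed by $b^c$. Since $\mu$ is a bijection $\Bas\to\cP$, both $\alpha$ and $\gamma$ are bounded feasible; moreover $\gamma=\mu(b)$ is characterized by the fact that $\xi$ attains its maximum on $\Delta_\gamma$ at the vertex $H_b$. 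The goal $a\le b$ is a statement about the partial order on $\Bas\cong\cP$, which is the transitive closure of the covering relation $\prec$, so it suffices to produce a chain $a=a_0\prec a_1\prec\cdots\prec a_k=b$ (or more generally $a_0\le a_1\le\cdots$) climbing from $\mu^{-1}(\alpha)$ up to $b$.

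The main step is the inductive one. Suppose $\alpha\neq\gamma$, so they differ in some coordinate $i\in b^c$; I would like to ``flip'' $\alpha$ across the hyperplane $H_i$ to a new sign vector $\alpha'=\alpha^i$ that is still bounded feasible, that agrees with $\gamma$ on a strictly larger set, and that satisfies $\mu^{-1}(\alpha)\le\mu^{-1}(\alpha')$. The natural choice of $i$ is one where the chamber $\Delta_\alpha$ can be enlarged toward $H_b$: concretely, since $H_b\in\Delta_\gamma$ and $\Delta_\gamma$ lies on the side of $H_i$ dictated by $\gamma(i)$, the vertex $H_b$ does not lie in $\Delta_\alpha$, so there is a wall of $\Delta_\alpha$ (some hyperplane $H_i$ with $i\in b^c$, $\alpha(i)\neq\gamma(i)$) separating the $\xi$-maximal vertex $H_a$ of $\Delta_\alpha$ from $H_b$, and crossing that wall moves in the direction of increasing $\xi$. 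Passing to the adjacent chamber $\Delta_{\alpha'}$, the $\xi$-maximum $H_{a'}=H_{\mu^{-1}(\alpha')}$ satisfies $\xi(H_a)<\xi(H_{a'})$ because $H_a$ is now an interior or boundary point of $\Delta_{\alpha'}$ and is not its maximum; checking that $H_a$ and $H_{a'}$ lie on a common one-dimensional flat (so that $\mu^{-1}(\alpha)\prec\mu^{-1}(\alpha')$ in the covering relation, not merely $\le$) is the delicate combinatorial point. To keep the argument clean I would restrict to the flat $H_{b\cap b'}$ and delete the irrelevant hyperplanes, reducing to a low-dimensional configuration exactly as in the proof of Lemma \ref{op}; alternatively one can simply record the weaker conclusion $\mu^{-1}(\alpha)\le\mu^{-1}(\alpha')$, which is all that is needed. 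One also has to confirm $\alpha'\in\cB_b$ still, i.e.\ that the flipped coordinate $i$ really was in $b^c$ and that no coordinate of $b$ was disturbed — this is immediate from the choice of $i$.

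I expect the main obstacle to be verifying that the wall-crossing always strictly increases $\xi$ at the maximizing vertex, and in particular ruling out the degenerate possibility that the chosen wall $H_i$ passes through $H_b$ in a way that stalls the induction; this is really a linear-programming simplex-method argument, and the cleanest route is to invoke boundedness ($\gamma\in\cB$, hence $\alpha'\in\cB$ after the flip, using an argument like that in Lemma \ref{duality lemma} to see the flip stays in $\cF$ as well) together with the fact that $H_b$ is the unique global $\xi$-maximum of $\Delta_\gamma\supset\Delta_{\alpha'}\supset\cdots$. The induction terminates because $|{\{i : \alpha(i)\neq\gamma(i)\}}|$ strictly decreases at each step and is bounded by $|b^c|$, and when it reaches $0$ we have $\alpha=\gamma$, i.e.\ $\mu^{-1}(\alpha)=b$; transitivity of $\le$ then gives $a\le b$.
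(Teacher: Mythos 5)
There is a genuine gap, and it sits exactly where you flagged it: the inductive step. You never prove $\mu^{-1}(\alpha)\le\mu^{-1}(\alpha^i)$ for the wall you cross, and the justification you offer is false in general: the vertex $H_a$ lies on the crossed wall $H_i$ only when $i\in a$, whereas your selection rule (a wall with $i\in b^c$, $\alpha(i)\ne\gamma(i)$, \emph{separating} $H_a$ from $H_b$) typically forces $i\notin a$; then $H_a$ is not ``an interior or boundary point of $\Delta_{\alpha'}$'' at all, and there is no reason for the $\xi$-maximum to increase, let alone for $a$ and $a'=\mu^{-1}(\alpha^i)$ to be comparable. Your two fallbacks do not close this. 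The reduction ``restrict to a flat and delete, as in Lemma \ref{op}'' is unavailable because $a$ and $a'$ need not share $\dim V-1$ elements (so there is no common one-dimensional flat to restrict to), and the relation $\le$ is a transitive closure through the whole arrangement, which deletion does not preserve; and ``record the weaker conclusion $\mu^{-1}(\alpha)\le\mu^{-1}(\alpha')$'' is precisely the unproven assertion.

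In fact your recipe can fail outright. In the plane $V_\eta$ with coordinates $(x,y)$ take the simple arrangement $H_1=\{y=-x\}$, $H_2=\{y=x\}$, $H_3=\{x=1\}$, $H_4=\{y=-5\}$ and $\xi=y+\epsilon x$ with $0<\epsilon\ll 1$. Then $b=\{1,2\}$, $H_b=(0,0)$, and the negative cone is $C=\{y\le -|x|\}$. Let $\Delta_\alpha=C\cap\{x\ge 1\}\cap\{y\le -5\}$; then $\alpha\in\cB_b$, its $\xi$-maximum is $H_a=H_1\cap H_4=(5,-5)$, so $a=\{1,4\}$, and $\alpha$ differs from $\gamma=\mu(b)$ exactly in coordinates $3$ and $4$. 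The only wall of $\Delta_\alpha$ separating $H_a$ from $H_b$ is $H_3$ (note $H_a\in H_4$), so your rule crosses $H_3$; but the $\xi$-maximum of $\Delta_{\alpha^3}$ is $H_3\cap H_4=(1,-5)$, giving $\mu^{-1}(\alpha^3)=\{3,4\}$ with $\xi(1,-5)=-5+\epsilon<-5+5\epsilon=\xi(H_a)$. Since every relation in the partial order strictly increases the $\xi$-value of the corresponding point, $\{1,4\}\not\le\{3,4\}$, so the induction stalls even though the number of disagreements with $\gamma$ dropped. (The correct first move here is along the flat $H_1$ through $H_a$, giving $\{1,4\}\prec\{1,2\}$ directly.) This is exactly why the paper argues differently: it inducts on the dimension of the smallest face $C'$ of the negative cone containing $H_a$ and follows a one-dimensional flat through $H_a$ inside $C'$ in the $\xi$-increasing direction until it exits at some $H_c$; both points lie on that flat by construction, so $a\prec c$ is automatic and $H_c$ lands on a smaller face. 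Walking through chambers, as you propose, loses precisely that automatic comparability, and repairing it essentially forces you back to the vertex-and-flat argument.
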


\begin{proof}
Let $C$ be the negative cone of $\mu(b)$.
Then $\mu(a)\in\cB_b$ means that $H_a\in C$.
Let $C'$ be the smallest face of $C$ on which $H_a$ lives.
We will prove the lemma by induction on $d = \dim C'$.  If $d = 0$, then 
$a = b$ and we are done.  Otherwise, there is a one-dimensional
flat which is contained in $C'$ and passes through $H_{a}$.  
Following it in the $\xi$-positive direction, it must leave
$C'$, since $C'$ is $\xi$-bounded.  The point where it 
exits will be $H_{c}$ for some basis $c$, and $a < c$
by construction.  But $H_c$ lies on a face of $C$ of smaller 
dimension, so the inductive hypothesis gives $c \le b$.
\end{proof}

\begin{example}\label{partial order example}
We continue with Examples \ref{bounded and feasible example} 
and \ref{Gale example}.
Figure \ref{partial order figure} gives Hasse 
diagrams for the partial orders on 
$\Bas$ and $\Bas^\gd$.  


\begin{figure}[h]
 \includegraphics[totalheight=1.8in]{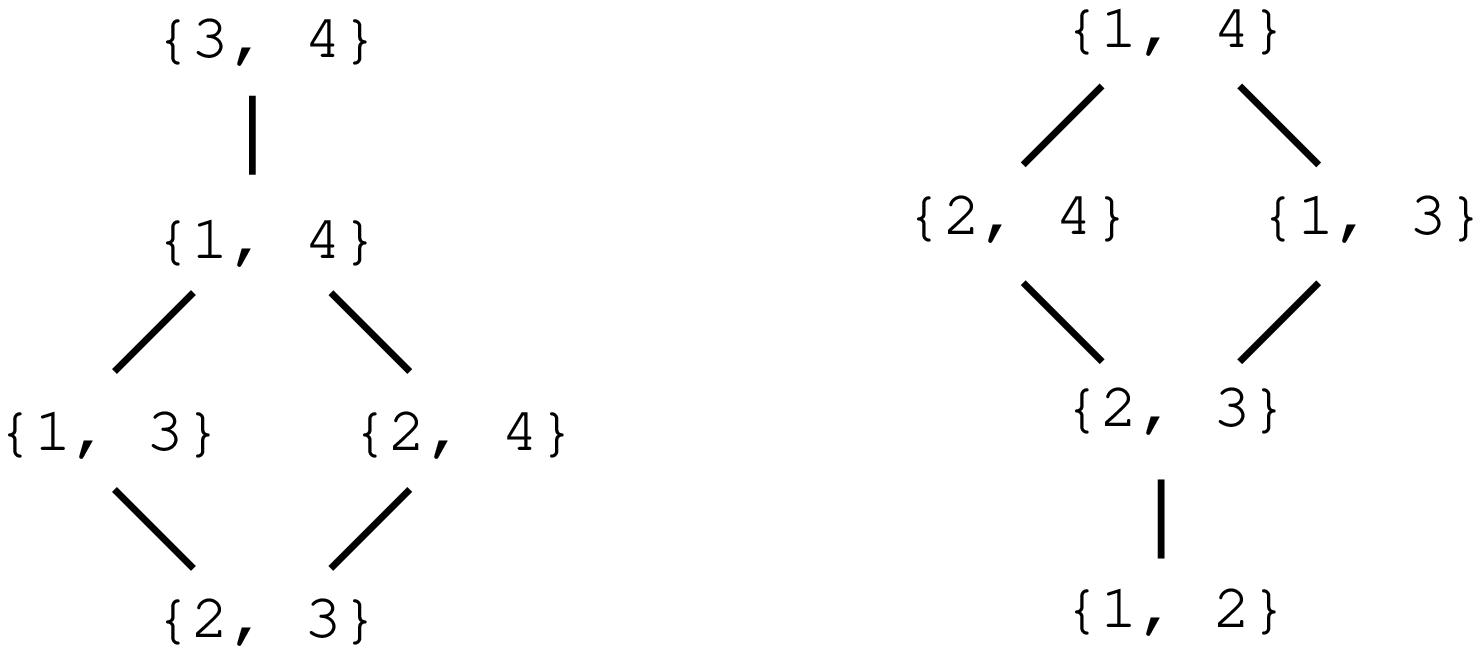}
\caption{Partial order on $\Bas$ and $\Bas^\gd$}\label{partial order figure}
\end{figure}

\end{example}

\section{The algebra $A$}\label{sec:quiver}
\subsection{Definition of \boldmath{$A$}}
Fix a polarized arrangement $\cV$, and 
consider the quiver $Q = Q(\cV)$ with vertex set $\cF$
and arrows $\{(\a,\b)\mid \a\lra\b\}$.
Note in particular that there is an arrow 
from $\alpha$ to $\beta$ if and only if there is an arrow from $\beta$ to $\alpha$.
Let $P(Q)$ be the algebra of real linear combinations of paths in the quiver $Q$,
generated by pairwise orthogonal idempotents $\{e_\alpha\mid\a\in\cF\}$
along with edge paths $\{p(\alpha,\beta)\mid\a\lra\b\}$.  
We use the following notation: if 
$\alpha_1 \lra \alpha_2 \lra \dots \lra \alpha_k$
is a path in the quiver, then we write\footnote{Note that with this convention,
	a representation of $Q$ is a {\em right} module over $P(Q)$.}
\[
	p(\alpha_1,\alpha_2,\dots,\alpha_k) := 
	p({\alpha_1,\alpha_2})\cdot p({\alpha_2, \alpha_3}) \cdot\ldots\cdot
 	p({\alpha_{k-1}, \alpha_k}).
\]
Let 
$t_i\in V^*$ be the restriction to $V$ of the $i^\text{th}$ coordinate function on $\R^I$.

\begin{definition}\label{defining A}
We define $A = A(\cV)$ to be the quotient of $P(Q)\otimes_\R \Sym V^*$ 
by the two-sided ideal generated by the following relations:
\begin{enumerate}
\item[\Aone:] If $\alpha\in \cF\ssm\cP$, then $e_\alpha=0$.\\
\item[\Atwo:] If four {\em distinct} elements $\alpha,\beta,\gamma,\delta \in \cF$
satisfy $\alpha \lra \beta \lra \gamma \lra \delta \lra \alpha$,
then 
\[p(\alpha, \beta, \gamma) = p(\alpha,\delta,\gamma).\]
\item[\Athree:] If $\a, \b \in \cF$ and $\a\lrao i\b$,
 then
$$p(\alpha,\beta,\alpha) = t_ie_\alpha.$$
\end{enumerate}
We put a grading on this algebra by letting $\deg(e_\a) = 0$ for all $\a\in\cF$, 
$\deg(p(\a,\b)) = 1$ for all
$\a \lra \b$, and $\deg(t_i) = 2$ for all $i\in I$.
\end{definition}

Let $Q_\cP$ be the subquiver of $Q$ consisting of the vertices 
that lie in $\cP$ and all arrows between them.
The following lemma tells us that $A$ is {\bf quadratic},
meaning that $A$ is generated over $A_0$ by $A_1$, and that the
only nontrivial relations are in degree 2.

\begin{lemma}\label{smaller quiver}
 The natural map $P(Q_\cP) \to A$ is surjective, and the kernel is 
generated in degree two.
\end{lemma}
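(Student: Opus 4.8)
The plan is to prove the two assertions — surjectivity of $P(Q_\cP)\to A$ and the fact that its kernel is generated in degree two — separately, using the defining relations \Aone, \Atwo, \Athree to ``absorb'' all generators and relations of $A$ back onto the subquiver $Q_\cP$.

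\medskip

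\textbf{Surjectivity.} The algebra $A$ is generated over $\Sym V^*$ by the idempotents $e_\a$ ($\a\in\cF$) and the edge paths $p(\a,\b)$ ($\a\lra\b$). First I would dispose of the idempotents and edge paths that do not live in $Q_\cP$. By \Aone, $e_\a = 0$ whenever $\a\in\cF\ssm\cP$, so the only surviving idempotents are the $e_\a$ with $\a\in\cP$. Next, if $\a\lra\b$ with, say, $\b\notin\cP$, then $p(\a,\b) = p(\a,\b)e_\b = 0$ since $e_\b = 0$; similarly $p(\b,\a) = 0$. Hence every edge path involving a vertex outside $\cP$ already vanishes in $A$, and the image of $P(Q)$ in $A$ coincides with the image of $P(Q_\cP)$. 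Finally I must check that the $t_i\in V^*$ lie in the image of $P(Q_\cP)$: this is exactly relation \Athree, $t_i e_\a = p(\a,\b,\a)$ for any $\a\lrao i \b$ with $\a\in\cF$ — but I need $\a,\b$ to \emph{both} be in $\cP$ for the right-hand side to be a path in $Q_\cP$. Here is where I will invoke the combinatorics of Section~\ref{sec:lp}: given $\a\in\cP$ and an index $i$, I need a bounded-feasible neighbor of $\a$ across $H_i$, or failing that, some path in $Q_\cP$ starting and ending at $\a$ whose value is $t_i e_\a$. If $\a^i\in\cF$ then $\a^i\in\cP$ as well (a neighbor of a bounded sign vector obtained by flipping one coordinate is bounded, since its recession cone is a face of $\Sigma_\a$ or vice versa), and we are done; if $\a^i\notin\cF$, Lemma~\ref{duality lemma} says $\a^i\in\cF^\gd$, and I would argue that then $t_i$ can be expressed via a different relation — this is the delicate point, discussed below. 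In any case, since $\sum_{\a\in\cP}e_\a$ acts as the identity on the image of $A$ (because $e_\a=0$ for $\a\notin\cP$), every $t_i = t_i\cdot\bigl(\sum_{\a\in\cP}e_\a\bigr)$ is a sum of elements $t_i e_\a$, each of which lies in the image of $P(Q_\cP)$ once we have handled it $\a$ by $\a$.

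\medskip

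\textbf{The kernel is generated in degree two.} Let $J$ be the kernel of $P(Q_\cP)\twoheadrightarrow A$. The relations \Atwo\ and \Athree\ are already quadratic (degree $2$), and \Aone\ does not contribute to $J$ since it only kills generators outside $Q_\cP$. So the claim amounts to showing that $A$, as a quotient of the \emph{tensor algebra on $Q_\cP$ modulo the commutation relations making $\Sym V^*$ central}, is cut out by \Atwo\ and \Athree\ together with these commutation relations — equivalently, that the relations expressing ``$t_i$ is central'' and ``$t_i$ can be moved past any edge path'' are consequences, in $P(Q_\cP)$, of the quadratic relations \Atwo, \Athree. Concretely, once $t_i$ is rewritten inside $P(Q_\cP)$ as a degree-$2$ element $p(\a,\b,\a)$, the relation $t_i\,p(\a,\ga)=p(\a,\ga)\,t_i$ (for $\a\lrao i\b$ and $\a\lra\ga$) becomes a claim about degree-$3$ paths, and I would derive it from the four-cycle relation \Atwo\ applied to the square $\a\lra\b\lra\b'\lra\ga\lra\a$ (where $\b' = \ga^i$), reducing a longer walk to a shorter one. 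I would then argue by induction on path length that any element of $J$ reduces, modulo the two-sided ideal generated by \Atwo\ and \Athree\ in $P(Q_\cP)$, to a ``normal form'' — e.g.\ a linear combination of $p(\a,\b)\cdot f$ with $f\in\Sym V^*$ acting on the right — and that two such normal forms are equal in $A$ only if they are already equal in $P(Q_\cP)/(\text{\Atwo},\text{\Athree})$.

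\medskip

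\textbf{Main obstacle.} The hard part is the expression of $t_i e_\a$ inside $P(Q_\cP)$ when $\a$ has no bounded-feasible neighbor across $H_i$, i.e.\ when $\a^i\notin\cF$. Relation \Athree\ only directly gives $t_i e_\a$ as a path through $\a^i$, which is \emph{not} a vertex of $Q_\cP$ (indeed not even of $Q$). I expect the resolution to use Lemma~\ref{order lemma} and the partial-order structure on $\cP\cong\Bas$: one should be able to find a basis $b$ with $\a=\mu(b)$, observe that $i\in b$ or $i\notin b$, and in the problematic case trade the missing neighbor for a longer path inside the ``negative cone'' $\cB_b$ using the four-cycle relations to build up $t_i e_\a$ from paths that stay in $\cP$ — essentially, walk around the infeasible region. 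Making this precise, and checking no further relations are needed, is the crux; everything else is bookkeeping with \Aone–\Athree. I would also double-check the edge case of loops $i\in I$ (where $H_i=\emptyset$), for which $t_i$ may need separate treatment or may simply be expressible because $\a^i=\a$ is impossible and some other index does the job.
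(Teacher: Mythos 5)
There is a genuine gap, and it sits exactly where you flagged it: expressing the $\Sym V^*$-action at a vertex $\a\in\cP$ by paths in $Q_\cP$. Your proposed resolution---find a bounded feasible neighbor across $H_i$, or else ``walk around the infeasible region'' using the partial order and Lemma \ref{order lemma}---is not how this works, and one of its ingredients is false: for $\a\in\cP$ and $\a^i\in\cF$ it does \emph{not} follow that $\a^i\in\cP$. A bounded feasible chamber can be adjacent across a hyperplane to a feasible chamber that is unbounded (in Figure \ref{first figure} the shaded chambers have unshaded feasible neighbors); this is precisely why, in the presentation over $Q_\cP$, the relation \Atwo\ must be read with right-hand side $0$ when the fourth vertex fails to be bounded. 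So even in the case $\a^i\in\cF$ your argument breaks, and in the case $\a^i\notin\cF$ you leave the crux unresolved; Lemma \ref{order lemma} and Lemma \ref{duality lemma} play no role in the paper's proof of this lemma.

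The missing idea is much simpler: you never need to realize an individual $t_ie_\a$ by a path through $\a^i$. For $\a\in\cF$ put $I_\a=\{j\in I\mid \Delta_\a\cap H_j\neq\emptyset\}=\{j\in I\mid \a^j\in\cF\}$. The chamber $\Delta_\a$ is a full-dimensional polyhedron containing no line (a line direction $v\in V$ would satisfy $v_j=0$ for all $j\in I$, hence $v=0$ since $V\hookrightarrow\R^I$), so the normals to its codimension-one faces, which occur among the $t_j$ with $j\in I_\a$, span $V^*$. Hence for \emph{any} $i$---including loops, where $t_i=0$, and indices with $\a^i\notin\cF$ or $\a^i\in\cF\ssm\cP$---one writes $t_i=\sum_{j\in I_\a}c_jt_j$ in $V^*$ and gets $t_ie_\a=\sum_{j\in I_\a}c_j\,p(\a,\a^j,\a)$ by \Athree, the terms with $\a^j\in\cF\ssm\cP$ vanishing by \Aone. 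This yields surjectivity of $P(Q_\cP)\to A$ with no case analysis, and it is also what makes the second half work: once all generators lie in degrees $0$ and $1$, the kernel is generated by the relations \Atwo\ (with right side $0$ when the fourth vertex is unbounded) together with the linear relations among the loops $p(\a,\a^j,\a)$, $j\in I_\a$, induced by the relations among the covectors $t_j$ in $V^*$ (i.e.\ by $V^\perp$)---all quadratic. This replaces the normal-form induction on path length that your sketch does not carry out; the ``centrality of $t_i$'' identities you propose to verify are consequences of these quadratic relations rather than additional ones.
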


\begin{proof}
For any $\a \in \cF$, define the set
\[I_\a := \{i\in I\mid \Delta_\a\cap H_i\neq\emptyset\} = \{i \in I\mid \a^i \in \cF\};\]
it indexes the codimension one faces of $\Delta_\a$.
Then the $t_i$ for $i\in I_\a$ are normal vectors to those
faces, and therefore span $V^*$.  Thus for any
$w \in V^*$, the corresponding element $w = \sum_{\alpha \in \cF} we_\a$
of $A$ can be written in terms of paths $p(\a, \b, \a)$, 
using the relation \Athree.  So 
$A$ is a quotient of the full path algebra $P(Q)$, and 
then by the relation \Aone\  
it is also quotient of $P(Q_\cP)$.  The 
relations for this presentation are generated 
by those of \Atwo\ (where the right side is understood to be $0$ if $\delta \notin \cP$)
along with linear relations among the various $p(\a, \b, \a)$
coming from the relations among the covectors $t_i \in V^*$. 
\end{proof}

\begin{remark}
The upshot of Lemma \ref{smaller quiver} is that we could have given a presentation
of $A$ that was more efficient than the one given in Definition \ref{defining A},
in the sense that it would have used fewer generators, all in degrees 0 and 1.
The trade-off would have been that the relations A2 and A3 would each have needed
to be split into cases, depending on whether or not $\b$ is bounded.
Furthermore, the map from $\Sym V^*$ to $A$ would have been less apparent
in this picture. 
\end{remark}

\begin{remark}\label{D-mod}
  In a subsequent paper \cite{BLPW2}
  we will show that, when $\cV$ is rational, the category of right $A$-modules is equivalent to
  a category of modules over a quantization of the structure sheaf of
  the hypertoric variety $\M_\cH$.
  In the special case where $\M_\cH \cong T^*X$ is the cotangent bundle of a projective variety $X$,
  these are just $\mathcal D$-modules on $X$ (microlocalized to $T^*X$) whose characteristic
  varieties are contained in the conormal variety to a certain
  stratification of the base.  More generally,
  the sheaves will be supported on the relative core of $\M_\cH$ (see Section \ref{toric}), a lagrangian subvariety
  of $\M_\cH$ defined by $\xi$.
\end{remark}

\begin{remark}\label{deligne}
The quiver $Q$ has appeared in the literature before; it is the Cayley graph
of the {\bf Deligne groupoid} of $\cH$ \cite{Par00}. 
Indeed, let $\tilde A(\cV)$ be the quotient of $P(Q)\otimes_\R\Sym(\R^I)^*$
by the relations \Aone, \Atwo, and $\tilde{\rm A}3$, where $\tilde{\rm A}3$ is obtained from \Athree\
by replacing $t_i\in V^*$ with the $i^\text{th}$ coordinate function on $\R^I$.
Let the {\bf Deligne semi-groupoid}
be the groupoid generated by paths in $Q$.  Then $\tilde A(\cV)$ is the quotient by
$\{e_\a\mid \a\in\cF\smallsetminus\cP\}$ of the semi-groupoid algebra of the Deligne
semi-groupoid, and $$ A\cong \tilde A\otimes_{\Sym(\R^I)^*}\Sym V^*
\cong A\otimes_{\Sym V^\perp}\R.$$
\end{remark}

\subsection{Taut paths}
We next establish a series of results that allow us to understand the elements
of $A$ more explicitly.  Though we do not need these results for the remainder
of Section \ref{sec:quiver}, they will be used in Sections \ref{sec:convolution} and 
\ref{sec:repr-categ}.

\begin{definition}\label{recursion statistic}
For a sequence $\alpha_1,\dots,\alpha_k$ of elements of $\{\pm 1\}^I$ and
an index $i \in I$, define
\[\theta_i(\alpha_1,\dots,\alpha_k) = \big | \{1 < j < k \mid \alpha_j(i) \ne \alpha_{j+1}(i) = \alpha_1(i)\}\big |.\]
This counts the number of times the sequence crosses the $i^\text{th}$ hyperplane
and returns to the original side.
\end{definition}

\begin{definition}
We say that a path $\a_1 \lra \a_2 \lra \dots \lra \a_k$ in $Q$ is 
{\bf taut} if it has minimal length among all paths from $\a_1$ to 
$\a_k$.  This is equivalent to saying that
the sign vectors $\a_1$ and $\a_k$ differ in exactly $k-1$ entries.
\end{definition}

\begin{proposition}\label{standard form}
 Let $\a_1 \lra \a_2 \lra \dots \lra \a_k$ be a path in $Q$. 
Then there is a taut path
\[
	\a_1 = \b_1 \lra \b_2 \lra \dots \lra \b_d = \a_k
\]
such that
\[
	p(\a_1, \dots, \a_k) = p(\b_1, \dots, \b_d)\cdot \prod_{i\in I} t_i^{\theta_i}
\]
in the algebra $A$, where $\theta_i = \theta_i(\a_1, \a_2,\dots, \a_k)$.
\end{proposition}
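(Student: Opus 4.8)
\textbf{Proof plan for Proposition \ref{standard form}.}
The plan is to argue by induction on the length $k-1$ of the path, using the relations \Atwo\ and \Athree\ to shorten any non-taut path by one step while keeping track of the statistic $\theta_i$. The base case is immediate: a path of length $0$ or $1$ is already taut and all $\theta_i$ vanish. For the inductive step, suppose the path $\a_1 \lra \dots \lra \a_k$ is not taut, so $\a_1$ and $\a_k$ differ in fewer than $k-1$ coordinates. Then somewhere along the path there is a coordinate $i$ that is crossed and then crossed back; more precisely, there is an index $j$ with $1 \le j < k$ and an index $i$ such that $\a_j(i) \ne \a_{j+1}(i)$ but the sign in coordinate $i$ is restored to $\a_j(i)$ at some later step. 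I would first locate the \emph{first} returning crossing: choose the smallest $j$ such that some coordinate changed at step $j \to j+1$ is changed back at a later step, and among the later steps choose the first one, say $\ell \to \ell+1$ with $\ell \ge j+1$, that restores coordinate $i$.

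The heart of the argument is a local move. Between steps $j$ and $\ell+1$ the path leaves the hyperplane $H_i$-side $\a_j(i)$ and returns to it; I want to ``push'' the excursion across $H_i$ down to a single back-and-forth $\a \lra \a^i \lra \a$, which by \Athree\ contributes a factor $t_i$, thereby removing two steps and adding one to $\theta_i$. Concretely, I would show that one can commute the crossing of coordinate $i$ past each intermediate crossing using \Atwo: if $\a \lrao i \b \lrao{i'} \ga$ with $i' \ne i$, then $\a, \b, \ga$ and the fourth vertex $\de = \a^{i'}$ (which differs from $\ga$ in coordinate $i$) form a square $\a \lra \b \lra \ga \lra \de \lra \a$, and one must check $\de \in \cF$ — this is where I expect the main obstacle. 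The subtlety is that \Atwo\ only applies when all four vertices are feasible, and a priori the ``shifted'' vertex $\de$ need not lie in $\cF$. I would handle this by arguing that along a path in $Q$, whenever $\a, \b, \ga \in \cF$ with $\a \lrao i \b \lrao{i'} \ga$ and $i \ne i'$, the fourth corner $\de$ is automatically feasible: geometrically, $\Delta_\de$ is the chamber across $H_i$ from $\Delta_\ga$ and across $H_{i'}$ from $\Delta_\a$, and one can see it is nonempty because the two hyperplanes $H_i, H_{i'}$ bound a common face (using simplicity of $\cH$, condition (a)). This feasibility lemma is the one genuinely nontrivial input; once it is in hand, the commutations are formal.

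With the local move established, the inductive step runs as follows: apply \Atwo\ repeatedly to slide the crossing of coordinate $i$ at step $j$ rightward until it is immediately followed by the return crossing at step $\ell$, producing a consecutive pair $\cdots \lra \a \lrao i \a^i \lrao i \a \lra \cdots$; then apply \Athree\ to replace $p(\dots,\a,\a^i,\a,\dots)$ by $t_i \cdot p(\dots,\a,\a,\dots)$ with the two redundant steps deleted. Here I must check that sliding the crossing and then deleting it changes $\theta_{i'}$ for $i' \ne i$ not at all, and increases $\theta_i$ by exactly $1$ — this is a bookkeeping check on Definition \ref{recursion statistic}, using that $\theta_i$ counts returns to the side $\a_1(i)$, and that the excursion we removed was precisely one such return (the choice of $j$ as the first returning crossing guarantees the side being returned to is $\a_1(i)$, since no earlier crossing of $i$ survives). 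The resulting path has length $k-2$, the product of $t$'s has absorbed one new $t_i$, and by induction we may rewrite the shorter path as a taut path times $\prod t_{i'}^{\theta_{i'}'}$; combining the exponents and using that $t_i$ is central in $A$ finishes the proof. I expect the feasibility-of-the-fourth-corner step to be the only place requiring real care; everything else is induction plus relation-chasing.
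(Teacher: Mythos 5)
Your reduction strategy is reasonable in outline, but the lemma you lean on — that if $\a \lrao i \b \lrao{i'} \ga$ with $\a,\b,\ga\in\cF$ and $i\neq i'$, then the fourth corner $\de$ is automatically feasible — is false, and with it the sliding step collapses. Take $I=\{1,2\}$, $V=\operatorname{span}(1,1)\subset\R^2$, and $\eta$ chosen so that $V_\eta=\{(t,t-1)\}$; then $\cH$ is two points on a line, the chambers are $(-,-)$, $(+,-)$, $(+,+)$, and $(-,+)$ is infeasible. For the path $(+,+)\lra(+,-)\lra(-,-)\lra(+,-)\lra(+,+)$, your rule picks the crossing of $H_2$ at the first step as the ``first returning crossing,'' and the very first slide asks for the square with corners $(+,+),(+,-),(-,-),(-,+)$ — but $(-,+)\notin\cF$, so \Atwo\ does not apply (indeed $(-,+)$ is not even a vertex of $Q$), and there is no relation available to commute the two crossings. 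The geometric justification you offer (``the two hyperplanes bound a common face by simplicity'') fails because $H_i$ and $H_{i'}$ may be parallel, or may intersect far from the chambers in question; feasibility of three corners of a square never forces feasibility of the fourth.

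The statement itself is of course still true (in the example one contracts the \emph{inner} excursion first, using \Athree\ twice), which points to what a combinatorial repair would require: a careful choice of which excursion to contract (e.g.\ one minimizing $\ell-j$) together with a proof that every square actually needed in the sliding has all four corners feasible — and that feasibility claim is the real content, not a formality. The paper's proof sidesteps exactly this issue by working geometrically: it realizes the path as a piecewise-linear path in $V_\eta$ and contracts it by a generic homotopy, so that every application of \Atwo\ occurs when a segment sweeps across a \emph{nonempty} codimension-two flat $H_i\cap H_{i'}$, near which all four sign vectors are automatically feasible, and every application of \Athree\ occurs when a vertex crosses a single hyperplane. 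In other words, the homotopy produces only legal moves by construction, which is precisely the guarantee your induction is missing.
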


\begin{proof}
We can represent paths in the quiver geometrically by 
topological paths in the affine space $V_\eta$ in which $\cH$ lives.
Let $\phi\colon [0,1] \to V_\eta$ be a piecewise linear path
with the property that for any $t \in [0,1]$, the point $\phi(t)$ lies in at most 
one hyperplane $H_i$, and the endpoints $\phi(0)$ and $\phi(1)$ lie 
in no hyperplanes.  Such a path determines an element
$p(\phi) = p(\gamma_1, \dots, \gamma_r)$ in the algebra $A$, where
$\Delta_{\gamma_1}, \dots, \Delta_{\gamma_r}$ are the successive 
chambers visited by $\phi$.  We define $\theta(\phi) = \theta(\ga_1,\dots,\ga_r)$.

To represent our given element $p(\a_1,\dots, \a_k)$
as $p(\phi)$ for a path $\phi$, we choose points 
$x_1, \dots, x_k$ with $x_j \in \Delta_{\a_j}$
and let $\phi$ be the concatenation of the line segments
$\overline{x_jx_{j+1}}$.  By choosing the points $x_j$ generically 
we can assume that for every $1 < j < k$  
the line segment $\overline{x_1x_j}$
only passes through one hyperplane at a time, 
the plane containing $x_1$, $x_j$, and $x_{j+1}$ contains no point which lies 
in more than two hyperplanes, and any line through $x_{j+1}$ contained in this 
plane contains at most one point which is in two hyperplanes.

Given such points $x_i$, we construct a piecewise linear homotopy $\phi_t$ from $\phi = \phi_0$ to 
a straight-line path $\phi_1$ from $x_1$ to $x_k$, by contracting
the points $x_2, \dots, x_{k-1}$ one at a time 
along a straight line segment to $x_1$.  The 
sequence of chambers visited by the path changes only a
finite number of times, and at each step it can change in two 
possible ways.  First, when the line segment $\overline{x_jx_{j+1}}$ 
passes through the intersection of two hyperplanes, a sequence
$\a \lra \b \lra \gamma $ with $\a \ne \gamma$ 
is replaced by $\a \lra \delta \lra \gamma$ with $\delta \ne \b$.
The quiver relation \Atwo\  implies that the corresponding element in the algebra $A$
does not change.

Second, when the point $x_j$ passes through a hyperplane $H_i$, either
the sequence of chambers visited by the path is left unchanged, or
a loop $\a \lra \b \lra \a$ is replaced by 
$\a$.  In the latter case, the element of the algebra
is multiplied by $t_i$, by relation \Athree\  in the definition of the algebra $A$.
This is also the only change which affects the numbers $\theta_j(\phi_t)$; it decreases
$\theta_i$ by one and leaves the other $\theta_j$ alone.

Thus $p(\phi) = p(\phi_1) \cdot \prod_{i\in I} t_i^{\theta_i}$, and
$\phi_1$ represents a taut path, since a line segment 
cannot cross any hyperplane more than once.
\end{proof}

\begin{corollary}\label{taut}
Let
\[
	\a = \a_1 \lra \a_2 \lra \dots \lra \a_d = \b
\]
and
\[
	\a = \b_1 \lra \b_2 \lra \dots \lra \b_d = \b
\]
be two taut paths between fixed elements $\a,\b \in \cP$.
Then
\[
	p(\a_1, \dots, \a_k) = p(\b_1, \dots, \b_d).
\]
\end{corollary}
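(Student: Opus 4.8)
The plan is to deduce Corollary \ref{taut} directly from Proposition \ref{standard form}. The key observation is that for a \emph{taut} path, every statistic $\theta_i$ vanishes: since $\a_1$ and $\a_d$ differ in exactly $d-1$ entries and the path has exactly $d-1$ steps, the path crosses each of those $d-1$ hyperplanes exactly once and never returns, and it crosses no other hyperplane at all; hence $\alpha_j(i) \ne \alpha_{j+1}(i) = \alpha_1(i)$ never happens for any $i$, so $\theta_i(\a_1,\dots,\a_d) = 0$ for all $i \in I$.

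First I would apply Proposition \ref{standard form} to the taut path $\a = \a_1 \lra \dots \lra \a_d = \b$. It produces a taut path $\a = \b'_1 \lra \dots \lra \b'_{d'} = \b$ with $p(\a_1,\dots,\a_d) = p(\b'_1,\dots,\b'_{d'}) \cdot \prod_i t_i^{\theta_i}$, where the $\theta_i$ are computed from the \emph{original} path $\a_1,\dots,\a_d$. By the observation above these exponents are all zero, so the product is trivial. Moreover $d' = d$ because both paths are taut between the same endpoints, so they have the same (minimal) length. The same argument applied to the second taut path $\a = \b_1 \lra \dots \lra \b_d = \b$ gives $p(\b_1,\dots,\b_d) = p(\b''_1,\dots,\b''_d)$ for some taut path $\b''_\bullet$ produced by the proposition.

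It then remains to see that the taut path output by Proposition \ref{standard form} does not actually depend on which taut representative we start from — or, more precisely, that \emph{all} taut paths between $\a$ and $\b$ give the same element of $A$. The cleanest way is to re-examine the proof of Proposition \ref{standard form}: the homotopy constructed there contracts the intermediate points $x_2,\dots,x_{d-1}$ one at a time to $x_1$, and the final path $\phi_1$ is simply the straight line segment from $x_1 \in \Delta_\a$ to $x_d \in \Delta_\b$. Since any two generic points of $\Delta_\a$ and $\Delta_\b$ can be joined by a straight segment crossing precisely the $d-1$ separating hyperplanes once each (in the order determined by where the segment meets them), the chamber sequences of any two such segments differ only by moves of type \Atwo\ (reordering two consecutive crossings when the segment is deformed through a codimension-two flat), and the line-segment homotopy between the two endpoints realizes these moves. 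Hence $p(\phi_1)$ is independent of the choice of endpoints, and therefore all taut paths between $\a$ and $\b$ represent the same element of $A$. Applying this to both $\b'_\bullet$ and $\b''_\bullet$ (and to the original paths $\a_\bullet$, $\b_\bullet$, which are themselves taut), we conclude $p(\a_1,\dots,\a_d) = p(\b_1,\dots,\b_d)$.

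\textbf{Main obstacle.} The one genuinely non-formal point is the last step: showing that two taut paths between the same chambers differ only by a sequence of \Atwo\ moves. The combinatorial heart of this is that a taut path corresponds to a linear order on the set of $d-1$ separating hyperplanes compatible with the "betweenness" structure of the chambers, and any two such orders are connected by adjacent transpositions each of which is a legal \Atwo\ move (one never needs an \Athree\ move since no hyperplane is crossed twice). I would either prove this directly by the straight-line homotopy argument sketched above (taking care that the genericity assumptions in Proposition \ref{standard form}'s proof can be arranged for the comparison segment), or, if one prefers a purely combinatorial argument, induct on $d$: peel off a hyperplane crossed first by one of the two paths, use an \Atwo\ move to bring it to the front of the other path, and recurse on the shortened paths. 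Everything else is bookkeeping with the already-established Proposition \ref{standard form}.
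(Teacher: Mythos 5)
Your opening reduction is exactly the paper's: for a taut path every $\theta_i$ vanishes, so Proposition \ref{standard form} writes each of $p(\a_1,\dots,\a_d)$ and $p(\b_1,\dots,\b_d)$ as $p(\phi)$ for a straight-line segment $\phi$ from a generic point of $\Delta_\a$ to a generic point of $\Delta_\b$. Where you diverge is the last step, and that is where your proposal has a gap. You treat "the element $p(\phi)$ is independent of the chosen endpoints" (equivalently, that all taut paths from $\a$ to $\b$ agree in $A$) as a remaining nontrivial claim, to be proved either by a further endpoint-moving homotopy or by a combinatorial induction. But no such claim is needed: the genericity conditions imposed on the endpoints in the proof of Proposition \ref{standard form} hold on a dense open subset of $\Delta_{\a}\times\Delta_{\b}$, so when you run that proof for the two taut paths you may simply choose the \emph{same} pair of endpoints for both. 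Then $\phi_1=\phi_2$ literally, and $p(\a_1,\dots,\a_d)=p(\phi_1)=p(\phi_2)=p(\b_1,\dots,\b_d)$ with nothing left to check. This is the paper's entire proof; your version postpones the real content into an "obstacle" that the intended argument dissolves by a choice of common endpoints.

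Beyond being unnecessary, your proposed fallbacks are not fully sound as stated. The "purely combinatorial" induction (peel off the hyperplane crossed first by one path and use an \Atwo\ move to bring it to the front of the other) assumes that the adjacent transposition is always available, but relation \Atwo\ requires all four corners of the square to be \emph{feasible}, and the fourth corner $\a^j$ needed for such a swap need not lie in $\cF$; the geometric argument is precisely what guarantees that every intermediate chamber encountered is nonempty, so the combinatorial route cannot be taken for granted. Your geometric fallback (sweeping the segment as the endpoints move) is plausible and could be completed with the same genericity bookkeeping as in the proof of Proposition \ref{standard form}, but as written ("the line-segment homotopy realizes these moves") it is a sketch of a second homotopy argument rather than a proof. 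The fix is simple: drop the independence claim altogether and invoke the common choice of endpoints.
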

\begin{proof}
The proof of Proposition \ref{standard form}
shows that we can write $p(\a_1, \dots, \a_k) = p(\phi_1)$
and $p(\b_1, \dots, \b_d) = p(\phi_2)$, where
$\phi_1$ and $\phi_2$ are both straight-line paths from a 
 point of $\Delta_{\a_1}$ to a point of $\Delta_{\a_k}$.
These endpoints can be chosen arbitrarily from a
dense open subset of $\Delta_{\a_1}\times \Delta_{\a_k}$,
so we can take $\phi_1 = \phi_2$.
\end{proof}

\begin{corollary} \label{visits every region}
Consider an element $$a = p\cdot \prod_{i\in I}t_i^{d_i} \in e_\a Ae_\b,$$
where $p$ is represented by a taut path from $\a$ to $\b$ in $Q$.
Suppose that $\gamma \in \cF$ satisfies $\gamma(i) = \a(i)$
whenever $\a(i) = \b(i)$ and $d_i = 0$.
Then $a$ can be written as an $\R$-linear combination of elements represented by
paths in $Q$ all of which pass through $\ga$.

In particular, if $\gamma \in \cF\ssm \cP$, then $a = 0$.
\end{corollary}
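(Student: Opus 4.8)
The plan is to represent $a$ geometrically and use Proposition \ref{standard form} to route a representing path through the chamber $\Delta_\gamma$. First one may assume $\a,\b\in\cP$: otherwise $e_\a$ or $e_\b$ vanishes in $A$ by \Aone, so $a=0$, which we regard as the empty linear combination. Since $\Delta_\a$, $\Delta_\gamma$, $\Delta_\b$ are all nonempty, I would choose generic points $x_\a\in\Delta_\a$, $x_\gamma\in\Delta_\gamma$, $x_\b\in\Delta_\b$ and let $\sigma=(\gamma_1,\dots,\gamma_r)$ be the sequence of chambers visited by the concatenation of the straight segments $\overline{x_\a x_\gamma}$ and $\overline{x_\gamma x_\b}$. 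Then $\gamma_1=\a$, $\gamma_r=\b$, and $\gamma=\gamma_m$ for some $m$, so the associated path in $Q$ passes through $\gamma$.

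Next I would apply Proposition \ref{standard form} to $\sigma$, obtaining $p(\sigma)=p(\tau)\cdot\prod_i t_i^{\theta_i}$ in $A$ for some taut path $\tau$ from $\a$ to $\b$, where $\theta_i=\theta_i(\sigma)$; by Corollary \ref{taut} (this is where $\a,\b\in\cP$ is used) $p(\tau)=p$, so $p(\sigma)=p\cdot\prod_i t_i^{\theta_i}$. The crucial computation is that $\theta_i=1$ when $\a(i)=\b(i)\ne\gamma(i)$ and $\theta_i=0$ otherwise. Indeed each of the two straight segments crosses $H_i$ at most once; a crossing along the first segment, if any, moves from the $\a(i)$-side to the $\gamma(i)$-side, and a crossing along the second moves from the $\gamma(i)$-side to the $\b(i)$-side, so in view of Definition \ref{recursion statistic} the only crossing that returns to the $\a(i)$-side occurs along the second segment and exactly when $\a(i)=\b(i)\ne\gamma(i)$. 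The hypothesis on $\gamma$ says precisely that $\a(i)=\b(i)\ne\gamma(i)$ forces $d_i\ge 1$, hence $\theta_i\le d_i$ for all $i$. Since $\Sym V^*$ is commutative, this yields
\[
	a=p\cdot\prod_i t_i^{d_i}=p(\sigma)\cdot\prod_i t_i^{\,d_i-\theta_i}
\]
in $A$, with all exponents $d_i-\theta_i\ge 0$. The ``in particular'' is then immediate: if $\gamma\in\cF\ssm\cP$ then $e_\gamma=0$ by \Aone, and since $p(\sigma)$ factors through $e_{\gamma_m}=e_\gamma$ it vanishes, so $a=0$.

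To finish the general statement I would absorb the residual monomial into loops at $\b$. The element $\prod_i t_i^{\,d_i-\theta_i}e_\b$ of $e_\b A e_\b$ is an $\R$-linear combination of products of the loops $p(\b,\b^k,\b)$ with $k\in I_\b$: expand each $t_i$ in terms of $\{t_k\mid k\in I_\b\}$, which span $V^*$ as shown in the proof of Lemma \ref{smaller quiver}, and use \Athree\ in the form $t_k e_\b=p(\b,\b^k,\b)$. Appending such a product of loops at $\b$ to the path $\sigma$ produces a path in $Q$ that still passes through $\gamma$, and $a$ becomes the corresponding $\R$-linear combination of these path elements. The only step that really requires care is the evaluation of the $\theta_i$ and the verification that they are dominated by the $d_i$; everything else is bookkeeping with Proposition \ref{standard form}, Corollary \ref{taut}, and the defining relations of $A$.
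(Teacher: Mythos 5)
Your proof is correct and follows essentially the same route as the paper: both compose taut paths from $\a$ to $\ga$ and from $\ga$ to $\b$ (you realize this by two straight segments and compute $\theta_i$ directly), identify the result with $p\cdot\prod_i t_i^{d'_i}$ via Proposition \ref{standard form} and Corollary \ref{taut}, observe $d'_i\le d_i$ from the hypothesis on $\ga$, and absorb the leftover $t_i$'s into loops at $\b$ using \Athree\ and the spanning argument of Lemma \ref{smaller quiver}. The extra bookkeeping you include (reducing to $\a,\b\in\cP$, verifying the values of $\theta_i$) is consistent with the paper's argument.
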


\begin{proof}
Applying Proposition \ref{standard form} and Corollary \ref{taut}
we see that the composition of taut 
paths from $\a$ to $\ga$ and from $\ga$ to $\b$ is equal in $A$
to $p \cdot \prod_{i\in I} t_i^{d'_i}$, where $d'_i$ is $1$
if $\a(i) = \b(i) \ne \ga(i)$ and is $0$ otherwise.  
The result follows since $d_i \ge d'_i$ for all $i$ 
and for any $i$ with $d_i > d'_i$ we
can express $e_\b t_i$ as a combination of paths
$p(\b,\b',\b)$, as in Lemma \ref{smaller quiver}.
\end{proof}

\subsection{Quadratic duality}\label{qd}
We conclude this section by establishing the first part of Theorem (A), 
namely that the algebras $A(\cV)$ and $A(\cV^\gd)$
are quadratic duals of each other.  First we review the definition 
of quadratic duality.  See \cite{PolPos} for more about quadratic algebras.

Let $R = \R\{e_{\a}\mid \a\in \cI\}$ be a ring spanned by finitely many
pairwise orthogonal idempotents, and let $M$ be an $R$-bimodule.  Let
$T_R(M)$ be the tensor algebra of $M$ over $R$, and let $W$ be a
sub-bimodule of $M\otimes_R M = T_R(M)_2$.  For shorthand, we will write $M_{\a\b} = e_\a M e_\b$
and $W_{\a\b} = e_\a W e_\b$.
To this data is associated a quadratic algebra
$$E = T_R(M)\,\Big/\, T_R(M)\cdot W\cdot T_R(M).$$

The {\bf quadratic dual} $E^\qd$ of $E$ is defined as the quotient
$$E^\qd = T_R(M^*)\,\Big/\, T_R(M^*)\cdot W^\perp\cdot T_R(M^*),$$
where $M^*$ is the vector space dual of $M$, and
$$W^\perp \subset M^*\otimes_R M^* \cong \big(M\otimes_R M\big)^*$$
is the space of elements that vanish on $W$.  
Note that dualizing $M$ interchanges the left and right $R$-actions,
so that $(M_{\a\b})^* = (M^*)_{\b\a}$.
It is clear from this definition that 
there is a natural isomorphism $E^{\qd\qd}\cong E$.

As we saw in Lemma \ref{smaller quiver}, the algebra $A(\cV)$
is quadratic, where we take $\cI = \cP$  and 
$$M = \R\{\, p(\a,\b)\mid \a,\b\in\cP\,\,\text{such that}\,\,\a\lra\b\}.$$
The relations of type \Atwo\ from Definition \ref{defining A} lie in $W_{\a\gamma}$,
while relations of type \Athree\ lie in $W_{\a\a}$.  Since $\cP^\gd = \cP$,
the algebra $A(\cV^\gd)$ for the Gale dual polarized arrangement 
has the same base ring $R$.
The degree one generating sets are also canonically isomorphic, since
the adjacency relations on $\cP$ and $\cP^\gd$ are the same.  However,
in order to keep track of which algebra is which, we will denote the 
generators of $A(\cV^\gd)$ by $p^\gd(\a,\b)$ and their span by $M^\gd$.

We want to show that $A(\cV)$ and $A(\cV^\gd)$ are quadratic dual rings, so
we must define a perfect pairing $$M\otimes M^\gd\to\R$$ to identify $M^\gd$ with $M^*$.
An obvious way to do this would be to make $\{p^\gd(\a,\b)\}$ the dual
basis to $\{p(\a,\b)\}$,
but this does not quite give us what we need.  Instead, we need to twist this pairing
by a sign. Choose a subset $X$ of edges in the underlying undirected graph of $Q$
with the property that 
for any square $\a \lra \b \lra \gamma \lra \delta \lra \a$
of distinct elements, an odd number of the 
edges of the square are in $X$.
The existence of such an $X$ follows from the fact that our graph is 
a subgraph of the edge graph of an $n$-cube.
Define a pairing $\langle\, ,\rangle$ by putting
\begin{equation*}
	\langle\, p(\a, \b),\, p^\gd(\b,\a)\, \rangle =
        \begin{cases}
          -1 & \text{if}\, \{\a, \b\} \in X\\
          1 & \text{if}\, \{\a,\b\}\notin X
        \end{cases}
\end{equation*}
and $$\langle\, p(\a, \b),\, p(\delta,\gamma)\, \rangle = 0$$
unless $\a=\gamma$ and $\b=\delta$.


\begin{theorem}\label{Quadratic dual}
The above pairing induces an isomorphism $A(\cV^\gd)\cong A(\cV)^\qd$.
\end{theorem}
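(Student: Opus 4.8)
The plan is to show that the pairing $\langle\,,\rangle$ identifies $M^\gd$ with $M^*$ as $R$-bimodules, and that under this identification the relation space $W^\gd$ defining $A(\cV^\gd)$ coincides with the orthogonal complement $W^\perp$ of the relation space $W$ defining $A(\cV)$. Since both algebras are quadratic with the same base ring $R = \R\{e_\a \mid \a \in \cP\}$ (using $\cP^\gd = \cP$ from Theorem \ref{ftlp}) and the same degree-one generating module up to the chosen pairing, establishing $W^\gd = W^\perp$ inside $M^\gd \otimes_R M^\gd \cong (M \otimes_R M)^*$ suffices. The first step is bookkeeping: $M_{\a\b}$ is one-dimensional (spanned by $p(\a,\b)$) when $\a \lra \b$ and zero otherwise, so $M \otimes_R M$ decomposes as $\bigoplus M_{\a\b} \otimes M_{\b\gamma}$, and I would analyze $W$ and $W^\gd$ block by block according to the pair $(\a,\gamma)$.

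The blocks come in three types. When $\a = \gamma$: the space $(M \otimes_R M)_{\a\a}$ has basis $\{p(\a,\b)p(\b,\a) \mid \b \lra \a,\ \b \in \cP\}$, and $W_{\a\a}$ is cut out by the relations \Athree, which identify $p(\a,\b)p(\b,\a)$ with $t_i e_\a$ whenever $\a \lrao i \b$. Thus $W_{\a\a}$ is exactly the kernel of the map sending $p(\a,\a^i)p(\a^i,\a) \mapsto t_i$ into $V^*$, i.e.\ the linear relations among the $\{t_i \mid i \in I_\a \cap (\text{bounded directions})\}$. On the dual side $W^\gd_{\a\a}$ is the kernel of the analogous map into $(V^\perp)^*$. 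The key point is that these two subspaces are orthogonal complements under the sign-twisted pairing: this is precisely the statement that the short exact sequence $0 \to V^\perp \to \R^I \to \R^I/V^\perp \cong V \to 0$ (restricted to the relevant coordinates indexed by $I_\a$) is self-dual, so the relations among coordinate functions restricted to $V$ are orthogonal to the relations among coordinate functions restricted to $V^\perp$. I would make this precise by relating $I_\a$ on the two sides via Gale duality and the local structure of $\Delta_\a$ near its $\xi$-maximum. When $\a,\gamma$ differ in exactly two coordinates and both are connected to a common $\b$ (and a fourth vertex $\de$): $(M \otimes_R M)_{\a\gamma}$ is spanned by $p(\a,\b)p(\b,\gamma)$ and $p(\a,\de)p(\de,\gamma)$ (when both middle vertices lie in $\cP$), $W_{\a\gamma}$ is the line spanned by their difference (relation \Atwo), and one checks directly that the sign-twist makes $W^\gd_{\a\gamma}$ — the analogous difference for the dual square — orthogonal to $W_{\a\gamma}$: the product of the four relevant signs is $-1$ because $X$ contains an odd number of the square's edges, which forces the dual relation to be the \emph{sum} rather than the difference, and sum is orthogonal to difference. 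The remaining blocks, where $\a$ and $\gamma$ differ in more than two coordinates or in a way admitting only one two-step path, contribute no relations on either side, so there is nothing to check.

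The main obstacle is the $\a = \gamma$ block: verifying that the linear relations among $\{t_i\}$ coming from $V$ are exactly the orthogonal complement (under the twisted pairing) of the linear relations coming from $V^\perp$. This requires carefully matching the index set $I_\a$ of codimension-one faces of $\Delta_\a$ with the corresponding data on the Gale dual side, which is where Lemmas \ref{complement} and \ref{order lemma} and the bijection $\mu$ enter, together with a dimension count: $|W_{\a\a}| + |W^\gd_{\a\a}| = |I_\a|$, matching $\dim V + \dim V^\perp$ restricted appropriately. A clean way to organize this is to pass to the restriction/deletion $\cV^S_{S'}$ that isolates the relevant local picture at $H_{\mu^{-1}(\a)}$, reducing to a case where $\Delta_\a$ is a simplicial cone and the claim becomes the transparent statement that, for the tautological sequence $0 \to V \to \R^{I_\a} \to \R^{I_\a}/V \to 0$, a relation $\sum c_i t_i = 0$ in $V^*$ is orthogonal to a relation $\sum d_i t_i^\gd = 0$ in $(V^\perp)^*$ precisely because $(c_i) \in V^{\perp\perp} = V$ pairs to zero with $(d_i) \in V^\perp$ — the sign twist being exactly the correction needed to turn the naive pairing into this honest one. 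Once the local reduction is in place, the sign bookkeeping is routine and the theorem follows by assembling the blocks.
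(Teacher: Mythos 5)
Your overall plan---compare $W$ and $W^\gd$ block by block inside $M\otimes_R M$, with the sign twist handling the squares---is the same as the paper's, and your analysis of the squares whose two intermediate vertices both lie in $\cP$ is correct. But your dismissal of the remaining off-diagonal blocks is a genuine gap. If $\a\ne\ga$ differ in two coordinates and only one of the two intermediate sign vectors, say $\b_1$, lies in $\cP$, the block $e_\a M\otimes_R Me_\ga$ is still one-dimensional (spanned by the path through $\b_1$), so ``no relations on either side'' would itself contradict $W^\gd=W^\perp$: the orthogonal complement of $0$ is the whole line. What actually happens is the crux of this case: since $\b_2\lra\a\in\cP$, Lemma \ref{duality lemma} (together with $\cB=\cF^\gd$ and $\cP=\cF\cap\cB$) shows $\b_2$ lies in exactly one of $\cF$, $\cF^\gd$. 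If $\b_2\in\cF\ssm\cP$, then in $A(\cV)$ relations \Aone\ and \Atwo\ combine to give $p(\a,\b_1,\ga)=p(\a,\b_2,\ga)=0$, so $W_{\a\ga}$ is the \emph{entire} line, while $\b_2\notin\cF^\gd$ means no such relation exists dually and $W^\gd_{\ga\a}=0=(W_{\a\ga})^\perp$; the other case is symmetric. This interaction of \Aone\ with \Atwo, mediated by Lemma \ref{duality lemma}, is an essential part of the proof and is absent from your argument.

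The diagonal block also has a gap: your identification of $W_{\a\a}$ is too small. Write $I_\a=\{i\mid\a^i\in\cF\}$ and $J_\a=\{i\mid\a^i\in\cP\}$. The loops $p(\a,\a^i,\a)$ with $i\in I_\a\ssm J_\a$ are killed by \Aone, so any $w\in V^\perp$ supported on $I_\a$ (not merely on $J_\a$) yields the relation $\sum_{i\in J_\a}w_i\,p(\a,\a^i,\a)=0$. Hence $W_{\a\a}$ is the projection of $V^\perp\cap\R^{I_\a}$ onto $\R^{J_\a}$, not the kernel $V^\perp\cap\R^{J_\a}$ of your map into $V^*$; with your smaller space the complementarity claim is false in general, since the orthogonal complement of $V^\perp\cap\R^{J_\a}$ in $\R^{J_\a}$ is $\pi_{J_\a}(V)$ rather than $\pi_{J_\a}(V\cap\R^{I^\gd_\a})=W^\gd_{\a\a}$. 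The statement that the two mixed projections are orthogonal complements of each other rests on the set identity $I_\a\ssm J_\a=I\ssm I^\gd_\a$, which is again an application of Lemma \ref{duality lemma}---not on Lemmas \ref{complement} or \ref{order lemma}, and not on a reduction to a simplicial cone; your ``transparent'' local statement only gives orthogonality of two naive subspaces, not that they exhaust each other, and the relevant ambient space for any dimension count is $\R^{J_\a}$, not $\R^{I_\a}$. A small further point: the sign twist plays no role on the diagonal block, since each edge sign occurs twice and cancels; it is needed only for the squares.
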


\begin{proof} Let $W \subset (M\otimes_R M)_2$ and 
$W^\gd \subset (M^\gd \otimes_R M^\gd)_2$ be the spaces
of relations of $A(\cV)$ and $A(\cV^\gd)$, respectively.
We analyze each piece $W_{\a\ga}$ and $W^\gd_{\ga\a}$ for every pair
$\a, \gamma \in \cP$ which admit paths of 
length two connecting them.  First consider the 
case $\a \ne \gamma$; they must differ in exactly two entries, 
so there are exactly two elements $\b_1, \b_2$ in $\{\pm 1\}^I$ 
which are adjacent to both $\a$ and $\ga$.
Since we are assuming that there is 
a path from $\a$ to $\gamma$ in $\cP$, at least
one of the $\b_j$ must be in $\cP$.

If both $\b_1$ and $\b_2$ are in $\cP$,
then $e_\a M\otimes_R M e_\gamma$ 
is a two-dimensional vector space with basis
$\{p(\a,\b_1) \otimes p(\b_1,\gamma), p(\a,\b_2)\otimes p(\b_2,\gamma)\}$,
while $e_\gamma M\otimes_R M e_\a$ 
has a basis $\{p^\vee(\gamma,\b_1)\otimes p^\gd(\b_1,\a), p^\vee(\gamma,\b_2)\otimes p^\gd(\b_2,\a)\}$.
Then the relation \Atwo\ gives
\[W_{\a\ga} = \R\{p(\a,\b_1) \otimes p(\b_1,\gamma) - p(\a,\b_2)\otimes p(\b_2,\gamma)\}\; \text{and}\]
\[W^\gd_{\ga\a} = \R\{p^\vee(\gamma,\b_1)\otimes p^\gd(\b_1,\a) - p^\vee(\gamma,\b_2)\otimes p^\gd(\b_2,\a)\},\]
and so $W^\gd_{\ga\a} = (W_{\a\ga})^\perp$ (this is where we use the signs in our pairing).

If $\b_1 \in \cP$ and 
$\b_2 \notin \cP$, then either $\b_2 \in \cF \ssm \cP$ and
$\b_2 \notin \cF^\gd$ or $\b_2 \in \cF^\gd \ssm \cP^\gd$ and
$b_2 \notin \cF$.  In the first case we get 
$W_{\a\gamma}=e_{\a}M\otimes_R Me_{\ga}$ 
since $p(\a,\b_1,\ga) = p(\a,\b_2,\ga) = 0$ in 
$A(\cV)$ by relations \Aone\ and \Atwo. On the other
hand $\b_2 \notin \cF^\gd$ means that 
the relation \Atwo\ doesn't appear on the dual side, so 
$W^\gd_{\ga\a} = 0 = (W_{\a\ga})^\perp$.  
The argument in the second case is the same, reversing the role of
$\cV$ and $\cV^\gd$. 

We have dealt with the case $\a\neq\gamma$, so suppose now that
$\a = \gamma$.  The vector space $e_\a M \otimes_R Me_\a$ 
has a basis consisting of the elements $p(\a,\a^i) \otimes p(\a^i,\a)$
where $i$ lies in 
the set $$J_\a := \{i \in I \mid \a^i \in \cP\}.$$
(Recall that $\a^i$ is the element of 
$\{\pm 1\}^I$ which differs from $\a$ in 
precisely the $i^\text{th}$ entry.)
Note that $J_\a$ is a subset of the set $I_\a$ defined in the 
proof of Lemma 
\ref{smaller quiver}.
Let $I^\gd_\a$ and $J^\gd_\a$ be the corresponding sets for $\cV^\gd$,
and note that $J^\gd_\a = J_\a$.

Using this basis we can identify $W_{\a\a}$ with a subspace
of $\R^{J_\a}$, which we compute as follows. 
Given a covector $w \in (\R^I)^* \cong \R^I$, its image
in $V^*$ is $\sum_{i \in I} w_i t_i$, so the relations 
among the $t_i$ in 
$\Sym(V^*)$ are given by $V^\bot \subset (\R^I)^*\cong \R^I$.
If $\sum_{i\in I}w_it_i = 0$ and $w_i = 0$ for $i \notin I_\a$,
then multiplying by $e_\a$ and using relation \Aone\ and \Athree\ gives
\[\sum_{i\in J_\a} w_ip(\a,\a^i,\a) = \sum_{i \in I_\a}w_i p(\a,\a^i,\a) = 0.\]
Thus $W_{\a\a}$ is the projection of $V^\perp \cap \R^{I_\a}$ onto $\R^{J_\a}$.
Alternatively, we can first project $V^\perp$ onto 
$\R^{I_\a^c\cup J_\a}$ and then intersect with $\R^{J_\a}$.

On the dual side, the space $W^\gd_{\a\a}$ 
of relations among loops at $\a$ is identified with a vector subspace
of $\R^{J^\gd_\a} = \R^{J_\a}$, namely the projection of 
$V\cap \R^{I^\gd_\a}$ onto $\R^{J^\gd_\a}$.
This is the orthogonal complement of $W_{\a\a}$, since 
Lemma \ref{duality lemma} implies that $I_\a \ssm J_\a = I \ssm I^\gd_\a$.
Note that the signs we added to the pairing do not affect this,
since we always have
\[\langle p(\a,\b), p^\gd(\b,\a)\ra\cdot\langle p(\b,\a),p^\gd(\a,\b)\ra = 1.\qedhere\]
\end{proof}

\section{The algebra $B$}\label{sec:convolution}
\subsection{Combinatorial definition}\label{combdef}
In this section we define our second algebra $B(\cV)$ associated to the 
polarized arrangement $\cV = (V,\eta,\xi)$.
Consider the polynomial ring $\R[u_i]_{i \in I} = \Sym\R^I$.  
We give it a grading by putting $\deg u_i = 2$ for all $i$. 
For any subset $S \subset I$, let 
$u_S = \prod_{i \in S} u_i$.

\begin{definition}
Given a subset $\Delta \subset V_\eta$,
define graded rings 
\begin{equation*}
 \tilde{R}_{\Delta} := \Sym\R^I\big/
\left\langle u_S\mid
S\subset I\,\,\text{such that}\,\, 
H_S \cap \Delta = \emptyset\right\rangle
\end{equation*}
and $$R_{\Delta} := \tilde{R}_{\Delta} \otimes_{\Sym V} \R,$$
where the map from $\Sym V$ to $\Sym \R^I$
is induced by the inclusion $V \hookrightarrow \R^I$,
and the map $\Sym(V) \to \R$ is the graded map which 
kills $V$.
We use the conventions that $u_{\emptyset} = 1$ and 
$H_\emptyset = V_\eta$, 
which means that
$R_\Delta = 0$ if and only if $\Delta = \emptyset$.
Notice that if $\Delta_1 \subset \Delta_2$, then we have  natural 
quotient maps $\tilde{R}_{\Delta_2} \to \tilde{R}_{\Delta_1}$ and 
$R_{\Delta_2} \to R_{\Delta_1}$.

There are only certain subsets that will interest us,
and for each of these subsets we introduce simplified notation for the corresponding ring.
We put 
\begin{equation}\label{rch}
\tilde{R}_\cH = \tilde{R}_{V_\eta},\end{equation}
and for any $\a,\b,\ga,\de \in \cF$,
\begin{equation}\label{rabgd}
\tilde{R}_{\a} = \tilde{R}_{\Delta_\a},\,\,\,
\tilde{R}_{\a\b} = \tilde{R}_{\Delta_\a \cap \Delta_\b},\,\,\,
\tilde{R}_{\a\b\ga} = \tilde{R}_{\Delta_\a \cap \Delta_\b\cap \Delta_\ga},\,\,\,
\text{and}\,\,\, \tilde{R}_{\a\b\ga\de} = \tilde{R}_{\Delta_\a \cap \Delta_\b\cap \Delta_\ga\cap \Delta_\de}.\end{equation}
Finally, we let $R_\cH$, $R_\a$. $R_{\a\b}$, $R_{\a\b\ga}$, and $R_{\a\b\ga\de}$ 
denote the tensor products of these rings with $\R$ over $\Sym(V)$.
\end{definition}

\begin{lemma}\label{formality}
The ring $\tilde{R}_\cH$ is a free $\Sym(V)$-module of total rank
$|\Bas|$, the number of bases of the matroid of $V$.
For any intersection $\Delta$ of chambers of $\cH$, the
ring $\tilde{R}_{\Delta}$ is a free $\Sym(V)$-module of total rank
$|\Bas_\Delta|$, where $\Bas_\Delta = \left|\{b \in \Bas \mid H_b \subset \Delta\}\right|$.
\end{lemma}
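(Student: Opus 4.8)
The plan is to recognize $\tilde R_\cH$ as a well-known object: it is exactly the Stanley–Reisner-type quotient attached to the (simple, essential) matroid of $V\subset\R^I$, and more precisely the presentation of the equivariant cohomology of the hypertoric variety $\M_\cH$ (or, combinatorially, the "central" version of the Orlik–Terao / face ring appearing in the work of Hausel–Sturmfels and others). Since we are told in the surrounding text that the paper does not want to assume rationality, I would instead give a self-contained argument by induction on $|I|$ using the deletion–restriction operations from Section \ref{sec:lp}. The two statements in the lemma are really one: the second, applied to $\Delta = V_\eta$, gives the first, since $H_b\subset V_\eta$ for every basis $b$, so $\Bas_{V_\eta}=\Bas$. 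So it suffices to prove that for any intersection of chambers $\Delta$, the ring $\tilde R_\Delta$ is a free $\Sym(V)$-module with basis indexed by $\{b\in\Bas\mid H_b\subset\Delta\}$.

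First I would set up the induction. Pick an index $i\in I$ that is not a loop (if every index is a loop then $V=0$, $\Bas=\{\emptyset\}$, and both sides are $\R$, handled trivially). The key is the short exact sequence of $\Sym(V)$-modules obtained by "deleting" versus "restricting" at $H_i$:
\begin{equation*}
0 \longrightarrow \tilde R_{\Delta,\,i} \xrightarrow{\;\cdot u_i\;} \tilde R_\Delta \longrightarrow \tilde R_{\Delta\cap H_i} \longrightarrow 0,
\end{equation*}
where the quotient on the right is the analogous ring for the restriction $\cV^{\{i\}}$ (intersected with $\Delta$), and the submodule on the left is, after dividing by $u_i$, the analogous ring for the deletion $\cV_{\{i\}}$. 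One has to check: (i) multiplication by $u_i$ is injective on the relevant subquotient — this uses that $i$ is not a loop, so $u_i$ is a nonzerodivisor on the part of $\tilde R_\Delta$ not already killed; (ii) the ideal generated by $u_i$ in $\tilde R_\Delta$, as a module, is isomorphic (up to the degree shift by $u_i$) to the deletion ring $\tilde R^{\,\mathrm{del}}_\Delta$; and (iii) the cokernel is the restriction ring. Matching the matroid combinatorics, a basis $b\in\Bas(V)$ with $H_b\subset\Delta$ either contains $i$ — then $b$ corresponds to a basis of the restriction matroid of $V^{\{i\}}$ with its flat inside $\Delta\cap H_i$ — or does not contain $i$, in which case $b$ is a basis of the deletion matroid $V_{\{i\}}$; this is precisely the standard deletion–contraction recursion $\Bas(V) = \Bas(V\ssm i) \sqcup \Bas(V/i)$, and it matches the three terms of the sequence. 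By induction both outer terms are free $\Sym(V)$-modules of the predicted ranks, hence (the sequence splitting over the polynomial ring, or simply by counting ranks degree-by-degree and lifting a homogeneous basis) so is the middle term $\tilde R_\Delta$, with the predicted rank $|\Bas_\Delta|$.

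The main obstacle I anticipate is establishing the exact sequence cleanly — in particular verifying claims (i)–(iii) above, i.e.\ that the annihilator of $u_i$ in $\tilde R_\Delta$ is exactly the extra Stanley–Reisner relations that appear for the restriction, and that the principal ideal $(u_i)$ is the deletion ring up to shift. This is the place where one must carefully track which monomials $u_S$ are set to zero: $H_S\cap\Delta=\emptyset$ for the original arrangement versus $H_{S}\cap(\Delta\cap H_i)=\emptyset$ in $H_i$ for the restriction, using that $\cH$ is simple (codim $H_S = |S|$ when nonempty, Condition (a)). Once the sequence is in place, the freeness and the rank count are a routine induction; alternatively, one can bypass the module-theoretic bookkeeping by exhibiting an explicit monomial basis (a broken-circuit / "no broken circuit" basis for the matroid, localized to the chambers meeting $\Delta$) and checking it spans and is $\Sym(V)$-independent using the same deletion–restriction recursion, which is perhaps the more transparent route and the one I would ultimately write up.
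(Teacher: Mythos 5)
Your overall strategy (deletion--restriction induction, or an explicit monomial basis) can be made to work, but the key step as you have set it up is wrong: your short exact sequence has deletion and restriction interchanged, and with your labelling claims (i)--(iii) fail. With the paper's definitions, the quotient $\tilde{R}_\Delta/(u_i)$ is cut out by the monomials $u_S$ with $S\subset I\ssm\{i\}$ and $H_S\cap\Delta=\emptyset$, which is exactly the ring of $\Delta$ for the arrangement with $H_i$ \emph{deleted}; on the other hand the annihilator of $u_i$ in $\tilde{R}_\Delta$ is generated by the $u_S$ with $H_S\cap H_i\cap\Delta=\emptyset$, so the principal ideal $(u_i)$ is isomorphic, up to the shift by $\deg u_i$, to $\tilde{R}_{\Delta\cap H_i}$ --- the \emph{restriction} side (more precisely the Stanley--Reisner star of the vertex $i$, i.e.\ the restriction ring with $u_i$ still acting as a free polynomial variable), not the deletion ring. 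Thus the correct sequence is
$0\to \tilde{R}_{\Delta\cap H_i}(-2)\xrightarrow{\cdot u_i}\tilde{R}_\Delta\to \tilde{R}_\Delta/(u_i)\to 0$,
matching the splitting $\Bas_\Delta=\{b\ni i\}\sqcup\{b\not\ni i\}$ via the observation (simplicity) that $H_b\subset H_i$ iff $i\in b$. Your version fails already for two points on a line: take $I=\{1,2\}$, $V$ the diagonal, so $\tilde{R}_\cH=\R[u_1,u_2]/(u_1u_2)$; the deletion ring at $i=1$ is $\R[u_2]$, multiplication by $u_1$ kills $u_2$ (so your injectivity claim (i) is false), and the cokernel of $\cdot u_1$ is $\R[u_2]$, the deletion ring, not the restriction ring $\R$.

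Even after this correction, two further points keep the induction from being ``routine.'' First, if $i$ is a coloop (every basis contains $i$), the deletion term is a torsion $\Sym(V)$-module rather than a free one of the predicted rank, so the step ``both outer terms free, hence the middle is free'' does not apply and that case must be handled separately (there $u_i$ is a nonzerodivisor and $\tilde{R}_\Delta\cong\tilde{R}_{\Delta\cap H_i}$); relatedly, $\Delta$ and $\Delta\cap H_i$ need not remain intersections of chambers of the smaller arrangements, so the inductive statement has to be enlarged. Second, induction naturally yields freeness of the outer terms over symmetric algebras of \emph{different} subspaces ($\pi(V)\subset\R^{I\ssm\{i\}}$ for the deletion, and $(V\cap\R^{I\ssm\{i\}})\oplus\R e_i$ for the star), and $V$ is not contained in the latter in general; passing to freeness over $\Sym(V)$ itself requires an extra argument, e.g.\ proving Cohen--Macaulayness and then checking that $V$ is a linear system of parameters because $V$ maps isomorphically onto $\R^b$ for every basis $b$. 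That last observation is exactly how the paper concludes; its proof is otherwise much shorter than your plan: it identifies $\tilde{R}_\cH$ and $\tilde{R}_\Delta$ as face rings of shellable simplicial complexes (the matroid complex of $\cV$ and the dual of the face lattice of $\Delta$), so they are free over a linear system of parameters with rank the number of facets, and then notes that $V$ furnishes such a system. Your fallback suggestion of exhibiting an explicit (broken-circuit type) monomial basis is plausible but is not carried out, so as it stands the proposal has a genuine gap.
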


\begin{proof}
Both of these rings are the face rings of shellable 
simplicial complexes, namely the matroid complex
of $\cV$ and the dual of the face lattice of 
$\Delta$, respectively.  This implies that they are
free over the symmetric algebra of \emph{some} subspace
of $\R^I$, with bases parametrized by the top-dimensional
simplices, which are in bijection with $\Bas$ and 
$\Bas_\Delta$, respectively.
The fact that the rings are free over $\Sym(V)$ specifically
follows from
the fact that $V$ projects isomorphically onto $\R^b$
for any basis $b$.
\end{proof}

As we will see in the next section, when the arrangement $\cV$ is
rational, the rings of Equations \eqref{rch} and \eqref{rabgd} can be interpreted as 
equivariant
cohomology rings of algebraic varieties with torus actions, while their tensor products with $\R$
are the corresponding ordinary cohomology rings.
(In particular, we will give a topological interpretation of Lemma \ref{formality}
in Remark \ref{fixed points}.)
However, our main theorems will all be proved in a purely algebraic setting.

For $\a, \b\in \cP$, let
\[d_{\a\b} = \big | \{i \in I \mid \a(i) \neq \b(i)\}\big |.\] 
If the intersection $\Delta_\a \cap \Delta_\b$ is nonempty, then $d_{\a\b}$
is equal to its codimension inside of $V_\eta$.
For $\a,\b,\ga\in\cP$, let
\[S(\a\b\ga) = \{i \in I \mid \a(i) = \ga(i) \ne \b(i) \}.\]
If $\a,\b$, and $\ga$ are all feasible, then $S(\a\b\ga)$ is the set of hyperplanes
that are crossed twice by the composition of a pair of taut paths from
$\a$ to $\b$ and $\b$ to $\ga$.

\begin{definition}  Given a polarized arrangement $\cV$, let
\[B = B(\cV) := \bigoplus_{(\a,\b) \in \cP \times \cP} R_{\a\b}[-d_{\a\b}].\]
We define a product operation
\[\conv\colon B \otimes B \to B\]
via the composition
\[\xymatrix{
R_{\a\b} \otimes R_{\b\ga} \ar[r] & R_{\a\b\ga}\otimes R_{\a\b\ga} \ar[r] & R_{\a\b\ga} \ar[rr]^{\cdot u^{}_{S(\a\b\ga)}} && R_{\a\ga}
},\]
where the first map is the tensor product of the natural quotient maps, the second is multiplication in
$R_{\a\b\ga}$, and
the third is induced by multiplication by the monomial $u^{}_{S(\a\b\ga)}$.
To see that the third map is well-defined, it is enough to observe that 
\[\Delta_\a \cap \Delta_\b \cap \Delta_\de = \Delta_\a \cap \Delta_\b \cap H_{S(\a\b\de)}.\]
(All tensor products above are taken over $\R$, and the product is identically zero on
$R_{\a\b}\otimes R_{\de\ga}$ if $\de\neq \b$.)
\end{definition}

\begin{proposition}
 The operation $\conv$ makes $B$ into a graded ring.
\end{proposition}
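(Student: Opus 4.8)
The plan is to verify the ring axioms for $(B,\conv)$ one at a time, with associativity being the real work. Grading compatibility is immediate: the map in the definition of $\conv$ sends $R_{\a\b}[-d_{\a\b}]\otimes R_{\b\ga}[-d_{\b\ga}]$ into $R_{\a\ga}[-d_{\a\ga}]$ because the quotient and multiplication maps are graded of degree zero, while multiplication by $u_{S(\a\b\ga)}$ raises degree by $2|S(\a\b\ga)|$, and one checks the numerical identity $d_{\a\b}+d_{\b\ga} = d_{\a\ga} + 2|S(\a\b\ga)|$ directly from the definitions of $d$ and $S$ (the entries where $\a$ and $\ga$ agree but $\b$ differs are counted twice on the left and not at all in $d_{\a\ga}$). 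The identity element is $\sum_{\a\in\cP} 1_{R_{\a\a}}$; since $S(\a\a\ga) = \emptyset = S(\a\ga\ga)$ and $d_{\a\a}=0$, the map $R_{\a\a}\otimes R_{\a\ga}\to R_{\a\ga}$ is just the structure map of $R_{\a\ga}$ as a module over itself, hence the unit acts as the identity on each summand.

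The main obstacle is associativity. Fix $\a,\b,\ga,\de\in\cP$; I must show the two composites $R_{\a\b}\otimes R_{\b\ga}\otimes R_{\ga\de}\to R_{\a\de}$ agree. Both composites factor through $R_{\a\b\ga\de}$ — the ring attached to $\Delta_\a\cap\Delta_\b\cap\Delta_\ga\cap\Delta_\de$ — via the natural quotient maps, and after multiplying the three incoming classes inside $R_{\a\b\ga\de}$ the only thing left to track is which monomial $u_T$ one multiplies by before pushing forward to $R_{\a\de}$. So associativity reduces to a monomial bookkeeping identity in $R_{\a\de}$: I need
\[
u_{S(\a\b\ga)}\cdot u_{S(\a\ga\de)} \;=\; u_{S(\b\ga\de)}\cdot u_{S(\a\b\de)}
\]
modulo the relations of $R_{\a\b\ga\de}$, after transport to $R_{\a\de}$. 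This will \emph{not} be an equality of monomials on the nose, because the multisets $S(\a\b\ga)\sqcup S(\a\ga\de)$ and $S(\b\ga\de)\sqcup S(\a\b\de)$ need not coincide; rather, for each index $i$ one analyzes the sign pattern $(\a(i),\b(i),\ga(i),\de(i))\in\{\pm1\}^4$ and checks that the exponent of $u_i$ on the two sides either already agrees, or differs by a monomial that lies in the defining ideal of $R_{\a\b\ga\de}$ (because the corresponding flat misses $\Delta_\a\cap\Delta_\b\cap\Delta_\ga\cap\Delta_\de$, so $u_i$ is killed there). Concretely: up to the symmetry $\pm$, the index $i$ contributes to $S(\a\b\ga)$ iff $\a(i)=\ga(i)\ne\b(i)$, and similarly for the others; running through the sixteen sign patterns (really only a handful of orbits), in every pattern where the total exponents of $u_i$ differ between the two sides, one of the hyperplane conditions forces $\Delta_\a\cap\Delta_\b\cap\Delta_\ga\cap\Delta_\de$ to lie inside $H_i$ — equivalently the intersection meets $H_i$ — no wait: one shows the intersection is contained in $\{x_i=0\}$ \emph{or} that the relevant $u_S$ already vanishes in $\tilde R_{\a\b\ga\de}$, making the discrepancy zero in $R_{\a\b\ga\de}$ and hence in $R_{\a\de}$.

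I would organize the write-up as follows: first record the degree identity and the unit axiom in a sentence each; then state the reduction of associativity to the monomial identity in $R_{\a\b\ga\de}$, using that all four length-three composites, read as maps out of $R_{\a\b}\otimes R_{\b\ga}\otimes R_{\ga\de}$, factor through the quotient to $R_{\a\b\ga\de}$ followed by multiplication by an explicit monomial and then pushforward to $R_{\a\de}$ (here the well-definedness observation $\Delta_\a\cap\Delta_\b\cap\Delta_\de = \Delta_\a\cap\Delta_\b\cap H_{S(\a\b\de)}$ from the definition, and its analogues, get used repeatedly to identify intermediate rings); and finally dispatch the monomial identity by the index-by-index sign analysis, observing that the only cases where the naive exponents disagree are exactly those in which the extra factor of $u_i$ is a relation in $\tilde R_{\a\b\ga\de}$. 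The sign analysis is elementary but should be presented as a small table or case list over the $\{\pm1\}^4$ patterns rather than prose, to keep it honest.
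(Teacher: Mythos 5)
Your overall strategy is the same as the paper's: compute both triple composites by passing to $R_{\a\b\ga\de}$, multiplying there, and then multiplying by an explicit monomial before mapping to $R_{\a\de}$, so that associativity reduces to comparing $u_{S(\a\b\ga)}u_{S(\a\ga\de)}$ with $u_{S(\b\ga\de)}u_{S(\a\b\de)}$; your grading identity $d_{\a\b}+d_{\b\ga}=d_{\a\ga}+2\,|S(\a\b\ga)|$ and the unit check are fine. The problem is at the crucial last step. You assert that the two monomials need not agree on the nose and that the discrepancies must be killed by the relations of $R_{\a\b\ga\de}$, but you never carry out the sign analysis, and the assertion is false: for every pattern $(\a(i),\b(i),\ga(i),\de(i))\in\{\pm 1\}^4$ the exponent of $u_i$ is the \emph{same} on both sides, namely $\theta_i(\a,\b,\ga,\de)$ of Definition \ref{recursion statistic} (the index $i$ contributes to $S(\a\b\ga)$ exactly when the step $\b\to\ga$ returns to the $\a$-side of $H_i$, and to $S(\a\ga\de)$ exactly when $\ga\to\de$ does, and the same count comes out of $S(\b\ga\de)\sqcup S(\a\b\de)$). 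So $u_{S(\a\b\ga)}u_{S(\a\ga\de)}=u_{S(\b\ga\de)}u_{S(\a\b\de)}$ holds already in $\Sym\R^I$, and no appeal to the quotient is needed; this exact identity is precisely how the paper concludes.

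Moreover, the rescue mechanism you sketch would not work even if it were needed: in $\tilde{R}_{\a\b\ga\de}$ the class $u_i$ vanishes exactly when $H_i$ is \emph{disjoint} from $\Delta_\a\cap\Delta_\b\cap\Delta_\ga\cap\Delta_\de$, not when that intersection is contained in $H_i$ (your ``no wait'' sentence conflates these two conditions), and you give no argument that any alleged discrepancy monomial satisfies the disjointness condition --- nor could one in general, which is itself a hint that the claimed discrepancies do not occur. So as written the proof is incomplete at its central point, and the plan you describe for completing it would send you after a vanishing statement that is neither true in the form stated nor necessary; doing the (eight, up to an overall sign flip) case checks honestly yields the stronger on-the-nose identity and finishes the proof.
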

\begin{proof}
The map $x \otimes y \otimes z \mapsto (x \conv y) \conv z$ from
$R_{\a\b} \otimes R_{\b\ga} \otimes R_{\ga\de}$ to $R_{\a\de}$ can also be computed by 
multiplying the images of $x$, $y$, and $z$ in $R_{\a\b\ga\de}$, and 
then mapping into $R_{\a\de}$ by multiplication by $u^{}_{S(\a\b\ga)}u^{}_{S(\a\ga\de)}$.
As a result, associativity of $\conv$ follows from the identity
$u^{}_{S(\a\b\ga)}u^{}_{S(\a\ga\de)} = u^{}_{S(\b\ga\de)}u^{}_{S(\a\b\de)}$.  This 
easy to verify by hand: the power to which the variable $u_i$ appears on either side
is $\theta_i(\a,\b,\ga,\de)$ (recall Definition \ref{recursion statistic}).
The fact that the product is compatible with the grading follows from the identity
\[d_{\a\b} + d_{\b\ga} - d_{\a\ga} = 2\, \big |S(\a\b\ga)\big |\]
for all $\a, \b, \ga \in \cP$. 
\end{proof}

\begin{remark}\label{Bprime}
The graded vector space $\tilde{B} := \bigoplus_{(\a,\b) \in \cP \times \cP} \tilde{R}_{\a\b}[-d_{\a\b}]$
can be made into a graded ring in exactly the same way.
There is a natural ring homomorphism
$$\zeta:\Sym\R^I\hookrightarrow\bigoplus_{\a\in\cP}\Sym\R^I\twoheadrightarrow
\bigoplus_{\a\in\cP}\tilde{R}_{\a\a}\hookrightarrow\tilde{B}$$
making $\tilde{B}$ into an algebra over $\Sym\R^I$, and we have
$$B\cong \tilde{B}\otimes_{\Sym\R^I}\Sym(\R^I/V)\cong \tilde{B}\otimes_{\Sym V}\R.$$
(compare to Remark \ref{deligne}).
In \cite{BLP+} we construct a canonical deformation of any quadratic algebra,
and the algebras $\tilde{A}$ and $\tilde{B}$ are the canonical deformations of $A$ and $B$,
respectively.
\end{remark}



\subsection{Toric varieties}\label{toric}
We next explain how the ring $B(\cV)$ arises from the geometry
of toric and hypertoric varieties in the case where $\cV$ is rational.  
We begin by using the data in
$\cV$ to define a collection of toric varieties, which 
are lagrangian subvarieties 
of an algebraic symplectic orbifold $\M_\cH$, the 
hypertoric variety determined by the arrangement $\cH$.

Let $\cV = (V,\eta,\xi)$ be a rational polarized arrangement.
The vector space $V$ inherits an integer lattice $V_\Z := V\cap \Z^I$,
and the dual vector space $V^*$ inherits a dual lattice which is a quotient
of $(\Z^I)^*\subset(\R^I)^*$.  Consider the
compact tori $$T^I := (\R^I)^*/(\Z^I)^*\twoheadrightarrow
V^*/V^*_\Z =: T.$$
For every feasible chamber $\a\in\cF$, the polyhedron $\Delta_\a$ determines
a toric variety with an action of the complexification $T_\C$.
The construction that we give below is originally due to Cox \cite{Cox}.

Let $T^I_\C$ be the complexification of $T^I$,
and let $\C^I_\a$ be the representation of $T^I_\C$ in which the $i^\text{th}$
coordinate of $T^I_\C$ acts on the $i^\text{th}$ coordinate
of $\C^I_\a$ with weight $\a(i)\in\{\pm 1\}$.
Let $$G = \ker(T_\C^I\twoheadrightarrow T_\C),$$ 
and  let $$Z_\a\,\, =\,\, \C_\a^I \,\,\ssm \bigcup_{\substack{S\subset I\,\, 
\text{such that}\\ H_S \cap \Delta_\a = \emptyset}}
\{z\in \C^I \mid z_i = 0 \,\text{ for all } i\in S\},$$
which is acted upon by $T^I_\C$ and therefore by $G$.
The toric variety $X_\a$ associated to $\Delta_\a$ is defined to be the quotient
$$X_\a = Z_\a/G,$$ and it inherits an action of the quotient torus $T_\C = T^I_\C/G$.
The action of the compact subgroup $T\subset T_\C$ is hamiltonian with respect
to a natural symplectic structure on $X$, and $\Delta_\a$ is the moment polyhedron.
If $\Delta_\a$ is compact\footnote{We use the word "compact" rather than the more standard word "bounded"
to avoid confusion with the fact that $\a$ is called bounded if $\xi$
is bounded above on $\Delta_\a$.},
then $X_\a$ is projective.  More generally, $X_\a$ is projective
over the affine toric variety whose coordinate ring is the semi-group ring of
the semi-group $\Sigma_\a\cap V_\Z$.
Since the polyhedron $\Delta_\a$ is simple by our assumption on $\eta$,
the toric variety $X_\a$ has at worst finite quotient singularities. 
 
Now let $$\wt{\eX}=\coprod_{\a \in \cP} X_\a$$ 
be the disjoint union 
of the toric varieties associated to the bounded feasible sign vectors,
that is, to the chambers of $\cH$ on which $\xi$ is bounded above.
We will define a quotient space $\eX$ of $\wt{\eX}$ which, informally,
is obtained by gluing the components of $\tilde{\eX}$ together along
the toric subvarieties corresponding to the faces at which the 
corresponding polyhedra intersect.

More precisely, let $\C^I$ be the standard coordinate representation
of $T^I_\C$, and let $$\H^I = \C^I \times (\C^I)^*$$ be the 
product of $\C^I$ with its dual.  For every $\a\in\{\pm 1\}^I$,
$\C^I_\a$ can be found in a unique way as a subrepresentation of $\H^I$.
Then $$\eX := \left(\,\bigcup_{\a\in\cP} Z_\a\right)\Big{/}\, G,$$ where the union
is taken inside of $\H^I$.  Then $T_\C$ acts on $\eX$,
and we have a $T_\C$-equivariant projection
$$\pi:\wt{\eX}\to\eX.$$
The restriction of this map to each $X_\a$ is obviously an embedding.
As a result, any face of a polyhedron $\Delta_\a$ corresponds to a
$T$-invariant subvariety which is itself a toric variety for a 
subtorus of $T_\C$.

The singular variety $\eX$ sits naturally as a closed subvariety 
of the hypertoric variety $\M_\cH$, which is defined
as an algebraic symplectic quotient of $\H^I$ by $G$ (or, equivalently,
as a hyperk\"ahler quotient of $\H^I$ by the compact form of $G$).
See \cite{Pr07} for more details.
The subvariety $\eX$ is lagrangian, and is
closely related to two other lagrangian subvarieties of $\M_\cH$
which have appeared before in the literature.  
The projective components of $\eX$ (the components
whose corresponding polyhedra are compact) form a complete list of 
irreducible projective lagrangian subvarieties of $\M_\cH$, and their union
is called the {\bf core} of $\M_\cH$.  On the other hand, 
we can consider the larger subvariety 
$\ecore = (\bigcup_{\alpha\in \cF} Z_\a) / G$, where the union is taken
over all feasible chambers, not just the bounded ones.  This larger union
was called the {\bf extended core} in \cite{HP04};
it can also be described as the zero level of the moment map for the hamiltonian
$T_\C$-action on $\M_\cH$.  

Our variety $\eX$, which sits in between the core 
and the extended core, will be referred to as the {\bf relative core} of $\M_\cH$
with respect to the $\C^\times$-action defined by $\xi$.
It may be characterized as the set of points 
$x\in \M_\cH$ such that $\lim_{\la\to\infty} \la\cdot x$ exists.

\begin{example}\label{an}
Suppose that $\cH$ consists of $n$ points in a line.  Then $\M_\cH$
is isomorphic to the minimal resolution of $\C^2/\Z_n$, and its core is equal
to the exceptional fiber of this resolution, which is a chain of $n-1$ projective
lines.  The extended core is larger; it includes two affine lines attached to the projective
lines at either end of the chain.  The relative core lies half-way in between, containing exactly
one of the two affine lines.  This reflects the fact that $\xi$ is bounded above on exactly
one of the two unbounded chambers of $\cH$.

For this example, the category of ungraded right $B$-modules is 
equivalent to the category of perverse sheaves on $\mathbb{P}^{n-1}$
which are constructible for the Schubert stratification.
This in turn is equivalent to a regular block of
parabolic category $\cO$ for $\mathfrak{g} = \mathfrak{sl_n}$
and the parabolic $\mathfrak{p}$ whose associated Weyl group is
$S_1 \times S_{n-1}$ \cite[1.1]{Str06a}.
\end{example}

The core, relative core, and extended core of $\M_\cH$ are all $T$-equivariant
deformation retracts of $\M_\cH$, which allows us to give a combinatorial description
of their ordinary and equivariant cohomology rings \cite{Ko,HS,Pr07}.

\begin{theorem}\label{hypertoric cohom}
There are natural isomorphisms
$$H^*_T(\eX) \,\,\cong\,\, H^*_T(\M_\cH)\,\,\cong\,\,
\tilde{R}_\cH$$
and
$$H^*(\eX) \,\,\cong\,\, H^*(\M_\cH)\,\,\cong\,\,
R_\cH.$$
\end{theorem}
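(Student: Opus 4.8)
The plan is to deduce Theorem~\ref{hypertoric cohom} from two inputs: a well-understood deformation-retraction statement for hypertoric varieties, and the combinatorial presentation of the equivariant cohomology of a hypertoric variety already available in the literature \cite{Ko,HS,Pr07}. First I would recall that the hypertoric variety $\M_\cH$ carries a $T^I$-action (through the quotient $T = T^I/\!\text{(torus)}$) and that Konno's theorem expresses $H^*_{T^I}(\M_\cH)$ as the Stanley--Reisner-type ring of the matroid of $V$: concretely, $H^*_{T^I}(\M_\cH)\cong \Sym(\R^I)^*/\langle u_S \mid H_S=\emptyset\rangle$, with the residual $T$-equivariant cohomology obtained by the change of rings $\otimes_{\Sym V^\perp}\R$ (equivalently, $H^*_T(\M_\cH)$ is a module over $\Sym V$ and one does \emph{not} yet kill $V$). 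Matching this against the definition of $\tilde R_\cH = \tilde R_{V_\eta}$ in \eqref{rch}, where we quotient $\Sym\R^I$ by $\langle u_S\mid H_S\cap V_\eta=\emptyset\rangle = \langle u_S\mid H_S=\emptyset\rangle$, gives the isomorphism $H^*_T(\M_\cH)\cong\tilde R_\cH$ on the nose. Then tensoring over $\Sym V$ with $\R$ produces $H^*(\M_\cH)\cong R_\cH$, since for a space with suitable $T$-action (equivariant formality, which holds here because $\M_\cH$ has a $T$-invariant Morse--Bott function with even-index critical manifolds, or because the relevant rings are free over $\Sym V$ by Lemma~\ref{formality}) one has $H^*(\M_\cH)\cong H^*_T(\M_\cH)\otimes_{\Sym\mathfrak t^*}\R$.

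Next I would handle the inclusion $\eX\hookrightarrow\M_\cH$. The key geometric claim, already asserted in the text, is that $\eX$ (the relative core) is a $T$-equivariant deformation retract of $\M_\cH$; this is the Bia{\l}ynicki-Birula--type statement that the flow of the $\C^\times$ defined by $\xi$ (together with a compatible $S^1\subset T$) retracts $\M_\cH$ onto the locus where $\lim_{\la\to\infty}\la\cdot x$ exists, which is exactly $\eX$ by the characterization given just before the theorem. A deformation retract induces isomorphisms on $H^*_T$ and on $H^*$, so $H^*_T(\eX)\cong H^*_T(\M_\cH)$ and $H^*(\eX)\cong H^*(\M_\cH)$, completing the chain of isomorphisms. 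One should check the retraction is through $T$-equivariant maps: the BB-flow commutes with $T$ since $\xi\in V^*_\C$ generates a subtorus commuting with all of $T_\C$, so this is automatic.

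The main obstacle I expect is not any single deep theorem but rather assembling the precise normalizations so that the three rings genuinely coincide rather than merely being abstractly isomorphic: one must be careful that the grading conventions ($\deg u_i = 2$, matching $\deg$ of a degree-two cohomology class), the choice of which torus acts ($T$ versus $T^I$), and the relations ``$H_S=\emptyset$'' versus ``$H_S\cap V_\eta=\emptyset$'' all line up, and that the identification is induced by the honest restriction/pullback maps in cohomology rather than an ad hoc ring map. A secondary point to verify carefully is equivariant formality, needed to pass from the equivariant statement to the ordinary one by tensoring down; here Lemma~\ref{formality} does the work on the algebraic side (freeness over $\Sym V$), and on the topological side one can cite the existence of a $T$-invariant perfect Morse--Bott function on $\M_\cH$ from \cite{BD,Pr07}. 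I would also remark that the combinatorial description of $H^*_T(\eX)$ can alternatively be obtained directly, by realizing $\eX$ as a union of toric varieties $X_\a$ glued along faces and computing via a Mayer--Vietoris/gluing argument, but the deformation-retract route is cleaner and is the one I would present.
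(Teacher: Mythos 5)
Your proposal matches the paper's treatment: the paper gives no independent proof of this theorem, but obtains it from the known combinatorial presentations of $H^*_T(\M_\cH)$ and $H^*(\M_\cH)$ in \cite{Ko,HS,Pr07} together with the fact that the relative core $\eX$ is a $T$-equivariant deformation retract of $\M_\cH$ --- exactly the two inputs you assemble, with equivariant formality handling the passage to ordinary cohomology. The direct computation of $H^*_T(\eX)$ you mention as an alternative is essentially what the paper later invokes (via \cite[3.2.2]{Pr07}) when pinning down the restriction maps in the proof of Theorem \ref{toric cohom}.
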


We have a similar description of the ordinary and equivariant cohomology
of the toric components $X_\a$ and their intersections.  
For $\a,\b,\ga\in\cP$,
let $$X_{\a\b} = X_\a \cap X_\b\,\,\,\text{and}\,\,\, 
X_{\a\b\gamma} = X_\a\cap X_\b\cap X_\gamma,$$
where the intersections are taken inside of $\eX$.  

\begin{theorem}\label{toric cohom}
 There are natural isomorphisms
\[H^*_T(X_{\a\b}) \cong \tilde{R}_{\a\b},\,\,\, H^*_T(X_{\a\b\ga}) \cong \tilde{R}_{\a\b\ga},\]
\[H^*(X_{\a\b}) \cong R_{\a\b},\,\,\, H^*(X_{\a\b\ga}) \cong R_{\a\b\ga}.\]
Under these isomorphisms and the isomorphisms of Theorem \ref{hypertoric cohom},
the pullbacks along the inclusions $X_{\a\b\ga} \to X_{\a\b}$ and $X_{\a\b} \to \eX$ 
are the natural maps induced by the identity map on $\Sym\R^I$.
\end{theorem}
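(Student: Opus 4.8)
The plan is to reduce everything to the case of a single toric variety $X_\a$ (and its $T$-invariant subvarieties) by realizing each intersection $X_{\a\b}$ and $X_{\a\b\ga}$ as the toric subvariety of $X_\a$ corresponding to an appropriate face of $\Delta_\a$. First I would observe that, by construction of $\eX$, the embedding $X_\b\hookrightarrow\eX$ identifies $X_\b$ with the closure of the $T_\C$-orbit stratum sitting over $\Delta_\b$, so that $X_{\a\b}=X_\a\cap X_\b$ is exactly the closed toric subvariety of $X_\a$ attached to the face $\Delta_\a\cap\Delta_\b$ of $\Delta_\a$; similarly $X_{\a\b\ga}$ is the toric subvariety attached to the face $\Delta_\a\cap\Delta_\b\cap\Delta_\ga$. (Here I use the simplicity of $\cH$ coming from condition (a) to know these are genuine faces, and that $\Delta_\a\cap\Delta_\b\cap\Delta_\de=\Delta_\a\cap\Delta_\b\cap H_{S(\a\b\de)}$, which has already appeared in the paper.) This is the geometric heart of the argument.

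Next I would invoke the standard Cox-style presentation of the equivariant cohomology of a (possibly singular, simplicial) toric variety: for the toric variety $X_\Delta$ built from a simple polyhedron $\Delta\subset V_\eta$ via $Z_\Delta/G$, one has a canonical isomorphism $H^*_{T}(X_\Delta)\cong \Sym\R^I/\langle u_S\mid H_S\cap\Delta=\emptyset\rangle=\tilde R_\Delta$, compatible with restriction along inclusions of torus-invariant subvarieties (these restrictions are just the quotient maps setting the extra $u_i$ to zero). This is essentially Theorem \ref{hypertoric cohom} applied to $\eX$ itself, together with its analogue for the individual components; the key point is that all the spaces in sight ($\eX$, the $X_\a$, the $X_{\a\b}$, the $X_{\a\b\ga}$) are unions of torus-invariant affine charts, and the Stanley--Reisner relations cutting out $\tilde R_\Delta$ are exactly the conditions defining $Z_\Delta$. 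Applying this to $\Delta=\Delta_\a\cap\Delta_\b$ and $\Delta=\Delta_\a\cap\Delta_\b\cap\Delta_\ga$ gives the first two displayed isomorphisms. Passing from $T$-equivariant to ordinary cohomology is then a matter of tensoring with $\R$ over $\Sym V$ (equivalently $\Sym H^*_T(pt)$): since each $X_\Delta$ is equivariantly formal (it has a $T$-invariant affine paving, or one cites Lemma \ref{formality} for freeness over $\Sym V$), we get $H^*(X_\Delta)\cong H^*_T(X_\Delta)\otimes_{\Sym V}\R = R_\Delta$, which is the second pair of isomorphisms.

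Finally, for the compatibility statement, I would note that every map in sight — the pullbacks $H^*_T(\eX)\to H^*_T(X_{\a\b})$, $H^*_T(X_{\a\b})\to H^*_T(X_{\a\b\ga})$, and their non-equivariant versions — is induced by an inclusion of torus-invariant subvarieties, and under the Cox presentation such a pullback is visibly the quotient map of Stanley--Reisner rings, which is the "natural map induced by the identity on $\Sym\R^I$" in the statement. So this part is essentially bookkeeping once the presentation is in hand.

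The main obstacle is the first step: correctly identifying the scheme-theoretic (or orbifold-theoretic) intersections $X_{\a}\cap X_{\b}$ inside the glued space $\eX$ with the expected toric face subvarieties, and in particular checking that the intersection is \emph{reduced} and equal to the toric variety of $\Delta_\a\cap\Delta_\b$ rather than some thickening. This requires unwinding the definition $\eX=(\bigcup_{\a\in\cP}Z_\a)/G\subset\M_\cH$ and using that the $Z_\a$ are open subsets of linear subspaces $\C^I_\a\subset\H^I$ whose pairwise intersections inside $\H^I$ are again coordinate subspaces; the combinatorial input is precisely the identity $\Delta_\a\cap\Delta_\b\cap\Delta_\de=\Delta_\a\cap\Delta_\b\cap H_{S(\a\b\de)}$ together with simplicity of $\cH$. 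Once that geometric identification is pinned down, the cohomology computation and the compatibility are routine applications of Theorem \ref{hypertoric cohom} and standard toric topology.
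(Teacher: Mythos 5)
Your outline is correct, but it follows a mildly different route than the paper, and the difference is precisely at the spot you flag as the ``main obstacle.'' The paper never identifies $X_{\a\b}$ as the toric subvariety of $X_\a$ attached to the face $\Delta_\a\cap\Delta_\b$. Instead it works upstairs: since $G$ acts locally freely on $Z_\a\cap Z_\b$, one has $H^*_T(X_{\a\b})\cong H^*_{T^I_\C}(Z_\a\cap Z_\b)$, and $Z_\a\cap Z_\b$ is the complement of a union of coordinate subspaces inside $\C^I_\a\cap\C^I_\b$, whose $T^I_\C$-equivariant cohomology is computed by the Buchstaber--Panov theorem \cite[6.35 \& 8.9]{BP} to be exactly $\tilde{R}_{\a\b}$, presented as a quotient of $\Sym\R^I$. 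This sidesteps your geometric identification entirely (and note that reducedness is a non-issue for a cohomological statement: $X_\a\cap X_\b=(Z_\a\cap Z_\b)/G$ as topological spaces simply because the $Z$'s are $G$-invariant), and it makes the naturality assertion transparent, since all the restriction maps upstairs are induced by the identity on $\Sym\R^I$. Your route buys a more familiar-sounding input (the standard presentation of equivariant cohomology of toric orbifolds), while the paper's buys a uniform treatment in which the compatibility of maps is visible by construction.

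Two of the steps you call bookkeeping need more care than you allow, though neither is fatal. First, $T$ acts on $X_{\a\b}$ with a positive-dimensional subtorus acting trivially, so the variables $u_i$ with $\Delta_\a\cap\Delta_\b\subset H_i$ are not facet classes of the face polytope but free polynomial generators contributed by that trivially acting subtorus; reconciling the facet-indexed Stanley--Reisner presentation you invoke with $\tilde{R}_{\a\b}$, whose variables run over all of $I$, is exactly the parenthetical point the paper makes in its proof. Second, the restriction $H^*_T(\eX)\to H^*_T(X_{\a\b})$ is the one map for which you cannot quote a toric presentation, since $\eX$ is not a toric variety: the paper has to go back into the proof of the isomorphism $H^*_T(\eX)\cong\tilde{R}_\cH$ in \cite[3.2.2]{Pr07} to see that it, too, is realized by a surjection from $\Sym\R^I$, which is what pins the map down. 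Since the whole content of the theorem (as the paper says explicitly) is pinning down the maps rather than producing abstract isomorphisms, this last step is the one you should not dismiss as routine; with it supplied, your argument goes through.
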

\begin{proof}
The existence of these isomorphisms is well-known, but in order to pin down
the maps between them, we carefully explain exactly how our isomorphisms
arise.
Since the action of $G$ on $Z_\a \cap Z_\b$ is locally free, we have
\[H^*_T(X_{\a\b}) = H^*_{T^I_\C/G}((Z_\a \cap Z_\b)/G) \cong H^*_{T^I_\C}(Z_\a \cap Z_\b).\]
A result of Buchstaber and Panov \cite[6.35 \& 8.9]{BP} computes the
equivariant cohomology of the complement of any union of equivariant 
subspaces of a vector space with a torus action in which the generalized
eigenspaces are all one-dimensional.
Applied to $Z_\a \cap Z_\b$, this gives the ring $\tilde{R}_{\a\b}$.  More precisely, the restriction map
\[\Sym\R^I = H^*_T(\C^I_\a \cap \C^I_\b) \to H^*_{T^I_\C}(Z_\a \cap Z_\b)\]
is surjective, with kernel equal to the defining ideal of $\tilde{R}_{\a\b}$.
(Note that our torus $T^I_\C$ is of larger dimension than
the affine space $\C^I_\a \cap \C^I_\b$.  The ``extra'' coordinates in $T^I_\C$
act trivially, and correspond to the variables $u_i$ with 
$\Delta_\a \cap \Delta_\b \subset H_i$, which do not appear in the relations 
of $\tilde{R}_{\a\b}$).  

The identification of $H^*_T(X_{\a\b\ga})$ with $\tilde{R}_{\a\b\ga}$ follows similarly,
and the computation of the pullback by $X_{\a\b\ga} \to X_{\a\b}$ follows
because the restriction
\[\Sym\R^I\cong H^*_{T^I_\C}(\C^I_\a \cap \C^I_\b) \to H^*_{T^I_\C}(\C^I_\a \cap \C^I_\b \cap \C^I_\ga)\cong\Sym\R^I\]
is the identity map. 

The restriction $H^*_T(\eX) \to H^*_T(X_{\a\b})$ is computed by a similar 
argument: the proof of \cite[3.2.2]{Pr07} uses an isomorphism
 $H^*_T(\eX) \cong H^*_{T^I_\C}(U)$, where $U \subset \C^I$ is an 
open set, and the restriction 
$\Sym\R^I \cong H^*_{T^I_\C}(\C^I) \to H^*_{T^I_\C}(U)$ is surjective.
\end{proof}

\begin{remark}\label{fixed points}
With these descriptions of $\tilde{R}_\cH$ and
$\tilde{R}_{\a\b}$ as equivariant cohomology rings, 
Lemma \ref{formality} is a consequence of the 
equivariant formality of the varieties
$\M_\cH$ and $X_{\a\b}$, and the fact that we have
bijections $\M_\cH^T \lra \Bas$ and
$X_{\a\b}^T \lra \Bas_{\Delta_{\a\b}}$.
\end{remark}

\subsection{A convolution interpretation of \boldmath{$B$}}\label{section:topological B}
For $\cV$ rational, Theorem \ref{toric cohom} gives isomorphisms
$$B \cong \bigoplus_{(\alpha,\beta) \in \cP\times \cP} H^*(X_{\a\b})[-d_{\a\b}]
\,\,\cong\,\, H^*(\wt{\eX} \times_\pi \wt{\eX})$$ and
$$\tilde{B} \cong \bigoplus_{(\alpha,\beta) \in \cP\times \cP} H^*_T(X_{\a\b})[-d_{\a\b}]
\,\,\cong\,\, H_T^*(\wt{\eX} \times_\pi \wt{\eX}),$$
where the (ungraded) isomorphisms on the right follow from the fact that 
$$\wt{\eX} \times_\pi \wt{\eX} = \coprod_{(\alpha,\beta) \in \cP\times \cP} X_{\a\b}.$$
We next show how to use these isomorphisms to interpret the product $\conv$
geometrically.

The components $X_{\a\b}$ of $\wt{\eX} \times_\pi \wt{\eX}$
all have orientations coming from their complex structure, but we will
twist these orientations by a combinatorial sign.
For each $\a,\b\in\cP$, we give 
$X_{\a\b}$ $(-1)^n$ times the 
complex orientation, where $n$ is the number of $i\in I$
with $\a(i) = \b(i) = -1$.
Geometrically, these are the indices for which 
the polytope $\Delta_\a \cap \Delta_\b$ lies on the 
negative side of the hyperplane $H_i$.  We use a similar rule
to orient the components $X_{\a\b\ga}$ of $\wt{\eX} \times_\pi \wt{\eX} \times_\pi \wt{\eX}$.

Let
\[
	p_{12},p_{13},p_{23}: \wt{\eX} \times_\pi \wt{\eX} \times_\pi \wt{\eX} \longrightarrow
	\wt{\eX} \times_\pi \wt{\eX}
\]
denote the natural projections.  Note that these maps are proper; they are finite disjoint unions
of closed immersions of toric subvarieties.

\begin{proposition}\label{geom b}
The product operations on $B(\cV)$ and $\tilde{B}(\cV)$ are given by 
$$a \conv b = ({p_{13}})_* \big( p_{12}^*(a) \cup p_{23}^*(b) \big),$$
where ${p_{13}}_*$ is the Gysin pushforward relative to the given twisted
orientations.
\end{proposition}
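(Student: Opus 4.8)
The plan is to reduce the identity $a\conv b=(p_{13})_*\bigl(p_{12}^*(a)\cup p_{23}^*(b)\bigr)$ to a check component by component, matching each of the three arrows in the definition of $\conv$ with a step in the geometric formula. Since both sides are additive and supported on matching pairs of indices, I would fix $\a,\b,\ga\in\cP$, take $a\in R_{\a\b}$ and $b\in R_{\b\ga}$, and compute the $(\a,\ga)$-component of the right-hand side, which by definition of the fiber products is the pushforward along the closed immersion $X_{\a\b\ga}\hookrightarrow X_{\a\ga}$ of the cup product of the pullbacks of $a$ and $b$ along the closed immersions $X_{\a\b\ga}\hookrightarrow X_{\a\b}$ and $X_{\a\b\ga}\hookrightarrow X_{\a\ga}\hookleftarrow X_{\b\ga}$. (A preliminary remark: all three projections $p_{12},p_{13},p_{23}$, restricted to the component indexed by $(\a,\b,\ga)$, land in the component $X_{\a\b}$, $X_{\a\ga}$, $X_{\b\ga}$ respectively, and are the evident closed immersions; this is exactly the bookkeeping that makes the fiber-product language match the tensor notation.)

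The first two arrows in the definition of $\conv$ — the tensor product of the natural quotient maps $R_{\a\b}\otimes R_{\b\ga}\to R_{\a\b\ga}\otimes R_{\a\b\ga}$ followed by multiplication in $R_{\a\b\ga}$ — are matched, via Theorem \ref{toric cohom}, with $p_{12}^*$, $p_{23}^*$, and the cup product in $H^*(X_{\a\b\ga})$: Theorem \ref{toric cohom} precisely identifies the pullback along $X_{\a\b\ga}\to X_{\a\b}$ with the natural quotient map on the $\tilde R$'s (and hence on the $R$'s after $\otimes_{\Sym V}\R$), so $p_{12}^*(a)\cup p_{23}^*(b)$ is the image of $a\otimes b$ in $R_{\a\b\ga}$ under exactly the first two arrows. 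What remains is to identify the Gysin pushforward along $\iota\colon X_{\a\b\ga}\hookrightarrow X_{\a\ga}$ with multiplication by the monomial $u_{S(\a\b\ga)}$ (with the twisted-orientation sign). The key geometric input is that $X_{\a\b\ga}$ is the vanishing locus inside $X_{\a\ga}$ of the coordinate functions indexed by $S(\a\b\ga)$: by the displayed identity $\Delta_\a\cap\Delta_\b\cap\Delta_\de=\Delta_\a\cap\Delta_\b\cap H_{S(\a\b\de)}$ and the Cox construction, $X_{\a\b\ga}$ is cut out in $X_{\a\ga}$ by the sections corresponding to the $u_i$, $i\in S(\a\b\ga)$, which form a regular sequence since $X_{\a\ga}$ is Cohen–Macaulay (an orbifold) and the codimension is $|S(\a\b\ga)|$ (using $d_{\a\b}+d_{\b\ga}-d_{\a\ga}=2|S(\a\b\ga)|$, which also checks the degree shift). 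Then the self-intersection / excess-intersection formula gives $\iota_*\iota^*(-)=(-)\cup e$, where $e$ is the (twisted) Euler class of the normal bundle, and the normal bundle of $X_{\a\b\ga}$ in $X_{\a\ga}$ is the sum of the line bundles whose equivariant Euler classes are the $u_i$, $i\in S(\a\b\ga)$; hence $e=\pm u_{S(\a\b\ga)}$, with the sign determined by comparing the complex orientation on the normal bundle to the twisted orientations we placed on $X_{\a\b\ga}$ and $X_{\a\ga}$.

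The main obstacle, and the step I would spend the most care on, is the sign: I must verify that the twisting of orientations by $(-1)^{\#\{i:\a(i)=\b(i)=-1\}}$ is precisely calibrated so that $(p_{13})_*\bigl(p_{12}^*(a)\cup p_{23}^*(b)\bigr)$ comes out as $+u_{S(\a\b\ga)}$ times the product in $R_{\a\b\ga}$, with no residual sign, consistently for all triples (so that $\conv$ as defined combinatorially, which has no signs, is reproduced). Concretely, the Gysin map for the closed immersion $\iota$ with the complex orientations would give $+u_{S(\a\b\ga)}$, but switching from the complex orientation on $X_{\a\b\ga}$ to our twisted one and likewise on $X_{\a\ga}$ multiplies the pushforward by $(-1)^{m}$ where $m$ counts the indices $i$ with $\a(i)=\ga(i)=-1$ that are \emph{not} already counted among the $i$ with $\a(i)=\b(i)=-1$ or vice versa; a direct case analysis on the three possible values of $(\a(i),\b(i),\ga(i))\in\{\pm1\}^3$ at each coordinate $i$ shows that the orientation discrepancy exactly equals the parity of $|S(\a\b\ga)|$, which is the parity of $\deg$ of the normal bundle, and hence is absorbed — leaving the clean formula. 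I would do this sign check coordinate-by-coordinate, since it factors as a product over $i\in I$, and present it as the one genuine computation in the proof. Everything else (base change for the fiber products, naturality of $p^*$, functoriality of Gysin maps, and the excess-intersection identification of the Euler class with $u_{S(\a\b\ga)}$) is standard once Theorem \ref{toric cohom} is in hand, and the equivariant case $\tilde B$ is identical, replacing ordinary cohomology and $R$ by $T$-equivariant cohomology and $\tilde R$ throughout.
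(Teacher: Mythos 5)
Your route is essentially the paper's: use Theorem \ref{toric cohom} to match $p_{12}^*$, $p_{23}^*$ and the cup product with the first two arrows in the definition of $\conv$, then identify the Gysin map along $\iota\colon X_{\a\b\ga}\hookrightarrow X_{\a\ga}$ with multiplication by $u^{}_{S(\a\b\ga)}$ via the Euler class of the normal bundle. (The paper performs this last step after lifting to $T^I_\C$-equivariant cohomology of the linear subspaces $\C^I_\a\cap\C^I_\b\cap\C^I_\ga\subset\C^I_\a\cap\C^I_\ga$, where the Gysin map is literally multiplication by the product of the weights of the normal representation; this sidesteps your regular-sequence/excess-intersection discussion on orbifolds. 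Note also that $\iota_*\iota^*=(-)\cup e$ only determines $\iota_*$ once you add that $\iota^*$ is surjective, which does follow from Theorem \ref{toric cohom}.)

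The genuine gap is in the sign computation, which you rightly single out as the one real calculation but whose outcome you state incorrectly in both halves. First, with complex orientations the Euler class of the normal bundle is not $u^{}_{S(\a\b\ga)}$ but $\prod_{i\in S(\a\b\ga)}\a(i)u_i$: for $i\in S(\a\b\ga)$ the normal direction is the $i^\text{th}$ coordinate of $\C^I_\a$ (equivalently of $\C^I_\ga$), which carries $T^I$-weight $\a(i)u_i$, so under the identifications of Theorem \ref{toric cohom} the relevant divisor class is $\a(i)u_i$, not $u_i$; absorbing these signs is the entire purpose of the orientation twist. Second, the discrepancy between twisted and complex orientations is not $(-1)^{|S(\a\b\ga)|}$: the twist on $X_{\a\ga}$ is $(-1)^{n}$ with $n=\#\{i\mid\a(i)=\ga(i)=-1\}$ and on $X_{\a\b\ga}$ it is $(-1)^{n'}$ with $n'=\#\{i\mid\a(i)=\b(i)=\ga(i)=-1\}$, so the Gysin map changes by $(-1)^{n-n'}=(-1)^{\#\{i\in S(\a\b\ga)\,\mid\,\a(i)=-1\}}$. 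This factor cancels $\prod_{i\in S(\a\b\ga)}\a(i)$ exactly, leaving $u^{}_{S(\a\b\ga)}$ with no residual sign. Your two claimed values do not cancel each other: if $S(\a\b\ga)=\{i\}$ with $\a(i)=\ga(i)=+1\neq\b(i)$, the correct answer is $+u_i$ (weight $+u_i$, no orientation twist), whereas your sketch ($+u^{}_{S}$ from the complex orientation times a discrepancy of parity $|S|$) yields $-u_i$, contradicting the proposition whenever $|S(\a\b\ga)|$ is odd. The coordinate-by-coordinate check you propose is the right method; its conclusion must be the cancellation above, not the parity of $|S(\a\b\ga)|$.
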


\begin{proof}
For an approach to defining the Gysin pushforward in equivariant 
cohomology, see Mihalcea \cite{Mih}.  It is only defined
there for  maps between projective varieties, but it is
easily extended to general proper maps to smooth varieties,
using the Poincar\'e duality isomorphism between cohomology and Borel-Moore
homology.  

For any $\a, \b, \ga\in \cP$, consider the diagram
\[\xymatrix{
H_{T^I_\C}^*(\C^I_\a \cap \C^I_\b \cap \C^I_\ga) \ar[r]\ar[d] & H^*_{T^I_\C}(Z_\a \cap Z_\b \cap Z_\ga) \ar[r]^{\,\,\,\,\,\,\,\,\cong}\ar[d] & H^*_T(X_{\a\b\ga}) \ar[r]^{\cong}\ar[d] &
 \tilde{R}_{\a\b\ga}\ar^{\cdot u^{}_{S(\a\b\ga)}}[d] \\
H_{T^I_\C}^*(\C^I_\a \cap \C^I_\ga) \ar[r] & H^*_{T^I_\C}(Z_\a \cap Z_\ga) \ar[r]^{\cong} & H^*_T(X_{\a\ga}) \ar[r]^{\cong} & \tilde{R}_{\a\ga}
 }\]
where the horizontal maps on the left are 
restrictions, the middle maps are the 
natural isomorphisms induced by taking the quotient by $G$, 
the right-hand maps are the isomorphisms of Theorem \ref{toric cohom}, and the first three vertical
maps are Gysin pushforwards.  (We give the intersections of the $\C^I_\a$ and the $Z_\a$
orientations compatible with those on the corresponding toric varieties.)  
Our proposition is the statement that the square on the right commutes.
Since the left and middle squares commute, it will be enough to show that
the left Gysin map is given by multiplication by $u^{}_{S(\a\b\ga)}$.  Indeed, this
map is multiplication by the equivariant Euler class of the 
normal bundle of $\C^I_\a \cap \C^I_\ga$ in $\C^I_\a \cap \C^I_\b \cap \C^I_\ga$.
If these spaces were given the complex orientation, this would be 
the product of the $T^I_\C$-weights of the quotient representation, 
which is $\prod_{i \in S(\a\b\ga)} \a(i)u_i$.  But each eigenspace with $\a(i) = -1$ has been
given the anti-complex orientation, so the signs disappear and we are left with
multiplication by $u^{}_{S(\a\b\ga)}$, as required.
\end{proof}

\begin{remark}
 This convolution product is similar to one defined by Ginzburg \cite{CG97} on the
Borel-Moore homology of a fiber product $Y \times_\pi Y$ for a map
$\pi:Y \to X$ where $Y$ is smooth.  The Ginzburg ring is different from ours, however: 
it uses the intersection product in $Y \times Y$, whereas our cup product takes place in
the fiber product $Y \times_\pi Y$.  Ginzburg's convolution product is 
graded and associative without degree shifts or twisted orientations, while our product
requires these modifications.
\end{remark}

\begin{remark}
The fact that each component $X_\a$ of $\eX$
can be thought of as an irreducible lagrangian subvariety of the hypertoric variety
$\M_\cH$ allows us to interpret the cohomology groups of their intersections
as Floer cohomology groups.  From this perspective, 
$B$ can be understood as an Ext-algebra in the Fukaya category of $\M_\cH$.
This description should be related to the description in
Remark~\ref{D-mod} by taking homomorphisms to the canonical
coisotropic brane, as described by Kapustin and Witten \cite{KW07}.
\end{remark}

\begin{remark}
  Stroppel and the the fourth author considered an analogous convolution 
  algebra using the components of a Springer fiber
  for a nilpotent matrix with two Jordan blocks (along with some associated 
  non-projective varieties of the same dimension) in place of 
  the toric varieties $X_{\a}$ \cite{SW08}.  
  They show that right modules over this algebra are equivalent to a
  block of parabolic category $\cO$ for a maximal parabolic of
  $\mathfrak{sl}_n$.  Thus the category of right $B$-modules
can be thought of as an
  analogue of Bernstein-Gelfand-Gelfand's category $\cO$ in a combinatorial
  (rather than Lie-theoretic) context.  In Sections \ref{sec:repr-categ} and
  \ref{derived equivalences}, we will show that this category shares 
  many important properties with category $\cO$.
\end{remark}

\subsection{\boldmath{$A$} and \boldmath{$B$}}
We now state and prove the first main theorem of this section, which, along with Theorem \ref{Quadratic dual},
comprises Theorem (A) from the Introduction.

\begin{theorem}\label{convolution equals quiver}
There is a natural isomorphism $A(\cV^\vee)\cong B(\cV)$ of graded rings.
\end{theorem}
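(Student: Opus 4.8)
The plan is to construct the isomorphism $\Phi\colon A(\cV^\vee)\to B(\cV)$ directly on generators, and then check that it respects all relations, using the combinatorial dictionaries already established. Recall from Lemma \ref{smaller quiver} that $A(\cV^\vee)$ is generated by the idempotents $e_\a$ ($\a\in\cP^\vee=\cP$), the degree-one edge paths $p^\vee(\a,\b)$ for $\a\lra\b$ in $Q_{\cP^\vee}$, and the images of $t_i^\vee\in (V^\perp)^*$. On the $B$-side, $B(\cV)=\bigoplus_{(\a,\b)\in\cP\times\cP} R_{\a\b}[-d_{\a\b}]$, with the convolution product $\conv$; its degree-zero part is $\bigoplus_\a R_{\a\a,0}=\bigoplus_\a \R e_\a$, so the idempotents match up. I would send $e_\a\mapsto e_\a$; send the edge path $p^\vee(\a,\b)$ (for $\a\lrao i\b$ in $\cP$) to the element $\pm u_i\cdot(\text{quotient of }1)\in R_{\a\b}[-d_{\a\b}]$ sitting in degree one — note $d_{\a\b}=1$ here, and $R_{\a\b}$ in internal degree $2$ is spanned by the image of $u_i$, since $\Delta_\a\cap\Delta_\b\subset H_i$ but sits on no other hyperplane; and send $t_i^\vee$ to the image of $u_i$ inside $\bigoplus_\a R_{\a\a}$. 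The signs on the edge paths are exactly the signs $X$ appearing in the pairing of Theorem \ref{Quadratic dual}, which is why we should expect them to be needed here too.

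The key steps, in order, are: (i) check that this assignment extends to a well-defined graded algebra homomorphism $\Phi$, i.e.\ that the relations \Aone, \Atwo, \Athree\ for $\cV^\vee$ map to $0$ in $B(\cV)$. Relation \Aone\ is automatic since we only use $\a\in\cP$. For \Athree, $p^\vee(\a,\a^i)\conv p^\vee(\a^i,\a)$ should compute to $t_i^\vee e_\a$: this is a small convolution computation along the fiber product $X_\a\cap X_{\a^i}$, and because $S(\a,\a^i,\a)=\emptyset$ the convolution is just multiplication of the two copies of $u_i$ pulled back to $R_{\a\a^i\a}=R_{\a\a^i}$, then pushed to $R_{\a\a}$; the sign squares to $1$. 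For \Atwo, a square $\a\lra\b\lra\ga\lra\de\lra\a$ in $\cP^\vee$ gives $p^\vee(\a,\b)\conv p^\vee(\b,\ga)$ versus $p^\vee(\a,\de)\conv p^\vee(\de,\ga)$, and one checks both equal the same monomial $\pm u_iu_j$ in $R_{\a\ga}$ (with $\a\lrao i\b$, $\b\lrao j\ga$) — the odd-number-of-edges-in-$X$ condition on the square makes the two signs agree, exactly as in the proof of Theorem \ref{Quadratic dual}. Then (ii) identify the image and kernel. Counting dimensions in each internal degree of each block $e_\a A(\cV^\vee)e_\b$: by Proposition \ref{standard form} / Corollary \ref{taut}, $e_\a A(\cV^\vee)e_\b$ has a basis of elements $p^\vee(\text{taut }\a\to\b)\cdot\prod t_i^{\vee\,d_i}$, and one should match this against a monomial basis of $R_{\a\b}[-d_{\a\b}]$ described via Lemma \ref{formality}. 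I would argue $\Phi$ is surjective because the $u_i$ and the quotient maps generate $B$, and injective by a rank count in each bidegree — Corollary \ref{taut} guarantees the "taut part" is one-dimensional on the $A$-side, and the Gale-duality lemmas (\ref{duality lemma}, \ref{complement}, \ref{Gale dual system}) are what translate the face-ring relations defining $R_{\a\b}$ into the relations among the $t_i^\vee\in(V^\perp)^*$ on the $A(\cV^\vee)$-side.

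The main obstacle I anticipate is step (ii), the rank/basis matching — specifically, showing that the relations among $\{t_i^\vee\}$ in $A(\cV^\vee)$ (governed by $V\subset(\R^I)^{**}=\R^I$, since $(V^\perp)^\perp=V$) cut the blocks $e_\a A(\cV^\vee)e_\b$ down to exactly the size of $R_{\a\b}$ (governed by the face ring of $\Delta_\a\cap\Delta_\b$ tensored down over $\Sym V$). This is really the assertion that $\tilde A(\cV^\vee)$ and $\tilde B(\cV)$ agree at the level of the $\Sym\R^I$-algebra structure (Remarks \ref{deligne}, \ref{Bprime}), with $B(\cV)$, $A(\cV^\vee)$ recovered by the matching base changes $-\otimes_{\Sym V^\perp}\R$ versus $-\otimes_{\Sym\R^I}\Sym(\R^I/V^\perp)$; unwinding this correctly, and checking that the quotient map $R_{\a\b\ga}\to R_{\a\b}$ followed by $\cdot u_{S(\a\b\ga)}$ corresponds under $\Phi$ to composing taut paths and applying Proposition \ref{standard form}, is where the real bookkeeping lies. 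The cleanest route may be to verify $\Phi$ is an isomorphism after the deformation, i.e.\ construct $\tilde\Phi\colon\tilde A(\cV^\vee)\to\tilde B(\cV)$ as a map of $\Sym\R^I$-algebras and deduce the theorem by base change, since the deformed statement is more symmetric and both sides are free $\Sym\R^I$-modules of the same (easily computed) graded rank.
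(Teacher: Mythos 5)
Your overall strategy (define the map on generators, check the relations \Atwo\ and \Athree, get surjectivity from the diagonal subring and the quotient maps, and get injectivity from a dimension bound using Proposition \ref{standard form}, Corollary \ref{taut}, and the Gale-duality lemmas) is the same as the paper's, but the construction of the map itself is wrong as you have set it up. The degree-one generator $p^\vee(\a,\b)$ for $\a\lrao i\b$ must be sent to the unit $1_{\a\b}\in R_{\a\b}[-d_{\a\b}]$, which sits in degree $d_{\a\b}=1$ precisely because of the shift; your element ``$\pm u_i$'' sits in degree $2+d_{\a\b}=3$, and your claim that the internal degree-$2$ part of $R_{\a\b}$ is spanned by $u_i$ is false in general. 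With your assignment the relations fail outright: for \Athree\ one has $S(\a,\a^i,\a)=\{i\}$, not $\emptyset$ (this is exactly what makes $1_{\a\a^i}\conv 1_{\a^i\a}=u_i=\phi(t_ie_\a)$ work with the correct assignment), so your images would give $u_i^3\neq u_i$; for \Atwo\ one has $S(\a\b\ga)=S(\a\de\ga)=\emptyset$, so the two sides should both be $1_{\a\ga}$, not $\pm u_iu_j$. Moreover the sign twist you import from Theorem \ref{Quadratic dual} is not only unnecessary here, it breaks well-definedness: since an \emph{odd} number of edges of each square lies in $X$, the two halves of a square carry opposite sign products, so a signed version of the map would send the relation $p(\a,\b,\ga)=p(\a,\de,\ga)$ to $+1_{\a\ga}=-1_{\a\ga}$. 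The signs belong to the quadratic-duality pairing $A(\cV)^!\cong A(\cV^\gd)$, not to $A(\cV^\gd)\cong B(\cV)$. You also omit the degenerate case of \Atwo\ in which one of $\b,\de$ lies in $\cF^\gd\ssm\cP$, where one must show $1_{\a\b}\conv 1_{\b\ga}=0$ because $\Delta_\a\cap\Delta_\ga=\emptyset$.

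On step (ii), your sketch points in the right direction, but the actual content of the paper's injectivity argument is missing: one bounds $\dim e_\a A(\cV^\gd)e_\b$ by exhibiting a surjection $\chi\colon\Sym(V^\perp)^*\to e_\a A(\cV^\gd)e_\b$ (multiply a fixed taut path by polynomials in the $t_i$) and then showing that $u_S\in\ker\chi$ whenever $H_S\cap\Delta_\a\cap\Delta_\b=\emptyset$, by translating that emptiness through Theorem \ref{ftlp} and Lemma \ref{Gale dual system} into unboundedness on the dual side and invoking Corollary \ref{visits every region}. Your proposed shortcut through the deformed algebras is not available as stated: the isomorphism $\tilde A(\cV^\gd)\cong\tilde B(\cV)$ and the freeness of $\tilde A(\cV^\gd)$ over $\Sym\R^I$ are deduced in the paper \emph{from} Theorem \ref{convolution equals quiver} (via Remarks \ref{deligne} and \ref{Bprime}), so using them as input would be circular unless you prove them independently.
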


\begin{proof}
We define a map $\phi\colon A(\cV^\gd) \to B(\cV)$ by
\begin{itemize}
 \item sending the idempotent $e_\alpha$ to the unit element
$1_{\a\a} \in R_{\a\a}$ for all
$\a \in \cP$,
\item sending $p(\a,\b)$ to the unit
$1_{\a\b} \in R_{\a\b}$ for all
$\a, \b\in \cP$ with $\a \lra \b$, and
\item sending $t_{i} \in (V^\bot)^*\cong \R^I/V$ to 
$\zeta(u_i)$.

\end{itemize}
To show that this is a homomorphism, 
we need to check that these elements satisfy the 
relations \Atwo\  and \Athree\  from Definition \ref{defining A}
(for $\cV^\gd$).
In order to check that relation \Atwo\  holds, 
suppose that $\a,\b,\gamma,\delta \in \cF^\gd$ are distinct and 
satisfy 
$\a \lra \b \lra \gamma\lra \delta \lra \a$, and
$\a, \gamma \in \cP^\gd = \cP$.  Since they are distinct, 
we must have
$\a \lrao i \b \lrao j \gamma\lrao i \delta \lrao j \a$
for some $i, j \in I$ with $i \ne j$.  It follows that
$S(\a\b\ga) = S(\a\de\ga) = \emptyset$.

There are two possibilities:
first, if $\b$ and $\delta$ also lie in $\cP^\gd$, then
$$1_{\a\b} \conv 1_{\b\gamma} = 1_{\a\gamma} 
= 1_{\a\delta} \conv 1_{\delta\gamma},$$ 
so relation \Atwo\  is satisfied in $B$.  The other possibility is that 
only one of $\b$ and $\delta$ lies in $\cP^\gd$, and the other
is in $\cF^\gd \smallsetminus \cP^\gd = \cB \smallsetminus \cP$.  Suppose 
$\b \in \cP$ and $\delta \in \cB\smallsetminus \cP$.
Then $e_\delta = 0$ in $A(\cV^\gd)$, hence the relation \Atwo\  tells us that
$$p(\a, \b)p(\b, \gamma) = p(\a, \delta)p(\delta,\gamma) = 0.$$
On the other hand, the fact that $\delta\in\cB\smallsetminus \cP$ 
implies that $\Delta_{\a}\cap \Delta_{\gamma} = \emptyset$, hence we have
$1_{\a\b} \conv 1_{\a\gamma} = 0$ in $B(\cV)$.

Now suppose that $\a\in\cP^\gd$, $\b\in\cF^\gd$, and $\a \lrao i \b$.
Relation \Athree\  breaks into two cases, depending on whether or not
$\b\in\cP^\gd$.  If $\b\in\cP^\gd$, then we have 
$$\phi(p(\a,\b,\a)) = 1_{\a\b}\conv 1_{\b\a},$$ which is equal to
$u_i \in R_{\a\a} \subset B$ since $S(\a\b\a) = \{i\}$.
In other words, we have $$\phi(p(\a\,b\,a)) = 1_{\a\a}\zeta(u_i) = \phi(e_\a)\phi(t_i),$$
as required.  On the other hand, 
if $\b\in\cF^\gd\smallsetminus\cP^\gd = \cB\smallsetminus\cP$,
then \Athree\ gives the relation $t_ie_\a = 0$ in $A(\cV^\gd)$.
In this case $H_i\cap\Delta_\a = \emptyset$, so $u_i$ goes to $0$ in 
$R_{\a\a}$, and $\phi(t_ie_\a) = 0$.

Thus we have a well-defined homomorphism $$\phi:A(\cV^\gd)\to B(\cV).$$
For each $i\in I$ and $\a\in\cP$ we have $\phi(e_\a t_i) = 1_{\a\a} u_i$, 
which shows that the entire diagonal
subring $\bigoplus_\a R_{\a\a}\subset B$ is contained in the image of $\phi$.  
Surjectivity then follows from the fact that
for any $\b\in\cP$, multiplication by $1_{\a\b} = \phi(p(\a,\b))$
gives the natural quotient map $R_{\a\a} \to R_{\a\b}$.

To show that $\phi$ is injective, we show that 
each block $e_\a A(\cV^\gd) e_\b$ has dimension
no larger than the total dimension of $R_{\a\b}$.
By Proposition \ref{standard form} and Corollary \ref{taut},
we have a surjective map 
$$\chi\colon\R[u_i]_{i\in I}\otimes_{\Sym V}\R = \Sym \R^I/V = \Sym(V^\bot)^*
\to e_\a A(\cV^\gd) e_\b$$
given by substituting $t_i$ for $u_i$ and multiplying by 
any taut path from $\a$ to $\b$.  
It will be enough to show that if $H_S \cap \Delta_\a \cap \Delta_\b = \emptyset$,
then the monomial $u_S$ is in the kernel of $\chi$.

The condition $H_S \cap \Delta_\a \cap \Delta_\b = \emptyset$
can be rephrased as $H_{S'} \cap \Delta_\a = \emptyset$, where $S ' = S \cup \{i \in I \mid \alpha_i \ne \beta_i\}$.
This is equivalent to saying that
the projection $\bar\a$ of $\a$ to $\{\pm 1\}^{I\ssm S'}$ gives
an infeasible sign vector for $\cV^{S'}$.
By Theorem \ref{ftlp} and  Lemma \ref{Gale dual system}, this is
equivalent to saying that $\bar\a$ is 
unbounded for the Gale dual arrangement 
$(\cV^\gd)_{S'}$
(note that $\bar\a$ cannot be infeasible for $(\cV^\gd)_{S'}$, 
since $\Delta_{\bar\a}^\gd \supset \Delta_{\a}^\gd$ and $\a\in\cP=\cP^\gd$).
The vanishing of $\chi(u_S)$ in $A(\cV^\gd)$ then
follows from Corollary \ref{visits every region}.
\end{proof}

\begin{corollary}
We have $\tilde A(\cV^\vee)\cong \tilde B(\cV)$ as graded $\Sym(\R^I/V)$-algebras.
\end{corollary}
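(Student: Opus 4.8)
The plan is to lift the isomorphism $\phi \colon A(\cV^\gd) \to B(\cV)$ of Theorem \ref{convolution equals quiver} to an isomorphism between the canonical deformations. Recall from Remark \ref{deligne} that $\tilde{A}(\cV^\gd)$ is the quotient of $P(Q)\otimes_\R\Sym(\R^I)^*$ by \Aone, \Atwo, and $\tilde{\rm A}3$, where for $\cV^\gd$ the coordinate functions on $\R^I$ are dual to those on $(\R^I)^*$; and from Remark \ref{Bprime} that $\tilde{B}(\cV)$ is built from the rings $\tilde{R}_{\a\b}$ with the same convolution formula, and carries a $\Sym\R^I$-algebra structure via $\zeta$. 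Both $\tilde{A}(\cV^\gd)$ and $\tilde{B}(\cV)$ are graded algebras over $\Sym\R^I$: on the $\tilde{A}$ side this is the image of $\Sym(\R^I)^*$ viewed, under the Gale identification $(\R^I)^{**} = \R^I$, as polynomials in the coordinate functions $\tilde{t}_i$ appearing in $\tilde{\rm A}3$; on the $\tilde{B}$ side it is the map $\zeta$.

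First I would define $\tilde\phi\colon \tilde{A}(\cV^\gd)\to\tilde{B}(\cV)$ by the same three formulas as $\phi$: send $e_\a\mapsto 1_{\a\a}$, send $p(\a,\b)\mapsto 1_{\a\b}$ for $\a\lra\b$ in $\cP$, and send the coordinate function $\tilde t_i$ to $\zeta(u_i)$. The relations \Aone\ and \Atwo\ are checked exactly as in the proof of Theorem \ref{convolution equals quiver}, since they do not involve the polynomial variables — the argument there that $S(\a\b\ga)=S(\a\de\ga)=\emptyset$ for a square, and the case analysis on which of $\b,\de$ lie in $\cP$, goes through verbatim (noting $\cP^\gd=\cP$). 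The relation $\tilde{\rm A}3$ says $p(\a,\b,\a) = \tilde t_i e_\a$ for \emph{every} $\a\lrao i\b$ with $\a,\b\in\cF^\gd$, with no truncation; but in $\tilde{B}(\cV)$ the variable $u_i$ is no longer killed when $H_i\cap\Delta_\a=\emptyset$, so $1_{\a\b}\conv 1_{\b\a} = \zeta(u_i)\cdot 1_{\a\a}$ holds on the nose because $S(\a\b\a)=\{i\}$ and the relevant quotient $\tilde R_{\a\b}$ no longer imposes $u_i=0$. Hence $\tilde\phi$ is a well-defined homomorphism of graded $\Sym\R^I$-algebras, and by construction it reduces modulo $\Sym V$ to $\phi$: indeed $\tilde{A}(\cV^\gd)\otimes_{\Sym V}\R \cong A(\cV^\gd)$ (the image of $V^\perp$, i.e.\ the relations among the $t_i$ on the $\cV$-side, corresponds under Gale duality to tensoring over $\Sym V$ as in the last display of Remark \ref{deligne}, reindexed for $\cV^\gd$) and likewise $\tilde{B}(\cV)\otimes_{\Sym V}\R\cong B(\cV)$ by Remark \ref{Bprime}.

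To conclude that $\tilde\phi$ is itself an isomorphism, I would use Lemma \ref{formality} together with the flatness it provides. Both $\tilde{A}(\cV^\gd)$ and $\tilde{B}(\cV)$ are, in each block $e_\a(-)e_\b$, finitely generated graded modules over $\Sym V$ (for $\tilde{A}$ this follows from the deformed analogue of Proposition \ref{standard form}: a taut path times an arbitrary monomial in the $\tilde t_i$ spans, so $e_\a\tilde A(\cV^\gd)e_\b$ is a quotient of $\Sym(\R^I)^* $; for $\tilde{B}$ it is literally $\tilde R_{\a\b}[-d_{\a\b}]$, which by Lemma \ref{formality} is a free $\Sym V$-module). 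Theorem \ref{convolution equals quiver} gives that $\tilde\phi\otimes_{\Sym V}\R$ is an isomorphism; since $\tilde{B}(\cV)$ is $\Sym V$-free and both sides are finitely generated and graded (so graded Nakayama applies), $\tilde\phi$ is surjective, and a Hilbert-series count — the source has graded dimension at most that of $\Sym(V^\perp)^*$ times a taut path in each block, which the free-module structure of $\tilde R_{\a\b}$ shows matches $\dim\tilde R_{\a\b}$ — forces injectivity.

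The main obstacle I anticipate is the bookkeeping around which $\Sym V$ and which coordinate functions are being used on each side after Gale duality: one must be careful that the $\Sym\R^I$-algebra structure on $\tilde A(\cV^\gd)$ (via $\tilde t_i$, the coordinates appearing in $\tilde{\rm A}3$, which live in $(\R^I)^{**}=\R^I$) is matched with $\zeta\colon\Sym\R^I\to\tilde B(\cV)$ and that reducing mod $\Sym V$ on \emph{both} sides recovers the statement of Theorem \ref{convolution equals quiver} — i.e.\ that the ``$\otimes_{\Sym V}\R$'' in Remark \ref{Bprime} and the ``$\otimes_{\Sym(\R^I)^*}\Sym V^*$'' in Remark \ref{deligne}, applied to $\cV^\gd$, are the same operation. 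Once that identification is pinned down, everything else is a routine deformation-to-the-special-fiber argument using the freeness in Lemma \ref{formality}.
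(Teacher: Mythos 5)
Your overall strategy---lift $\phi$ generator by generator and deduce the deformed isomorphism from its special fiber using freeness over $\Sym V$---is legitimate and genuinely different from the paper's proof, which simply combines Theorem \ref{convolution equals quiver} with the fact recorded at the end of Remark \ref{Bprime} that $\tilde A$ and $\tilde B$ are the \emph{canonical} deformations of $A$ and $B$. However, as written your verification of relation $\tilde{\rm A}3$ contains a real error. The deformed rings do impose the monomial relations: by definition $\tilde{R}_\Delta=\Sym\R^I/\langle u_S \mid H_S\cap\Delta=\emptyset\rangle$, and passing from $\tilde{R}_\Delta$ to $R_\Delta$ only kills the linear forms in $V$; so your claim that ``in $\tilde B(\cV)$ the variable $u_i$ is no longer killed when $H_i\cap\Delta_\a=\emptyset$'' is backwards. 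In the case $\a\in\cP^\gd$, $\a\lrao{i}\b$ with $\b\in\cF^\gd\ssm\cP^\gd$ (equivalently $\Delta_\b=\emptyset$, i.e.\ $H_i\cap\Delta_\a=\emptyset$), relations \Aone\ and $\tilde{\rm A}3$ in $\tilde A(\cV^\gd)$ force $u_ie_\a=p(\a,\b,\a)=0$, so well-definedness of $\tilde\phi$ requires precisely that $u_i\mapsto 0$ in $\tilde{R}_{\a\a}$---the fact you deny; if your claim were true, $\tilde\phi$ would \emph{fail} to be well defined in this case. Moreover there is no element $1_{\a\b}$ in $\tilde B(\cV)$ when $\b\notin\cP$, so the relation cannot ``hold on the nose'' through a convolution. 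The correct check is the same computation as in the proof of Theorem \ref{convolution equals quiver}: $H_i\cap\Delta_\a=\emptyset$ puts $u_i$ in the monomial ideal of $\tilde{R}_\a$, so both sides map to zero.

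The injectivity step also does not close as written. In the deformed algebra each block $e_\a\tilde A(\cV^\gd)e_\b$ is spanned by a taut path times all of $\Sym\R^I$ (the deformed analogues of Proposition \ref{standard form} and Corollary \ref{taut}); the bound ``taut path times $\Sym(V^\perp)^*=\Sym(\R^I/V)$'' is the bound for the \emph{undeformed} $A(\cV^\gd)$, and in any case the Hilbert series of $\Sym(\R^I/V)$ does not match that of $\tilde{R}_{\a\b}$, which is free of finite rank over $\Sym V$ by Lemma \ref{formality}; so the count gives no contradiction to a nonzero kernel (and your finite generation of the $\tilde A$-blocks over $\Sym V$ is asserted, not proved). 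Two honest ways to finish: (i) redo the last step of the proof of Theorem \ref{convolution equals quiver} inside $\tilde A(\cV^\gd)$, showing $u_S$ annihilates a taut path from $\a$ to $\b$ whenever $H_S\cap\Delta_\a\cap\Delta_\b=\emptyset$---the ``in particular'' clause of Corollary \ref{visits every region} survives deformation, since the relevant taut composite already passes through a killed idempotent---which bounds each block above by $\tilde{R}_{\a\b}$ and, together with your surjectivity argument, forces an isomorphism; or (ii) use freeness of $\tilde{R}_{\a\b}$ over $\Sym V$ to kill $\operatorname{Tor}_1^{\Sym V}(\tilde{R}_{\a\b},\R)$, so that the kernel $K$ of $\tilde\phi$ satisfies $K\otimes_{\Sym V}\R=\ker\phi=0$, whence $K=0$ by graded Nakayama (the grading is bounded below with finite-dimensional pieces). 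With either repair your direct approach works; the paper itself sidesteps all of this by citing the canonical-deformation construction referred to in Remark \ref{Bprime}.
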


\begin{proof}
This follows from Theorem \ref{convolution equals quiver} and the last part of Remark \ref{Bprime}.
\end{proof}

\subsection{The center}\label{the center}
In this section we state and prove a generalization of part (2) of Theorem (B),
which gives a cohomological interpretation of the center of $B$.
Recall the homomorphism $$\zeta\colon \Sym\R^I \to \tilde{B}$$ defined 
in Remark \ref{Bprime}.

\begin{theorem}\label{centerthm}
The image of $\zeta$ is the center of $\tilde{B}$, which is isomorphic to $\tilde R_{\cH}$
as a quotient of $\Sym\R^I$.  The quotient homomorphism
$\tilde{B} \to B$ induces a surjection of centers, and yields an isomorphism
$Z(B) \cong R_{\cH}$.
\end{theorem}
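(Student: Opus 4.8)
The plan is to identify $Z(\tilde B)$ explicitly, show it equals $\im\zeta$, and then deduce the statement about $Z(B)$ by base change. First I would show $\im\zeta \subseteq Z(\tilde B)$: for $u\in\Sym\R^I$, the element $\zeta(u) = \sum_{\a\in\cP}\bar u\cdot 1_{\a\a}$ (where $\bar u$ denotes the image of $u$ in $\tilde R_{\a\a}$) is central because the product $\conv$ of two diagonal pieces $1_{\a\a}$ and $1_{\a\b}$ in either order is just the quotient map $\tilde R_{\a\b}\otimes\tilde R_{\b\b}\to \tilde R_{\a\b}$ or $\tilde R_{\a\a}\otimes\tilde R_{\a\b}\to \tilde R_{\a\b}$ applied to $\bar u$ — here $S(\a\a\b) = S(\a\b\b) = \emptyset$, so no monomial twist appears — and both maps send $u$ to the same element of $\tilde R_{\a\b}$ since the quotient maps are compatible with the identity on $\Sym\R^I$. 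This handles centrality.

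Next I would show $Z(\tilde B)$ is no bigger than $\im\zeta$. Write a general element as $z = \sum_{\a,\b} z_{\a\b}$ with $z_{\a\b}\in\tilde R_{\a\b}[-d_{\a\b}]$. Testing $z\conv 1_{\a\a} = 1_{\a\a}\conv z$ forces $z_{\a\b} = 0$ whenever $\a\ne\b$ (the left side lands in the $\b$-column, the right side in the $\a$-row, so off-diagonal components of $z$ can only survive if they commute with every idempotent, which they do not). So $z = \sum_\a z_{\a\a}$ with $z_{\a\a}\in\tilde R_{\a\a}$. Now testing against $1_{\a\b}$ for $\a\lra\b$ (these exist because $Q_\cP$ is connected — I would invoke connectedness of the quiver on $\cP$, which follows from the taut-path results of Section \ref{sec:quiver}, e.g.\ Proposition \ref{standard form}) gives $1_{\a\b}\conv z_{\b\b} = z_{\a\a}\conv 1_{\a\b}$, i.e.\ the images of $z_{\a\a}$ and $z_{\b\b}$ in $\tilde R_{\a\b}$ agree. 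Since each $\tilde R_{\a\a}$ is a quotient of $\Sym\R^I$ by Lemma \ref{formality} and the $\tilde R_{\a\b}$ are the common further quotients, the family $(z_{\a\a})$ being compatible along all edges means it is compatible along all of $Q_\cP$; I then need that such a compatible family lifts to a single element of $\Sym\R^I$. For this I would use that $\bigcap_{\a\in\cP}\ker(\Sym\R^I\to\tilde R_{\a\a})$ is exactly the defining ideal of $\tilde R_\cH$ — equivalently, a monomial $u_S$ dies in every $\tilde R_{\a\a}$, $\a\in\cP$, iff $H_S\cap\Delta_\a = \emptyset$ for all bounded feasible $\a$, iff $H_S = \emptyset$ (since the bounded feasible chambers cover every nonempty flat — every nonempty $H_S$ contains some $H_b$ with $b\in\Bas$, hence lies in the closure region $\Delta_{\mu(b)}$). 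This identifies $Z(\tilde B)$ with $\tilde R_\cH$ as a quotient of $\Sym\R^I$, simultaneously establishing $Z(\tilde B) = \im\zeta$.

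Finally, for the statement about $B$: recall $B\cong\tilde B\otimes_{\Sym V}\R$ from Remark \ref{Bprime}, and $R_\cH = \tilde R_\cH\otimes_{\Sym V}\R$. The homomorphism $\zeta$ for $\tilde B$ descends to the analogous $\Sym(\R^I/V)\to B$, whose image lands in $Z(B)$ by the same centrality argument as above; it factors through $R_\cH$ by Theorem \ref{hypertoric cohom}'s defining relations (equivalently, the base change of $\Sym\R^I\onto\tilde R_\cH$). For surjectivity of $Z(\tilde B)\to Z(B)$ and the reverse inclusion $Z(B)\subseteq R_\cH$, I would rerun the diagonal-and-edge argument from the previous paragraph directly inside $B$: the computation is identical with $R_{\a\a}, R_{\a\b}$ in place of $\tilde R_{\a\a},\tilde R_{\a\b}$, and the key point that $\bigcap_{\a\in\cP}\ker(\Sym(\R^I/V)\to R_{\a\a})$ is the defining ideal of $R_\cH$ follows from the $\tilde{}$-version by right-exactness of $-\otimes_{\Sym V}\R$ together with the freeness in Lemma \ref{formality} (which guarantees the relevant Tor-vanishing so that intersections of ideals behave well under this base change). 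Since the composite $\Sym(\R^I/V)\to Z(\tilde B)\to Z(B)$ hits all of $R_\cH = Z(B)$, the map $Z(\tilde B)\to Z(B)$ is surjective and $Z(B)\cong R_\cH$.

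\medskip

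The main obstacle I anticipate is the base-change step in the last paragraph: showing that "compatible family of diagonal components" still forces a global lift after tensoring down over $\Sym V$, i.e.\ that forming $Z(-)$ commutes with $-\otimes_{\Sym V}\R$ here. This is not formal for arbitrary algebras, and the honest way through is to exploit the freeness in Lemma \ref{formality} — each $\tilde R_{\a\a}$, and more importantly $\tilde R_\cH$ and the kernels $\ker(\tilde R_\cH\to \tilde R_{\a\b})$, are free (hence flat) $\Sym V$-modules — so that $\mathrm{Tor}^{\Sym V}_1$ of the relevant quotients vanishes and the inclusion of the center survives the base change. Everything else is a direct computation with the convolution formula and the combinatorics of bounded feasible chambers covering the flats.
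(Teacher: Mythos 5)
The first and third steps of your outline (centrality of $\im\zeta$, and the reduction of a central element to a diagonal, edge-compatible family $(z_{\a\a})_{\a\in\cP}$) match the paper's proof, and your computation that $\bigcap_{\a\in\cP}\ker(\Sym\R^I\to\tilde R_{\a})$ is the defining ideal of $\tilde R_\cH$ is correct (every nonempty flat does contain a vertex $H_b$, $b\in\Bas$). But that computation only gives \emph{injectivity} of the map $\tilde R_\cH\to\bigoplus_{\a\in\cP}\tilde R_\a$, i.e.\ it identifies $\im\zeta$ with $\tilde R_\cH$ as a quotient of $\Sym\R^I$. It does not address the step your argument actually needs: that every edge-compatible family $(z_{\a\a})$ \emph{lifts} to a single element of $\Sym\R^I$, equivalently that $\tilde R_\cH\to\varprojlim_{\Delta}\tilde R_\Delta$ is surjective. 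This surjectivity is the heart of the theorem, and nothing in your sketch rules out compatible families that are not in the image --- a genuine worry, since the chambers indexed by $\cP$ need not tile $V_\eta$ and may share only small faces, so the gluing conditions among them are a priori quite weak. The paper spends most of its effort exactly here: Lemma \ref{chamber complex} proves the surjectivity for the \emph{extended} algebra (all feasible chambers) by splitting a chain complex monomial-by-monomial into complexes computing the Borel--Moore homology of tubular neighborhoods of flats, and Lemma \ref{center lemma} then shows, by an induction that deletes non-compact faces one at a time, that restricting from all feasible chambers to the bounded feasible ones does not change the inverse limit, so $Z(\tilde B_\ext)\cong Z(\tilde B)$. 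Neither ingredient appears in your proposal, so as written the inclusion $Z(\tilde B)\subseteq\im\zeta$ is unsupported.

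The base-change issue you flag for the ungraded quotient $B$ is real but secondary, and your instinct (use the freeness in Lemma \ref{formality}) is essentially how the paper deals with it: one augments the acyclic complex $C_\udot$ by its top kernel $\tilde R_\cH$, obtaining an exact complex of free $\Sym V$-modules, which stays exact after $-\otimes_{\Sym V}\R$; rerunning the cycle computation then gives $Z(B_\ext)\cong R_\cH$, and the same face-deletion argument gives $Z(B_\ext)\cong Z(B)$. But note that ``rerunning the diagonal-and-edge argument directly inside $B$'' again requires the lifting statement, so fixing the gap above is unavoidable for both halves of the theorem.
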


The proof of this theorem goes through several steps.  We define
{\bf extended} rings $\tilde{B}_\ext$ and $B_\ext$ by putting
\[\tilde{B}_\ext := \bigoplus_{(\a,\b)\in \cF \times \cF} \tilde{R}_{\a\b}[-d_{\a\b}]
\,\,\,\text{and}\,\,\, B_\ext := \tilde{B}_\ext \otimes_{\Sym(V)} \R.\]
The difference between these rings and the original ones is that we now use
all feasible sign vectors rather than just the bounded feasible ones.
We define product operations $\conv$ on the extended rings and 
a homomorphism $\zeta^{}_\ext \colon \Sym\R^I \to Z(\tilde{B}_\ext)$
exactly as before, replacing the set $\cP$ with $\cF$.
The topological description or our rings given in Section 
\ref{section:topological B} also carries over, 
replacing the relative core $\eX$ with the extended 
core $\ecore$ (both defined in Section \ref{toric}).
Our strategy will be first to prove Theorem \ref{centerthm} with $\tilde B$
and $B$ replaced by $\tilde B_\ext$
and $B_\ext$, respectively, and then to show that the natural quotient homomorphisms
$\tilde{B}_\ext \to \tilde{B}$ and $B_\ext \to B$ induce isomorphisms of centers.

We begin by constructing a chain complex whose homology is the center $Z(\tilde{B}_\ext)$.
Define the set 
\[\mfD = \{\Delta_\a \cap \Delta_\b \mid \a, \b\in \cF, \Delta_\a\cap \Delta_\b\ne \emptyset\}.\]
It is the set of all faces of chambers of the arrangement $\cH$.  For 
any face $\Delta\in \mfD$, let $\Delta^\circ$ denote its relative interior, that is, its
interior as a subspace of its linear span.  Then $\{\Delta^\circ \mid \Delta\in \mfD\}$
is a decomposition of $V_\eta$ into disjoint cells.
For an integer $d \ge 0$, let $$\mfD_d = \{\Delta \in \mfD \mid \dim(\Delta) = d \}.$$

For a face $\Delta\in \mfD$, its space of orientations is the one-dimensional vector space
\[\ori(\Delta) := H^{BM}_{\dim(\Delta)}(\Delta^\circ; \R).\]
If $\dim(\Delta) = d$, there is a natural boundary map 
\[\bdy_\Delta\colon \ori(\Delta) \to \!\!\!\!\!\bigoplus_{\,\,\,\,\,\,\,\,\Delta\,\supset\,\Sigma \,\in\, \mfD_{d - 1}}\!\!\!\!\! \ori(\Sigma).\]
Putting these together over all $\Delta$ makes $\bigoplus_{\Delta \in \mfD_d} \ori(\Delta)$ into 
a chain complex, graded by the dimension of $\Delta$, which computes the Borel-Moore homology of $V_\eta$. 
Thus its homology
is one-dimensional in degree $\dim V$ and zero in all other degrees.

We next define a chain complex $C_{\udot}$ by putting
\[C_d = \bigoplus_{\Delta \in \mfD_d} \tilde{R}_\Delta \otimes_\R \ori(\Delta),\]
with boundary operator
\[\tilde{R}_\Delta \otimes_\R \ori(\Delta) \to
\!\!\!\!\!\bigoplus_{\,\,\,\,\,\,\,\,\Delta\,\supset\,\Sigma \,\in\, \mfD_{d - 1}} \tilde{R}_{\Sigma} \otimes_\R  \ori(\Sigma) \]
induced by the natural maps $\ori(\Delta) \to \ori(\Sigma)$ and 
$\tilde{R}_{\Delta} \to \tilde{R}_{\Sigma}$ for $\Sigma \subset \Delta$.

Fix an orientation class $\Omega \in H^{BM}_{\dim V}(V; \R)$.
For any $\a \in \cF$, let $\psi_\a\colon \tilde{R}_\cH \to \tilde{R}_{\a}$ denote the natural quotient 
map, and let $\Omega_\a \in \ori(\Delta_\a)$ be the
restriction of $\Omega$ to $\Delta^\circ_\a$.

\begin{lemma}\label{chamber complex}
The complex $C_{\udot}$ has homology only in degree $\dim V$, and we have
an isomorphism $\tilde{R}_\cH \cong H_{\dim V}(C_\udot)$ given by
\[x \mapsto \sum_{\a \in \cF} \psi_\a(x) \otimes \Omega_\a.\]
\end{lemma}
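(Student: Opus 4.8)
The statement is really two claims: first that $C_\udot$ is exact except in degree $\dim V$, and second that the explicit map $x\mapsto \sum_\a \psi_\a(x)\otimes\Omega_\a$ is an isomorphism onto $H_{\dim V}(C_\udot)$. The plan is to prove both simultaneously by a filtration/spectral-sequence argument that reduces everything to the known exactness of the underlying ``orientation complex'' $\bigoplus_\Delta \ori(\Delta)$, which computes $H^{BM}_\udot(V_\eta;\R)$.

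First I would exploit the grading. Every ring $\tilde R_\Delta$ is graded (by the degree from $\Sym\R^I$, with $\deg u_i = 2$), and the boundary maps of $C_\udot$ are graded of degree $0$, so $C_\udot$ splits as a direct sum of complexes, one for each internal degree $2m$. I would like to identify the degree-$2m$ piece with something built out of the orientation complex. The key linear-algebra input is Lemma \ref{formality}: for each face $\Delta \in \mfD$, $\tilde R_\Delta$ is a \emph{free} $\Sym(V)$-module of rank $|\Bas_\Delta|$, where $\Bas_\Delta = \{b\in\Bas\mid H_b\subset\Delta\}$. This freeness is what lets me move between $\tilde R$ and the combinatorics of flats. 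So the plan is: pick a homogeneous $\Sym(V)$-basis of each $\tilde R_\Delta$ compatible (under the quotient maps $\tilde R_\Delta \to \tilde R_\Sigma$) with a basis of $\tilde R_\Sigma$ whenever $\Sigma\subset\Delta$ --- a shelling of the matroid/face complexes, as in the proof of Lemma \ref{formality}, provides exactly such compatible bases indexed by $\Bas_\Delta$. Given that, $C_\udot$ becomes, as a complex of $\Sym(V)$-modules, a direct sum over $b\in\Bas$ of subcomplexes $C_\udot^{(b)}$, where $C_d^{(b)} = \bigoplus_{\Delta\in\mfD_d,\ H_b\subset\Delta}\Sym(V)\otimes_\R\ori(\Delta)$.

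Now the main step: each $C_\udot^{(b)}$ is the orientation complex of the subposet of faces \emph{containing the point $H_b$}, tensored with $\Sym(V)$. That subposet is the face poset of a (small) central subarrangement localized at $H_b$, and its orientation complex computes the Borel-Moore homology of a contractible cell --- concretely, of a neighborhood of $H_b$ in $V_\eta$ --- hence is exact except in the top degree $\dim V$, where it is one-dimensional, spanned by $\sum_{\a\ni H_b}\Omega_\a$ (sum over feasible chambers whose closure contains $H_b$). This is the geometric heart; I would prove it by observing that the faces containing $H_b$ form the face lattice of the normal fan at $H_b$, i.e. of a complete fan in $V$, whose orientation complex is the (augmented, reindexed) cellular chain complex of a sphere, equivalently computes $H^{BM}_\udot$ of $V$ itself. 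Summing over $b\in\Bas$: $H_d(C_\udot) = 0$ for $d\ne\dim V$, and $H_{\dim V}(C_\udot) = \bigoplus_{b\in\Bas}\Sym(V)$, which is precisely $\tilde R_\cH$ by Lemma \ref{formality}. Finally I would check that under this identification the cycle $\sum_\a\psi_\a(x)\otimes\Omega_\a$ corresponds to $x$: the basis element of $\tilde R_\cH$ indexed by $b$ maps under each $\psi_\a$ (for $\a$ a chamber containing $H_b$) to the $b$-basis element of $\tilde R_\a$ and to $0$ under $\psi_\a$ for chambers not containing $H_b$, by construction of the compatible bases, so $\sum_\a\psi_\a(\text{basis}_b)\otimes\Omega_\a = \sum_{\a\ni H_b}\Omega_b\otimes\Omega_\a$ is exactly the top-degree cycle generating $H_{\dim V}(C_\udot^{(b)})$.

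\textbf{Main obstacle.} The delicate point is the choice of mutually compatible homogeneous bases: I need a single system of $\Sym(V)$-bases $\{x_b^\Delta : b\in\Bas_\Delta\}$ for all faces $\Delta$ at once such that the quotient map $\tilde R_\Delta\to\tilde R_\Sigma$ sends $x_b^\Delta\mapsto x_b^\Sigma$ for $H_b\subset\Sigma\subset\Delta$ and $x_b^\Delta\mapsto 0$ for $H_b\not\subset\Sigma$. Shellability gives a basis for each ring separately (monomials $\prod_{i\in F}u_i$ over faces $F$ of the shellable complex, suitably restricted), and one must check these are genuinely compatible under the quotient maps --- this is where a careful choice of shelling order, or equivalently a direct verification that the natural ``broken-circuit''/face-ring monomial bases behave well under the maps $\tilde R_\Delta\to\tilde R_\Sigma$, is needed. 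Once that bookkeeping is in place, the rest is the essentially formal reduction to the orientation complex of a complete fan, plus the tautological identification of the degree-$\dim V$ homology with $\tilde R_\cH$.
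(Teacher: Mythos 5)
There is a genuine gap, and it is exactly at the point you flag as the ``main obstacle'': the compatible system of $\Sym(V)$-bases you need does not exist, so the proposed splitting $C_\udot = \bigoplus_{b\in\Bas} C_\udot^{(b)}$ cannot be carried out. The quotient maps $\tilde{R}_\Delta \to \tilde{R}_\Sigma$ are unital ring maps, and the degree-zero part of each $\tilde{R}_\Delta$ is one-dimensional, spanned by $1$. Hence in any homogeneous $\Sym(V)$-basis of $\tilde{R}_\Delta$ the unique degree-zero element is a scalar multiple of $1$, and it maps to a nonzero element of \emph{every} nonzero $\tilde{R}_\Sigma$; but your requirement forces it to map to zero in each face $\Sigma$ not containing the vertex $H_b$ to which it is assigned. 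As soon as some chamber contains two or more vertices this is a contradiction. Concretely, take $I=\{1,2\}$ and $V=\R\cdot(1,1)\subset\R^2$ (two points on a line): for the middle chamber $\Delta$ one has $\tilde{R}_\Delta = \R[u_1,u_2]/(u_1u_2)$, free of rank $2$ over $\Sym(V)=\R[u_1+u_2]$ with necessarily one basis element in degree $0$; yet your conditions force $x^\Delta_{\{1\}}\in(u_1)$ and $x^\Delta_{\{2\}}\in(u_2)$, so both would have positive degree. The same degree obstruction kills the compatibility you invoke at the end between $\tilde{R}_\cH$ and the chamber rings $\tilde{R}_\a$. So this is not a matter of choosing the shelling order carefully: the decomposition indexed by $\Bas$ over $\Sym(V)$, the identification of $C^{(b)}_\udot$ with the orientation complex of the star of $H_b$, and the final verification of the explicit map all break down.

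The repair is to split over $\R$ by \emph{monomials} rather than over $\Sym(V)$ by bases, which is the route the paper takes. Each $\tilde{R}_\Delta$ is the quotient of $\Sym\R^I$ by a monomial ideal, and every component of the differential of $C_\udot$ is, up to sign, induced by the identity of $\Sym\R^I$; hence for each monomial $m$ with support $S$ the images of $m$ span a subcomplex $C^m_\udot$ whose degree-$d$ term is $\bigoplus \ori(\Delta)$ over those $\Delta\in\mfD_d$ with $H_S\cap\Delta\ne\emptyset$. If $H_S=\emptyset$ this subcomplex vanishes; if $H_S\ne\emptyset$ it is the cellular complex computing the Borel--Moore homology of an open tubular neighborhood $U$ of $H_S$ in $V_\eta$, hence $\R$ in degree $\dim V$ and zero elsewhere, with top class $\sum_{\a\,:\,H_S\cap\Delta_\a\ne\emptyset} \psi_\a(m)\otimes\Omega_\a$, i.e.\ exactly the image of $m$ under the map in the statement. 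Summing over the monomials with $H_S\ne\emptyset$, which form a vector-space basis of $\tilde{R}_\cH$, yields both the vanishing and the isomorphism simultaneously; no appeal to Lemma \ref{formality} is needed. Your ``normal fan at $H_b$'' picture is the special case of this where $S$ is a basis; the general argument must handle all flats $H_S$, including $S=\emptyset$, where the relevant neighborhood is $V_\eta$ itself.
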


\begin{proof}
Since the terms of $C_\udot$ are direct sums of quotients of $\Sym\R^I$ by 
monomial ideals and all of the entries of the differentials are,
up to sign, induced by the identity map on $\Sym\R^I$, 
this complex splits into a direct sum of complexes of vector spaces,
one for each monomial.  Consider a monomial 
$m = \prod_{i \in S} u_i^{k_i}$, with all $k_i > 0$, 
and let $C_\udot^m \subset C_\udot$ be the subcomplex
consisting of all images of the monomial $m$.
The lemma will follow if we can show that $H_{\dim V}(C^m_\udot)$ is 
a one-dimensional vector space if $H_S \ne \emptyset$ and zero if $H_S = \emptyset$.

We have
\[C_d^m = \bigoplus_{\substack{\Delta \in \mfD_d \\ H_S \cap \Delta
    \ne \emptyset}} \ori(\Delta).\] In particular, if $H_S =
\emptyset$ then $C_\udot^m = 0$.  Assume now that $H_S \ne \emptyset$.
There exists an open tubular neighborhood of $H_S$ in the affine space
$V_\eta$ with the property that for all $\Delta \in \mfD$, 
$\Delta \cap H_S \ne \emptyset$ if and only if $\Delta \cap U \ne \emptyset$.
Then $C_\udot^m$ is the complex computing the cellular Borel-Moore homology of
$U$ using the decomposition by cells $\Delta \cap U$.  It follows that
$H_k(C_\udot^m)$ is one-dimensional if $k = \dim V$ and zero otherwise.
\end{proof}

Using Lemma \ref{chamber complex}, we can deduce the analogue of Theorem
\ref{centerthm} for the extended algebras.

\begin{proposition}\label{centerofBF}
The image of $\zeta_\ext$ is the center of $\tilde{B}_\ext$, which is isomorphic to $\tilde R_{\cH}$
as a quotient of $\Sym\R^I$.  The quotient homomorphism
$\tilde{B}_\ext \to B_\ext$ induces a surjection of centers, and yields an isomorphism
$Z(B_\ext) \cong R_{\cH}$.
\end{proposition}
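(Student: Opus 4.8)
The strategy is to realize $Z(\tilde{B}_\ext)$ as the top homology of the chain complex $C_\udot$ from Lemma \ref{chamber complex}, thereby identifying it with $\tilde{R}_\cH$ via $\zeta_\ext$. First I would write a general element $z \in \tilde{B}_\ext$ as $z = \sum_{(\a,\b)} z_{\a\b}$ with $z_{\a\b} \in \tilde{R}_{\a\b}$, and translate the condition that $z$ be central into a system of equations. Centrality is equivalent to commuting with all $1_{\a\b}$ (for $\a\lra\b$ — or even just adjacent pairs, by Theorem \ref{convolution equals quiver} and Lemma \ref{smaller quiver}) and with the image of $\Sym\R^I$ under $\zeta_\ext$. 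Commuting with $\zeta_\ext(\Sym\R^I)$ forces each $z_{\a\b}$ to lie in the subring generated by the $u_i$, which is automatic here since $\tilde{R}_{\a\b}$ is generated by the images of the $u_i$; the real content is commuting with the $1_{\a\b}$.

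**Key steps.**
The condition $z \star 1_{\a\b} = 1_{\a\b} \star z$ for every adjacent pair $\a\lra\b$ unwinds, using the definition of $\star$ and the fact that $S(\a\b\b) = \{i \in I \mid \a(i) \neq \b(i)\}$ when $\a\lrao i \b$ (so $u_{S(\a\b\b)} = u_i$), to the statement: for all $\a \lrao i \b$ in $\cF$, the images of $z_{\a\a}$ and $z_{\b\b}$ in $\tilde{R}_{\a\b}$ agree after multiplication by $u_i$, and moreover $z_{\a\b} = 0$ whenever $\a \neq \b$ (the off-diagonal components must vanish, since an off-diagonal $z_{\a\b}$ produces, upon multiplication by idempotents $1_{\ga\ga}$, terms that cannot be matched). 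So a central element is a tuple $(z_{\a\a})_{\a \in \cF}$ with $z_{\a\a} \in \tilde{R}_{\a\a} = \tilde{R}_\a$ satisfying a compatibility on each wall. Next I would observe that this is exactly the condition that $\sum_\a z_{\a\a} \otimes \Omega_\a$ be a cycle in the top degree of $C_\udot$ — more precisely, I would set up a version of $C_\udot$ truncated to codimension $\le 1$ cells (chambers and walls), show the closedness condition there coincides with the wall compatibility above, and invoke Lemma \ref{chamber complex} (whose proof localizes to tubular neighborhoods and so applies verbatim to this truncation) to conclude $Z(\tilde{B}_\ext) \cong H_{\dim V}(C_\udot) \cong \tilde{R}_\cH$, with the isomorphism given by $x \mapsto \sum_\a \psi_\a(x) \otimes \Omega_\a$, i.e. precisely $\zeta_\ext$. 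Finally, for the statements about $B_\ext$: tensoring the identification $Z(\tilde{B}_\ext) \cong \tilde{R}_\cH$ with $\R$ over $\Sym V$ and using that everything in sight is free over $\Sym V$ (Lemma \ref{formality}), so that $-\otimes_{\Sym V}\R$ is exact on the relevant modules, gives $Z(B_\ext) \cong R_\cH$ and the surjectivity of $Z(\tilde{B}_\ext) \to Z(B_\ext)$.

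**Main obstacle.**
The delicate point is the identification of the wall-compatibility condition with the differential in $C_\udot$, together with ruling out off-diagonal central elements. Naively one expects $Z(\tilde{B}_\ext)$ to involve the full complex $C_\udot$, but because we only impose commutation with degree-one path elements, only codimension-$0$ and codimension-$1$ faces enter; I must check that this truncated condition still computes $\tilde{R}_\cH$ and not something larger, which is where the tubular-neighborhood argument in the proof of Lemma \ref{chamber complex} does the work — it shows that on each monomial graded piece the relevant homology is already detected at the top two levels. A secondary subtlety is that commuting with \emph{all} of $\tilde{B}_\ext$, not just the generators, could a priori be stronger; here one uses that $\tilde{B}_\ext$ is generated by $\zeta_\ext(\Sym\R^I)$ together with the $1_{\a\b}$ (which follows as in the surjectivity argument in the proof of Theorem \ref{convolution equals quiver}), so commuting with generators suffices. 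Once these two points are pinned down, the rest is bookkeeping.
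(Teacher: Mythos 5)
Your overall route is the same as the paper's: reduce centrality to commutation with the idempotents and the degree-one elements $1_{\a\b}$, observe that a central element must be diagonal, translate the remaining conditions into wall compatibilities, identify these with the cycle condition in top degree of $C_\udot$, invoke Lemma \ref{chamber complex}, and then pass to $B_\ext$ using freeness over $\Sym V$ (Lemma \ref{formality}). However, there is a concrete computational slip in your key step: by Definition of $S(\a\b\ga)=\{i\mid \a(i)=\ga(i)\neq\b(i)\}$ one has $S(\a\a\b)=S(\a\b\b)=\emptyset$, not $\{i\}$, so for $\a\lrao{i}\b$ the conditions $z\conv 1_{\a\b}=1_{\a\b}\conv z$ read $\psi_{\a\b}(z_\a)=\psi_{\b\a}(z_\b)$ in $\tilde{R}_{\a\b}$ with \emph{no} factor of $u_i$. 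As literally stated, your condition (agreement after multiplying by $u_i$) is strictly weaker, since multiplication by $u_i$ need not be injective on $\tilde{R}_{\a\b}$, and then your assertion that it ``is exactly'' the cycle condition for $C_\udot$ (whose differential uses the plain quotient maps) would fail; with the corrected computation it is exactly the paper's characterization \eqref{center formula}, and the rest goes through. Your ``main obstacle'' about truncating $C_\udot$ is a non-issue: $C_{\dim V}$ is the top term of the complex, so $H_{\dim V}(C_\udot)=\ker\partial_{\dim V}$ automatically, and only chambers and walls ever enter; no separate tubular-neighborhood argument for a truncated complex is needed.

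For the second half, be careful that forming the center is a kernel, and kernels do not commute with $-\otimes_{\Sym V}\R$ just because the individual modules are free. The paper makes this precise by appending $\widehat{C}_{\dim V+1}=\ker(\partial_{\dim V})\cong\tilde{R}_\cH$ to $C_\udot$: the resulting bounded acyclic complex of free $\Sym V$-modules remains acyclic after applying $-\otimes_{\Sym V}\R$, which simultaneously gives $Z(B_\ext)\cong R_\cH$ and the surjectivity of $Z(\tilde{B}_\ext)\to Z(B_\ext)$. Your ``freeness implies exactness on the relevant modules'' is the right instinct but should be packaged in this way (or an equivalent splitting argument) to be airtight.
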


\begin{proof}
Consider an element $z$ in the center of $\tilde B_\ext$.  Since $z$ commutes with the
idempotent $1_{\a\a}$ for all $\a\in\cF$, 
$z$ must be a sum of diagonal terms, that is, $z = \sum_{\a\in \cF} z_\a$ for some collection
of elements $z_\a\in R_\a = R_{\a\a}$.
For all $\a,\b\in\cF$, let $$\psi_{\a\b}:\tilde R_\a\to \tilde R_{\a\b}$$ be the natural quotient homomorphism.
The fact that $z$ commutes with $1_{\a\b}$ may be translated to the equation
$$\psi_{\a\b}(z_\a) = \psi_{\b\a}(z_\b) \in \tilde R_{\a\b} = \tilde R_{\b\a}.$$ 
On the other hand, since the elements $1_{\a\b}$ for $\a \lra \b$
generate $\tilde B_\ext$ as a ring, these conditions completely characterize $Z(\tilde B_\ext)$.
That is, we have an isomorphism
\begin{equation}\label{center formula}
 	Z(\tilde B_\ext)\cong\left\{(z_\a) \in \bigoplus_{\a \in \cF} \tilde R_\a \bigmid
	\psi_{\a\b}(z_\a) = \psi_{\b\a}(z_\b)\,\,\,
 	\text{for all}\,\,\,\a\lra\b\in \cF\right\}.
 	\end{equation}

Now consider an element
$y = \sum_{\a \in \cF} y_\a \otimes \Omega_\a \in C_{\dim V}$.  
Suppose that $\a, \b \in \cF$ satisfy $\a \lra \b$,
and let $\Omega_{\a\b}$ be the 
orientation of $\Delta_\a \cap \Delta_\b$ induced by $\partial(\Omega_\a)$.
The $\a\b$ component of the differential applied to 
$y$ is $(\psi_{\a\b}(y_a) - \psi_{\b\a}(y_b))\otimes \Omega_{\a\b}$,
thus $(y_\a)$ represents an element of the center
$Z(\tilde{B}_\ext)$ if and only if $y$ is a cycle.  This implies that $\zeta_\ext$ induces an isomorphism
\[\tilde{R}_\cH \cong H_{\dim V}(C_\udot) \cong Z(\tilde{B}_\ext),\]
which proves the first half of the proposition.

Let $\widehat{C}_\udot$ be the complex of 
free $\Sym V$-modules with $\widehat{C}_k = C_k$ for $0 \le k \le \dim V$ and 
$\widehat{C}_{\dim V + 1} = \ker(\partial_{\dim V}) \cong \tilde{R}_\cH$.
We have shown that $\widehat{C}_\udot$ is acyclic, thus so is $\widehat{C}_\udot \otimes_{\Sym(V)} \R$.
Now an argument identical to the one above gives
isomorphisms $$R_\cH \cong H_{\dim V}(\widehat{C}_\udot) \cong Z(B_\ext)$$
compatible with the quotient maps $\tilde R_\cH\to R_\cH$ and $Z(\tilde B_\ext) \to Z(B_\ext)$.
\end{proof}

\begin{remark}\label{other flavors}
The formula \eqref{center formula}
for the center of $\tilde B_\ext$ still holds if we impose the condition
$\psi_{\a\b}(z_\a) = \psi_{\b\a}(z_\b)$ for all 
$\a, \b \in \cF$, regardless of whether or not $\a\lra\b$.
We may re-express this in fancier language by writing
\begin{equation}\label{first inverse limit}
Z(\tilde{B}_\ext) \cong \varprojlim_{\Delta\in \mfD} \tilde{R}_\Delta.
\end{equation}
Identical arguments for $B_\ext$, $\tilde B$, and $B$ give us isomorphisms
\begin{equation}\label{other inverse limits}
Z(B_\ext)  \cong \varprojlim_{\Delta\in \mfD} R_\Delta, \,\,\,\,\,\,\,\,\,
Z(\tilde{B}) \cong \varprojlim_{\Delta\in \mfD(\cP)} \tilde{R}_\Delta, \,\,\,\,\,\,\,\,\,\text{and}\,\,\,\,\,\,
Z(B) \cong \varprojlim_{\Delta\in \mfD(\cP)} R_\Delta,\end{equation}
where
$$\mfD(\cP) := \{\Delta\in \mfD \mid \Delta \subset \Delta_\a \;\text{for some}\;\a \in \cP\}.$$
\end{remark}

For any $\Sigma \in \mfD$, let $\mfD(\Sigma) = \{\Delta
\in \mfD \mid \Delta \subset \Sigma\}$ be the set of its faces, and
let $\mfD^c(\Sigma) = \{\Delta \in \mfD(\Sigma) \mid \Delta\;\text{is
  compact}\,\}$.

\begin{lemma}\label{center lemma}
For any $\Sigma \in \mfD$ and any $\mfD_0$ such that $\mfD^c(\Sigma)\subset \mfD_0 \subset \mfD(\Sigma)$,
the restrictions
\begin{equation}\label{restriction map}
\varprojlim_{\Delta\in \mfD(\Sigma)}\!\! \tilde{R}_\Delta \to\!\! \varprojlim_{\Delta\in \mfD_0}\!\! \tilde{R}_\Delta\;\;\;\;\text{and}\;\;\;\;
  \varprojlim_{\Delta\in \mfD(\Sigma)}\!\! R_\Delta \to \!\!\varprojlim_{\Delta\in \mfD_0}\!\! R_\Delta
\end{equation}
are isomorphisms.
\end{lemma}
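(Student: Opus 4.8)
The plan is to reduce the statement to a local topological computation, just as in the proof of Lemma~\ref{chamber complex}. Since both inverse limits involve rings that are quotients of $\Sym\R^I$ by monomial ideals, and all the transition maps $\tilde R_\Delta \to \tilde R_{\Delta'}$ are induced by the identity on $\Sym\R^I$, each inverse limit decomposes as a direct sum over monomials $m = \prod_{i\in S} u_i^{k_i}$ (with all $k_i>0$) of the corresponding inverse limit of one-dimensional pieces. For a fixed such monomial, the piece of $\tilde R_\Delta$ contributing is $\R$ if $H_S\cap\Delta\ne\emptyset$ and $0$ otherwise, so $\varprojlim_{\Delta\in\mfD(\Sigma)}\tilde R_\Delta$ restricted to the $m$-component is the inverse limit of the constant functor $\R$ over the poset $\{\Delta\in\mfD(\Sigma)\mid H_S\cap\Delta\ne\emptyset\}$, and similarly for $\mfD_0$. (The statement for $R_\Delta$ follows from the $\tilde R_\Delta$ statement by tensoring with $\R$ over $\Sym V$, since everything is free over $\Sym V$ by Lemma~\ref{formality}, so I will only treat $\tilde R_\Delta$.)

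Thus the claim reduces to: for each $S$ with $H_S\cap\Sigma\ne\emptyset$, the restriction map
\[
\varprojlim_{\substack{\Delta\in\mfD(\Sigma)\\ H_S\cap\Delta\ne\emptyset}} \R
\;\longrightarrow\;
\varprojlim_{\substack{\Delta\in\mfD_0\\ H_S\cap\Delta\ne\emptyset}} \R
\]
is an isomorphism. An inverse limit of the constant functor $\R$ over a poset $P$ (viewed as a category) is just $\R^{\pi_0(P)}$ if we think of it correctly — more precisely, it computes $H^0$ of the order complex, or equivalently the space of locally constant functions on $P$ with the topology generated by the comparability relation. So the statement becomes a combinatorial/topological claim: the subposets $\{\Delta\in\mfD(\Sigma)\mid H_S\cap\Delta\ne\emptyset\}$ and $\{\Delta\in\mfD_0\mid H_S\cap\Delta\ne\emptyset\}$ have the same set of ``connected components'' for every $S$. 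Using the same tubular-neighborhood trick as in Lemma~\ref{chamber complex}, the faces $\Delta$ with $H_S\cap\Delta\ne\emptyset$ correspond to the cells of a cell decomposition of a tubular neighborhood $U$ of $H_S\cap\Sigma$ inside $\Sigma$ (respectively inside the union of faces in $\mfD_0$); since $U$ is connected (it is a neighborhood of a single nonempty flat intersected with a convex set), the inverse limit is one-dimensional on the source side, and it suffices to show it is also one-dimensional — in particular nonzero and spanned by the all-ones element — on the $\mfD_0$ side.

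The key point, and the one I expect to be the main obstacle, is showing that the target inverse limit is \emph{not larger} than the source, i.e.\ that the poset $\{\Delta\in\mfD_0\mid H_S\cap\Delta\ne\emptyset\}$ is still ``connected'' in the relevant sense, even though $\mfD_0$ may omit non-compact faces. This is where the hypothesis $\mfD^c(\Sigma)\subset\mfD_0$ is essential: it guarantees that all compact faces of $\Sigma$ meeting $H_S$ are retained. The argument should be: given two faces $\Delta,\Delta'\in\mfD_0$ both meeting $H_S$, one can connect them through a chain of faces in $\mfD_0$ meeting $H_S$, where the chain is built from faces of the (necessarily compact, since it is a bounded polytope) intersection $\overline{U}$ of $\Sigma$ with a suitable affine slice transverse to $H_S$; all faces of this compact polytope lie in $\mfD^c$ of an appropriate subarrangement and hence in $\mfD_0$. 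I would formalize this by intersecting with an affine subspace complementary to $\lspan(H_S)$ through a point of $H_S\cap\Sigma^\circ$, reducing to the case where $H_S$ is a point lying in the interior, so that a small enough ball around it meets only compact faces; then every face meeting $H_S$ contains in its closure a compact face meeting $H_S$, and these compact faces form a connected poset (they are the faces of the star of the vertex $H_S$, which is a cone and hence contractible). Granting that connectivity, both inverse limits are $\R$ spanned by the constant element, and the restriction map is visibly an isomorphism; summing over all monomials $m$ finishes the proof.
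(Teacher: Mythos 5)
Your treatment of the first map (the $\tilde{R}_\Delta$ statement) is essentially viable and genuinely different from the paper's: the paper proves acyclicity of the Borel--Moore complex $C^\Sigma_\udot$ and then strips off the missing faces one at a time by induction, whereas you split the (finite) inverse limits into their $\Z^I_{\geq 0}$-graded pieces and reduce to the claim that, for each $S$ with $H_S\cap\Sigma\neq\emptyset$, the subposet of faces in $\mfD_0$ meeting $H_S$ is nonempty and connected. That reduction is valid, and the combinatorial claim is true, but your justification of it is the shakiest part: $H_S\cap\Sigma^\circ$ is usually \emph{empty} when $S\neq\emptyset$ (the flat meets $\Sigma$ only in its boundary unless $\Sigma\subset H_S$), and a small ball around a point certainly meets noncompact faces (e.g.\ unbounded edges through a vertex), so the slicing/small-ball argument does not stand as written. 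The clean repair is elementary convexity: since the $t_i$ span $V^*$, no face contains a line, so every face is pointed; for any $\Delta$ with $H_S\cap\Delta\neq\emptyset$ the intersection $H_S\cap\Delta$ is again a face of the arrangement, hence has a vertex, which is a compact face of $\Sigma$ lying on $H_S$ and therefore belongs to $\mfD^c(\Sigma)\subset\mfD_0$ and is comparable to $\Delta$; connectivity then reduces to connectivity of the bounded faces of the pointed polyhedron $H_S\cap\Sigma$, which follows from the standard simplex-path argument (edges along which a functional positive on the recession cone decreases are bounded). With that fix, your proof of the $\tilde R$ half is complete and more elementary than the paper's homological argument.

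The genuine gap is the second map, the statement for the rings $R_\Delta$. The parenthetical reduction ``tensor with $\R$ over $\Sym V$, using freeness from Lemma~\ref{formality}'' does not work: a finite inverse limit is a kernel, and the right-exact functor $-\otimes_{\Sym V}\R$ does not commute with kernels merely because the individual terms are free (the obstruction is a $\mathrm{Tor}_1$ term). Nor can your monomial decomposition be run on the $R_\Delta$'s, since killing $V\subset\R^I$ destroys the $\Z^I_{\geq0}$-grading; and no pointwise argument can see this statement, since for example the map $R_\Sigma\to\bigoplus_v R_v$ over vertices already kills everything in positive degree. This is precisely where the paper's proof earns its keep: it shows the whole complex $C^\Sigma_\udot$ is acyclic, and since it is a bounded acyclic complex of \emph{free} $\Sym V$-modules it remains acyclic after applying $-\otimes_{\Sym V}\R$, from which the $R$-isomorphism is read off. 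To finish your proof you would need to promote the connectivity argument to an exactness statement for a complex of free $\Sym V$-modules (or otherwise control the relevant Tor groups), which in effect returns you to the paper's route for this half of the lemma.
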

\begin{proof}
  If $\Sigma$ is compact (in
  particular if $\dim\Sigma = 0$) the statement is trivial.  So we
  can assume that $\Sigma$ is not compact and, by induction, that the
  statement is true for all proper faces of $\Sigma$.  First we show 
that the lemma holds for $\mfD_0 = \mfD(\Sigma) \ssm \{\Sigma\}$.
Let $C_\udot^\Sigma$ be the subcomplex
  of $C_\udot$ consisting of the summands $\tilde{R}_\Delta
  \otimes_\R \ori(\Delta)$ with $\Delta \in \mfD(\Sigma)$.  
As in the proof of Lemma \ref{chamber complex}, 
 the complex $C_\udot^\Sigma$ splits into a direct sum of
complexes $C^{\Sigma, m}_\udot := C_\udot^\Sigma \cap C_\udot^m$ for each monomial $m$.
The summand $C^{\Sigma, m}_\udot$ is a cellular complex computing the Borel-Moore
homology of a tubular neighborhood of $H_S\cap\Sigma$ in $\Sigma$, where $S$ is the support of $m$.
Since $\Sigma$ is itself non-compact, such a neighborhood (when nonempty) is always homeomorphic
to a non-compact polyhedron with at least one vertex, and therefore has trivial Borel-Moore homology.
It follows that each $C^{\Sigma, m}_\udot$ is acyclic, and thus so is
$C_\udot^\Sigma$.

The fact that the first map of \eqref{restriction map} is an isomorphism for $\mfD_0 = \mfD(\Sigma) \ssm \{\Sigma\}$
now follows from the fact that 
the target is isomorphic to the kernel of the boundary map 
$C^\Sigma_{\dim V - 1} \to C^\Sigma_{\dim V - 2}$.
The second isomorphism follows analogously,
since $C^\Sigma_\udot$ is an acyclic complex of free
$\Sym(V)$-modules, which implies that $C^\Sigma_\udot\otimes_{\Sym(V)} \R$
is an acyclic complex of vector spaces.

Finally, to prove the Lemma for a general $\mfD_0$ containing $\mfD^c(\Sigma)$,
pick an ordering $\Delta_1, \dots, \Delta_r$ of the faces in
$\mfD(\Sigma) \ssm \mfD_0$ so that their dimension is 
nonincreasing, and let $\mfD_j = \mfD_0 \cup \{\Delta_1, \dots, \Delta_j\}$.  
Then for $1 \le j \le r$ all of the proper faces of any $\Delta_j$ already lie in $\mfD_{j-1}$, 
so an argument identical to the one above shows that
\[ \varprojlim_{\Delta\in \mfD_j}\! \tilde{R}_\Delta \to\!\! \varprojlim_{\Delta\in \mfD_{j-1}}\!\!\! \tilde{R}_\Delta \;\;\text{and}\;\;
\varprojlim_{\Delta\in \mfD_j} \!R_\Delta \to\!\! \varprojlim_{\Delta\in \mfD_{j-1}}\!\!\! R_\Delta\]
are isomorphisms.
\end{proof}

We are now ready to prove Theorem \ref{centerthm}.

\begin{proof}[Proof of Theorem \ref{centerthm}]
By Equations \eqref{first inverse limit} and \eqref{other inverse limits} and Lemma \ref{center lemma}, we have
$$Z(\tilde B_\ext) \,\,\,\cong\,\,\, \varprojlim_{\Delta\in \mfD} \tilde{R}_\Delta
\,\,\,\cong\,\,\, \varprojlim_{\Delta\in \mfD(\cP)} \tilde{R}_\Delta
\,\,\,\cong\,\,\, Z(\tilde B)$$
and 
$$Z(B_\ext) \,\,\,\cong\,\,\, \varprojlim_{\Delta\in \mfD} {R}_\Delta
\,\,\,\cong\,\,\, \varprojlim_{\Delta\in \mfD(\cP)} {R}_\Delta
\,\,\,\cong\,\,\, Z(B).$$
Since all of these isomorphisms fit into a commutative diagram
\[\xymatrix{
\Sym\R^I \ar[d]^{=}\ar[r]^{\zeta_\ext} & Z(\tilde B_\ext) \ar[d]^{\cong}\ar[r] & Z(B_\ext)\ar[d]^{\cong} \\
\Sym\R^I\ar[r]^{\zeta} & Z(\tilde B)\ar[r] & Z(B),  
}\]
the theorem is proved.
\end{proof}

\section{The representation category}
\label{sec:repr-categ}
We begin with a general discussion of highest weight categories, quasi-hereditary algebras, self-dual projectives, and Koszul algebras.
With the background in place, we analyze our algebras $A(\cV)$ and $B(\cV)$ 
in light of these definitions.

\subsection{Highest weight categories}
\label{sec:high-weight-categ}

Let $\mathcal C$ be an abelian, artinian category 
enriched over $\R$ with simple
objects $\{\Si \a\mid\a\in\cI\}$,
projective covers $\{\Pro \a\mid\a \in \cI\}$, 
and injective hulls $\{I_\a\mid\a\in\cI\}$.
Let $\leq$ be a partial order on the index set $\cI$.

\begin{definition}\label{high-weight-def}
  We call $\mathcal C$ {\bf highest weight} with respect to this partial order if there is a collection of objects
  $\{\Ve\a\mid\a \in \cI\}$ and epimorphisms 
  $\Pro \a\overset{\Pi_\a}\to\Ve \a\overset{\pi_\a}\to\Si \a$
  such that  for each $\a\in \cI$, the following conditions hold:
  \begin{enumerate}
  \item The object $\ker\pi_\a$ has a filtration such that each sub-quotient is isomorphic to $\Si \b$ for some $\b< \a$.
  \item The object $\ker \Pi_\a$ has a filtration such that each sub-quotient is isomorphic to $\Ve \ga$ for some $\ga> \a$.
  \end{enumerate}
The objects $\Ve\a$ are called {\bf standard objects}.
Classic examples of highest weight categories in representation theory include the various integral
blocks of parabolic category $\cO$ \cite[5.1]{FM}.
\end{definition}

Suppose that $\cC$ is highest weight with respect to a given partial order on $\cI$.
To simplify the discussion, we will assume that the endomorphism algebras of 
every simple object in $\cC$ is just the scalar ring $\R$; this will hold
for the categories we consider.
For all $\a\in\cI$, let
$\mathcal C_{\not> \a}$ be the subcategory of objects 
whose composition series contain no simple objects $\Si\b$ with $\b >\a$.
By \cite[3.2(b)]{CPS}, the standard object $\Ve\a$ is isomorphic to the projective cover of $\Si\a$ in the
subcategory $\mathcal C_{\not> \a}$.
Dually, we define the {\bf costandard object} $\Lambda_\a$ to be the
the injective hull of $L_\a$ in $\mathcal C_{\not> \a}$.

\begin{definition}\label{tilting}
An object of $\cC$ is called {\bf tilting} if it admits a filtration with standard sub-quotients
{\em and} one with costandard sub-quotients.
An equivalent condition is that $T$ is tilting if and only if $\Ext^i(T,\Lambda_\a)=0=\Ext^i(\Ve\a,T)$ for all $i> 0$ and $\a\in\cI$.
(The first condition is equivalent to the existence of a
standard filtration, and the second to the existence of a costandard filtration.)
For each $\a\in\cI$, there is a unique indecomposible tilting module $\Tilt\a$
with $\Ve\a$ as its largest standard submodule and $\Lambda_\a$ 
as its largest costandard quotient \cite{Ri}.
\end{definition}

We now have six important sets of objects of $\cC$, all indexed by the set $\cI$:
\begin{itemize}
\item the simples $\{\Si\a\}$
\item the indecomposable projectives $\{\Pro\a\}$
\item The indecomposable injectives $\{I_\a\}$
\item the standard objects $\{\Ve\a\}$
\item the costandard objects $\{\Lambda_\a\}$
\item the tilting objects $\{T_\a\}$.
\end{itemize}
Each of these six sets forms a basis for the Grothendieck group $K(\cC)$, and
thus each is a minimal set of generators of the bounded derived category
$D^b(\mathcal{C})$.  In particular, any exact functor from
$D^b(\mathcal C)$ to any other triangulated category is determined by
the images of these objects and the morphisms between them and their 
shifts.

Let $\{M_\a\mid \a\in\cI\}$ and $\{N_\a\mid \a\in\cI\}$
be two sets of objects that form bases for $K(\cC)$.
We say that the second set is {\bf left dual} to the first set
(and that the first set is {\bf right dual} to the second) if 
\begin{equation*}
 \Ext^i\left(N_\a,M_{\b}\right)\cong
\begin{cases}
 \R & \text{if $\a=\b$ and $i=0$,}\\
0 & \text{otherwise}.
\end{cases}
\end{equation*}
It is an easy exercise to check that if a dual set to $\{M_\a\}$ exists, then it is unique
up to isomorphism.
Note that dual sets descend to dual bases for $K(\cC)$ under the Euler form 
$$\big\langle [M],[N]\big\rangle := \sum_{i=0}^\infty (-1)^i\dim\Ext^i(M,N).$$

\begin{proposition}\label{dual sequences}
  The sets $\{P_\a\}$ and $\{I_\a\}$ are left and right (respectively) dual to $\{L_\a\}$, and 
  the set $\{\Lambda_\a\}$ is right dual to $\{\Ve\a\}$.
\end{proposition}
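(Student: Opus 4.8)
The statement is a clean consequence of the definitions of projective cover, injective hull, and standard object, together with the general nonsense of highest weight categories. The plan is to verify each of the three duality claims directly from the defining Ext-vanishing, using the highest weight structure from Definition \ref{high-weight-def}.

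First I would handle the pair $\{P_\a\}$ and $\{L_\a\}$. Since $P_\a$ is the projective cover of $L_\a$, we have $\Ext^i(P_\a, L_\b) = \Hom(P_\a, L_\b)$ concentrated in degree $i = 0$ (projectives have no higher Ext), and $\Hom(P_\a, L_\b)$ is $\R$ if $\a = \b$ and $0$ otherwise, because $P_\a$ has simple head $L_\a$ and $L_\b$ is simple (here we use the assumption that $\End(L_\a) = \R$). This shows $\{P_\a\}$ is left dual to $\{L_\a\}$. The statement for $\{I_\a\}$ is the Ext-dual argument: $I_\a$ is injective so $\Ext^i(L_\b, I_\a) = \Hom(L_\b, I_\a)$, which again vanishes unless $\a = \b$, in which case it is $\R$ since $I_\a$ has simple socle $L_\a$; thus $\{I_\a\}$ is right dual to $\{L_\a\}$.

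The remaining claim, that $\{\Lambda_\a\}$ is right dual to $\{\Ve\a\}$, is where the highest weight axioms do real work; I expect this to be the main point. I would compute $\Ext^i(\Ve\a, \Lambda_\b)$ by a standard two-step argument. When $\a \not\le \b$: using condition (1) of Definition \ref{high-weight-def}, $\Ve\a$ is filtered with top quotient $L_\a$ and all other subquotients $L_\ga$ with $\ga < \a$; since $\Lambda_\b$ lives in $\cC_{\not> \b}$, which contains no $L_\ga$ with $\ga > \b$, and in particular $L_\a$ does not embed into (nor map onto, in the relevant $\Ext^0$ sense) $\Lambda_\b$ unless $\a \le \b$, a d\'evissage on the filtration of $\Ve\a$ kills all the Ext groups. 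Symmetrically, when $\b \not\le \a$ one uses that $\Lambda_\b$, being the injective hull of $L_\b$ inside $\cC_{\not> \b}$, restricts the support of its composition factors, and the recollement/BGG-reciprocity formalism (equivalently, the characterization of $\Ve\a$ as the projective cover of $L_\a$ in $\cC_{\not> \a}$, cited from \cite[3.2(b)]{CPS}) forces $\Ext^i(\Ve\a, \Lambda_\b) = 0$. It remains to treat $\a = \b$: here $\Ext^i(\Ve\a, \Lambda_\a)$ should be $\R$ for $i = 0$ and $0$ for $i > 0$. The degree-zero part is $\Hom(\Ve\a, \Lambda_\a)$, which is one-dimensional because $\Ve\a$ has simple head $L_\a$ and $\Lambda_\a$ has simple socle $L_\a$, and both lie in $\cC_{\not> \a}$ where $\Ve\a$ is projective and $\Lambda_\a$ is injective; the vanishing of higher Ext follows because $\Ext^i_{\cC}(\Ve\a, \Lambda_\a) = \Ext^i_{\cC_{\not> \a}}(\Ve\a, \Lambda_\a)$ (the inclusion of the subcategory being suitably exact, again by \cite{CPS}) and $\Ve\a$ is projective in that subcategory.

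The main obstacle, then, is carefully justifying the reduction $\Ext^i_{\cC}(\Ve\a, \Lambda_\b) = \Ext^i_{\cC_{\not> \max(\a,\b)}}(\Ve\a, \Lambda_\b)$ and the concomitant vanishing when $\a \ne \b$ — i.e. getting the d\'evissage bookkeeping right on both the standard filtration of $\Ve\a$ and the costandard structure of $\Lambda_\b$. This is entirely standard highest-weight-category material (BGG reciprocity), so I would either cite \cite{CPS} for the key facts or spell out the short induction on filtration length, noting that each extension step reduces to $\Ext^i(L_\ga, L_\de)$-type vanishing controlled by the partial order.
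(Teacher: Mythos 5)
Your argument is correct and takes essentially the same route as the paper: the paper disposes of the $\{P_\a\}$ and $\{I_\a\}$ claims exactly by your definition-chasing, and for $\Ext^i(\Ve\a,\Lambda_\b)$ it simply cites the proof of \cite[3.11]{CPS}, which supplies precisely the facts you invoke (projectivity/injectivity of the (co)standards in the truncated subcategories $\cC_{\not>\a}$ together with invariance of $\Ext$ under these inclusions). The only caution is that in the $\a\neq\b$ case d\'evissage to composition factors alone does not kill the higher $\Ext$ groups; one needs the same subcategory $\Ext$-invariance there as well, just as you use it for $\a=\b$, which is exactly what the citation to \cite{CPS} covers.
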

\begin{proof}
  The first statement follows from the definition of projective covers and
  injective hulls.  The second statement is shown in the proof of \cite[3.11]{CPS}. 
  \end{proof}

\subsection{Quasi-hereditary algebras}\label{qha}
We now study those algebras whose module categories are highest weight.

\begin{definition} An algebra is {\bf quasi-hereditary} if its category $\cC(E)$
of finitely generated right modules is highest weight with respect to some partial ordering of its simple modules.
\end{definition}

Let $E$ be a finite-dimensional, quasi-hereditary $\R$-algebra with respect to a fixed partial order
on the indexing set $\cI$ of its simple modules.
Let $$\Pro* = \bigoplus_{\a\in\cI} P_\a,\,\,\, I_* = \bigoplus_{\a\in\cI} I_\a,
\,\,\,\text{and}\,\,\,\Tilt* = \bigoplus_{\a\in\cI} T_\a$$ 
be the sums of the indecomposible projectives, injectives, and tilting modules,
respectively.  Let $D^b(E) = D^b(\cC(E))$ be the bounded derived category of finitely generated right $E$-modules.

\begin{definition}  We say that $E$ is {\bf basic} if the simple module $L_\a$
is one-dimensional for all $\a\in\cI$.  This is equivalent to requiring that
the canonical homomorphism
$$E\to\End(\Pro*)\cong\End(I_*)^{\text{op}}$$
is an isomorphism.
\end{definition}

\begin{definition}
The endomorphism algebra $R(E) := \End(\Tilt*)$ is called the {\bf Ringel dual} of $E$.  
It has simple modules indexed by $\cI$, and it is quasi-hereditary with respect to the
partial order on $\cI$ opposite to the given one.
If $E$ is basic, then the canonical homomorphism $E\to R(R(E))$ is an isomorphism
\cite[Theorems 6 \&\ 7]{Ri}.
The functor $\EuScript{R} := \RHom^\bullet(-,\Tilt*)$
from $D^b(E)$ to $D^b(R(E))$ is called the {\bf Ringel duality functor}.
\end{definition}

\begin{proposition}\label{ringdu}
Suppose that $E$ is basic.
  Up to automorphisms of $E$ and $R(E)$, $\EuScript{R}$ is the unique
  contravariant equivalence that satisfies any of the following
  conditions:
  \begin{enumerate}
   \item $\EuScript{R}$ sends tilting modules to projective modules,
 \item $\EuScript{R}$ sends projective modules to tilting modules,
  \item $\EuScript{R}$ sends standard modules to standard modules.
  \end{enumerate}
\end{proposition}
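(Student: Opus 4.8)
The argument splits into an \emph{existence} statement, that $\EuScript{R}$ itself satisfies each of (1), (2), (3), and a \emph{uniqueness} statement. For existence, observe first that $\RHom^\bullet(-,\Tilt*)$ collapses to $\Hom(-,\Tilt*)$ on the objects at issue: since $\Tilt*$ admits a costandard filtration, Definition \ref{tilting} gives $\Ext^{>0}(\Tilt{\a},\Tilt*)=0$ and $\Ext^{>0}(\Ve{\a},\Tilt*)=0$, so $\EuScript{R}(\Tilt{\a})=\Hom(\Tilt{\a},\Tilt*)$ and $\EuScript{R}(\Ve{\a})=\Hom(\Ve{\a},\Tilt*)$ are modules concentrated in degree $0$. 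As a module over $\End(\Tilt*)=R(E)$ the first is the indecomposable projective attached to $\a$, which is (1); the second is, by Ringel's computation \cite{Ri}, the standard $R(E)$-module attached to $\a$ for the opposite order, which is (3). Condition (2) follows because $\EuScript{R}$ is a quasi-inverse of the Ringel duality functor of $R(E)$ under the biduality $E\cong R(R(E))$ \cite[Theorems 6 \&\ 7]{Ri}, and that functor carries tilting $R(E)$-modules to projective $E$-modules by (1) applied to $R(E)$; hence $\EuScript{R}$ carries the indecomposable projective $E$-modules onto the indecomposable tilting $R(E)$-modules.

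For uniqueness, let $F$ be a contravariant equivalence satisfying one of the three conditions and set $\Psi:=\EuScript{R}^{-1}\circ F$, a covariant self-equivalence of $D^b(E)$. The plan is to show that in every case $\Psi$ fixes, up to isomorphism, a \emph{tilting object} of $D^b(E)$ --- a generator with no self-extensions in nonzero degrees --- and then to invoke the following rigidity principle: a covariant self-equivalence of $D^b(E)$ fixing a tilting object $\Tilt$ up to isomorphism is isomorphic to the derived functor of an algebra automorphism of $\End(\Tilt)$. This principle is standard: via the derived equivalence $D^b(E)\simeq D^b(\End(\Tilt))$ and the identification $D^b(\End(\Tilt))=K^b(\operatorname{proj})$ it reduces to the fact that a triangulated self-equivalence of $D^b(\Lambda)$ fixing the regular module is induced by an element of $\operatorname{Aut}(\Lambda)$ when $\Lambda$ is basic. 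Granting it, $F=\EuScript{R}\circ\Psi$ differs from $\EuScript{R}$ by such an automorphism --- of $R(E)$ in cases (1) and (3), of $E$ in case (2) --- which is exactly the assertion.

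Cases (1) and (2) are then immediate. In case (1), $F$ carries the $|\cI|$ indecomposable tilting $E$-modules bijectively, in degree $0$ since these are modules, onto the $|\cI|$ indecomposable projective $R(E)$-modules, exactly as $\EuScript{R}$ does; hence $\Psi(\Tilt*)\cong\Tilt*$. Now $\Tilt*$ is a tilting object --- the self-extension vanishing is the orthogonality $\Ext^{>0}(\Ve{\a},\Lambda_\b)=0$ --- with $\End(\Tilt*)=R(E)$, so the principle applies. Case (2) is the same with $\Pro*$ in place of $\Tilt*$: it is a tilting object because projectives have no higher self-extensions, $\End(\Pro*)=E$ since $E$ is basic, and condition (2) for both $F$ and $\EuScript{R}$ gives $\Psi(\Pro*)\cong\Pro*$.

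Case (3) is the heart of the matter, because the standard objects are \emph{not} a tilting object --- they have nontrivial mutual extensions --- so the principle does not apply to them directly. The plan is to reduce (3) to case (1). Since $\EuScript{R}$ also carries standards to standards, $\Psi$ permutes the standard objects, again in degree $0$. By Proposition \ref{dual sequences} the costandard objects are the unique set right-dual to the standard objects, so, $\Psi$ being an equivalence, it permutes the costandard objects as well. Being exact, $\Psi$ takes short exact sequences of modules to short exact sequences of modules (a triangle with both outer terms in $\operatorname{mod} E$ has its third term there too), so it preserves the subcategory of modules filtered by standard objects and the subcategory of modules filtered by costandard objects, hence their intersection --- which is precisely $\operatorname{add}(\Tilt*)$. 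In particular $\Psi(\Tilt*)\cong\Tilt*$, and we are back in case (1). The delicate step is exactly this manufacture of a genuine tilting object out of the rigid-but-not-tilting standard objects, via the two-sided orthogonality of standards and costandards together with the identification of $\operatorname{add}(\Tilt*)$ as the intersection of the two filtration subcategories; one also uses throughout that the Ringel dual of a basic quasi-hereditary algebra is again basic, so that the Picard-group ambiguity in the rigidity principle collapses to honest algebra automorphisms.
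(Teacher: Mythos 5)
Your proof is correct, and it reaches the same endpoint as the paper's but by a noticeably different route in the two nontrivial cases. For uniqueness in case (1) you and the paper do essentially the same thing: force $F(\Tilt*)$ (equivalently $\Psi(\Tilt*)\cong\Tilt*$) and then invoke the principle that an exact equivalence is pinned down, up to automorphism, by its value on a generator together with the induced map on its endomorphism ring --- your ``rigidity principle'' is exactly the paper's assertion, stated for $\Psi=\EuScript{R}^{-1}\circ F$, and both arguments leave the same (standard, accepted) point about triangulated functors at the same level of informality. The divergence is in (2) and (3). The paper deduces (2) from (1) by passing to the adjoint functor, while you argue (2) directly, using biduality $E\cong R(R(E))$ to see that $\EuScript{R}$ itself satisfies (2) and then running the rigidity argument on $\Pro*$ with $\End(\Pro*)\cong E$; this costs you the (true, but stronger than the paper's citation) fact that double Ringel duality is quasi-inverse to $\EuScript{R}$ at the level of functors, not just algebras. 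For (3), the paper reduces to (2) in two lines: $\EuScript{R}'(\Pro\a)$ inherits a standard filtration, and the contravariant swap $\Ext^i(\EuScript{R}'(\Ve\b),\EuScript{R}'(\Pro\a))\cong\Ext^i(\Pro\a,\Ve\b)=0$ together with the criterion in Definition \ref{tilting} gives a costandard filtration, so $\EuScript{R}'$ sends projectives to tiltings. You instead reduce to (1): $\Psi$ permutes standards, hence (by uniqueness of right-dual sets, Proposition \ref{dual sequences}) permutes costandards, hence preserves the two filtration subcategories and their intersection $\operatorname{add}(\Tilt*)$, forcing $\Psi(\Tilt*)\cong\Tilt*$. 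This is correct --- the cohomology long-exact-sequence argument keeps everything in the heart, and summands of $\Delta$- or $\nabla$-filtered modules are again filtered by the Ext criterion --- but it is longer than the paper's Ext-swap, whose whole point is to exploit contravariance directly. What your version buys is a more self-contained treatment (detailed existence for (1)--(3), an explicit identification of $\operatorname{add}(\Tilt*)$ as the fixed subcategory) and an argument for (3) that does not pass through statement (2) at all.
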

\begin{proof}
  We first prove statement (1).  The Ringel duality functor is an
  equivalence because $D^b(E)$ is generated by $T_*$.
  Since $\EuScript{R}(T_*)$ is equal to $R(E)$ as a right module over itself,
  it is clear that $\EuScript{R}$ takes tilting modules to projective modules.
  Suppose that $\EuScript{R}'$
  is another such equivalence.  Since the indecomposable tilting
  modules $\{T_\a\}$ generate $K(E)$ and $\EuScript{R}'$ induces an
  isomorphism on Grothendieck groups, $\EuScript{R}'$ must take the
  tilting modules to the complete set of indecomposable projective
  $R(E)$-modules.  Thus $\EuScript{R}'(T_*)$ is isomorphic to the
  direct sum of all indecomposable projective $R(E)$-modules, which is
  isomorphic to $R(E)$ as a right module over itself.  Since any exact
  functor is determined by its values on sends a generator and on the
  endomorphisms of that generator, $\EuScript{R}'$ can only differ
  from $\EuScript{R}$ in its isomorphism between $\End(T_*)$ and
  $R(E)$.  This is precisely the uniqueness statement we have claimed
  for (1).

  Statement (2) follows by applying statement (1) to the adjoint
  functor.  

As for Statement (3), it was shown in \cite[Theorem 6]{Ri} that
  $\EuScript{R}$ takes standard modules to standard modules.
  Suppose that $\EuScript{R}'$ is another such equivalence.
  For any $\a\in\cI$, the projective module $P_\a$ has a standard filtration,
  therefore so does $\EuScript{R}'(P_\a)$.  Furthermore, we have 
  $\Ext^i(\EuScript{R}'(\Ve\b),\EuScript{R}'(\Pro\a))=\Ext^i(\Pro\a,\Ve\b)=0$
  for all $i>0$ and $\b\in\cI$, thus $\EuScript{R}'(\Pro\a)$ 
  has a costandard filtration as well, and is therefore tilting.  
  Then part (2) tells us that $\EuScript{R}'$ is the Ringel duality functor.
\end{proof}

\subsection{Self-dual projectives and the double centralizer property}\label{sdp}
Suppose that our algebra $E$ is basic and 
quasi-hereditary, and that it is
endowed with an
anti-involution $\psi$, inducing an equivalence of categories $\cC(E)\simeq \cC(E^{\text{op}})$.
We have another such equivalence given by taking the dual of the underlying vector space,
and these two equivalences compose to a contravariant 
auto-involution $\du$ of $\cC(E)$.

We will assume for simplicity that $\psi$ fixes all idempotents of $E$.  The case where it gives a non-trivial involution on idempotents is also interesting, but requires a bit more care in the statements below, and will not be relevant to this paper.
The following proposition follows easily from the fact that any contravariant equivalence takes projectives to injectives.

\begin{proposition}\label{duality}
For all $\a\in\cI$, 
$$\du \!L_\a\cong L_\a,\,\,\,\,\,\du\!\Pro\a\cong I_\a,\,\,\,\,\,\du\!\Ve\a\cong\Lambda_\a,\,\,\,\,\,\text{and}\,\,\,\,\,\du\!\Tilt\a\cong \Tilt\a.$$
\end{proposition}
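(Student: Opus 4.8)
The plan is to derive everything from the general categorical fact that a contravariant equivalence of abelian categories sends projective covers to injective hulls, indecomposables to indecomposables, and short exact sequences to short exact sequences (reversing arrows). First I would set up the contravariant auto-involution $\du$ precisely: $\psi$ gives an equivalence $\cC(E)\simeq\cC(E^{\text{op}})$, vector-space duality $\Hom_\R(-,\R)$ gives another such equivalence $\cC(E^{\text{op}})\simeq\cC(E)$ (here we use that $E$ is finite-dimensional), and $\du$ is their composite. Since $\psi$ fixes all idempotents, $\du$ fixes the indexing set $\cI$ on the nose, so it makes sense to ask that $\du$ preserve each object up to isomorphism rather than just permuting them.

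Next I would handle the four isomorphisms in turn. For $\du\! L_\a\cong L_\a$: $\du$ is a contravariant auto-equivalence, hence permutes the isomorphism classes of simple objects; because $\psi$ fixes idempotents, the simple $L_\a = e_\a E/e_\a\,\mathrm{rad}\,E$ is sent to a simple supported at the same idempotent, so $\du\!L_\a\cong L_\a$. For $\du\!\Pro\a\cong I_\a$: a contravariant equivalence takes projectives to injectives and preserves indecomposability, and since $\Pro\a$ is the projective cover of $L_\a$, its image $\du\!\Pro\a$ is the injective hull of $\du\!L_\a\cong L_\a$, namely $I_\a$. For $\du\!\Ve\a\cong\Lambda_\a$: recall from the discussion after Definition \ref{high-weight-def} that $\Ve\a$ is the projective cover of $L_\a$ in the subcategory $\cC_{\not>\a}$ and $\Lambda_\a$ is the injective hull of $L_\a$ there; since $\du$ fixes the partial order (it fixes $\cI$ and, being exact-up-to-reversal, preserves composition factors, hence preserves each $\cC_{\not>\a}$ as a subcategory), it restricts to a contravariant equivalence of $\cC_{\not>\a}$ and therefore sends $\Ve\a$ to the injective hull of $L_\a$ in $\cC_{\not>\a}$, which is $\Lambda_\a$. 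For $\du\!\Tilt\a\cong\Tilt\a$: by Definition \ref{tilting}, $T$ is tilting iff $\Ext^i(T,\Lambda_\b)=0=\Ext^i(\Ve\b,T)$ for all $i>0$ and all $\b$; applying the contravariant equivalence $\du$ and using $\du\!\Ve\b\cong\Lambda_\b$, $\du\!\Lambda_\b\cong\Ve\b$ (the latter since $\du$ is an involution), this condition is invariant, so $\du$ permutes the indecomposable tilting modules. To pin down that $\du\!\Tilt\a\cong\Tilt\a$ for each $\a$, I would use the characterization in Definition \ref{tilting}: $\Tilt\a$ is the unique indecomposable tilting module whose largest standard submodule is $\Ve\a$; dualizing, $\du\!\Tilt\a$ is the unique indecomposable tilting module whose largest costandard quotient is $\du\!\Ve\a\cong\Lambda_\a$, and that module is again $\Tilt\a$.

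The main obstacle I anticipate is the bookkeeping in the claim that $\du$ fixes the partial order and hence preserves the subcategories $\cC_{\not>\a}$ — this is what makes the standard/costandard statement go through, and it hinges on $\psi$ fixing idempotents (so that $\du$ does not permute the $L_\a$) together with the fact that a contravariant exact functor preserves the set of composition factors of any object. I would state this as a small lemma (or fold it into the proof), being careful that "largest standard submodule" dualizes to "largest costandard quotient" exactly as in the cited uniqueness statement from \cite{Ri}. Everything else is a formal consequence of $\du$ being a contravariant involution, so the proof should be short once the order-preservation point is nailed down.
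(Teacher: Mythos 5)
Your proposal is correct and follows essentially the same route as the paper, which disposes of this proposition in one sentence by invoking exactly the fact you elaborate: $\du$ is a contravariant auto-equivalence fixing the simples (because $\psi$ fixes idempotents), so it exchanges projective covers with injective hulls, both in $\cC(E)$ and in the subcategories $\cC_{\not>\a}$, and permutes the indecomposable tiltings compatibly with Ringel's uniqueness characterization. Your more careful treatment of the order-preservation point and of ``largest standard submodule'' dualizing to ``largest costandard quotient'' is just a fuller write-up of the argument the paper leaves implicit.
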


\begin{remark}\label{consequences}
Proposition \ref{duality} has two important consequences.  First, since $\du$ preserves simples,
it acts trivially on the Grothendieck group of $\cC(E)$.  In particular, we have $[\Ve \a]=[\Lambda_\a]$, so by Proposition \ref{dual sequences}, the classes $[\Ve \a]$ are an orthonormal basis of the Grothendieck group.

Second, the isomorphism $T_*\cong\du\!T_*$ induces an anti-automorphism of $R(E)$ that fixes idempotents, 
and thus a duality functor on the Ringel dual category $\cC(R(E))$.
\end{remark}

The next proposition follows immediately from the definitions and the fact that all
tilting modules are self-dual.

\begin{proposition}\label{intisd}
For all $\a\in\cI$, the following are equivalent:
  \begin{enumerate}
  \item The projective module $\Pro \a$ is injective.
   \item The projective module $\Pro \a$ is tilting.
  \item The projective module $\Pro \a$ is self-dual, that is, $\du(\Pro\a) = \Pro\a$.
  \end{enumerate}
\end{proposition}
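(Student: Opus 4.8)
The plan is to prove Proposition~\ref{intisd} by chasing the definitions, using only the formal consequences of the quasi-hereditary and duality structure already set up. The three conditions to be shown equivalent are: (1) $\Pro\a$ is injective; (2) $\Pro\a$ is tilting; (3) $\Pro\a$ is self-dual under $\du$. I would organize the argument as a cycle of implications, or rather as a small web, exploiting the fact that $\du$ fixes simples and exchanges projectives with injectives (Proposition~\ref{duality}).

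First, $(1)\Leftrightarrow(3)$. Applying $\du$ to $\Pro\a$ gives $I_\a$ by Proposition~\ref{duality}. Since $\du$ is a contravariant auto-equivalence, $\Pro\a$ is indecomposable injective if and only if it equals some indecomposable injective; but among the indecomposable injectives $\{I_\b\}$, the only one with simple socle $L_\a$ is $I_\a$ (here one uses that $\Pro\a$ has simple top $L_\a$, hence $\du\Pro\a \cong I_\a$ has simple socle $L_\a$, and an injective module equal to $\Pro\a$ must be $I_\a$). So $\Pro\a$ is injective exactly when $\Pro\a\cong I_\a\cong\du\Pro\a$, which is condition (3). I would phrase this cleanly: $\Pro\a$ injective $\iff$ $\Pro\a \cong \du(\Pro\a)$, using $\du(\Pro\a) \cong I_\a$ and the bijectivity of $\du$ on indecomposables.

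Next, $(2)\Rightarrow(1)$ and $(1)\Rightarrow(2)$. For $(1)\Rightarrow(2)$: a projective module always has a standard filtration (this is part of the definition of quasi-hereditary / highest weight, via condition (2) of Definition~\ref{high-weight-def} applied to $\Pro\a \onto \Ve\a$), and dually an injective always has a costandard filtration; so if $\Pro\a$ is also injective it has both, hence is tilting by the characterization in Definition~\ref{tilting}. For $(2)\Rightarrow(3)$: by the last sentence of the excerpt ("all tilting modules are self-dual"), which is itself an instance of $\du\Tilt\a\cong\Tilt\a$ from Proposition~\ref{duality} together with the indecomposability of $\Tilt\a$, a tilting $\Pro\a$ satisfies $\du(\Pro\a)\cong\Pro\a$. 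Closing the loop $(3)\Rightarrow(1)$ was handled above. Thus $(1)\Leftrightarrow(2)\Leftrightarrow(3)$.

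I do not anticipate a genuine obstacle here — the proposition is essentially a bookkeeping consequence of Propositions~\ref{duality} and the definitions of tilting and quasi-hereditary, which is exactly why the authors say it "follows immediately." The one point requiring a sentence of care is the identification $\du(\Pro\a)\cong I_\a$ forcing $\Pro\a\cong I_\a$ rather than merely some $I_\b$: this uses that $\du$ preserves the simple top/socle (it fixes $L_\a$), so the indecomposable summand $\Pro\a$ with top $L_\a$ is sent to the indecomposable with socle $L_\a$, namely $I_\a$; and conversely if $\Pro\a$ is injective it is its own injective hull, which must then be $I_\a$. The rest is just citing "projectives have standard filtrations, injectives have costandard filtrations, tiltings have both," and "tiltings are self-dual."
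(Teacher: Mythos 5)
Your cycle $(1)\Rightarrow(2)\Rightarrow(3)\Rightarrow(1)$ is correct and is exactly what the paper's one-line proof intends: $\Pro\a$ has a standard filtration because it is projective, an injective $\Pro\a\cong\du\!\Pro\b$-type module has a costandard filtration by applying $\du$ (using $\du\Ve\ga\cong\Lambda_\ga$), indecomposable tiltings are self-dual by Proposition \ref{duality}, and $\du\!\Pro\a\cong I_\a$ makes a self-dual $\Pro\a$ injective. One caveat: your standalone argument for $(1)\Rightarrow(3)$ --- ``if $\Pro\a$ is injective it is its own injective hull, which must then be $I_\a$'' --- is not justified as written. Injectivity only gives $\Pro\a\cong I_\b$ where $\Si\b=\soc\Pro\a$, and knowing the top of $\Pro\a$ is $\Si\a$ says nothing a priori about its socle; identifying $\b$ with $\a$ is essentially the self-duality you are trying to prove, so that sentence is circular. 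Fortunately it is also redundant, since $(1)\Rightarrow(2)\Rightarrow(3)$ already closes the loop; if you want a direct proof that $\b=\a$, compare the standard filtration of $\Pro\a$ (one copy of $\Ve\a$, all other factors $\Ve\ga$ with $\ga>\a$) with the costandard filtration of $I_\b$ in the Grothendieck group, using $[\Ve\ga]=[\Lambda_\ga]$ from Remark \ref{consequences} and the antisymmetry of the partial order.
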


We will later need the following easy lemma.

\begin{lemma}\label{general-self-dual}
    If $\Pro\a$ is self-dual, then
the simple module $\Si\a$ is contained in the socle of some standard module $\Ve \b$.
\end{lemma}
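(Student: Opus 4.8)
The plan is to combine Proposition \ref{duality} with the standard filtration of the projective $\Pro\a$. First I would observe that $\du\!\Pro\a \cong I_\a$ by Proposition \ref{duality}, so the hypothesis $\du\!\Pro\a \cong \Pro\a$ forces $\Pro\a \cong I_\a$; since $I_\a$ is by definition the injective hull of the simple object $\Si\a$, this gives $\soc(\Pro\a) \cong \Si\a$, because the socle of the injective hull of a simple object is that simple object (the injective hull is an essential extension, so any simple submodule must contain the given simple).

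Next I would recall that, because $\cC$ is highest weight, the epimorphism $\Pi_\a\colon \Pro\a \onto \Ve\a$ has kernel filtered by standard objects $\Ve\ga$ with $\ga > \a$; splicing this filtration with the quotient $\Ve\a$ on top shows that $\Pro\a$ admits a filtration $0 = F_0 \subset F_1 \subset \dots \subset F_n = \Pro\a$ all of whose successive quotients are standard objects. Set $\Ve\b := F_1$, the bottom term, which is a genuine submodule of $\Pro\a$ isomorphic to a standard object.

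To finish, I would note that $\Ve\b = F_1$ is nonzero, so by artinianness its socle is nonzero; but $\soc(\Ve\b) \subseteq \soc(\Pro\a) \cong \Si\a$, and $\Si\a$ is simple, so $\soc(\Ve\b) \cong \Si\a$. In particular $\Si\a$ is contained in the socle of the standard module $\Ve\b$, which is exactly the assertion.

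I do not expect any serious obstacle; the only points meriting a word of justification are that the bottom step of a standard filtration is a submodule (so that its socle injects into $\soc(\Pro\a)$) and that the injective hull of a simple object has that object as its entire socle, both of which are routine.
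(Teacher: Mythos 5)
Your proof is correct and follows essentially the same route as the paper's: identify $\soc(\Pro\a)\cong\Si\a$ from self-duality, take a standard submodule $\Ve\b\subset\Pro\a$ from the standard filtration, and use that its nonzero socle sits inside the simple socle of $\Pro\a$. The only cosmetic difference is that you get $\soc(\Pro\a)\cong\Si\a$ via $\Pro\a\cong\du\!\Pro\a\cong I_\a$ (Proposition \ref{duality}), whereas the paper argues via indecomposability and socle $\cong$ cosocle; both are fine.
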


\begin{proof}


  Suppose that the projective module $\Pro\a$ is self-dual.
  Since $\Pro \a$ is indecomposible, it is the injective hull
  of its socle.  Since $\Pro\a$ is self-dual, its socle is isomorphic
  to its cosocle $\Si \a$.  Since $P_\a$ has a standard filtration, it has at least one
  standard module $\Ve\b$ as a submodule.
  The functor that takes a module to its socle is
  left exact and $\Ve\b$ is finite-dimensional (and therefore has a non-trivial socle),
  hence the socle of $\Ve\b$ is a non-trivial submodule of the socle $L_\a$
  of $\Pro\a$.  Since $L_\a$ is simple, the socle of $\Ve\b$ must be isomorphic to $L_\a$.
  \end{proof}

Let $\cI_{\du} = \{\a\in\cI\mid\du(\Pro\a) \cong \Pro\a\}$.  Let
$$\PI  = \bigoplus_{\a\,\in\,\cI_{\du}}\Pro \a\subset P_*
$$ be the direct sum of all of the
self-dual projective right $E$-modules, and consider its endomorphism
algebra
\begin{equation}\label{erect S}\PIS := \End(\PI)\subset\End(P_*) \cong E.
\end{equation}

\begin{definition}\label{symdef}
An algebra is said to be {\bf symmetric} if it is isomorphic to its
vector space dual as a bimodule over itself.
It is immediate from the definition that $\PIS$ is symmetric.
\end{definition}

The next theorem, which we will need in Section \ref{Ringel duality and Serre functors}, 
provides a motivation for studying self-dual projectives and their endomorphism algebras.

\begin{theorem}\label{dc}
Suppose that the converse of Lemma \ref{general-self-dual} holds for the algebra $E$.
Then the functor from right $E$-modules to right $\PIS$-modules taking a
  module $M$ to $\Hom_E(\PI,M)$ is fully faithful on projectives.
\end{theorem}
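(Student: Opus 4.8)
The statement is a version of the double centralizer property, and the natural route is to prove that the functor $F = \Hom_E(\PI, -)$ is fully faithful on the additive subcategory generated by $P_*$. Concretely, for indecomposable projectives $P_\a$ and $P_\b$ we must show that $F$ induces an isomorphism
\[
\Hom_E(P_\a, P_\b) \;\xrightarrow{\;\sim\;}\; \Hom_{\PIS}\big(\Hom_E(\PI, P_\a),\, \Hom_E(\PI, P_\b)\big).
\]
Since $\PIS = \End_E(\PI)$ and $\PI$ is a projective $E$-module, $F$ is automatically full and faithful on the category $\mathrm{add}(\PI)$ — that is, on the self-dual projectives. So the content is entirely about the projectives $P_\a$ with $\a \notin \cI_{\du}$: one must control $\Hom_E(P_\a, P_\b)$ in terms of maps out of $\PI$.

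\emph{Key steps, in order.} First I would reduce the fully-faithfulness to a single statement: the natural evaluation map is an isomorphism provided $P_*$, and hence every $P_\a$, admits a two-step presentation
\[
\PI_1 \longrightarrow \PI_0 \longrightarrow P_\a \longrightarrow 0
\]
with $\PI_0, \PI_1 \in \mathrm{add}(\PI)$; applying the left-exact functor $\Hom_E(-, P_\b)$ to this presentation and comparing with the analogous sequence for $\Hom_{\PIS}(F(-), F(P_\b))$ gives the isomorphism by the five lemma, exactly as in the classical argument for the double centralizer property (see Soergel's \emph{Struktursatz}, or \cite[Theorem 2.10]{CPS} style arguments). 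So the real task becomes: \textbf{every indecomposable projective $E$-module is generated and co-related by self-dual projectives}, i.e.\ has a presentation by modules in $\mathrm{add}(\PI)$. Second, to produce such a presentation I would use the hypothesis — the converse of Lemma \ref{general-self-dual} — which says that whenever $L_\a$ sits in the socle of some standard module $\Ve\b$, the projective $P_\a$ is self-dual. The strategy is to show that the injective hull $I_\a$ of every simple, equivalently (by duality) that every $P_\a$, embeds into a product of self-dual injectives/projectives; this uses that the socle of any module $M$ consists of simples $L_\ga$, each of which — being in the socle of a submodule, and ultimately traceable into some standard module via the standard filtration machinery of Section \ref{qha} — forces $\ga \in \cI_{\du}$ by the assumed converse. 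Concretely: the socle of $P_\a$ as a submodule of its injective hull, combined with self-duality of the relevant $P_\ga$, gives the embedding $P_\a \hookrightarrow \PI_0$ with $\PI_0$ a sum of self-dual projectives; dualizing and re-running the argument on the cokernel gives the $\PI_1$ term.

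\emph{The main obstacle.} The crux is the transition from ``$L_\ga$ appears in the socle of $P_\a$'' (or of the relevant syzygy) to ``$\ga\in\cI_{\du}$''. Lemma \ref{general-self-dual} gives one direction for free; the hypothesis supplies the converse, but to \emph{invoke} it I need to know that any simple in the socle of $P_\a$ (and then in the socle of the kernel of $\PI_0\to P_\a$) really does occur in the socle of \emph{some standard module}. This is where the highest-weight structure of Section \ref{sec:repr-categ} and Proposition \ref{dual sequences} — in particular the fact that every projective has a standard filtration and costandard objects are injective hulls in truncated subcategories — must be deployed carefully: one traces a socle element of $P_\a$ down through a standard submodule $\Ve\b \hookrightarrow P_\a$ and uses left-exactness of the socle functor (as in the proof of Lemma \ref{general-self-dual}) to land inside $\soc \Ve\b$. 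Getting the bookkeeping right for the second syzygy $\PI_1$, so that its socle is again covered by $\cI_\du$, is the delicate point; everything else is the standard five-lemma comparison.
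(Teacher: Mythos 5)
The central gap is in your reduction. The displayed ``two-step presentation'' $\PI_1 \to \PI_0 \to P_\a \to 0$ with $\PI_0,\PI_1\in\operatorname{add}(\PI)$ cannot exist unless $P_\a$ is already self-dual: a surjection onto a projective module splits, so $P_\a$ would be a direct summand of $\PI_0$ and hence lie in $\operatorname{add}(\PI)$. What the double-centralizer machinery actually needs is a \emph{co}presentation $0\to P_\a\to \PI^0\to \PI^1$ with $\PI^i\in\operatorname{add}(\PI)$ (dominant dimension at least two), which is what your later prose about embedding $P_\a$ into self-dual projectives gestures toward; but once the arrows are reversed, the ``five-lemma comparison'' is no longer formal. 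One would need the comparison maps $\Hom_E(P_\a,X)\to\Hom_{\PIS}(F P_\a,F X)$ to be isomorphisms for $X\in\operatorname{add}(\PI)$ and $P_\a$ an arbitrary projective, and this is not automatic for a projective module $\PI$ (it is essentially the statement being proved, restricted to targets in $\operatorname{add}(\PI)$); deducing it from the copresentation is exactly the content of the K\"onig--Slung\aa rd--Xi/Soergel-type dominant-dimension criterion, which you would have to prove or cite. Moreover, even in the corrected formulation you only obtain half of the copresentation: the embedding $P_\a\hookrightarrow\PI^0$ does follow from your socle argument (every simple in $\soc P_\a$ lies in the socle of some subquotient of a standard filtration, hence is indexed by $\cI_{\du}$ by the hypothesis), but the step you yourself call ``the delicate point''---that the socle of the cokernel $\PI^0/P_\a$ is again covered by $\cI_{\du}$---is left without an argument, and it does not follow by ``dualizing and re-running,'' since that cokernel need not carry a standard filtration.

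The paper's proof avoids all of this by verifying a different and much cheaper condition. For a standard module $\Ve\a$, its socle tautologically consists of simples lying in the socle of a standard module (namely $\Ve\a$ itself), so the hypothesis says their injective hulls are self-dual projectives; the injective hull of $\soc\Ve\a$ is therefore projective, and since $\Ve\a$ embeds into it (extend the inclusion of the socle and note injectivity on the socle forces injectivity on $\Ve\a$), every standard module has a projective-injective injective hull. The full faithfulness on projectives is then obtained by citing \cite[2.6]{MS}, which packages precisely the bookkeeping your outline leaves open (the second syzygy and the non-formal Hom-space comparison). To complete your approach you would either verify the hypothesis of that criterion, as the paper does, or supply complete proofs of both missing ingredients.
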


\begin{proof}
  Fix an index $\a\in\cI_{\du}$, and let $L$ be the socle of the
  standard module $\Ve \a$.  Then $L$ is a direct sum of simple modules,
  and the assumption above implies that the injective hull of $L$ is
  also projective; we denote this hull by $P$.  Since $P$ is an
  injective module, the inclusion $L \hookrightarrow P$ extends to
  $\Ve\a$, and since the map is injective on the socle $L$, it must be
  injective on all of $\Ve\a$.  Thus $P$ is the injective hull of
  $\Ve\a$.  An application of \cite[2.6]{MS} gives the desired
  result.
\end{proof}

\begin{remark}\label{dcp}
  The property attributed to the $\PIS$-$E$-bimodule $\PI$ in Theorem
  \ref{dc} is known as the {\bf double centralizer property}.  See
  \cite{MS} for a more detailed treatment of this phenomenon.
\end{remark}

\subsection{Koszul algebras}\label{Koszul algebras}
To discuss the notion of Koszulity, we must begin to work with graded algebras and graded modules.
Let $E = \bigoplus_{k\geq 0}E_k$ be a graded $\R$-algebra, and let $R = E_0$. 

\begin{definition}
  A complex
  \[
  	\hdots\to P_k \rightarrow P_{k-1} \rightarrow \hdots\to P_1 \rightarrow P_0
\]
of graded projective right $E$-modules is called {\bf linear} if $P_k$ is generated in
  degree $k$.
\end{definition}

\begin{definition}
The algebra $E$ is called {\bf Koszul} if each every simple right $E$-module admits a linear projective resolution.
\end{definition}

The notion of Koszulity gives us a second interpretation of quadratic duality.

\begin{theorem}\label{kd}{\cite[2.3.3, 2.9.1, 2.10.1]{BGS96}}
If $E$ is Koszul, then it is quadratic.  Its quadratic dual $E^!$ is also Koszul, and is isomorphic to
$Ext_E(R,R)^{\text{op}}$.
\end{theorem}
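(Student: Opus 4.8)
This statement is \cite[2.3.3, 2.9.1, 2.10.1]{BGS96}, so in the paper it is enough to cite that reference; here is how I would reconstruct the proof. First, to see that a Koszul algebra $E$ is quadratic, take a minimal linear projective resolution $\cdots\to P_2\to P_1\to P_0\to R\to 0$ of the semisimple module $R=E_0$. Since $P_0$ is generated in degree $0$ and surjects onto $R$, we have $P_0=E$; since $P_1$ is generated in degree $1$, minimality forces $P_1=E\otimes_R E_1$ with the multiplication map, and exactness at $P_0$ identifies its image with $E_{\ge 1}$, so $E$ is generated in degree one. Setting $N=\ker(P_1\to P_0)$ and using that $P_2$ is generated in degree $2$ and surjects onto $N$, the module $N$ is generated by $N_2=\ker(E_1\otimes_R E_1\to E_2)=:W$; a standard argument (\cite[Ch.~2]{PolPos}) then gives $E=T_R(E_1)/(W)$.

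Next, I would read off the Ext algebra from the same resolution. Because $P_\udot$ is minimal, every differential strictly raises internal degree, so $\Hom_E(P_\udot,R)$ has zero differential and $\Ext^n_E(R,R)=\Hom_E(P_n,R)$. Writing $P_n=E\otimes_R K_n$ with $K_n$ an $R$-bimodule concentrated in degree $n$ gives $\Ext^n_E(R,R)\cong K_n^*$, so the Yoneda algebra is generated in cohomological degree one with $\Ext^1_E(R,R)\cong E_1^*$; computing the Yoneda product $\Ext^1\times\Ext^1\to\Ext^2$ from the explicit resolution identifies the degree-two relations with $W^\perp$, whence $\Ext_E(R,R)^{\text{op}}\cong T_R(E_1^*)/(W^\perp)=E^!$, the ``op'' recording the reversal of composition order.

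Finally, for the self-duality I would introduce the Koszul complex $\mathcal K_\udot(E)=E\otimes_R W_\udot$, with $W_0=R$, $W_1=E_1$, and $W_n=\bigcap_{i+j=n-2}E_1^{\otimes i}\otimes_R W\otimes_R E_1^{\otimes j}$ inside $E_1^{\otimes n}$; the first two steps show that $E$ is Koszul precisely when $\mathcal K_\udot(E)$ resolves $R$ (the $K_n$ above must then equal $W_n$). Since $(E^!)_n\cong W_n^*$ and $\mathcal K_\udot(E^!)$ is assembled from the annihilators $W_n^\perp$, one expects exactness of $\mathcal K_\udot(E)$ to translate, internal-degree by internal-degree, into exactness of $\mathcal K_\udot(E^!)$; thus $E^!$ is Koszul, and applying the previous paragraph to $E^!$ together with $E^{!!}=E$ recovers $\Ext_{E^!}(R,R)^{\text{op}}\cong E$.

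The first two steps are essentially formal bookkeeping with a minimal resolution. The hard part is the last one: turning exactness of the Koszul complex of $E$ into exactness of that of $E^!$ genuinely requires a numerical input — the Hilbert-series identity $h_E(t)\,h_{E^!}(-t)=1$, proved by an Euler-characteristic computation with the $W_n$ — together with a transpose argument in each internal degree. This is precisely the content of \cite[2.9.1, 2.10.1]{BGS96}.
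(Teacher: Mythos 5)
The paper offers no argument for this statement at all --- it is quoted verbatim from \cite[2.3.3, 2.9.1, 2.10.1]{BGS96} --- so there is no internal proof to compare against; judged on its own terms, your reconstruction follows the standard line (minimal linear resolution forces quadraticity; minimality kills the differentials in $\Hom_E(P_\udot,R)$ so the Yoneda algebra is read off from the generator spaces $K_n$; identification $K_n\cong W_n$ and the Koszul complex handle duality) and is essentially correct as a sketch of the cited results, matching the treatment in \cite{BGS96} and \cite{PolPos}.

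One quibble with your final paragraph: the Hilbert-series identity $h_E(t)\,h_{E^!}(-t)=1$ is not a needed input, and by itself it could not be one --- it is a consequence of Koszulity (the Euler characteristic of the exact Koszul complex in each internal degree), and there exist non-Koszul quadratic algebras satisfying it. The transpose argument you mention already suffices: in each internal degree $n$ the Koszul complex of $E^!$ is, term by term and differential by differential, the linear dual of the Koszul complex of $E$ (its terms are $E^!_k\otimes_R(E_{n-k})^*$, and the differential is induced by the same canonical element of $E_1\otimes E_1^*$), and the dual of an exact sequence of finite-dimensional vector spaces is exact. So exactness of $\mathcal K_\udot(E)$ in positive internal degrees transfers directly to $\mathcal K_\udot(E^!)$, with no dimension count required; this is the mechanism behind \cite[2.9.1]{BGS96}.
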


\begin{remark}
In this case $E^!$ is also known as the {\bf Koszul dual} of $E$.
\end{remark}

Let $D(E)$ be the bounded derived category of graded
right $E$-modules.

\begin{theorem}{\cite[1.2.6]{BGS96}}\label{koszul-equiv}
  If $E$ is Koszul, we have an equivalence of derived categories 
  \begin{equation*}
    D(E)\cong D(E^!).
  \end{equation*}
\end{theorem}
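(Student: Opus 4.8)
This is \cite[Theorem 1.2.6]{BGS96}; the plan is to recall the construction of the equivalence --- the \emph{Koszul functor} --- and the mechanism by which it is shown to be an equivalence. By Theorem \ref{kd} we identify $E^!$ with $\Ext_E(R,R)^{\mathrm{op}}$. Since $E$ is Koszul, each simple module $L_\a$ admits a linear projective resolution; assembling these over all $\a$ produces a linear resolution of $R$ as a graded left $E$-module whose $k$-th term is $E\otimes_R(E^!_k)^*$, generated in internal degree $k$ and placed in cohomological degree $-k$. A direct computation shows that the natural right action of $E^!$ on the factors $(E^!_k)^*$ commutes with the differential, so this resolution underlies a complex of graded $(E,E^!)$-bimodules
\[ \mathcal{K}^\bullet\colon\ \cdots \longrightarrow E\otimes_R(E^!_2)^* \longrightarrow E\otimes_R(E^!_1)^* \longrightarrow E \longrightarrow 0 . \]

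Using $\mathcal{K}^\bullet$ one defines an exact functor $\mathcal{K} := (-)\otimes^{\mathbf{L}}_{E}\mathcal{K}^\bullet \colon D(E)\to D(E^!)$, with the target $E^!$-structure coming from the residual right action on $\mathcal{K}^\bullet$. The first step is to compute $\mathcal{K}$ on the standard generators: $\mathcal{K}(E)\simeq\mathcal{K}^\bullet$, whose underlying complex resolves $R$, and tracking the residual structure one finds that $\mathcal{K}$ carries the free module $E$ --- hence each indecomposable projective $P_\a$, up to an internal shift --- to the simple $E^!$-module $L^!_\a$, and dually the simple $E$-modules to the indecomposable injective $E^!$-modules. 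The homological and internal gradings become correlated in the characteristic Koszul fashion; in our applications $E$ is one of the finite-dimensional, quasi-hereditary (hence finite global dimension) algebras $A(\cV)$, $B(\cV)$, so $\mathcal{K}^\bullet$ is a finite complex and everything stays within the bounded derived categories.

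To conclude, one uses the standard criterion that an exact functor of triangulated categories which is fully faithful and whose essential image contains a set of generators is an equivalence. Here $D(E)$ is generated under cohomological shifts $[i]$, internal twists $(n)$ and cones by the projectives $\{P_\a\}$, whose images $\{L^!_\a\}$ generate $D(E^!)$ (which is again finite-dimensional, its grading being bounded by the global dimension of $E$). Full faithfulness reduces to a comparison of total $\Hom$-spaces: $\bigoplus_{i,n}\Hom_{D(E)}(P_\a,P_\b[i](n)) = \bigoplus_n (e_\a E e_\b)_n$ because there are no higher $\Ext$-groups between projectives, while $\bigoplus_{i,n}\Hom_{D(E^!)}(L^!_\a,L^!_\b[i](n)) = \bigoplus_{i,n}\Ext^i_{E^!}(L^!_\a,L^!_\b)_n$ is, by Koszulity of $E^!$ (Theorem \ref{kd}) and the involutivity of quadratic duality, the corresponding block of $\Ext_{E^!}(R,R)\cong E^{\mathrm{op}}$; one verifies that $\mathcal{K}$ realizes the tautological identification between them.

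The step I expect to be the main obstacle is the bookkeeping: checking that $\mathcal{K}$ intertwines the cohomological shift on $D(E)$ with the appropriate \emph{combined} homological-and-internal shift on $D(E^!)$, and that the $\Hom$-comparison above is the natural isomorphism and not merely an abstract one. For the statement as printed, valid for an arbitrary Koszul algebra $E$, there is the further essential point that a bounded derived category need not then be generated by projectives or simples nor preserved by $\mathcal{K}$, so that the rigorous argument must be carried out inside the mixed derived categories $D^\uparrow$ and $D^\downarrow$ of \cite[\S 2.12]{BGS96}, to which we refer for the complete proof.
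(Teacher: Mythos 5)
The paper offers no proof of this statement---it is quoted directly from \cite[1.2.6]{BGS96}---so there is nothing internal to compare against, and your sketch is an accurate outline of the argument given there: the Koszul bimodule complex with terms $E\otimes_R(E^!_k)^*$, the derived tensor functor carrying projectives to simples, and the $\Hom$-comparison which reduces to Koszulity of $E^!$ together with $E^{!!}\cong E$. Your closing caveat is exactly the right one: as printed, for an arbitrary Koszul $E$ the bounded graded derived categories need not be generated by projectives nor preserved by the duality functor, so the honest statement either assumes $E$ finite-dimensional of finite global dimension or works in the categories $D^\uparrow$, $D^\downarrow$ of \cite{BGS96}; in the present paper the stronger hypotheses do hold, since $A(\cV)$ and $B(\cV)$ are finite-dimensional and quasi-hereditary (Theorems \ref{CA-highest-weight} and \ref{convolution equals quiver}), hence of finite global dimension.
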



We conclude this section with a discussion of Koszulity for
quasi-hereditary algebras.  Suppose that our graded algebra $E$ is
quasi-hereditary.  For all $\a\in\cI$, there exists an idempotent
$e_\a\in R$ such that $P_\a = e_\a E$, thus each projective module
$P_\a$ inherits a natural grading.  Let us assume that the grading of
$E$ is compatible with the quasi-hereditary structure.  In other
words, we suppose that for all $\a\in\cI$, the standard module $V_\a$
admits a grading that is compatible with the map
$\Pi_\a:\Pro\a\to\Ve\a$ of Definition \ref{high-weight-def}.  It is
not hard to check that each of $\Si\a,\Ve\a,\Lambda_\a,\Pro\a$, and
$I_\a$ inherits a grading as a quotient of $E$, and that $\Tilt\a$
admits a unique grading that is compatible with the inclusion of
$\Ve\a$.  Thus $R(E) = \End(T_*)$ inherits a grading as well, and this
grading is compatible with the quasi-hereditary structure \cite{Zhu}.

\begin{theorem}\label{ADL}{\cite[1]{ADL03}}
Let $E$ be a finite-dimensional graded algebra with a graded
anti-automorphism that preserves idempotents. 
If $E$ is graded quasi-hereditary and each standard module admits
a linear projective resolution, then $E$ is Koszul.
\end{theorem}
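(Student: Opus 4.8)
The goal is to produce a linear projective resolution of every simple right $E$-module, which is the definition of Koszulity; equivalently, to show that $\Ext^i_E(\Si\a,\Si\b)$ is concentrated in the single internal degree $i$ for all $\a,\b\in\cI$ and all $i\geq 0$ (with internal degrees normalized so that linearity of a resolution is detected by ``degree $=$ homological degree''). The plan is to obtain this equality by sandwiching $\Ext^i_E(\Si\a,\Si\b)$ between two one-sided bounds: an inequality ``$\geq i$'' coming from the hypothesis on standard modules, and the mirror inequality ``$\leq i$'' coming from the corresponding statement about costandard modules, which the graded anti-automorphism supplies for free. This is essentially the \'Agoston--Dlab--Luk\'acs argument.

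First I would record the duality. The graded anti-automorphism of $E$ fixing the idempotents induces a graded, contravariant, exact self-equivalence $\du$ of finitely generated graded $E$-modules which reverses the internal grading, carries projectives to injectives, and satisfies $\du\Si\a\cong\Si\a$, $\du\Pro\a\cong I_\a$ and $\du\Ve\a\cong\Lambda_\a$ (the last being the graded version of Proposition \ref{duality}, using the quasi-hereditary structure). Applying $\du$ to the assumed linear projective resolution of each $\Ve\a$ therefore gives a linear injective resolution of each $\Lambda_\a$; likewise a simple module has a linear projective resolution if and only if it has a linear injective one. So from the hypothesis we may freely assume in addition that every costandard module has a linear injective resolution.

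Next come two parallel inductions over the highest weight poset. For the lower estimate ``$\Ext^i_E(\Si\a,\Si\b)$ lives in degrees $\geq i$'', I would induct on $\a$: when $\a$ is minimal $\Ve\a=\Si\a$ and there is nothing to prove, while in general the short exact sequence $0\to\ker\pi_\a\to\Ve\a\to\Si\a\to 0$ of Definition \ref{high-weight-def} exhibits $\ker\pi_\a$ as an iterated extension of simples $\Si\ga$ with $\ga<\a$, each sitting in internal degree $\geq 1$; the inductive hypothesis bounds $\Ext^{i-1}_E(\Si\ga,\Si\b)$ in degrees $\geq i-1$, the positive shift pushes $\Ext^{i-1}_E(\ker\pi_\a,\Si\b)$ into degrees $\geq i$, and feeding this through the long exact sequence together with the fact that $\Ext^i_E(\Ve\a,\Si\b)$ is pure of degree $i$ yields the bound for $\Si\a$. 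For the upper estimate ``$\Ext^i_E(\Si\a,\Si\b)$ lives in degrees $\leq i$'', I would run the $\du$-mirror of this induction, now on $\b$: use $0\to\Si\b\to\Lambda_\b\to\Lambda_\b/\Si\b\to 0$, with $\Lambda_\b/\Si\b$ an iterated extension of simples $\Si\de$ ($\de<\b$) in negative internal degrees, the linear injective resolution of $\Lambda_\b$ from the previous paragraph, and the inductive hypothesis. Intersecting the two bounds forces $\Ext^i_E(\Si\a,\Si\b)$ to be concentrated in degree $i$, hence $E$ is Koszul.

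The step I expect to be the real obstacle is the internal-degree bookkeeping in these inductions, together with the fact that the standard hypothesis alone is genuinely insufficient: the induction using only standard modules produces only the bound ``$\geq i$'', and it is precisely the passage to costandard (equivalently, injective) resolutions — available here only because of the anti-automorphism — that supplies the matching bound ``$\leq i$''. Making the signs of the grading shifts in the radical filtration of $\Ve\a$ and the head filtration of $\Lambda_\b$ line up so that the two estimates actually meet in degree $i$, rather than leaving a gap, is the delicate part; everything else is formal manipulation of long exact sequences and of the functor $\du$.
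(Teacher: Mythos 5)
Your overall architecture matches how the paper intends this statement to be used: the paper gives no proof at all (the theorem is quoted from \cite[Theorem 1]{ADL03}), and the graded anti-automorphism is there precisely so that the duality $\du$ fixes each $\Si\a$, sends $\Ve\a$ to $\Lambda_\a$, and turns the linear projective resolutions of the standards into linear injective resolutions of the costandards, reducing the claim to the assertion of \cite{ADL03} that a ``standard Koszul'' quasi-hereditary algebra is Koszul. That reduction step in your first paragraph is correct.

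The gap is in the two-sided estimate, which is where the entire content of the cited theorem lives. First, the bound ``$\Ext^i(\Si\a,\Si\b)$ lives in internal degrees $\geq i$'' is automatic for any non-negatively graded algebra with semisimple degree-zero part (the $i$-th term of a minimal resolution of a module generated in degree $0$ is generated in degrees $\geq i$), so your first induction only reproves something free and the standard-module hypothesis does no work there. The substance is the bound ``$\leq i$'', and the induction you propose does not close. Normalize so that $\soc\Lambda_\b=\Si\b$ sits in internal degree $0$ and write $C_\b=\Lambda_\b/\Si\b$, whose composition factors are simples $\Si\de$ with $\de<\b$ placed in internal degrees $m\leq -1$; write $\Ext^i(M,N)_n$ for the internal-degree-$n$ part. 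For $n>i$, purity of $\Ext^\bullet(\Si\a,\Lambda_\b)$ makes both outer terms of the long exact sequence of $0\to\Si\b\to\Lambda_\b\to C_\b\to 0$ vanish, so $\Ext^i(\Si\a,\Si\b)_n\cong\Ext^{i-1}(\Si\a,C_\b)_n$, and everything rests on killing the latter. But d\'evissage along the composition series of $C_\b$ only shows that a factor $\Si\de$ in degree $m$ can contribute exactly when $n+m=i-1$, i.e.\ when $m=i-1-n\leq -2$, and the potential contribution is then a \emph{diagonal} group $\Ext^{i-1}(\Si\a,\Si\de)_{i-1}$ --- exactly the groups that are nonzero in general for a Koszul algebra. (The mirror induction on the first variable fails the same way, from the radical layers of $\Ve\a$ in degrees $\geq 2$.) So unless every costandard module has Loewy length at most $2$, the two estimates do not ``meet in degree $i$''; showing that these diagonal classes do not survive the extensions inside $C_\b$ is essentially equivalent to what is being proved (already for $i=1$, $n=2$ it amounts to the compatibility of the grading with the socle filtration of $\Lambda_\b$, i.e.\ to generation of $E$ in degrees $0$ and $1$), and it requires using the quasi-hereditary structure in a more serious way --- standard filtrations of the kernels $\ker\Pi_\a\subset\Pro\a$, induction along the heredity chain --- which is the actual work carried out in \cite{ADL03} and is absent from the sketch.
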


\subsection{The algebra \boldmath{$A(\cV)$}}
Let $\cV = (V,\eta,\xi)$ be a polarized arrangement, and let $A = A(\cV)$ be the associated quiver algebra.  
$A$ has a canonical anti-automorphism taking
$p(\a,\b)$ to $p(\b,\a)$ for all $\a\lra\b$ in $\cP$.
Under the identification $$A(\cV)\,\, \cong\,\, B(\cV^\gd) \,\, = \bigoplus_{(\a,\b)\,\in\,\cP\times\cP}R^\vee_{\a\b}[-d^\vee_{\a\b}]$$
of Theorem \ref{convolution equals quiver},
this corresponds identifying
$R^\vee_{\a\b}$ with $R^\vee_{\b\a}$.  Geometrically, it is given by
swapping the left and right factors of the fiber product
$\wt{\eX}^\gd\times_{\eX^\gd}\wt{\eX}^\gd$.
This anti-automorphism fixes the idempotents, and thus gives rise to a contravariant
involution $\du$ of $\cC(A)$ as in Section \ref{sdp}.

For all $\a\in\cP$, let 
$$L_\a = A/\langle e_\b\mid \b\neq\a\rangle$$ be the one-dimensional simple right $A$-module
supported at the node $\a$,
and let $P_\a = e_\a A$ denote the projective cover of $L_\a$.
Since $L_\a$ is one-dimensional for each $\a$, $A$ is basic.
Let $a = \mu^{-1}(\a)$ be the basis corresponding to the sign vector $\a$,
and let   \begin{equation*}
    K_{> \a}=\sum_{i\in a} p(\a,\a^i)\cdot A
    \subset \Pro \a
  \end{equation*}
be the right-submodule of $P_\a$ generated by paths that begin at the node $\a$ and move
to a node that is higher in the partial order given in Section \ref{sec:partial order}.  (Recall that $\a_i$ is the unique sign vector
such that $\a\lrao i\a^i$.)  Let $$V_\a = P_\a/K_{>\a},$$ and let
$$P_\a\to V_\a\to L_\a$$ be the natural projections.  

\begin{lemma}\label{vsb}
The module $\Ve\a$ has a vector space basis consisting of a taut path
from $\a$ to each element of $\cF\cap\cB_a$.
\end{lemma}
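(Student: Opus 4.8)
The plan is to analyze the module $\Ve\a = \Pro\a / K_{>\a}$ explicitly using the description of $A$ we have built up, in particular Proposition \ref{standard form}, Corollary \ref{taut}, and Corollary \ref{visits every region}. First recall that $\Pro\a = e_\a A$ is spanned, as a real vector space, by elements $e_\a\cdot p\cdot\prod_{i}t_i^{d_i}$ where $p$ runs over taut paths from $\a$ to some $\b\in\cP$; by Corollary \ref{taut} such an element depends (up to scalar) only on $\b$ and the exponent vector $(d_i)$, and by Corollary \ref{visits every region} it vanishes unless $H_S\cap\Delta_\a\cap\Delta_\b\neq\emptyset$ for every subset $S$ supported where $d_i>0$ and $\a(i)=\b(i)$. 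So a spanning set for $\Pro\a e_\b$ is indexed by the monomials surviving in $R^\vee_{\a\b}$, consistently with the identification $\Pro\a = \bigoplus_\b R^\vee_{\a\b}$ coming from Theorem \ref{convolution equals quiver}.

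Next I would identify the submodule $K_{>\a}$ concretely. By definition it is generated by the edge paths $p(\a,\a^i)$ for $i\in a = \mu^{-1}(\a)$, i.e. the $\dim V$ steps that cross a hyperplane through the vertex $H_a$ and move upward in the partial order. The key point, which I would extract from Lemma \ref{order lemma} together with the combinatorics of the negative cone $\cB_a$: for $\b\in\cP$, the chamber $\Delta_\b$ can be reached from $\a$ by a taut path that \emph{avoids} all of the ``upward'' hyperplanes $\{H_i\mid i\in a\}$ precisely when $\b\in\cB_a$; and for $\b\notin\cB_a$, every taut path from $\a$ begins by crossing one of these hyperplanes, so $e_\a A e_\b\subset K_{>\a}$, whereas for $\b\in\cB_a$ the component $e_\a A e_\b$ is \emph{disjoint} from $K_{>\a}$ in the sense that the quotient map is injective on it. This is where I expect the real work to be: one must check that $K_{>\a}$, as a subspace, is exactly the span of those basis elements $e_\a p\prod t_i^{d_i}$ for which either $\mu^{-1}(\text{target})\notin\cB_a$ or some $t_i$ with $i\in a$ appears — and that no cancellation occurs that would make the quotient smaller than expected. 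I would argue this by showing $K_{>\a}$ is spanned by $\{p(\a,\a^i,\dots)\mid i\in a\}$ and that multiplying a taut path to $\b\in\cB_a$ by any $t_j$ produces, via relation \Athree\ and Proposition \ref{standard form}, an element still lying in the span of taut paths to chambers in $\cB_a$ — because crossing a hyperplane and returning keeps $\b$ fixed, and $\cB_a$ is defined by the sign conditions $\a(i)=\mu(b)(i)$ for $i\in b=a$, which a backtracking move never violates.

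With that in hand, the quotient $\Ve\a$ has as a spanning set the images of taut paths $e_\a\, p(\a,\dots,\b)$ with $\b\in\cP\cap\cB_a$ — but we must also kill the $t_i$'s: in $\Ve\a$ every $t_i$ for $i\in a$ is zero (it equals $p(\a,\a^i,\a)\in K_{>\a}$ up to the idempotent), and the remaining $t_i$ are expressible in terms of these via the relation $V^\perp\cap\R^{I_\a}$ among the $t$'s (as in Lemma \ref{smaller quiver}), since the normal vectors $\{t_i\mid i\in I_\a\}$ span $V^*$ and, modulo $\{t_i\mid i\in a\}$, there is nothing left. Hence in $\Ve\a$ the surviving basis vectors are exactly the taut paths from $\a$ to the chambers $\Delta_\b$ with $\b$ feasible and $\b\in\cB_a$, i.e. one taut path for each element of $\cF\cap\cB_a$; by Corollary \ref{taut} this path is well-defined up to scalar, so these are genuinely linearly independent (a dimension count against $R^\vee_{\a\b}$ with only the constant monomial surviving confirms there is no further collapse). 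That gives the claimed basis. The main obstacle, as noted, is the careful verification that $K_{>\a}$ is neither larger nor smaller than this description — in particular that the quotient does not accidentally identify two distinct taut-path classes — for which the rigidity statement of Corollary \ref{taut} and the geometric contraction argument of Proposition \ref{standard form} are the essential tools.
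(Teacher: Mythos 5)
Your proposal follows the same route as the paper's proof: the spanning half is exactly the paper's argument, namely that Proposition \ref{standard form} and Corollary \ref{taut} reduce everything to taut paths times monomials, that any path whose endpoint fails to lie in $\cB_a$ can be rewritten (Corollary \ref{visits every region}) to cross some $H_i$ with $i\in a$ first and hence lies in $K_{>\a}$, and that positive-degree elements of $\Sym V^*$ act by zero on the quotient because $\{t_i\mid i\in a\}$ spans $V^*$ and $t_ie_\a=p(\a,\a^i,\a)\in K_{>\a}$. However, one intermediate sentence is false as stated: for $\b\in\cB_a$ the quotient map is \emph{not} injective on $e_\a Ae_\b$ --- that block is $R^\gd_{\a\b}$, which generally has dimension greater than one, and its entire positive-degree part lies in $K_{>\a}$. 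Your later paragraph, where you kill the $t_i$'s, is the correct statement, so this sentence should simply be discarded.

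The step you yourself flag as the main obstacle --- that the quotient does not also kill the taut-path classes --- is not supplied by Corollary \ref{taut} and Proposition \ref{standard form} alone (the paper is admittedly just as terse here, attributing independence to Corollary \ref{taut}). What is actually needed is: (i) the taut path from $\a$ to $\b\in\cF\cap\cB_a$ is nonzero in $A$ itself; under $A\cong B(\cV^\gd)$ (Theorem \ref{convolution equals quiver}) it maps to $1_{\a\b}\in R^\gd_{\a\b}$, and $R^\gd_{\a\b}\neq 0$ because $H^\gd_{a^c}\in\Delta^\gd_\a\cap\Delta^\gd_\b$, since $\cB_a=\cF^\gd_{a^c}$ by Lemma \ref{complement}; and (ii) the block $K_{>\a}e_\b$ is concentrated in degrees at least $d_{\a\b}+2$, because any element of $p(\a,\a^i)Ae_\b$ with $i\in a$ has degree at least $1+d_{\a^i\b}=d_{\a\b}+2$, using $\b(i)=\a(i)$ for $i\in a$. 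Hence the degree-$d_{\a\b}$ class of the taut path survives in $\Ve\a e_\b$, and since these classes lie in distinct summands $e_\a Ae_\b$ they are linearly independent; your parenthetical ``dimension count against $R^\gd_{\a\b}$'' gestures at (i) but does not by itself rule out collapse into $K_{>\a}$. With these two additions your outline becomes a complete proof along the paper's lines.
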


\begin{proof}
Corollary \ref{taut} implies that such a collection of paths is linearly independent.  
Any taut path which terminates outside of $\cF\cap\cB_a$ must cross a hyperplane
$H_i$ for some $i\in a$, and by Corollary \ref{visits every region} it can be replaced by a path which
crosses this hyperplane first, thus it lies in $K_{>\a}$.  It will
therefore suffice to show that any
path which is not taut will also have trivial image in $\Ve\a$.
By Proposition \ref{standard form}, this is equivalent to showing that the positive degree part of $\Sym V$
acts trivially on $\Ve\a$, which follows from the fact that $V$ is spanned by $\{t_i\mid i\in a\}$.
\end{proof}

When $\cV$ is rational, the modules 
$\Pro \a ,\Ve \a, \Si \a$ acquire natural geometric interpretations
via the isomorphisms $A \cong B(\cV^\gd) \cong H^*(\wt\eX^\gd \times_{\eX^\gd} \wt\eX^\gd)$ given by 
Theorem \ref{convolution equals quiver} and the results of Section \ref{section:topological B}.
For each $\a\in\cP=\cP^\gd$, we have a relative core component $X_\a^\gd\subs\eX^\gd$.
Let $y_\a\in X_\a^\gd$ be an arbitrary element of the dense toric stratum (in other words,
an element whose image under the moment map lies in the interior of the polyhedron $\Delta_\a^\gd$),
and let $x_\a = \lim_{\la\to\infty} \la\cdot y_\a \in X^\gd_\a$ be the toric fixed point whose image under the moment
map is the $\xi^\gd$-maximum point of $\Delta_\a^\gd$.

\begin{proposition}\label{interpretations}
If $\cV$ is rational, then for any $\a \in \cP$ we have module isomorphisms
$$\Pro \a\cong H^*(X_\a\times_{\eX^\gd}\wt{\eX}^\gd),\,\,\,\, 
  \Ve \a\cong H^*(\{x_\a\}\times_{\eX^\gd}\wt{\eX}^\gd),\,\,\,\,\text{and}\,\,\,\,
  \Si \a\cong H^*(\{y_\a\}\times_{\eX^\gd}\wt{\eX}^\gd),$$
  where $A(\cV) \cong B(\cV^\gd) \cong H^*(\wt{\eX}^\gd\times_{\eX^\gd}\wt{\eX}^\gd)$ acts on the right by convolution.
\end{proposition}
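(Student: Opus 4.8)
The plan is to push everything through the isomorphisms $A(\cV)\cong B(\cV^\gd)\cong H^*(\wt{\eX}^\gd\times_{\eX^\gd}\wt{\eX}^\gd)$ of Theorem~\ref{convolution equals quiver} and Section~\ref{section:topological B}, under which the product of $A(\cV)$ is the convolution $\conv$ of Proposition~\ref{geom b} and $e_\a$ is the fundamental class of the component $X^\gd_\a\subseteq\eX^\gd$ (so $X_\a$ in the statement means $X^\gd_\a$). First I would record an auxiliary family of modules: for a closed subvariety $Y\subseteq\eX^\gd$ put $M_Y:=H^*(Y\times_{\eX^\gd}\wt{\eX}^\gd)$, with the right action of $H^*(\wt{\eX}^\gd\times_{\eX^\gd}\wt{\eX}^\gd)\cong A(\cV)$ given by $m\cdot a=(q_{13})_*\big(q_{12}^*m\cup q_{23}^*a\big)$, the $q_{jk}$ being the evident projections off $Y\times_{\eX^\gd}\wt{\eX}^\gd\times_{\eX^\gd}\wt{\eX}^\gd$ and the orientations and shifts those of Section~\ref{section:topological B}. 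This is a standard convolution module structure (cf.\ \cite{CG97}), and for a closed $Y'\subseteq Y$ the restriction $M_Y\to M_{Y'}$ is $A(\cV)$-linear by base change. Because $\pi\colon\wt{\eX}^\gd\to\eX^\gd$ is an embedding on each component, $M_Y=\bigoplus_{\b\in\cP}H^*(Y\cap X^\gd_\b)$ up to shifts, a summand vanishing exactly when $Y\cap X^\gd_\b=\emptyset$.

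The projective and simple cases are then quick. Taking $Y=X^\gd_\a$, the variety $X^\gd_\a\times_{\eX^\gd}\wt{\eX}^\gd$ is the union of the components $X^\gd_{\a\b}$ of $\wt{\eX}^\gd\times_{\eX^\gd}\wt{\eX}^\gd$ with first index $\a$, so $M_{X^\gd_\a}$ is the corresponding direct summand $e_\a\cdot A(\cV)=\Pro\a$, its module structure being the restriction of the convolution, i.e.\ right multiplication. Taking $Y=\{y_\a\}$: since $y_\a$ lies in the dense $T_\C$-orbit of $X^\gd_\a$, it meets no other component, so $M_{\{y_\a\}}=H^*(\mathrm{pt})$ is one-dimensional in degree $0$, hence simple; the restriction $\Pro\a=M_{X^\gd_\a}\to M_{\{y_\a\}}$ is a surjection onto a simple module, so its kernel is the unique maximal submodule of $\Pro\a$ and $M_{\{y_\a\}}\cong\Si\a$.

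The standard case is the heart of the proof: I would show that the restriction $r\colon\Pro\a=M_{X^\gd_\a}\to M_{\{x_\a\}}$ has kernel exactly $K_{>\a}$. By Lemma~\ref{complement}, $x_\a$ is the $T_\C$-fixed point of $X^\gd_\a$ over the vertex $H^\gd_{a^c}$ with $a=\mu^{-1}(\a)$; so for $\b\in\cP$, $x_\a\in X^\gd_\b$ iff $H^\gd_{a^c}\in\Delta^\gd_\b$, i.e.\ $\b\in\cF^\gd_{a^c}$, which by the identity $\cF_b=\cB^\gd_{b^c}$ of Section~\ref{sec:partial order} (applied to $\cV^\gd$) equals $\cB_a$. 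As $\cB_a\subseteq\cB$, this gives $M_{\{x_\a\}}=\bigoplus_{\b\in\cF\cap\cB_a}\R$ (up to shifts), which by Lemma~\ref{vsb} has the same finite dimension as $\Ve\a$. The map $r$ is surjective since each component map $H^*(X^\gd_{\a\b})\to H^*(\{x_\a\})\cong\R$ is. It remains to see $r$ kills $K_{>\a}=\sum_{i\in a}p(\a,\a^i)\cdot A$: the class $p(\a,\a^i)$ corresponds to $1_{\a\a^i}\in H^0(X^\gd_{\a\a^i})$, hence lies in the summand of $\Pro\a$ indexed by $\b=\a^i$; but $\a^i$ differs from $\mu(a)=\a$ in the entry $i\in a$, so $\a^i\notin\cB_a$ and that summand is absent from $M_{\{x_\a\}}$, giving $r(p(\a,\a^i))=0$ (and $p(\a,\a^i)=0$ already in $A$ when $\a^i\notin\cP$). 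Since $\ker r$ is a submodule containing all generators of $K_{>\a}$, we get $K_{>\a}\subseteq\ker r$; as $r$ is surjective and $\dim\Ve\a=\dim M_{\{x_\a\}}<\infty$, the induced surjection $\Ve\a=\Pro\a/K_{>\a}\to M_{\{x_\a\}}$ is an isomorphism.

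The hard part will be exactly this last identification: matching $\{\b\in\cP:x_\a\in X^\gd_\b\}$ with $\cF\cap\cB_a$ via Lemma~\ref{complement} and the $\cF_b=\cB^\gd_{b^c}$ identity (so that it agrees with the basis of $\Ve\a$ in Lemma~\ref{vsb}), together with the observation that the generators $p(\a,\a^i)$ of $K_{>\a}$ land precisely in the summands of $\Pro\a$ that disappear in $M_{\{x_\a\}}$; after that the argument is just a dimension count. The module-theoretic scaffolding of the first paragraph is routine given Proposition~\ref{geom b}.
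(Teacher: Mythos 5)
Your route is the paper's route: identify $\Pro\a$ with the summand of $B(\cV^\gd)$ having first index $\a$, restrict to the fixed point $x_\a$ (resp.\ the generic point $y_\a$), match the set $\{\b\in\cP\mid x_\a\in X^\gd_\b\}$ with $\cF\cap\cB_a$, and finish with Lemma \ref{vsb} and a dimension count. Indeed your treatment of the standard module is more explicit than the paper's at exactly the right places: the identification of the index set via Lemma \ref{complement} and the identity $\cF_b=\cB^\gd_{b^c}$ is the one consistent with Lemma \ref{vsb}, and the observation that the generators $p(\a,\a^i)$ of $K_{>\a}$ land in summands that vanish at $x_\a$ makes precise what the paper only gestures at. The projective and simple cases are handled as in the paper.

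The one genuine problem is in your scaffolding paragraph, and it matters because your kernel computation rests on $\ker r$ being a submodule. For $Y=\{x_\a\}$ the formula $m\cdot a=(q_{13})_*\bigl(q_{12}^*m\cup q_{23}^*a\bigr)$, read in the natural way (Gysin maps for the inclusions $\{x_\a\}\cap X^\gd_{\b\ga}\hookrightarrow \{x_\a\}\cap X^\gd_\ga$, which are maps between points), does not define an $A(\cV)$-module structure at all: writing $m_\a$ for the generator of the $\a$-summand and choosing $\b\in(\cF\cap\cB_a)\setminus\{\a\}$, this rule gives $(m_\a\cdot 1_{\a\b})\cdot 1_{\b\a}=m_\a\neq 0$, whereas $m_\a\cdot(1_{\a\b}\conv 1_{\b\a})=m_\a\cdot u_{S(\a\b\a)}=0$ since positive-degree classes restrict to zero on a point (relatedly, no choice of shifts makes this rule graded). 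For the same reason ``restriction is $A(\cV)$-linear by base change'' is not available: the squares comparing $X^\gd_{\a\b\ga}\subset X^\gd_{\a\ga}$ with their intersections with $\{x_\a\}$ are Cartesian but not transverse, and base change for Gysin maps holds only after inserting the Euler class of the excess bundle, whose restriction at $x_\a$ is $u_{S(\a\b\ga)}|_{x_\a}$, i.e.\ zero whenever $S(\a\b\ga)\neq\emptyset$. The fix is to define the convolution action on $H^*(\{x_\a\}\times_{\eX^\gd}\wt{\eX}^\gd)$ with this excess factor included (equivalently, $1_{\b\ga}$ acts by the obvious identification of one-dimensional summands when $S(\a\b\ga)=\emptyset$ and by zero otherwise), and to verify equivariance of $r$ either via the excess-intersection formula or directly from the algebraic description of $\conv$ in Proposition \ref{geom b}, using that restriction to the point is the degree-zero augmentation of $R^\gd_{\a\ga}$. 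With that replacement your argument --- surjectivity, $K_{>\a}\subseteq\ker r$ because $\a^i\notin\cB_a$ for $i\in a$, and the dimension count against Lemma \ref{vsb} --- goes through and recovers the paper's proof; the simple case is untouched, since $y_\a$ lies in a single component and no excess arises there.
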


\begin{proof}
The first isomorphism is immediate from the definitions.  

Restriction to the point $x_\a$ defines a surjection
$$\Pro \a\cong H^*(X^\gd_\a\times_{\eX^\gd}\wt{\eX}^\gd)\to H^*(\{x_\a\}\times_{\eX^\gd}\wt{\eX}^\gd).$$
Note that $x_\a\in X^\gd_\b$ if and only if
$\a(i) = \b(i)$ for all $i \notin a := \mu^{-1}(\a)$, or in other words, if and
only if $\b \in \cF_a \cap \cB = \cB^\gd_a \cap \cF^\gd$.
The second isomorphism now follows from Lemma \ref{vsb}, 
using the fact that a taut path from $\a$ to $\b$ in the algebra 
$A(\cV)$ gives rise to the unit class
 $1_{\a\b} \in H^0(X_{\a\b}) \subset B(\cV^\gd)$.

The third isomorphism follows from the fact that 
$H^*(\{y_\a\}\times_{\eX^\gd}\wt{\eX}^\gd)\cong H^*(\{y_\a\})$ is one-dimensional, $e_\a$ acts by the identity,
and $e_\b$ acts by zero for all $\b\neq\a$.
\end{proof}

\begin{theorem}\label{CA-highest-weight}
 The algebra $A$ is quasi-hereditary with respect to the partial order on $\cP$ given in Section \ref{sec:partial order},
 with the modules $\{V_\a\}$ as the standard modules.  This structure is compatible with the grading on $A$.
 \end{theorem}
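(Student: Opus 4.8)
The goal is to verify the two axioms of Definition \ref{high-weight-def} for $\cC(A)$ with the proposed standard objects $V_\a = P_\a/K_{>\a}$ and partial order on $\cP \cong \Bas$, and then to observe that everything is compatible with the grading. The backbone of the argument is the explicit basis for $V_\a$ furnished by Lemma \ref{vsb}: $V_\a$ has a basis indexed by $\cF\cap\cB_a$, given by taut paths from $\a$. I would first use this to pin down the composition factors of $V_\a$. Since $e_\b$ acts as the identity on the basis vector indexed by $\b$ and by zero elsewhere, the composition factors of $V_\a$ are exactly $\{L_\b \mid \b \in \cF\cap\cB_a\}$, each with multiplicity one (multiplicity one because for a fixed $\b$ all taut paths $\a\to\b$ coincide in $A$, by Corollary \ref{taut}). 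In particular the cosocle of $V_\a$ is $L_\a$ (the length-zero taut path), and $\ker(\pi_\a\colon V_\a\to L_\a)$ has composition factors $L_\b$ with $\b\in(\cF\cap\cB_a)\ssm\{\a\}$. Now Lemma \ref{order lemma} says precisely that $\mu(b')\in\cB_a$ implies $b'\le a$, i.e.\ every such $\b=\mu(b')$ satisfies $\b < \a$. Any filtration by the $L_\b$ (refining the grading filtration, say) therefore has all subquotients $L_\b$ with $\b<\a$, establishing axiom (1).

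For axiom (2) I would analyze $K_{>\a} = \ker(\Pi_\a\colon P_\a\to V_\a)$. By definition $K_{>\a} = \sum_{i\in a} p(\a,\a^i)\cdot A$, the submodule generated by the "upward" edges out of $\a$. The plan is to show $K_{>\a}$ has a filtration with subquotients among the $V_\gamma$ with $\gamma>\a$. A clean way is to first identify $P_\a$ itself with $H^*(X_\a\times_{\eX^\gd}\wt\eX^\gd)$ via Proposition \ref{interpretations}, or — to stay purely algebraic — to argue directly that $P_\a$ has a standard filtration whose standard subquotients are $V_\a$ (once, at the top) together with $V_\gamma$ for $\gamma>\a$, each appearing $[P_\a:V_\gamma] = [\text{some multiplicity}]$ times. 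The standard way to see $P_\a$ has a $\Delta$-filtration is via the BGG-reciprocity-type bookkeeping: using the basis of $P_\a$ by taut-path-times-monomial elements $p\cdot\prod t_i^{d_i}$ from Proposition \ref{standard form}/Corollary \ref{visits every region}, and the fact that $V_\gamma$ collapses all of $\Sym V^*$ acting non-trivially, one stratifies the set of basis monomials of $P_\a$ according to which "corner" $\gamma$ they funnel through; each stratum contributes one copy of a $V_\gamma$. The key point is that the strata corresponding to $\gamma$ with $\gamma\not>\a$ and $\gamma\ne\a$ are empty — this is again exactly Lemma \ref{order lemma}/Lemma \ref{op} applied on the dual side, or can be read off the inclusion $\cB_a$-pattern. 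Once $P_\a$ has a filtration $V_\a = P_\a/K_{>\a}$ on top and $V_\gamma$'s ($\gamma>\a$) below, the submodule $K_{>\a}$ inherits exactly the filtration with subquotients $V_\gamma$, $\gamma>\a$, which is axiom (2).

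The compatibility with the grading is then essentially automatic: $A$ is graded, the idempotents $e_\a$ are in degree $0$, $K_{>\a}$ is generated by degree-$1$ elements and hence is a graded submodule, so $V_\a = P_\a/K_{>\a}$ is graded and $\Pi_\a$, $\pi_\a$ are graded maps; the filtrations above can be chosen graded because all the maps in sight (edge paths, the $t_i$) are homogeneous and the basis elements of Proposition \ref{standard form} are homogeneous.

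The main obstacle I expect is the clean bookkeeping in axiom (2): proving that $P_\a$ (equivalently $K_{>\a}$) has a standard filtration of the stated shape, and in particular that no $V_\gamma$ with $\gamma\not\ge\a$ occurs. The linear-independence and spanning come from Corollaries \ref{taut} and \ref{visits every region}, but organizing the taut-path-times-monomial basis of $P_\a$ into a filtration by submodules (rather than merely matching composition-factor multiplicities) requires care; the natural device is to order the feasible "corners" $\gamma$ reachable from $\a$ by the partial order and peel off the top $V_\gamma$ one at a time, using that multiplication by $p(\a,\gamma)$ realizes the quotient map $P_\a\to$ (copy of $V_\gamma$) and that Lemma \ref{order lemma} forbids any $\gamma$ outside $\{\a\}\cup\{\gamma>\a\}$. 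The rest — that $\ker\pi_\a$ has an $L$-filtration by lower terms — is immediate from Lemma \ref{vsb} and Lemma \ref{order lemma} as sketched above.
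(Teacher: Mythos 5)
Your choice of standard objects, your argument for axiom (1) via Lemmas \ref{vsb} and \ref{order lemma}, and your filtration for axiom (2) --- filtering $P_\a$ by the submodules of paths passing through a fixed corner $\gamma$, peeling off in the partial order, with composition by a taut path $p(\a,\gamma)$ giving a surjection of $V_\gamma$ onto each nonzero subquotient $M_\a^\gamma = P_\a^{\ge\gamma}/P_\a^{>\gamma}$, and Lemma \ref{order lemma} excluding $\gamma\not\ge\a$ --- is exactly the paper's argument. The genuine gap is the step you flag as ``requiring care'' and then do not carry out: proving that each surjection $V_\gamma\to M_\a^\gamma$ is \emph{injective}, i.e.\ that the subquotients are honest standard modules rather than proper quotients of them. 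You lean on a ``taut-path-times-monomial basis'' of $P_\a$, but Proposition \ref{standard form} and Corollaries \ref{taut} and \ref{visits every region} only show that such elements span $e_\a A e_\b$ and give reduction rules among them; nothing cited establishes their linear independence (equivalently, a lower bound on $\dim P_\a$), and proving that is essentially the same problem as the injectivity you need. The paper closes this by a global dimension count that leaves the quiver picture: by Theorem \ref{convolution equals quiver} and Lemma \ref{formality}, $\dim A = \sum_{\a,\b\in\cP}\dim R^\gd_{\a\b} = \big|\{(\a,\b,g)\in\cP\times\cP\times\Bas \mid H^\gd_{g^c}\subset\Delta^\gd_\a\cap\Delta^\gd_\b\}\big| = \big|\{(\a,\b,g)\mid \a,\b\in\cB_g\}\big|$, which equals the total dimension of the proposed standard subquotients summed over all $\a$; since surjectivity gives one inequality, equality forces every $V_\gamma\to M_\a^\gamma$ to be an isomorphism.

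Your fallback --- identify $P_\a$ with $H^*(X_\a\times_{\eX^\gd}\wt\eX^\gd)$ via Proposition \ref{interpretations} --- does not repair this: that statement is only available for rational $\cV$ (Theorem \ref{CA-highest-weight} must hold for all polarized arrangements), it itself depends on Theorem \ref{convolution equals quiver}, and knowing $P_\a$ as a module still does not produce a standard filtration without the same multiplicity bookkeeping, which again reduces to the dimension count above. With that count supplied (and the vanishing $M_\a^\gamma=0$ for $\a\notin\cB_{\mu^{-1}(\gamma)}$ argued via Corollary \ref{taut}, as you indicate), the remainder of your outline, including the grading compatibility, matches the paper.
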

 
\begin{proof}
  We must show that the 
  modules $\{\Ve \a\mid\a \in \cP\}$
  satisfy the conditions of Definition \ref{high-weight-def}.
  Condition (1) follows from Lemmas \ref{vsb} and \ref{order lemma}.

  For condition (2), we define a filtration of $K_{>\a} = \ker \Pi_\a$ as follows.  
  For $\gamma \in \cP$, let $P_\a^\gamma\subset \Pro \a$ be the submodule generated 
  by paths that pass through the node $\gamma$, and for any $\gamma \in \cP$ let 
  $$P_\a^{\ge \gamma} = \sum_{\delta\ge\gamma} P_\a^\delta\,\,\,\text{and}\,\,\,
  P_\a^{>\gamma} = \sum_{\delta>\gamma} P_\a^\delta.$$
  Note that $P_\a^{\ge \a} = P_\a$ and $P_\a^{>\a} = K_{>\a}$.

  Then $P_\a^{\gamma} \subset K_{>\a}$ for all $\gamma>\a$,
  and these submodules form a filtration with
  sub-quotients
  $$M_\a^\gamma:=P_\a^{\ge\gamma}/P_\a^{> \gamma}.$$  
  Let $g = \mu^{-1}(\gamma)\in\Bas$.
If $\a$ is not in the negative cone $\cB_{g}$, then there exists
$i\in g$ such that $\a(i) \neq \gamma(i)$.  It follows from Corollary \ref{taut}
that $P_\a^\ga = P_\a^{>\ga}$, hence $M_\a^\ga = 0$.  
If $\a\in\cB_{g}$, then composition with a taut path $p$ from $\a$ to $\ga$ defines a
map $P_\ga \to P_\a^{\ge \ga}$ which induces a map
$V_\gamma \to M_\a^\gamma$.  We will show that this induced map is an isomorphism.
First, note that $M_\a^\ga$ is spanned by the classes of paths which
pass through $\gamma$.  Using Proposition \ref{standard form}, such a
path is equivalent to one which begins with a taut path from $\a$
to $\gamma$, and by Corollary \ref{taut} implies that this 
taut path can be taken to be $p$, so our map is surjective.

To see that it is injective, it will be enough to show that
\[\dim P_\a  = \sum_{\a \in \cB_g} \dim V_{\mu(g)} = |\{(\delta, g)\in \cP\times \Bas \mid \a, \delta \in \cB_g\}|.\]
Since surjectivity establishes one inequality, it is sufficient to show that we have equality when we
sum over all $\a$, that is, that
\[\dim A = |\{(\a, \b, g)\in \cP\times \cP \times \Bas\mid \a, \b \in \cB_g\}|.\]
By Theorem \ref{convolution equals quiver} and Lemma \ref{formality} we have
\begin{equation*}
 \dim A = \sum_{\alpha, \b \in \cP^\gd} \dim R^\gd_{\a\b}
 = |\{(\a,\b, g)\in \cP \times \cP \times \Bas \mid H^\gd_{g^c} \subset \Delta^\gd_\a \cap \Delta^\gd_\b\}|.
\end{equation*}
Finally, we observe that for any basis $g \in \Bas$, 
$$H^\gd_{g^c} \subset \Delta^\gd_\a \iff \a \in \cF^\vee_{g^c} \iff \a \in \cB_{g},$$
and the result follows.  
\end{proof}

\begin{theorem}\label{Koszul dual} 
Let $\cV$ be a polarized arrangement.  The algebras $A(\cV)$ and $B(\cV)$ are Koszul,
and Koszul dual to each other.
\end{theorem}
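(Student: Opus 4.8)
The plan is to obtain Koszulity of $A$ from the quasi-hereditary structure already in place, and then to read off the rest from quadratic duality. By Theorems \ref{convolution equals quiver} and \ref{Quadratic dual} there are ring isomorphisms $B(\cV)\cong A(\cV^\gd)\cong A(\cV)^\qd$, so for every polarized arrangement $\cV$ the algebras $A(\cV)$ and $B(\cV)$ are quadratic dual; and $A(\cV)$ is finite dimensional, because $\dim A(\cV)=\sum_{\a,\b\in\cP}\dim R^\gd_{\a\b}$ by Theorem \ref{convolution equals quiver} and each $R^\gd_{\a\b}$ is finite dimensional by Lemma \ref{formality}. It therefore suffices to prove that $A(\cV)$ is Koszul for every $\cV$: then Theorem \ref{kd} shows that $A(\cV)^\qd\cong B(\cV)$ is Koszul and that $A(\cV)^{!}\cong A(\cV)^\qd\cong B(\cV)$, which is exactly the claim that $A$ and $B$ are Koszul dual.

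To prove $A=A(\cV)$ Koszul I would invoke Theorem \ref{ADL}: $A$ is finite dimensional and graded, it carries the graded anti-automorphism $p(\a,\b)\mapsto p(\b,\a)$ fixing all idempotents, and by Theorem \ref{CA-highest-weight} it is graded quasi-hereditary with standard modules $\Ve\a$. So everything comes down to producing, for each $\a\in\cP$, a linear projective resolution of $\Ve\a$.

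Here is the resolution I would use. Fix $\a\in\cP$ and put $a=\mu^{-1}(\a)$, so $H_a$ is a vertex of $\Delta_\a$ and the hyperplanes through it are exactly $\{H_i\mid i\in a\}$. For $S\subseteq a$ let $\a^S$ be $\a$ with the entries indexed by $S$ reversed; since $H_a$ is a simple vertex every $\a^S$ lies in $\cF$, and $\Pro{\a^S}=e_{\a^S}A$ vanishes precisely when $\a^S\notin\cP$. The claim is that
\[ 0\to\bigoplus_{|S|=|a|}\Pro{\a^S}[-|a|]\to\cdots\to\bigoplus_{|S|=1}\Pro{\a^S}[-1]\to\Pro\a\to\Ve\a\to 0 \]
is exact, with the map $\Pro{\a^S}\to\Pro{\a^{S\ssm\{i\}}}$ for $i\in S$ given by the edge path $p(\a^{S\ssm\{i\}},\a^S)$ and with signs coming from an orientation of the $|a|$-cube. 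This is a complex exactly because the commuting squares are the relations \Atwo; the augmentation is the quotient by $K_{>\a}$, which is generated in degree one by the $p(\a,\a^i)$, $i\in a$; and linearity is automatic, since $\a^S$ differs from $\a$ in $|S|$ coordinates so $\Pro{\a^S}[-|S|]$ is generated in degree $|S|$.

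Exactness is the substantive step, and for it I would run the device used in the proofs of Lemmas \ref{chamber complex} and \ref{center lemma}. Using the taut-path normal form of Proposition \ref{standard form} and Corollaries \ref{taut} and \ref{visits every region}, the complex splits---according to which monomial in the $t_i$ is applied---into a direct sum of complexes of vector spaces, each of which is the cellular Borel--Moore chain complex of a polyhedral neighbourhood of a flat $H_S$ through the vertex $H_a$ inside the feasible region of $\cH$; such a neighbourhood is contractible (or empty), hence that piece is acyclic away from the top, and assembling the top contributions and invoking Lemma \ref{vsb} identifies the homology with $\Ve\a$. The bookkeeping about which $\a^S$ are bounded and about compatibility with the partial order is handled by Lemmas \ref{order lemma} and \ref{vsb} and the Gale-duality dictionary of Theorems \ref{ftlp} and \ref{convolution equals quiver}. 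The main obstacle is precisely this last identification: choosing the orientations so that the splitting is by genuine subcomplexes, and verifying that the degenerate pieces---those for which the relevant neighbourhood is empty or unbounded---contribute nothing. With linear resolutions of all the $\Ve\a$ in hand, Theorems \ref{ADL} and \ref{kd} close the argument as above.
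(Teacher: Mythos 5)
Your reductions and your candidate resolution coincide with the paper's: one reduces via Theorems \ref{convolution equals quiver}, \ref{Quadratic dual}, \ref{kd} to Koszulity of $A(\cV)$, via Theorem \ref{ADL} and Theorem \ref{CA-highest-weight} to linear projective resolutions of the standards, and the resolution used is exactly your cube-shaped complex with terms $\Pro{\a^S}[-|S|]$ for $S\subseteq a$, edge maps $p(\a^{S\ssm\{i\}},\a^S)$, and the convention $\Pro{\a^S}=0$ when $\a^S\notin\cP$ (the squares commute by \Atwo, and $H^0\cong\Ve\a$). The gap is in the exactness argument, which is the substantive step. First, the complex does \emph{not} split ``according to which monomial in the $t_i$ is applied'': in $A(\cV)$ the elements $t_i\in V^*$ satisfy the linear relations coming from $V^\perp$ (see the proof of Theorem \ref{Quadratic dual}), so monomials in the $t_i$ are not linearly independent and the blocks $e_{\a^S}Ae_\gamma\cong R^\gd_{\a^S\gamma}$ are quotients by linear forms as well as monomials. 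The splitting device of Lemmas \ref{chamber complex} and \ref{center lemma} applies to the deformed rings $\tilde{R}_\Delta$ (quotients of $\Sym\R^I$ by monomial ideals only); to run your argument you would have to work with $\tilde{A}\cong\tilde{B}^\gd$, prove exactness there, and then descend to $A$ using freeness over $\Sym V^\perp$, none of which is set up. Second, even after deforming, the monomial pieces are not cellular chain complexes of neighbourhoods of flats ``inside the feasible region of $\cH$'': since $e_{\a^S}Ae_\gamma\cong R^\gd_{\a^S\gamma}$, the relevant combinatorics lives in the Gale dual arrangement $\cH^\gd$, and for fixed target $\gamma$ and fixed monomial the piece is a subcomplex of the chain complex of the $|a|$-cube supported on those $S$ for which a certain chamber--flat intersection in $\cH^\gd$ is nonempty; identifying this with something acyclic is precisely what you flag as ``the main obstacle,'' and it is not carried out.

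The paper closes this step by a different, purely module-theoretic argument that you could adopt: filter each $\Pro{\a^S}$ by the submodules $P^\b_{\a^S}$ of paths through $\b$, whose subquotients $M^\b_{\a^S}$ are isomorphic to $\Ve\b$ when $\a^S\in\cB_b$ (with $b=\mu^{-1}(\b)$) and vanish otherwise, as in the proof of Theorem \ref{CA-highest-weight}. The filtration is compatible with the cube differentials; on the associated graded, for $\b\neq\a$ one chooses $i\in a\cap b^c$, so that pairing $S$ with $S\sqcup\{i\}$ makes the $i$-th differential an isomorphism, while for $\b=\a$ only $S=\emptyset$ contributes. Two elementary facts finish the proof: exactness of a single differential of a multicomplex forces exactness of its total complex, and exactness of the associated graded forces exactness of the filtered complex. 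If you prefer your topological route, you must supply the deformation-and-descent step and the contractibility identification in $\cH^\gd$; as written, the proof is incomplete at its key point.
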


\begin{proof}
By Theorems \ref{Quadratic dual}, \ref{convolution equals quiver}, and \ref{kd}, it is enough to prove that
$A = A(\cV)$ is Koszul.
By Theorem \ref{ADL}, it is enough to show that each standard module $\Ve\a$
has a linear projective resolution.

Let $a = \mu^{-1}(\a)$ be the basis associated with the sign vector
$\a$.  For any subset $S\subset a$, let $\a^S$ be the sign vector that
differs from $\a$ in exactly the indices in $S$.  Thus, for example,
$\a^\emptyset = \a$, and $\a^{\{i\}} = \a^i$ for all $i\in a$.  
(Note that the sign vectors that arise this way are exactly those in the set $\cF_a$.)
If $S = S' \sqcup \{i\} \subset a$, then we have a map
$\vp_{S,i}:P_{\a^{S}}\to P_{\a^{S'}}$ given by left multiplication by
the element $p(\a^{S'}, \a^{S})$.  We adopt the convention that
$P_{\a^S} = 0$ if $\a^S\notin\cP$ and $\vp_{S,i}=0$ if $i\notin S$.

Let $$\Pi_\a=\bigoplus_{S\subset a}\Pro{\a^S}$$ be the sum of all of
the projective modules associated to the sign vectors $\a^S$.
This module is multi-graded by the abelian group $\Z^a = \Z\{\ep_i\mid i\in a\}$,
with the summand $\Pro{\a^S}$ sitting in multi-degree $\ep_S = \sum_{i\in S}\ep_i$.
For each $i\in a$, we define a differential 
$$\partial_i = \sum_{i\in S\subset a}\vp_{S,i}$$ of degree $-\ep_i$.
These differentials commute because of the relation (A2), and thus
define a multi-complex structure on $\Pi_\a$.  The total complex $\tPi$
of this multi-complex is linear and projective; we claim that it is a resolution
of the standard module $\Ve\a$.
It is clear from the definition that $H^0(\tPi)\cong \Ve\a$, thus we need only show that our complex is 
exact in positive degrees.

We will use two important facts about filtered chain complexes and multi-complexes.  
Both are manifest from the theory of spectral sequences, but could also easily be proven by hand by any interested reader.
\begin{itemize}
\item[(*)] If any one of the differentials in a multi-complex is exact, then the total complex 
is exact as well.
\item[(**)] If a chain complex $\mathbf{C}^\bullet$ has a filtration such that the associated 
graded $\wt{\mathbf{C}}^\bullet$ is exact at an index $i$,
then $\mathbf{C}^\bullet$ is also exact at $i$.
\end{itemize}

As in the proof of Theorem~\ref{CA-highest-weight},
we may filter each projective module $P_{\a^S}$ by submodules
of the form $P_{\a^S}^\b$ for $\b\geq\a^S$, 
which consists of paths from $\a_S$ that pass through the node $\b$.
We extend this filtration to all $\beta$ by defining
$P^\b_{\a^S}$ to be the sum of $P^{\b'}_{\a^S}$
over all $\b' \in \cP$ for which $\b' \ge \a^S$
and $\b' \ge \b$. 
It is easy to see that this filtration is compatible with the differentials,
hence we obtain an associated graded multi-complex
$$\widetilde\Pi_\a^\bullet := \bigoplus_{\b}\,\,(\Pi_\a^\bullet)^{\b}/(\Pi_\a^\bullet)^{>\b}
= \bigoplus_{\b,S}M_{\a^S}^\b.$$

Take $\b \in \cP$, and let $b = \mu^{-1}(\b)$.
We showed in the proof of Theorem \ref{CA-highest-weight} that
$M_{\a^S}^\b$ is non-zero if and only if
$\a^S\in\cB_b$, in which case it is isomorphic to $\Ve\b$.
If $\b = \a$, then we have a non-zero summand only when $S = \emptyset$,
so that summand sits in total degree zero.  For $\b\neq\a$, choose an element
$i\in a\cap b^c$.  This ensures that if $S = S'\sqcup\{i\}$, then $\a^{S'}\in\cB_b$
if and only if $\a^S\in\cB_b$.  For such a pair $S$ and $S'$,
we have $$M^\b_{\a^S}\cong\Ve\b\cong M^\b_{\a^{S'}},$$
and $\tilde\partial_i^\b$ is the isomorphism given by left-composition with
$p(\a^{S'},\a^S)$.  Thus $\tilde\partial_i^\b$ is exact in non-zero degree.
By (*) we can conclude that the total complex $\wt\tPi$ is exact in non-zero degree,
and thus by (**) so is $\tPi$.
\end{proof}

We next determine which projective $A$-modules are self-dual.

\begin{theorem}\label{self-dual}
  For all $\a\in\cP$, the following are equivalent:
  \begin{enumerate}
  \item The projective $\Pro \a$ is injective.
   \item The projective $\Pro \a$ is tilting.
  \item The projective $\Pro \a$ is self-dual, that is, $\du(\Pro\a) = \Pro\a$.
  \item The simple $\Si\a$ is contained in the socle of some standard module $\Ve \b$.
 \item The cone $\Sigma_\a\subset V$ has non-trivial interior.
 \item The chamber $\Delta_\a^\gd\subset V^\perp_{-\xi}$ is compact.
   \end{enumerate}
\end{theorem}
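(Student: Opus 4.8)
The equivalence of conditions (1), (2), and (3) is a special case of Proposition \ref{intisd}, so nothing needs to be done there. The plan is to prove the cycle of implications $(3)\Rightarrow(4)$, $(4)\Rightarrow(5)$, $(5)\Rightarrow(6)$, and $(6)\Rightarrow(1)$, using the combinatorics of Section \ref{sec:lp} and the explicit basis for the standard modules from Lemma \ref{vsb}.

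\textbf{From (3) to (4) and from (6) to (1).} The implication $(3)\Rightarrow(4)$ is exactly Lemma \ref{general-self-dual}, so it is free. For $(6)\Rightarrow(1)$, I would argue that when $\Delta_\a^\gd$ is compact, the toric variety $X_\a^\gd$ is projective, and use the geometric description of $\Pro\a$ from Proposition \ref{interpretations}: $\Pro\a\cong H^*(X^\gd_\a\times_{\eX^\gd}\wt\eX^\gd)$. One wants to see that this module is injective, equivalently (by the duality $\du$) that it is self-dual. Since $X^\gd_\a$ is a connected component of $\eX^\gd$ that is \emph{closed and open} once it is compact — it meets no larger chamber — the fiber product $X_\a^\gd\times_{\eX^\gd}\wt\eX^\gd$ is symmetric under the swap of factors in a way compatible with Poincar\'e duality on the compact smooth orbifold $X^\gd_\a$, which forces $\du(\Pro\a)\cong\Pro\a$. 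Alternatively, and perhaps more cleanly for a paper that wants to avoid geometry, one can argue purely combinatorially: $\Delta^\gd_\a$ compact means (by Theorem \ref{ftlp} and the definitions) that $\a$ is bounded for \emph{both} $\cV$ and $\cV^\gd$ with the property that \emph{every} neighbor stays feasible on both sides, so the block structure of $\Pro\a$ is symmetric; I expect the paper does it on the $B$-side using that $X^\gd_\a$ being a compact component makes $R^\gd_{\a\a}$ a Poincar\'e duality algebra.

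\textbf{From (4) to (5) and (5) to (6).} For $(4)\Rightarrow(5)$: suppose $\Si\a$ lies in the socle of $\Ve\b$. By Lemma \ref{vsb}, $\Ve\b$ has a basis of taut paths from $\b$ to the elements of $\cF\cap\cB_{b}$ where $b=\mu^{-1}(\b)$; the socle is spanned by the top-degree such paths, which correspond to the \emph{$\xi$-minimal} vertices of the negative cone of $\b$. For $\Si\a$ to appear there, $\a$ must be one of these terminal sign vectors, and the corresponding chamber $\Delta_\a$ must be a bounded chamber — a vertex at the "bottom" of the negative cone $\cB_b$ — which forces $\Delta_\a$ to be compact, i.e. $\Sigma_\a$ to be the recession cone $\{0\}$... but wait, that is the \emph{opposite} of (5). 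The correct reading: a self-dual $\Pro\a$ means $\Pro\a$ is also injective, hence $I_\a=\du\Pro\a=\Pro\a$, and $I_\a$ is the injective hull of $\Si\a$; combining with the highest-weight structure, $\Si\a$ is in the socle of $\Ve\b$ forces $\a$ to be \emph{maximal} in a suitable sense, and by Lemma \ref{op} (Gale duality reverses the order) this translates to $\a$ being minimal on the dual side, which is precisely $\Delta^\gd_\a$ compact — giving $(4)\Rightarrow(6)$ directly. So I would actually prove $(4)\Rightarrow(6)\Rightarrow(5)\Rightarrow$ back, or set up the equivalences as $(1)\!-\!(4)$ via the general machinery and then prove $(4)\iff(5)\iff(6)$ using: $\Si\a\subset\soc\Ve\b$ for some $\b$ iff ($\a$ is a "source" of the poset restricted appropriately) iff $\Sigma_\a$ is full-dimensional iff (Theorem \ref{ftlp}, and the fact that $\Sigma_\a$ full-dimensional means $\a$ is an extreme bounded direction) $\Delta^\gd_\a$ is compact. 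The key combinatorial fact is that $\Sigma_\a$ has nonempty interior in $V$ exactly when $\a$ is feasible for $\cV^\gd$ with a \emph{compact} chamber, which is a direct consequence of $\cF^\gd=\cB$ together with the observation that a chamber $\Delta^\gd_\a$ of $\cH^\vee$ is compact iff its recession cone $\Sigma^\gd_\a=V^\perp\cap(\text{orthant})$ is $\{0\}$, and $\Sigma^\gd_\a=\{0\}$ is Gale-dual to $\Sigma_\a$ being full-dimensional by the standard duality of cones under the exact sequence $0\to V\to\R^I\to\R^I/V\to0$.

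\textbf{Main obstacle.} The crux is the equivalence $(4)\iff(5)$: translating the purely module-theoretic condition "$\Si\a$ sits in the socle of some standard" into the cone condition "$\Sigma_\a$ has interior." I would handle it by using Lemma \ref{vsb} to identify the socle of $\Ve\b$ with the set of $\xi$-extreme vertices of $\cB_{b}$, then checking via Lemma \ref{order lemma} and Lemma \ref{op} that $\a$ occurs this way for some $\b$ precisely when $\a$ is $\preceq$-maximal, and finally matching $\preceq$-maximality of $\a$ with full-dimensionality of $\Sigma_\a$ by a direct argument: $\a$ is maximal iff no one-dimensional flat through $H_a$ leaves $\Delta_\a$ in the $\xi$-increasing direction staying feasible, which unwinds to "every ray of $\Sigma_\a$ can be perturbed," i.e. $\Sigma_\a$ is full-dimensional. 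The geometric steps $(5)\iff(6)$ and $(6)\Rightarrow(1)$ are comparatively routine once the cone duality is set up, so I expect the socle computation to be where the real work lies.
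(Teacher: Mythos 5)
Your skeleton agrees with the paper on the easy parts: $(1)\Leftrightarrow(2)\Leftrightarrow(3)$ is Proposition \ref{intisd}, $(3)\Rightarrow(4)$ is Lemma \ref{general-self-dual}, and your Gordan-type cone duality for $(5)\Rightarrow(6)$ is close in spirit to the paper's two-line argument (full-dimensional $\Sigma_\a$ makes $\a$ feasible for every translate $\eta'$, hence by Theorem \ref{ftlp} bounded for the dual for every covector, hence $\Delta^\gd_\a$ compact). But the crux step $(4)\Rightarrow(5)$, as you finally propose it, rests on a false claim: that $\Si\a$ lies in the socle of some standard module precisely when $\a$ is extremal ($\preceq$-maximal) for the partial order of Section \ref{sec:partial order}, and that this extremality matches full-dimensionality of $\Sigma_\a$. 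In the paper's running example (Examples \ref{bounded and feasible example}, \ref{Gale example}) the poset on $\Bas$ has a unique maximal element $\{3,4\}$ and a unique minimal element $\{2,3\}$, yet the dual arrangement has two compact bounded-feasible chambers and hence two self-dual projectives, so no order-extremality condition can characterize them; Lemmas \ref{op} and \ref{order lemma} will not rescue this. Your first attempt, abandoned when it produced the opposite of $(5)$, was nearer the actual argument: the paper shows directly from Lemma \ref{vsb} that if a taut path from $\b$ to $\ga$ spans a socle line of $\Ve\b$, then every hyperplane $H_i$ with $i\notin b=\mu^{-1}(\b)$ meeting $\Delta_\ga$ must separate $\Delta_\ga$ from $\Delta_\b$, whence any ray from $H_b$ through an interior point of $\Delta_\ga$ stays in $\Delta_\ga$ after entering; this produces an open cone of directions inside $\Sigma_\ga$, with no reference to the partial order.

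The closing implication also has a genuine gap. Your geometric argument for $(6)\Rightarrow(1)$ is wrong as stated: $X^\gd_\a$ is not open and closed in $\eX^\gd$ when $\Delta^\gd_\a$ is compact (core components intersect along toric subvarieties), there is no swap symmetry of $X^\gd_\a\times_{\eX^\gd}\wt{\eX}^\gd$ since its two factors are different spaces, and such an argument would in any case cover only rational $\cV$. Your fallback observation, that compactness of $\Delta^\gd_\a$ makes $R^\gd_\a\cong e_\a Ae_\a$ a Poincar\'e duality (Gorenstein) algebra, is exactly the paper's starting point, but by itself it only yields self-duality of the diagonal block. The essential remaining step, which your proposal omits, is to extend the Gorenstein pairing to a pairing $e_\a A\times Ae_\a\to\R$ and prove it is perfect; this comes down to showing that right multiplication by a taut path $p(\b,\a)$, equivalently multiplication by $u^{}_{S(\a\b\a)}\colon R^\gd_{\a\b}\to R^\gd_{\a\a}$, is injective for every $\b$, which the paper deduces from \cite[2.4.3]{Tim} (valid also for irrational arrangements). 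Without that injectivity, self-duality of the whole module $P_\a$ does not follow from Poincar\'e duality of $e_\a Ae_\a$.
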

\begin{proof}
The implications $(1)\Leftrightarrow(2)\Leftrightarrow(3)$ were proved in Proposition \ref{intisd}.
The fact that any of these implies $(4)$ was proven in Lemma \ref{general-self-dual}.

  $(4)\Rightarrow(5)$:  Let $b = \mu^{-1}(\b)\in\Bas$.  By Lemma \ref{vsb}, $\Ve\b$ is spanned as a vector space
  by taut paths $p_\gamma$ from $\b$ to nodes $\gamma\in\cF\cap\cB_b$.  The socle
  of $\Ve\b$ is spanned by those $p_\gamma$ for which $\gamma$ is as
far away from $\b$ as possible.  More precisely, if $i\notin b$ and $H_i$ 
meets $\Delta_\gamma$, then $H_i$ must separate $\Delta_\gamma$ from $\Delta_\b$.
This implies that any ray starting at the point $H_b$ and passing
through an interior point $q$ of $\Delta_\gamma$ will not leave this
chamber once it enters.  It follows that the direction vector of this
ray lies in $\Sigma_\gamma$.  Since this holds for any $q$, $\Sigma_\gamma$
has nonempty interior.

$(5)\Rightarrow(6)$: The fact that $\Sigma_\a$ has non-empty interior implies that 
$\a$ is feasible for the polarized arrangement $(V, \eta', \xi)$ for any 
$\eta' \in \R^I/V$.  Dually, $\a$ is bounded for $(V^\bot, -\xi, -\eta')$
for any covector $\eta'$, and  
and thus $\Delta_{\a}^\gd$ is compact.

$(6)\Rightarrow(3)$: Assume that $\Delta^\gd_\a$ is compact.
Then the ring $e_\a A e_\a$, which is isomorphic to the
subring $R^\gd_{\a}$ of $B^\gd$, is {\bf Gorenstein}: there is an isomorphism
\[\int\colon (e_\a A e_\a)_{\dim \Delta^\gd_\a} \to \R\]
such that $\langle x, y\rangle = \int xy$ defines a perfect 
pairing on $e_\a A e_\a$.  If the arrangement is
rational, this can be deduced from Poincar\'e duality 
for the $\Q$-smooth toric variety $X_{\Delta^\gd_\a}$.
The general case can be deduced, for instance, from
\cite[2.5.1 \& 2.6.2]{Tim}.

We extend this pairing to a pairing
$\langle -,-\rangle: e_\a A\times Ae_\a\to \R$
by the same formula.  We claim that this is again 
a perfect pairing.  Assuming this claim, 
it defines an isomorphism $d(\Pro\a)=(e_\a A)^*\cong Ae_\a=\Pro\a$ 
of right $A$-modules, since the right and left
actions of $A$ on $e_\a A$ and $Ae_\a$ are adjoint 
under the pairing.

To prove the claim, take any non-zero element 
$x \in e_\a A e_\b$.  It will be enough to 
show that for $p = p(\b,\a)$ the map 
$\cdot p\colon e_\a A e_\b \to e_\a Ae_\a$ is injective,
since then $xp \ne 0$, which implies that there
exists $y \in e_\a Ae_\a$ so that 
$\int (xp)y = \int x(py) = \langle x, py\rangle$ is non-zero.
Using Theorem \ref{convolution equals quiver}, we need to 
show that multiplication by $1_{\b\a}$ gives an injection
from $1_{\a\a}B^\gd 1_{\b\b}$ to 
$1_{\a\a}B^\vee 1_{\a\a}$.  Following
the definition of the multiplication, we need to 
show that 
\[\cdot u_{S(\a\b\a)}\colon R^\gd _{\a\b} \to R^\gd_{\a\a}\]
is injective.  This can be deduced from the second statement of
\cite[2.4.3]{Tim}.
\end{proof}

\begin{remark}
  The equivalence $(1)\iff (6)$ is part (3) of Theorem (B), keeping in mind that $A = A(\cV) \cong
  B(\cV^\gd)$.  If $\cV$ is rational, then the set of $\a\in\cP$ for
  which $\Delta^\gd_\a$ is compact indexes the components of the core
  of the hypertoric variety $\M_{\cH^\gd}$ (Section \ref{toric}),
  which is the set of all irreducible projective lagrangian
  subvarieties of $\M_{\cH^\gd}$.
\end{remark}
  
\begin{remark}
Theorems \ref{CA-highest-weight}, \ref{Koszul dual}, and \ref{self-dual} are all
analogous to theorems that arise in the study of parabolic category
$\cO$ and other important categories in representation theory \cite{MS}.
\end{remark}

\section{Derived equivalences}\label{derived equivalences}
The purpose of this section is to show that the dependence of $A(\cV)$
on the parameters $\xi$ and $\eta$ is relatively minor.  Indeed,
suppose that $$\cV_1 = (V, \eta_1, \xi_1)\,\,\,\text{ and }\,\,\,\cV_2
= (V, \eta_2, \xi_2)$$ are polarized arrangements with the same
underlying linear subspace $V \subset \R^I$.  Thus $\cV_1$ and $\cV_2$
are related by translations of the hyperplanes and a change of
affine-linear functional on the affine space in which the hyperplanes
live. The associated quiver algebras $A(\cV_1)$ and $A(\cV_2)$ 
are not necessarily isomorphic, nor even Morita equivalent.
They are, however, {\em derived} Morita equivalent, as stated in 
Theorem (C) of the Introduction and proved in Theorem \ref{equivalence}
of this section.  That is, the triangulated category
$D(\cV)$ defined in Section \ref{Koszul algebras}
is an invariant 
of the subspace $V\subset\R^I$.  
Corresponding results for derived categories of ungraded and $dg$-modules 
can be obtained by similar reasoning.

\label{sec:derived-equivalences}
\subsection{Definition of the functors}\label{Definition of the functors}
We begin by restricting our attention to the special case in which $\xi_1 = \xi_2 = \xi$ for some $\xi \in V^*$.
On the dual side, this means that $\eta^\gd_1 = \eta^\gd_2 = \eta^\gd = -\xi$, and therefore that
the arrangements $\cH_1^\gd$ and $\cH_2^\gd$ that they define are the same; call 
this arrangement $\cH^\gd$.  Thus the sets $\cP_1 = \cP^\gd_1$ and $\cP_2 = \cP^\gd_2$
of bounded feasible chambers of $\cV_1$ and $\cV_2$ are both subsets of
the set $\cF^\gd$ of feasible chambers of this arrangement.

Our functor will be the derived tensor product with a bimodule $N$. 
We will give two equivalent descriptions of $N$, one on the A-side and one 
on the B-side, exploiting the isomorphism 
$$A_j := A(\cV_j)\cong B(\cV_j^\gd) =: B_j^\vee$$ 
of Theorem \ref{convolution equals quiver} for $j=1,2$.  
We begin with the B-side description, as it is the easier of
the two to motivate.  We define
\[N \,\,\,\, = \bigoplus_{(\a,\b)\,\in\,\cP_1\times\cP_2}\!\!\!\! R^\vee_{\a\b}[-d^\vee_{\a\b}],\]
with the natural left $B^\gd_1$-action and right $B^\gd_2$-action
given by the $\conv$ operation.

When $\cV_1$ and $\cV_2$ are rational, we have a topological description
of this module as in Section \ref{section:topological B}. 
The relative cores $$\eX^\gd_j = \bigcup_{\a\in\cP_j}X^\gd_{\a}$$
sit inside the extended core 
$$\eX^\gd_{\ext} = \bigcup_{\a\in\cF^\gd}X^\gd_{\a},$$ 
which depends only
on $\cH^\gd$ and is therefore the same for $\cV_1^\gd$ and $\cV_2^\gd$.
We then have an (ungraded) isomorphism 
$$N \,\,\,\cong\,\,\, H^*(\wt\eX^\gd_1\times_{\eX^\gd_{\ext}}\wt\eX^\gd_2)\,\,\,\, \cong
\bigoplus_{(\a,\b)\,\in\,\cP_1\times\cP_2}\!\!\!\!H^*(X^\gd_{\a\b})[-d_{\a\b}],$$
with the bimodule structure defined by the convolution operation of Section \ref{section:topological B}.

To formulate this definition on the A-side, rather than considering all feasible
sign vectors we must consider all bounded sign vectors.
Let $A_{\ext}(\cV)$ be the algebra defined by the same
relations as $A(\cV)$, but without the feasibility restrictions.
That is, we begin with a quiver $Q_{\ext}$ whose nodes are indexed
by the set $\{\pm 1\}^I$ of all sign vectors, and let $A_{\ext}(\cV)$
be the quotient of $P(Q_{\ext})\otimes_\R\Sym V^*$ by the following relations:
\begin{enumerate}
\item[\Aeone:] If $\alpha\in \{\pm 1\}^I\ssm\cB$, then $e_\alpha=0$.\\
\item[\Aetwo:] If four {\em distinct} elements $\alpha,\beta,\gamma,\delta \in \{\pm 1\}^I$
satisfy $\alpha \lra \beta \lra \gamma \lra \delta \lra \alpha$,
then 
\[p(\alpha, \beta, \gamma) = p(\alpha,\delta,\gamma).\]
\item[\Aethree:] If $\a, \b \in \{\pm 1\}^I$ and $\a\lrao i\b$,
 then
$$p(\alpha,\beta,\alpha) = t_ie_\alpha.$$
\end{enumerate}
Note that since $\cB_1 = \cB_2$, we have $A_{\ext}(\cV_1) = A_{\ext}(\cV_2)$,
which we will simply call $A_{\ext}$.
Let $$e_{\eta_j} = \sum_{\a\in\cP_j}e_\a\in A_{\ext}.$$
Then $A_j$ is isomorphic to the subalgebra $e_{\eta_j}A_{\ext}\, e_{\eta_j}$
of $A_{\ext}$.  Consider the graded vector space
$$N = e_{\eta_1}A_{\ext}\, e_{\eta_2},$$
which is a left $A_1$-module and a right $A_2$-module in the obvious way.

Recall the algebra $B_\ext(\cV)$ introduced in Section \ref{the center}.
We have $B_\ext(\cV^\vee_1) = B_\ext(\cV^\vee_2)$, which we will simply call $B_\ext^\vee$.
The following proposition is
an easy extension of Theorem
\ref{convolution equals quiver}; its proof will be left to the reader.

\begin{proposition}
The quiver algebra $A_{\ext}$ is isomorphic to the extended convolution algebra $B^\vee_\ext$.
This isomorphism, along with the isomorphisms $A_j\cong B_j^\gd$ of Theorem 
\ref{convolution equals quiver}, induces an equivalence between our two definitions of 
the bimodule $N$.
\end{proposition}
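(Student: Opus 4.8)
The plan is to imitate the proof of Theorem \ref{convolution equals quiver} essentially verbatim, replacing $\cV$ by $\cV^\gd$ and replacing the set $\cP$ of bounded feasible sign vectors by the larger set $\cB=\cF^\gd$ of Theorem \ref{ftlp} throughout. Concretely, I would first define a graded homomorphism $\phi_\ext\colon A_\ext(\cV)\to B^\vee_\ext$ by sending $e_\a$ to the unit $1_{\a\a}\in R^\gd_{\a\a}$ for $\a\in\cB$ (and to $0$ for $\a\notin\cB$, matching \Aeone), sending $p(\a,\b)$ to $1_{\a\b}\in R^\gd_{\a\b}$ for adjacent $\a,\b\in\cB$, and sending $t_i$ to $\zeta^\gd_\ext(u_i)$. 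Checking that \Aetwo\ and \Aethree\ are respected is the same computation carried out for \Atwo\ and \Athree\ in Theorem \ref{convolution equals quiver}: the only change is that the role played there by the set $\cF\ssm\cP$ of infeasible neighbours is now played by $\{\pm1\}^I\ssm\cB$, i.e.\ by sign vectors infeasible for $\cV^\gd$, and in each such boundary case both sides of the relation map to $0$ (on the $A$-side because the relevant idempotent vanishes, on the $B$-side because $R^\gd_{\a\gamma}=0$ whenever $\Delta^\gd_\a\cap\Delta^\gd_\gamma=\emptyset$). Surjectivity of $\phi_\ext$ goes through unchanged: the elements $\phi_\ext(e_\a t_i)=1_{\a\a}u_i$ generate the diagonal subalgebra $\bigoplus_{\a\in\cB}R^\gd_{\a\a}$, and right multiplication by $\phi_\ext(p(\a,\b))$, extended along taut paths, realises every canonical surjection $R^\gd_{\a\a}\twoheadrightarrow R^\gd_{\a\b}$.

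The substantive point is injectivity. Here I would first observe that Proposition \ref{standard form}, Corollary \ref{taut}, and Corollary \ref{visits every region} all have extended versions valid for $A_\ext$, proved in the same way: since the quiver $Q_\ext$ has every element of $\{\pm1\}^I$ as a vertex, the square moves coming from \Aetwo\ and the loop removals coming from \Aethree\ apply with no feasibility hypothesis, so every path in $Q_\ext$ equals a taut path times a monomial in the $t_i$, with exponents recorded by the statistics $\theta_i$, and any two taut paths with the same endpoints become equal in $A_\ext$. Consequently, for $\a,\b\in\cB$ there is a surjection $\Sym V^*\to e_\a A_\ext(\cV)\,e_\b$ sending $u_S$ to $p\cdot\prod_{i\in S}t_i$ for a fixed taut path $p$ from $\a$ to $\b$, and it suffices to show $u_S$ lies in its kernel whenever $H^\gd_S\cap\Delta^\gd_\a\cap\Delta^\gd_\b=\emptyset$. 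As in Theorem \ref{convolution equals quiver}, this condition rewrites as $H^\gd_{S'}\cap\Delta^\gd_\a=\emptyset$ with $S'=S\cup\{i\mid\a(i)\neq\b(i)\}$, hence as the infeasibility of the projection of $\a$ for the restriction $(\cV^\gd)^{S'}$, hence, via Theorem \ref{ftlp} and Lemma \ref{Gale dual system} together with the identity $((\cV^\gd)^{S'})^\gd=\cV_{S'}$, as the unboundedness of that projection for $\cV_{S'}$; this then yields a lift $\gamma\notin\cB$ of $\a|_{I\ssm S'}$ to which the extended Corollary \ref{visits every region} applies, forcing $p\cdot\prod_{i\in S}t_i=0$. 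Combined with surjectivity, the resulting bound $\dim e_\a A_\ext(\cV)\,e_\b\le\dim R^\gd_{\a\b}$ makes $\phi_\ext$ an isomorphism. I expect this step---checking that the auxiliary results of Section \ref{sec:quiver} transfer cleanly to $A_\ext$ and that the linear-programming dictionary does produce the required sign vector $\gamma$---to be the only place demanding genuine (if routine) care.

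Finally, to identify the two descriptions of $N$, I would restrict $\phi_\ext$ to the corner cut out by the idempotents $e_{\eta_1}$ and $e_{\eta_2}$. Since $e_{\eta_j}=\sum_{\a\in\cP_j}e_\a$ and $\phi_\ext(e_\a)=1_{\a\a}$, the map $\phi_\ext$ carries $e_{\eta_1}A_\ext\,e_{\eta_2}$ isomorphically onto $\bigoplus_{(\a,\b)\in\cP_1\times\cP_2}R^\gd_{\a\b}[-d^\gd_{\a\b}]$, which is precisely the $B$-side definition of $N$; it simultaneously identifies $A_j\cong e_{\eta_j}A_\ext\,e_{\eta_j}$ with $e_{\eta_j}B^\vee_\ext\,e_{\eta_j}$, and the latter is manifestly $B(\cV_j^\gd)=B_j^\gd$ because the restriction of the convolution product $\conv$ to this corner is exactly the product defining $B(\cV_j^\gd)$. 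Hence the left $A_1$-module, right $A_2$-module structure on the $A$-side $N$ is carried to the $\conv$-bimodule structure on the $B$-side $N$, which is the asserted equivalence of the two definitions.
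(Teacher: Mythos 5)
Your proposal is correct and is exactly the route the paper intends: the paper leaves this proposition to the reader as ``an easy extension of Theorem \ref{convolution equals quiver},'' and you carry out that extension faithfully, including the one point that genuinely needs checking, namely that Proposition \ref{standard form}, Corollary \ref{taut}, and Corollary \ref{visits every region} hold for $A_\ext$. If anything, your justification there is simpler than the paper's original geometric homotopy argument (which would not literally transfer, since infeasible sign vectors have empty chambers): in $Q_\ext$ the square and loop relations hold at every vertex, so the purely combinatorial word-rewriting you describe suffices, and the rest of the argument (the linear-programming dictionary producing $\gamma\notin\cB$, and the corner identification $e_{\eta_j}A_\ext e_{\eta_j}\cong B_j^\gd$ giving the bimodule comparison) matches the paper's proof of Theorem \ref{convolution equals quiver} step for step.
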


We define a functor $\Phi:D(\cV_1)\to
D(\cV_2)$ by the formula
\begin{equation*}
\Phi(M) = M\overset{L}\otimes_{A_1} N. 
\end{equation*}  
For $\a\in\cP_j$, 
let $\Pro\a^j$ and $\Ve \a^j$ denote the corresponding projective module and
standard module for $A_j$.

\begin{proposition}\label{proj-preserved}
If $\a\in \cP_1\cap\cP_2$, then $\Phi(\Pro\a^1)=\Pro\a^2$.  
\end{proposition}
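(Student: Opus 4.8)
The plan is to show directly that tensoring $P_\a^1 = e_\a A_1$ with the bimodule $N = e_{\eta_1} A_\ext e_{\eta_2}$ over $A_1$ recovers $e_\a A_2 = P_\a^2$ as a right $A_2$-module, and that no higher derived functors appear because $P_\a^1$ is projective. Since $\a \in \cP_1 \cap \cP_2$, the idempotent $e_\a$ is available in both $A_1 = e_{\eta_1} A_\ext e_{\eta_1}$ and $A_2 = e_{\eta_2} A_\ext e_{\eta_2}$, and it is a summand of both $e_{\eta_1}$ and $e_{\eta_2}$. First I would observe that $P_\a^1 = e_\a A_1$ is projective, so $\Phi(P_\a^1) = P_\a^1 \otimes_{A_1} N$ with no need to derive; and then $e_\a A_1 \otimes_{A_1} N \cong e_\a N = e_\a e_{\eta_1} A_\ext e_{\eta_2} = e_\a A_\ext e_{\eta_2}$, using that $e_\a e_{\eta_1} = e_\a$.

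The remaining point is to identify $e_\a A_\ext e_{\eta_2}$ with $e_\a A_2 = e_\a A_\ext e_{\eta_2} \cdot$ — wait, more precisely with $e_\a e_{\eta_2} A_\ext e_{\eta_2} = e_\a A_2$, which again uses $e_\a e_{\eta_2} = e_\a$ since $\a \in \cP_2$. So one must check that the inclusion $e_\a A_2 = e_\a A_\ext e_{\eta_2} \hookrightarrow e_\a A_\ext e_{\eta_2}$ is actually an equality — that is, that every path in $A_\ext$ starting at the node $\a \in \cP_2$ and ending at some node in $\cP_2$ already lies in $e_{\eta_2} A_\ext e_{\eta_2}$, i.e. that it factors through nodes in $\cP_2$. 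This is the one substantive step. The key tool is Corollary \ref{visits every region} (applied to $A_\ext$, or rather its $\cV_2$-flavored quotient): an element of $e_\a A_\ext e_\b$ represented by a taut path times monomials $\prod t_i^{d_i}$ can be rewritten as a combination of paths all passing through any prescribed intermediate chamber $\gamma$ satisfying the stated compatibility condition, and in particular vanishes if that $\gamma$ lies outside $\cP_2$. Combined with Proposition \ref{standard form}, which puts any path into the form (taut path)$\cdot\prod t_i^{\theta_i}$, this shows that $e_\a A_\ext e_\b = 0$ whenever $\b \notin \cB_2$, and more importantly that the multiplication map $e_\a A_2 \otimes \text{(rest)} \to e_\a A_\ext e_{\eta_2}$ is onto; but in fact the cleanest route is simply to note that $A_2 = e_{\eta_2} A_\ext e_{\eta_2}$ is by definition the subalgebra on the $\cP_2$-nodes with the $\cV_2$ relations (A1)–(A3) imposed, and these coincide with the $A_\ext$ relations (Aext1)–(Aext3) after multiplying by $e_{\eta_2}$, since (Aext1) kills all nodes outside $\cB_2 \supseteq \cP_2$ while (A1) kills those outside $\cP_2$ — the extra relations $e_\b = 0$ for $\b \in \cB_2 \ssm \cP_2$ are harmless on the corner $e_\a A_\ext e_{\eta_2}$ precisely because, by Corollary \ref{visits every region}, any path from $\a \in \cP_2$ to a node in $\cP_2$ that is forced through such a $\b$ is already zero.

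So the heart of the argument is: (i) $\Phi(P_\a^1) = e_\a A_\ext e_{\eta_2}$, purely formally; (ii) identify $e_\a A_\ext e_{\eta_2}$ with $e_\a A_2 = P_\a^2$ as right $A_2$-modules, where the right $A_2 = e_{\eta_2} A_\ext e_{\eta_2}$-action on both sides is right multiplication, and equality of the underlying spaces follows from Corollary \ref{visits every region} together with Proposition \ref{standard form}. I would also want to check the grading shifts match — but the bimodule $N$ was defined with the shifts $[-d_{\a\b}^\gd]$ exactly so that on the A-side it is literally $e_{\eta_1} A_\ext e_{\eta_2}$ with its intrinsic grading, so $\Phi(P_\a^1)$ carries the grading of $e_\a A_\ext e_{\eta_2} = e_\a A_2 = P_\a^2$ with no discrepancy. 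The main obstacle, and the only place any thought is needed, is step (ii): verifying that passing from $A_\ext$ to $A_2$ does not change the corner $e_\a A_\ext e_{\eta_2}$, i.e. that the relations $e_\b = 0$ for $\b \in \cB_2 \ssm \cP_2$ act trivially on paths between $\cP_2$-nodes — and this is exactly what Corollary \ref{visits every region} (``if $\gamma \in \cF \ssm \cP$ then $a = 0$'', here with $\cB_2 \ssm \cP_2$ in the role of infeasible nodes) delivers once combined with the standard-form rewriting of Proposition \ref{standard form}.
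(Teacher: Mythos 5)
Your step (i) is fine and is exactly the paper's reduction: since $P^1_\a = e_\a A_1$ is projective, $\Phi(P^1_\a)=e_\a A_1\otimes_{A_1}N\cong e_\a N = e_\a A_{\ext}\,e_{\eta_2}$. The gap is in step (ii), which you yourself flag as the only substantive point. First, lying in the corner $e_{\eta_2}A_{\ext}\,e_{\eta_2}$ is not the same as factoring through $\cP_2$-nodes: any path between $\cP_2$-nodes lies in the corner no matter which intermediate nodes it visits, so the containment you set out to check is automatic and is not the issue. The real issue is whether the corner, with its corner-algebra structure, is isomorphic to $A_2$ as presented by \Aone--\Athree\ for $\cV_2$; this is not ``by definition'', and your justification of it fails. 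You claim, citing Corollary \ref{visits every region}, that any path from $\a\in\cP_2$ to a $\cP_2$-node that is forced through a node $\b\in\cB\ssm\cP_2$ is already zero. But that corollary gives vanishing only when the path can be rerouted through a chamber $\ga\in\cF\ssm\cP$, i.e.\ one that is \emph{feasible but unbounded} (and it is proved for $A(\cV)$, via geometric paths in $V_\eta$, not for $A_{\ext}$). The nodes of $\cB\ssm\cP_2=\cB\ssm\cF_2$ are the opposite case: bounded but \emph{infeasible} for $\cV_2$. They are not killed in $A_{\ext}$, and paths through them are typically nonzero: if $\a\lra\b\lra\ga$ and $\a\lra\de\lra\ga$ with $\b\in\cB\ssm\cF_2$ and $\de\in\cP_2$, then relation \Aetwo\ gives $p(\a,\b,\ga)=p(\a,\de,\ga)$, which is in general nonzero. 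For the same reason your picture of $A_2$ as ``the corner with the extra relations $e_\b=0$, $\b\in\cB\ssm\cP_2$, harmlessly imposed'' is wrong: imposing those relations on $A_{\ext}$ would kill nonzero corner elements such as the $p(\a,\de,\ga)$ above.

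What is actually true, and what the proposition rests on, is that $e_{\eta_2}A_{\ext}\,e_{\eta_2}\cong A_2$ with \emph{no} further relations. This is the identification stated in the setup of Section \ref{Definition of the functors} (it is what gives $N$ its right $A_2$-module structure in the first place), and it is justified not by a vanishing statement but by the isomorphism $A_{\ext}\cong B^\gd_{\ext}$ together with Theorem \ref{convolution equals quiver} --- i.e.\ by rewriting arbitrary paths as taut paths times monomials and comparing dimensions (Proposition \ref{standard form}, Corollary \ref{taut}, Lemma \ref{formality}). If you are permitted to quote that identification, your formal step (i) already finishes the argument and agrees with the paper's proof, which checks (by an argument analogous to Corollary \ref{taut}) that the natural map $P_\a^2=e_\a A_2\to P_\a^1\otimes_{A_1}N$ sending $e_\a$ to $e_\a\otimes e_{\eta_1}e_{\eta_2}$ is an isomorphism. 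As written, however, your justification of the crucial identification is incorrect.
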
 
\begin{proof}
An argument analogous to that given in Proposition \ref{taut} shows that the natural map 
 \[
 	\Gamma: \Pro\a^2 = e_\a A_2
	\to e_\a A_1 \otimes _{A_1} e_{\eta_1}A_{\ext}\, e_{\eta_2} 
	= \Pro\a^1\otimes _{A_1} N= \Phi(\Pro\a^1)
\]
taking $e_\a$ to $e_\a\otimes e_{\eta_1}e_{\eta_2}$ is an isomorphism.
\end{proof}

\begin{remark}
Note that by Proposition \ref{proj-preserved} and the equivalence 
$(3)\iff (6)$ of Theorem \ref{self-dual}, 
$\Phi$ takes self-dual projectives to self-dual projectives.
\end{remark}

Consider a basis $b\in \Bas(\cV_1) = \Bas(\cV_2)$, 
and recall that we have bijections
$$\cP_1 \overset{\mu_1}{\longleftarrow} \Bas(\cV_1) = \Bas(\cV_2) \overset{\mu_2}{\longrightarrow} \cP_2.$$
Let $\nu:\cP_1\to \cP_2$ denote the composition.
Recall also that the sets
$\cB_b\subset \cB$, defined in Section \ref{sec:partial order}, do not depend on $\eta$.

\begin{lemma}\label{standard-filter}
For any $\a\in\cP_1$, the $A_2$-module $\Phi(\Pro\a^1)$ has a
filtration with standard subquotients.  
If $\a\in\cB_b$ then
the standard module $\Ve{\mu_2(b)}^2$ appears with multiplicity 1 
in the associated graded,
and otherwise it does not appear.
\end{lemma}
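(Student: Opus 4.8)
The plan is to first trade the derived tensor product for an ordinary module. Since $\Pro\a^1 = e_\a A_1$ is projective over $A_1$, and since $e_\a e_{\eta_1}=e_\a$ because $\a\in\cP_1$, the standard isomorphism $e_\a A_1\otimes_{A_1} M\cong e_\a M$ gives a natural isomorphism of right $A_2$-modules
\[
\Phi(\Pro\a^1) \;=\; e_\a A_1\otimes_{A_1} e_{\eta_1}A_{\ext}\,e_{\eta_2} \;\cong\; e_\a A_{\ext}\,e_{\eta_2}.
\]
So the lemma amounts to constructing a standard filtration, with the asserted multiplicities, of the right $A_2$-module $e_\a A_{\ext}\,e_{\eta_2}$. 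When $\a\in\cP_2$ this module is just $\Pro\a^2$ (then $e_\a e_{\eta_2}=e_\a$), so the statement generalizes the standard filtration of projective $A_2$-modules produced in the proof of Theorem \ref{CA-highest-weight}, and I would follow that proof.

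For $\gamma\in\cP_2$, let $(e_\a A_{\ext}\,e_{\eta_2})^{\ge\gamma}$ be the $A_2$-submodule spanned by the classes of those paths in $A_{\ext}$ from $\a$ into $\cP_2$ which pass through some node $\delta\in\cP_2$ with $\delta\ge\gamma$, and likewise with $\delta>\gamma$; ordering $\cP_2$ compatibly with $\le$ turns these into a filtration with subquotients $\tilde M_\a^\gamma$. Writing $g=\mu_2^{-1}(\gamma)$, I would show, just as in Theorem \ref{CA-highest-weight}: (i) if $\a\notin\cB_g$, then the argument there — using an index $i\in g$ with $\a(i)\ne\gamma(i)$ together with the standard-form results discussed below — shows that every path through $\gamma$ factors through a strictly larger node, so $\tilde M_\a^\gamma=0$; (ii) if $\a\in\cB_g$, then left-multiplication by a taut path from $\a$ to $\gamma$ sends $\Pro\gamma^2$ into $(e_\a A_{\ext}\,e_{\eta_2})^{\ge\gamma}$ and descends to a surjection $\Ve\gamma^2\onto\tilde M_\a^\gamma$ (up to a grading shift by $d_{\a\gamma}$), where the description of $\Ve\gamma^2$ from Lemma \ref{vsb} applies since $\gamma\in\cP_2$. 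As $\{b\in\Bas\mid\a\in\cB_b\}$ is exactly the set of bases $g$ for which $\tilde M_\a^{\mu_2(b)}\ne 0$, this already exhibits the $\Ve{\mu_2(b)}^2$ with $\a\in\cB_b$ as subquotients, each at least once.

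To upgrade each surjection to an isomorphism — which gives multiplicity exactly one — I would use a global dimension count, as in Theorem \ref{CA-highest-weight}. By the isomorphism $A_{\ext}\cong B^\gd_{\ext}$ and Lemma \ref{formality},
\[
\dim\big(e_\a A_{\ext}\,e_{\eta_2}\big) \;=\; \sum_{\b\in\cP_2}\dim R^\gd_{\a\b} \;=\; \sum_{\b\in\cP_2}\big|\{\,b\in\Bas\mid H^\gd_{b^c}\subset\Delta^\gd_\a\cap\Delta^\gd_\b\,\}\big|,
\]
and the equivalence $H^\gd_{b^c}\subset\Delta^\gd_\a\iff\a\in\cB_b$ from the proof of Theorem \ref{CA-highest-weight} (applied to each factor, using that $\cB_b$ depends only on $V$ and $\xi$) rewrites the right-hand side as $\sum_{b:\,\a\in\cB_b}|\cF_2\cap\cB_b|=\sum_{b:\,\a\in\cB_b}\dim\Ve{\mu_2(b)}^2$. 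Comparing with the total dimension of the associated graded of the filtration forces each of the surjections in (ii) to be bijective.

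The one place where more than a transcription of the proof of Theorem \ref{CA-highest-weight} is needed is that the path manipulations now happen inside $A_{\ext}$ and involve the node $\a$, which lies in $\cB_1=\cB_2$ but need not be feasible for $\cV_1$ or $\cV_2$; in particular the topological arguments of Section \ref{sec:quiver} cannot be run in $V_{\eta_1}$ or $V_{\eta_2}$, where $\a$ or $\gamma$ may label an empty chamber. The remedy, and the thing to verify, is that Proposition \ref{standard form} and Corollaries \ref{taut} and \ref{visits every region} hold for $A_{\ext}$ with $\cP$ replaced by $\cB$: through the isomorphism $A_{\ext}\cong B^\gd_{\ext}$ one is working with the single arrangement $\cH^\gd$ shared by $\cV_1^\gd$ and $\cV_2^\gd$, in which $\cB=\cF^\gd$ is precisely the set of feasible chambers, so the topological reasoning of Section \ref{sec:quiver} goes through verbatim. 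Granting that, the rest of the argument is a routine adaptation, and I expect this transport step to be the main technical point.
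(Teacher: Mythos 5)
Your proposal is correct and follows essentially the same route as the paper's own proof: the identification $\Phi(\Pro\a^1)\cong e_\a A_{\ext}\,e_{\eta_2}$, the filtration by which $\cP_2$-nodes a path visits, the two-case analysis using an index $i\in\mu_2^{-1}(\gamma)$ with $\a(i)\ne\gamma(i)$, and the dimension count through $A_{\ext}\cong B^\gd_{\ext}$ and Lemma \ref{formality} are exactly the paper's argument, the count being the ``adaptation of the proof of Theorem \ref{CA-highest-weight}'' that the paper invokes without writing out. Your explicit flagging of the transport of Proposition \ref{standard form} and Corollaries \ref{taut} and \ref{visits every region} to $A_{\ext}$, carried out in the dual arrangement where $\cB=\cF^\gd$ is the set of nonempty chambers, is precisely the step the paper leaves implicit, and it goes through as you describe.
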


\begin{proof}
As in the proof of Proposition \ref{proj-preserved}, we have
$\Phi(\Pro\a^1) = e_\a A_1 \otimes _{A_1} e_{\eta_1}A_{\ext}\, e_{\eta_2}$,
thus we may represent an element of $\Phi(\Pro\a^1)$ by a path in $\cB$
that begins at $\a$ and ends at an element of $\cP_2 = \cB\cap \cF_2$.
For $\b\in\cP_2$, let $\Phi(\Pro \a^1)_\b$ be the submodule generated
by those paths $p$ such that $\b$ is the maximal element of $\cP_2$
through which $p$ passes,
and let
$$\Phi(\Pro \a^1)_{ > \b} = \bigcup_{\ga>\b}\Phi(\Pro \a^1)_\ga\,\,\,\,\,\text{and}\,\,\,\,\,
\Phi(\Pro \a^1)_{\geq\b} = \bigcup_{\ga\geq\b}\Phi(\Pro \a^1)_\ga.$$
We then obtain a filtration
$$\Phi(\Pro\a^1) = \bigcup_\b \,\Phi(\Pro \a^1)_{\geq\b}.$$ 
Suppose that $\b = \mu_2(b)$;
we claim that the quotient
$\Phi(\Pro \a^1)_{\geq\b} \big{/} \Phi(\Pro \a^1)_{ > \b}$ 
is isomorphic to $\Ve\b^2$ if $\a\in\cB_b$,
and is trivial otherwise.

If $\a\in\cB_b$, then we have a map
$$\Ve \b^2 \rightarrow\Phi(\Pro \a^1)_{\geq\b} \big{/}  \Phi(\Pro \a^1)_{ > \b}$$
given by pre-composition with any taut path from $\a$ to $\b$, and an adaptation
of the proof of Theorem \ref{CA-highest-weight} shows that it is an isomorphism.
If $\a\notin\cB_b$, then there exists $i\in b$ such that $\a(i)\neq\b(i)$,
and any path from $\a$ to $\b$ will be equivalent to one that passes through $\b^i>\b$.
Thus in this case the quotient is trivial.
\end{proof}

\begin{proposition}\label{gg}
For all $\a\in\cP_1$, we have $[\Phi(\Ve \a^1)]=[\Ve{\nu(\a)}^2]$
in the Grothendieck group of (ungraded) right $A_2$-modules.  
Thus $\Phi$ induces an isomorphism of Grothendieck groups.  
\end{proposition}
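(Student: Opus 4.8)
The plan is to compute the class of $\Phi(\Ve_\a^1)$ in the Grothendieck group by resolving $\Ve_\a^1$ by projectives, applying $\Phi$, and using the compatibility of $\Phi$ with projectives established in the preceding lemmas. First I would recall from the proof of Theorem \ref{Koszul dual} that $\Ve_\a^1$ has an explicit finite linear projective resolution, namely the total complex $\tPi$ of the multi-complex $\bigoplus_{S\subset a}\Pro{\a^S}^1$, where $a = \mu_1^{-1}(\a)$. Since $\Phi$ is exact on projectives (it is a derived tensor product, and on projective modules the derived tensor product agrees with the ordinary one, which is exact), applying $\Phi$ to this resolution computes $\Phi(\Ve_\a^1)$ in the derived category, and hence
\[
[\Phi(\Ve_\a^1)] = \sum_{S\subset a}(-1)^{|S|}[\Phi(\Pro{\a^S}^1)]
\]
in $K(A_2)$, provided every $\a^S$ that appears is bounded feasible for $\cV_1$; if $\a^S\notin\cP_1$ then $\Pro{\a^S}^1 = 0$ and the term drops out. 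Note that the sign vectors $\a^S$ are exactly the elements of $\cF_a\cap\cP_1$ appearing in that resolution.

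Next I would expand each $[\Phi(\Pro{\a^S}^1)]$ using Lemma \ref{standard-filter}: the $A_2$-module $\Phi(\Pro{\a^S}^1)$ has a standard filtration in which $\Ve{\mu_2(b)}^2$ appears with multiplicity $1$ precisely when $\a^S\in\cB_b$, and not otherwise. Therefore
\[
[\Phi(\Pro{\a^S}^1)] = \sum_{b\in\Bas\,:\,\a^S\in\cB_b}[\Ve{\mu_2(b)}^2].
\]
Substituting, we obtain
\[
[\Phi(\Ve_\a^1)] = \sum_{b\in\Bas}\ \Big(\sum_{S\subset a\,:\,\a^S\in\cB_b}(-1)^{|S|}\Big)[\Ve{\mu_2(b)}^2],
\]
and the problem reduces to evaluating the inner alternating sum, summed over $S\subset a$, of $(-1)^{|S|}$ subject to the condition $\a^S\in\cB_b$. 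Here one must be a little careful: the condition $\a^S\in\cP_1$ is automatically needed for $\Pro{\a^S}^1$ to be nonzero, but $\a\in\cB_b\subset\cB$ together with $\a\in\cP_1\subset\cF_1$ forces membership questions to be governed purely by the combinatorics of $\cB_b$, which depends only on $V$ and $\xi$ — and this is the key point that makes the answer independent of $\eta$.

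The main obstacle, and the heart of the argument, is to show that the inner sum $\sum_{S\subset a,\ \a^S\in\cB_b}(-1)^{|S|}$ equals $1$ if $b = \mu_1^{-1}(\a)\,(=a)$ — equivalently $\mu_2(b) = \nu(\a)$ — and $0$ otherwise. Recall $\a^S\in\cB_b$ means $\a^S(i) = \mu_1(b)(i) = \b(i)$ for all $i\in b$, where I write $\b = \mu_2(b)$; since $\a^S$ differs from $\a$ exactly in the coordinates of $S\subset a$, and $\a\in\cP_1$ already satisfies $\a(i) = \mu_1(a)(i)$ on $a$, the condition $\a^S\in\cB_b$ translates into a statement forcing certain coordinates of $S$ and determining the parity count. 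Concretely, I expect that for a fixed $b$, the set of $S\subset a$ with $\a^S\in\cB_b$ is either empty or a nonempty ``interval'' (a coset of a power set) in the Boolean lattice, so its alternating sum of $(-1)^{|S|}$ is $0$ unless that interval is a single point, which happens exactly when $a\subset b\cup(\text{forced set})$ collapses — i.e.\ when $a = b$. I would carry this out by noting that $\Ve_\a^1$ has trivial class contributions except from the unique basis $a$ itself: in the resolution the only summand landing in homological degree $0$ with $\a^S = \a$ is $S = \emptyset$, and the inclusion–exclusion over the Boolean lattice of $a$ kills every basis $b\neq a$. Once the inner sum is shown to be $\delta_{b,a}$, we conclude $[\Phi(\Ve_\a^1)] = [\Ve{\mu_2(a)}^2] = [\Ve{\nu(\a)}^2]$. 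Since the $\{[\Ve_\a^1]\}$ and $\{[\Ve_\b^2]\}$ are bases of the respective Grothendieck groups (Remark \ref{consequences}) and $\nu$ is a bijection, $\Phi$ induces an isomorphism on Grothendieck groups.
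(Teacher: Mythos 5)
Your argument is correct, but it follows a genuinely different route from the paper's. The paper never resolves $\Ve\a^1$: instead, for each basis $b$ it expands $[\Phi(\Pro{\mu_1(b)}^1)]$ in two ways --- applying $\Phi$ to the standard filtration of the projective constructed in the proof of Theorem \ref{CA-highest-weight}, which gives $\sum_{g}[\Phi(\Ve{\mu_1(g)}^1)]$ over the bases $g$ with $\mu_1(b)\in\cB_g$, and applying Lemma \ref{standard-filter}, which gives $\sum_{g}[\Ve{\mu_2(g)}^2]$ over the same index set --- and then deduces $[\Phi(\Ve{\mu_1(b)}^1)]=[\Ve{\nu(\mu_1(b))}^2]$ by induction on the partial order, the induction being available because Lemma \ref{order lemma} makes this system of equations unitriangular. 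You instead take the explicit linear projective resolution $\tPi$ of $\Ve\a^1$ from the proof of Theorem \ref{Koszul dual}, apply $\Phi$, expand each $[\Phi(\Pro{\a^S}^1)]$ by Lemma \ref{standard-filter}, and evaluate the inclusion--exclusion directly; your ``interval'' claim is correct and easy to nail down: the condition $\a^S\in\cB_b$ imposes an $S$-independent constraint on the coordinates in $b\ssm a$, forces $S\cap(a\cap b)$ to equal $S_0=\{i\in a\cap b\mid\a(i)\neq\mu_2(b)(i)\}$, and leaves $S\cap(a\ssm b)$ free, so the alternating sum vanishes unless $a\ssm b=\emptyset$, i.e.\ $a=b$, in which case $S_0=\emptyset$ (since $\mu_1(b)$ and $\mu_2(b)$ agree on $b$, depending only on $(V,\xi)$) and the coefficient is $+1$. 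The one point you should make explicit is that feasibility of $\a^S$ for $\cV_1$ is automatic, because $H_a\in\Delta_{\a^S}$ (i.e.\ $\cF_a\subset\cF_1$), so $\a^S\in\cP_1\cap\cB_b$ is equivalent to $\a^S\in\cB_b$ and no $\eta_1$-dependent condition enters the inner sum. What your route buys is a closed-form, induction-free computation, in effect the identity $[\Ve\a^1]=\sum_{S\subset a}(-1)^{|S|}[\Pro{\a^S}^1]$ pushed through $\Phi$; what the paper's route buys is economy, since it needs only the standard filtrations of projectives (already in hand from Theorem \ref{CA-highest-weight} and Lemma \ref{standard-filter}) plus Lemma \ref{order lemma}, and not the Koszul resolution of the standard modules.
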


\begin{proof}
For all $b\in\Bas$, we have
\begin{equation*}
\sum_{\a\in\cB_b}[\Phi(\Ve\a^1)]=[\Phi(\Pro{\mu_1(b)}^1)]=\sum_{\a\in \cB_b}[\Ve{\nu(\a)}^2],
\end{equation*}
where the first equality follows from the proof of Theorem \ref{CA-highest-weight}
and the second follows from Lemma \ref{standard-filter}.
The first statement of the theorem then follows from induction on $b$.
The second statement follows from the fact that the Grothendieck group
of modules over a quasi-hereditary algebra 
is freely generated by the classes of the standard modules.
\end{proof}

\begin{remark}
 We emphasize that $\Phi(\Ve \b^1)$ and $\Ve{\nu(\b)}^2$ are
 {\em not} isomorphic as modules; Proposition \ref{gg} says only that they
 have the same class in the Grothendieck group.
In fact, the next proposition provides an explicit description of $\Phi(\Ve\b^1)$
as a module.
\end{remark}

\begin{proposition}
$\Phi(\Ve\a^1)$ is the quotient of $\Phi(\Pro\a^1)$ by the submodule generated by all paths which cross the hyperplane $H_i$ for some $i\in \mu_1^{-1}(\a)$. In particular, $\mathrm{Tor}_k^{A_1}(\Ve\a^1,N)=0$ for all $k>0$.
\end{proposition}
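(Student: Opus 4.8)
The plan is to apply $\Phi$ to the linear projective resolution of $\Ve\a^1$ constructed in the proof of Theorem~\ref{Koszul dual}, and then to compute the homology of the resulting complex. Put $a=\mu_1^{-1}(\a)$, and recall that this resolution $\tPi$ is the total complex of the $\Z^a$-multicomplex $\bigoplus_{S\subset a}\Pro{\a^S}^1$ (with $\Pro{\a^S}^1=0$ unless $\a^S\in\cP_1$), whose differentials are built from the maps $\vp_{S,i}\colon\Pro{\a^S}^1\to\Pro{\a^{S\ssm i}}^1$ given by left multiplication by $p(\a^{S\ssm i},\a^S)$. Since every term is projective over $A_1$, the complex $\Phi(\tPi)=\tPi\otimes_{A_1}N$ represents $\Phi(\Ve\a^1)=\Ve\a^1\overset{L}{\otimes}_{A_1}N$, and the identification $\Pro{\a^S}^1\otimes_{A_1}N=e_{\a^S}A_{\ext}\,e_{\eta_2}$ (valid since $e_{\a^S}e_{\eta_1}=e_{\a^S}$ when $\a^S\in\cP_1$) shows that $\Phi(\tPi)$ is the total complex of the multicomplex $\bigoplus_{S\subset a}e_{\a^S}A_{\ext}\,e_{\eta_2}$ with differentials given by left multiplication by the $p(\a^{S\ssm i},\a^S)\in A_{\ext}$.

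I would first identify $H_0(\Phi(\tPi))$. It is the cokernel of the map $\bigoplus_{i\in a}e_{\a^i}A_{\ext}\,e_{\eta_2}\to e_\a A_{\ext}\,e_{\eta_2}$ whose $i^{\text{th}}$ component is left multiplication by $p(\a,\a^i)$, so $H_0(\Phi(\tPi))=\Phi(\Pro\a^1)\big/\sum_{i\in a}p(\a,\a^i)\,A_{\ext}\,e_{\eta_2}$. Using Proposition~\ref{standard form} together with the $A_{\ext}$-analogue of Corollary~\ref{visits every region} (whose proof is identical, with $\cF$ and $\cP$ replaced by $\{\pm1\}^I$ and $\cB$) and the fact that $\Sym V^*$ maps into the centre of $A_{\ext}$, one checks that every element represented by a path out of $\a$ crossing $H_i$ for some $i\in a$ already lies in this submodule. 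Hence $\sum_{i\in a}p(\a,\a^i)\,A_{\ext}\,e_{\eta_2}$ is exactly the submodule of $\Phi(\Pro\a^1)$ generated by all such paths, and $H_0(\Phi(\tPi))$ is the quotient described in the statement.

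The remaining point, $\mathrm{Tor}_k^{A_1}(\Ve\a^1,N)=H_k(\Phi(\tPi))=0$ for $k>0$, is where the actual work lies, and it is handled by the filtration argument from the proof of Theorem~\ref{Koszul dual}. Filter each term $e_{\a^S}A_{\ext}\,e_{\eta_2}=\Phi(\Pro{\a^S}^1)$ by the submodules $\Phi(\Pro{\a^S}^1)_{\ge\b}$ ($\b\in\cP_2$) of Lemma~\ref{standard-filter}; since left multiplication by an edge path merely prepends a node to a path, it cannot lower the maximal element of $\cP_2$ the path meets, so the filtration is compatible with the differentials. The associated graded splits as $\bigoplus_{\b\in\cP_2}\widetilde\Pi^{\b}$, where $\widetilde\Pi^{\b}$ is the total complex of $\bigoplus_{S\subset a}M^{\b}_{\a^S}$ with $M^{\b}_{\a^S}\cong\Ve\b^2$ when $\a^S\in\cB_b$ ($b:=\mu_2^{-1}(\b)$) and $M^{\b}_{\a^S}=0$ otherwise, and induced differentials given by left-composition with the $p(\a^{S\ssm i},\a^S)$. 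When $\b=\nu(\a)$ we have $b=a$, so $M^{\b}_{\a^S}\ne0$ only for $S=\emptyset$ and $\widetilde\Pi^{\nu(\a)}$ is $\Ve{\nu(\a)}^2$ in degree $0$; when $\b\ne\nu(\a)$ we have $b\ne a$, the nonzero terms form a sub-cube of positive dimension indexed by subsets of $a\cap b^c$, and, exactly as in Theorem~\ref{Koszul dual}, along a suitable direction in $a\cap b^c$ the induced differential is an isomorphism $\Ve\b^2\to\Ve\b^2$, so $\widetilde\Pi^{\b}$ is acyclic by fact $(*)$. Thus the associated graded of $\Phi(\tPi)$, and hence $\Phi(\tPi)$ itself by fact $(**)$, has homology concentrated in degree $0$; this gives the Tor-vanishing and $\Phi(\Ve\a^1)\cong H_0(\Phi(\tPi))$, the asserted quotient. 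The main obstacle is precisely this Tor-vanishing: $N$ is in general not flat over $A_1$, so $\Phi$ applied to the resolution is not a priori exact, and it is the filtration by standard subquotients that resolves the matter.
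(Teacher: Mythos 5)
Your setup and your identification of $H_0$ are fine, but the acyclicity step --- which you correctly flag as the heart of the matter --- contains a genuine gap. The induced differential $M^\b_{\a^S}\to M^\b_{\a^{S'}}$, $S=S'\sqcup\{i\}$, is an isomorphism only when $\a(i)\ne\b(i)$: then the edge $p(\a^{S'},\a^S)$ composed with a taut path from $\a^S$ to $\b$ is again taut and matches the identification of $M^\b_{\a^{S'}}$ with $\Ve\b^2$. When $\a(i)=\b(i)$ the composite equals $t_i$ times a taut path from $\a^{S'}$, and $t_i$ annihilates $\Ve\b^2$ (it is a combination of the $t_j$, $j\in b$, each of which acts through $K_{>\b}$), so the induced differential is \emph{zero}, not an isomorphism. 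In the one-arrangement setting of Theorem \ref{Koszul dual} a direction $i\in a\cap b^c$ with $\a(i)\ne\b(i)$ always exists when the sub-cube is nonempty, because $\a=\mu(a)$ and $\b=\mu(b)$ are chambers of the \emph{same} arrangement: if $\a$ and $\b$ agreed on all of $a\,\triangle\, b$, each of the vertices $H_a$, $H_b$ would lie in the other's negative cone, forcing $\xi(H_a)<\xi(H_b)<\xi(H_a)$. When $\eta_1\ne\eta_2$ the sign vectors $\a=\mu_1(a)$ and $\b=\mu_2(b)$ are read off from different affine spaces, this argument is unavailable, and in fact no good direction need exist.

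Concretely, take $I=\{1,2\}$, $V=\R\cdot(1,1)$, $\xi(t,t)=t$, $\eta_1$ the class of $(0,-1)$ and $\eta_2=-\eta_1$ the class of $(-1,0)$. Then $\cP_1=\{(--),(+-)\}$, $\cP_2=\{(--),(-+)\}$, $\mu_1(\{1\})=(--)$ and $\mu_2(\{2\})=(--)$. For $\a=(--)$, $a=\{1\}$, and $\b=(--)\in\cP_2$ (so $b=\{2\}\ne a$ and $\b\ne\nu(\a)=(-+)$), the sub-cube consists of both subsets of $\{1\}$, its unique direction $i=1$ satisfies $\a(1)=\b(1)$, and the graded piece is $\Ve{(--)}^2\xrightarrow{\ 0\ }\Ve{(--)}^2$, which has nonzero homology in degree $1$: the image of the generator $p((+-),(--))$ of $\Phi(\Pro{(+-)}^1)$ is $p((--),(+-),(--))=t_1e_{(--)}=p((--),(-+),(--))$, which lies one step deeper in the filtration. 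So the associated graded of your filtered complex is \emph{not} exact in positive degrees, and facts $(*)$ and $(**)$ cannot close the argument; the spurious degree-one class dies only under a higher differential of the filtration spectral sequence (here it cancels against part of the degree-zero homology of the $\b=(-+)$ piece). The proposition itself still holds in this example --- the map $\Phi(\Pro{(+-)}^1)\to\Phi(\Pro{(--)}^1)$ is injective --- but your proof of the Tor-vanishing does not establish it: the claim ``along a suitable direction in $a\cap b^c$ the induced differential is an isomorphism'' is false in general once $\eta_1\ne\eta_2$, and the exactness has to be obtained by a different argument (for instance by controlling these higher differentials, or by a dimension count comparing $\dim\Phi(\Pro\a^1)$ with the standard multiplicities of Lemma \ref{standard-filter}).
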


\begin{proof}
It is clear that if we take a projective resolution of $\Ve\a^1$ and tensor it with $N$,
the degree zero cohomology of the resulting complex will be this quotient.
Thus we need only show that the complex is exact in positive degree, that is, that
it is a resolution of $\Ve\a^1\otimes N$.  The proof of this fact is identical to the proof of
Lemma \ref{standard-filter}.
\end{proof}

\begin{corollary}\label{Tor-vanish}
If a right $A_1$-module $M$ admits a filtration by standard modules, then 
$\mathrm{Tor}_k^{A_1}(M,N)=0$ for all $k>0$, and thus
$\Phi(M)=M\otimes_{A_1} N$.
\end{corollary}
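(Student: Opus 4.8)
The plan is to reduce to the case already handled by the proposition immediately above — which asserts $\mathrm{Tor}_k^{A_1}(\Ve\a^1,N)=0$ for all $k>0$ — by inducting on the length of a standard filtration. Since $M$ is a finitely generated module over the finite-dimensional quasi-hereditary algebra $A_1$, any standard filtration has finite length $\ell$, so the induction is well-founded. The base cases $\ell=0$ (where $M=0$, so there is nothing to prove) and $\ell=1$ (where $M\cong\Ve\a^1$ for some $\a\in\cP_1$, and the statement is literally the vanishing asserted in that proposition) are immediate.

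For the inductive step I would choose a submodule $M'\subset M$ with $M/M'\cong\Ve\a^1$ for some $\a\in\cP_1$ and with $M'$ carrying a standard filtration of length $\ell-1$, then apply $-\otimes_{A_1}N$ to the short exact sequence $0\to M'\to M\to\Ve\a^1\to 0$ and read off the long exact sequence of $\mathrm{Tor}^{A_1}_\bullet(-,N)$. For each $k>0$ the two flanking terms $\mathrm{Tor}_k^{A_1}(M',N)$ and $\mathrm{Tor}_k^{A_1}(\Ve\a^1,N)$ vanish — the first by the inductive hypothesis, the second by the preceding proposition — which forces $\mathrm{Tor}_k^{A_1}(M,N)=0$. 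This completes the induction.

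Finally, to conclude that $\Phi(M)=M\otimes_{A_1}N$, I would resolve $M$ by a complex $P_\bullet$ of projective right $A_1$-modules; then $\Phi(M)=M\overset{L}\otimes_{A_1}N$ is represented by $P_\bullet\otimes_{A_1}N$, whose homology in degree $k$ is $\mathrm{Tor}_k^{A_1}(M,N)$. Having just shown this vanishes for $k>0$, the complex $P_\bullet\otimes_{A_1}N$ is quasi-isomorphic to its zeroth homology $M\otimes_{A_1}N$, so $\Phi(M)\cong M\otimes_{A_1}N$ in $D(\cV_2)$, as claimed.

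I do not expect a genuine obstacle here: the entire argument is the standard dévissage for $\mathrm{Tor}$-vanishing along a finite filtration, and all of the real content — the vanishing $\mathrm{Tor}_k^{A_1}(\Ve\a^1,N)=0$ — is already in hand from the preceding proposition. The only point requiring a word of care is the finiteness of the standard filtration, which holds automatically for finitely generated modules over the finite-dimensional algebra $A_1$.
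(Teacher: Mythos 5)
Your argument is correct and is exactly the standard d\'evissage the paper has in mind: the corollary is stated without proof precisely because it follows from the preceding proposition by induction on the length of the standard filtration and the long exact sequence for $\mathrm{Tor}^{A_1}_\bullet(-,N)$, with the identification $\Phi(M)\cong M\otimes_{A_1}N$ then immediate from the vanishing of higher $\mathrm{Tor}$. No gaps; taking $M'$ to be the penultimate term of the filtration, as you do, is all that is needed.
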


\begin{remark}
Though we will not need this fact, it is interesting to note that $\Phi$
takes the exceptional collection $\{V_\a^1\}$ to the mutation of $\{V_{\nu(\a)}^2\}$
with respect to a linear refinement of our partial order.
(See \cite{Bez} for definitions of exceptional collections and mutations.)
We leave the proof as an exercise to the reader.
\end{remark}

\subsection{Ringel duality and Serre functors}\label{Ringel duality and Serre functors}
In this section we pass to an even further special case; we still
require that $\xi_1 = \xi_2$, and we will now assume in addition that
$\eta_1 = -\eta_2$.  Rather than referring to $\cV_1$ and $\cV_2$, we
will write $$\cV = (V, \eta, \xi)\,\,\, \text{and}\,\,\, \bar\cV = (V,
-\eta, \xi),$$ and we will refer to $\bar\cV$ as the {\bf reverse} of
$\cV$.  Let $A = A(\cV)$, $\bar A = A(\bar\cV)$, and let
$$\Phi^-:D(\cV)\to D(\bar\cV)\,\,\, \text{and}\,\,\, \Phi^+:D(\bar\cV)\to D(\cV)$$
be the functors constructed above. 

\begin{theorem}\label{ringel-dual}
The algebras $A$ and $\bar A$ are Ringel dual, 
and the Ringel duality functor is
$\du\circ\,\Phi^-=\Phi^-\!\circ \du$.
In particular,  $\Phi^{-}$ sends projectives to tiltings, tiltings to injectives, and standards to costandards.
\end{theorem}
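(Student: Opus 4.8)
The plan is to identify the functor $\du\circ\Phi^-$ with the Ringel duality functor $\EuScript{R} = \RHom^\bullet(-,T_*)$ by using the characterization in Proposition \ref{ringdu}: any contravariant equivalence $D(\cV)\to D(\bar\cV)$ that sends projective modules to tilting modules must be (up to automorphisms) the Ringel duality functor. So the entire argument reduces to checking that $\du\circ\Phi^-$ is (i) a contravariant equivalence and (ii) sends each projective $\Pro\a$ to a tilting $\bar A$-module. I would also separately check the asserted commutation $\du\circ\Phi^- = \Phi^-\circ\du$, which is really a statement about the bimodule $N$ for $\cV$ versus the bimodule $\bar N$ for $\bar\cV$.

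First I would establish the commutation. The duality $\du$ on $\cC(A)$ comes from the anti-automorphism of $A$ swapping $p(\a,\b)\leftrightarrow p(\b,\a)$, equivalently swapping the two factors of the fiber product $\wt\eX^\gd\times_{\eX^\gd}\wt\eX^\gd$ under Theorem \ref{convolution equals quiver}; similarly for $\bar A$ and for $A_\ext$. On the bimodule $N = e_{\eta}A_\ext\,e_{-\eta} \cong H^*(\wt\eX_1^\gd\times_{\eX^\gd_\ext}\wt\eX_2^\gd)$, swapping the two factors carries $N$ to the analogous bimodule for $\bar\cV$, i.e. it intertwines the right $A$-action with a left $\bar A$-action. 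Since $\cV$ and $\bar\cV$ are genuinely reverses of each other ($\eta_1 = -\eta_2$), the roles of $\cV$ and $\bar\cV$ are symmetric, and this identification shows that the left-derived tensor with $N$ intertwines $\du$ on the two sides: concretely, $\du(M\overset{L}{\otimes}_A N) \cong \du(M)\overset{L}{\otimes}_{\bar A}N'$ where $N'$ is the bimodule for $\bar\cV$, and one checks $N' \cong N$ with sides swapped. This gives $\du\circ\Phi^- = \Phi^-\circ\du$ once one observes that $\Phi^-$ is defined symmetrically from $\cV$ to $\bar\cV$ as $\Phi^+$ is from $\bar\cV$ to $\cV$.

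Next, that $\du\circ\Phi^-$ sends projectives to tiltings. By Lemma \ref{standard-filter}, $\Phi^-(\Pro\a^{})$ has a filtration with standard subquotients, so $\du\Phi^-(\Pro\a^{})$ has a filtration with \emph{costandard} subquotients (Proposition \ref{duality}). For the other half of the tilting condition I would show $\Phi^-(\Pro\a^{})$ also has a costandard filtration, so that $\du\Phi^-(\Pro\a^{})$ has a standard filtration too. The cleanest route: $\Phi^-$ restricted to projectives is fully faithful (indeed $\Phi^-(\Pro\a^1) = \Pro\a^2$ whenever $\a\in\cP_1\cap\cP_2$ by Proposition \ref{proj-preserved}, and in general $\Phi^-(\Pro\a^1)$ is the summand $e_\a A_\ext e_{-\eta}$ of $A_\ext e_{-\eta}$), and by Corollary \ref{Tor-vanish} any standard-filtered module $M$ has $\Phi^-(M) = M\otimes_A N$ with no higher Tor. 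Applying $\Phi^-$ to the costandard objects: here I would invoke $\du$ and the already-established commutation, so that $\Phi^-(\Lambda_\a) = \Phi^-(\du\Ve\a) = \du\Phi^-(\Ve\a)$, and then analyze $\Phi^-(\Ve\a)$ directly via the explicit description of $\Phi^-(\Ve\a^1)$ as a quotient of $\Phi^-(\Pro\a^1)$ given in the Proposition just before Corollary \ref{Tor-vanish}. Putting this together shows $\Phi^-(\Pro\a)$ is tilting. Equivalence of $\Phi^-$ (hence of $\du\circ\Phi^-$) follows because $N = e_\eta A_\ext e_{-\eta}$ is a progenerator on each side: $A_\ext e_{-\eta}$ is projective and its endomorphism ring is $\bar A$ (since $\eta_1 = -\eta_2$ makes both $\cP_1$ and $\cP_2$ sit inside $\cF^\gd$ symmetrically), so $\overset{L}{\otimes}_A N$ is a Morita-type equivalence at the level of derived categories; more carefully, one checks $\Phi^+\circ\Phi^-$ and $\Phi^-\circ\Phi^+$ are the identity on projective generators, which suffices.

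The main obstacle I expect is the costandard-filtration half of the tilting claim — showing $\Phi^-(\Pro\a)$ admits a \emph{costandard} filtration, not just a standard one. The standard filtration is handed to us by Lemma \ref{standard-filter}, but the costandard side requires understanding $\Phi^-$ on costandards, which forces one to analyze $\Phi^-(\Ve\a)$ carefully and to know that $\Ext^{>0}(\Ve\a, \Phi^-(\Pro\a)) = 0$. I would handle this using the Koszulity and quasi-heredity of $A$ (Theorems \ref{Koszul dual}, \ref{CA-highest-weight}) together with the Tor-vanishing of Corollary \ref{Tor-vanish} and adjunction between $\Phi^-$ and $\Phi^+$, converting the $\Ext$-vanishing into a statement about $\Phi^+$ applied to standards — which is again controlled by Lemma \ref{standard-filter} applied to $\bar\cV$. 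Once both filtrations are in place, Proposition \ref{ringdu}(1) (or (2)) finishes the identification with the Ringel duality functor, and the remaining assertions (projectives $\to$ tiltings, tiltings $\to$ injectives, standards $\to$ costandards) are then immediate from $\EuScript{R}$ combined with $\du$ and Proposition \ref{duality}.
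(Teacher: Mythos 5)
There are two genuine gaps. First, the half of the tilting claim you yourself flag as "the main obstacle" is exactly the point where the paper has a concrete input that your sketch lacks: for $\a\in\cP$ and $\bar\b\in\bar\cP$ the polyhedron $\Delta^\gd_{\a\bar\b}$ is always \emph{compact} (its recession cone would have both $\eta$ and $-\eta$ bounded above), so each ring $R^\gd_{\a\bar\b}$ is Gorenstein and hence $\Phi^-(\Pro\a)\cong\bigoplus_{\bar\b\in\bar\cP}R^\gd_{\a\bar\b}[-d_{\a\bar\b}]$ is \emph{self-dual} under $\du$, exactly as in the proof of $(6)\Rightarrow(3)$ of Theorem \ref{self-dual}. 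Self-duality plus the standard filtration of Lemma \ref{standard-filter}, with $\bar\Ve_{\nu(\a)}$ as largest standard submodule, immediately identifies $\Phi^-(\Pro\a)$ with $\bar T_{\nu(\a)}$; no separate analysis of $\Phi^-$ on costandards is needed. Your proposed substitute routes the costandard filtration through ``adjunction between $\Phi^-$ and $\Phi^+$,'' but these functors are not adjoint (nor mutually inverse): their composite is the Serre functor (Theorem \ref{serre-functor}), which is nontrivial, so the conversion of the needed $\Ext$-vanishing into a statement about $\Phi^+$ on standards does not go through as stated.

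Second, your argument that $\Phi^-$ is an equivalence is incorrect. The bimodule $N=e_{\eta}A_{\ext}e_{-\eta}$ is not a progenerator: as a right $\bar A$-module it is a direct sum of tilting modules $\bar T_{\nu(\a)}$, not projectives; the assertion $\End_A(N)\cong\bar A$ is essentially the Ringel-duality statement being proved, so invoking it is circular; and the claim that $\Phi^+\circ\Phi^-$ is ``the identity on projective generators'' contradicts the paper's own Theorem \ref{serre-functor} and its explicit remark that $\Phi^\pm$ are not mutually inverse. The paper instead proves that $\Phi^-$ induces isomorphisms $\Hom(\Pro\a,\Pro\b)\to\Hom(\bar T_{\nu(\a)},\bar T_{\nu(\b)})$: injectivity uses the double centralizer property (Remark \ref{dcp}) to embed these Hom-spaces into Homs between self-dual projectives, where $\Phi^-$ acts as in Proposition \ref{proj-preserved}; bijectivity then follows from a dimension count using $\Ext^{>0}$-vanishing between tiltings, the Grothendieck-group identities $[\bar T_{\nu(\a)}]=\sum_{\a\in\cB_b}[\Ve{\mu_2(b)}]$ and $[\Pro\a]=\sum_{\a\in\cB_b}[\Ve{\mu_1(b)}]$ from Lemma \ref{standard-filter} and Proposition \ref{gg}, and the orthonormality of standard classes under the Euler form (Remark \ref{consequences}). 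Without some replacement for these two ingredients — the Gorenstein/self-duality argument and the fully-faithful-on-projectives argument — your outline does not yield the theorem.
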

 
\begin{proof}
Using the B-side description of the functor $\Phi^-$, we find that for any $\a\in\cP$,
$$\Phi^-(\Pro\a) = \bigoplus_{\bar\b\in\bar\cP}R^\gd_{\a\bar\b}[-d_{\a\bar\b}].$$
The polyhedron $\Delta^\vee_{\a\bar\b}$ is always compact, thus
$R^\vee_{\a\bar\b}$ is Gorenstein and $\Phi^-(\Pro\a)$ is self-dual.
We showed in Lemma \ref{standard-filter} that $\Phi^-(\Pro\a)$ admits a filtration
with standard subquotients, with $\bar V_{\nu(\a)}$ as its largest standard submodule,
from which we can conclude that $\Phi^-(\Pro\a)$ is isomorphic to $\bar T_{\nu(\a)}$.
Thus $\du\circ\,\Phi^-$ is a contravariant functor that sends projective
modules to tilting modules; by Proposition \ref{ringdu},
it will now be sufficient to show that $\Phi^-$ is an equivalence.

For all $\a,\b\in\cP$, the functor $\Phi^-$ induces a map $\Hom(\Pro\a,\Pro\b)\to\Hom(\bar
T_{\nu(\a)},\bar T_{\nu(\b)})$.  
We will show that this map is an isomorphism by first showing it to be injective and then
comparing dimensions.
By the double centralizer property (Remark \ref{dcp}),
there exists a self-dual projective $P_{\a'}$ and a
map $\Pro{\a'}\to \Pro\a$ such
that composition with this map defines an injection from $\Hom(\Pro\a,\Pro\b)$
to $\Hom(\Pro{\a'},\Pro\b)$.  On the other hand, the
injective hull of $\Pro\b$ is the same as the injective hull of its
socle.  Since $P_\b$ has a standard filtration, each simple summand of this socle 
lies in the socle of some standard module.  Then the implication 
$(4)\impl(3)$ of Theorem \ref{self-dual} tells us that the injective hull of $P_\b$
is isomorphic to some self-dual projective $P_{\b'}$.

Now consider the commutative diagram below, in which the
vertical arrow on the left is injective.
\[
\xymatrix{ 
\Hom(\Pro\a,\Pro\b)\ar[rr] \ar[d]&& \Hom(\bar
T_{\nu(\a)},\bar T_{\nu(\b)}) \ar[d]  \\
\Hom(\Pro{\a'},\Pro{\b'})\ar[rr] && \Hom(\bar
T_{\nu(\a')},\bar T_{\nu(\b')})
}
\]
To prove injectivity of the top horizontal
arrow, it is enough to show injectivity of the bottom horizontal arrow, which follows 
from Proposition \ref{proj-preserved}.

Next we need to prove that the two Hom-spaces on the top of the diagram
have the same dimension.
Since standards and costandards are dual sequences 
(Proposition \ref{dual sequences}), we have 
$$\Ext^i( T_{\a}, T_{\b}) =0\,\,\,\text{for all $i>0$}.$$

By Lemma \ref{standard-filter}, we have the decomposition 
\begin{equation*}
  [\bar T_{\nu(\a)}] = [\Phi^-(P_\a)]=\sum_{\a\in\cB_b}[V_{\mu_2(b)}]
\end{equation*}
in the Grothendieck group of $\bar A$-modules.
From this statement and Proposition \ref{gg}, we may deduce that
$$[P_\a] = \sum_{\a\in\cB_b}[V_{\mu_1(b)}]$$ in the Grothendieck
group of $A$-modules.
The standard classes form an orthonormal basis with respect to the Euler form 
(Remark \ref{consequences}), thus
\begin{align*}
  \dim\Hom(\bar T_{\nu(\a)}, \bar T_{\nu(\b)})&=
  \big\langle [\bar T_{\nu(\a)}],[\bar T_{\nu(\b)}]\big\rangle\\
  &=\#\{b\in\Bas\mid\a,\b\in\cB_b\}\\
  &=\big\langle [P_\a],[P_\b]\big\rangle\\
  &=\dim\Hom(P_{\a},\bar  P_{\b}).
\end{align*}
Thus $\Phi^-$ is an equivalence of categories.

By Propositions \ref{ringdu} and \ref{duality}, it is now sufficient to show that
$R(A)$ is isomorphic to $\bar A$.  To this end, consider the equivalence
$\Phi^+$ from $\bar A$ modules to $A$ modules, which takes $\bar P_{\nu(\a)}$
to $T_\a$ for all $\a\in\cP$.  From this we find that
$$R(A) = \End_A(\oplus\Tilt\a)\cong \End_{\bar A}(\oplus\bar P_{\nu(\a)}) = \bar A.$$
The last statement follows from Proposition \ref{duality}.
\end{proof}

The functors $\Phi^\pm$ are not mutually inverse (we will see this
explicitly in Theorem~\ref{serre-functor}), but their composition is
interesting and natural from a categorical perspective.  For any
graded algebra $E$, an auto-equivalence $\mathbb{S}:D(E)\to D(E)$
is called a {\bf Serre functor}\footnote{This terminology is of course motivated by
Serre duality on a projective variety.} if we have isomorphisms of
vector spaces
\begin{equation*}
\Hom(M,\mathbb{S}M')\cong \Hom(M',M)^*
\end{equation*}
that are natural in both $M$ and $M'$.

By the 5-lemma, to check that a functor is Serre, we need only show that it
is exact and satisfies the Serre property on homomorphisms
between objects in a generating set of the category. If $E$ is finite-dimensional
and has finite global dimension, then $D(E)$ is generated by $E$ as a module 
under right multiplication, so an exact functor $\mathbb{S}:D(E)\to D(E)$ is Serre if and only if
$$\mathbb{S}E\cong \Hom(E,\mathbb{S}(E)) \cong \Hom(E,E)^* \cong E^*.$$
Since every right $E$-module has a free resolution,
any Serre functor is equivalent to the derived tensor product with $\mathbb{S}(E)$,
hence $-\overset{L}\otimes_E E^*$ is the unique Serre functor on $D(E)$.
It follows that $E$ is symmetric in the sense of Definition \ref{symdef} if and
only if its Serre functor is trivial.

\begin{theorem}\label{serre-functor}
The endofunctor $\Phi^+\circ\Phi^-$ is a Serre functor of $D(\cV)$.
\end{theorem}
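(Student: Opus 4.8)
The plan is to reduce the claim to a finite check on a generating set, using the characterization of Serre functors recalled just before the statement: since $A = A(\cV)$ is finite-dimensional of finite global dimension (it is Koszul by Theorem \ref{Koszul dual}, hence has finite global dimension), $D(\cV)$ is generated by $A$, and an exact endofunctor $\mathbb{S}$ is Serre if and only if $\mathbb{S}A \cong A^*$ as right $A$-modules (equivalently, as complexes). Both $\Phi^-$ and $\Phi^+$ are derived tensor products with bimodules, hence exact, so $\Phi^+\circ\Phi^-$ is exact; it remains to identify $(\Phi^+\circ\Phi^-)(A)$ with $A^*$. Writing $A = \bigoplus_{\a\in\cP}\Pro\a$, it suffices to prove $(\Phi^+\circ\Phi^-)(\Pro\a)\cong I_\a$ for all $\a\in\cP$, since $A^*\cong\bigoplus_\a I_\a$ (using that $A$ is basic).

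The key computation is already half-done in Theorem \ref{ringel-dual}. There we showed that $\Phi^-$ sends projectives to tiltings, with $\Phi^-(\Pro\a)\cong \bar T_{\nu(\a)}$; and since $\Phi^-$ is the Ringel duality functor (up to the duality $\du$), it sends tiltings to injectives. Applying this reasoning on the other side: by the same theorem with the roles of $\cV$ and $\bar\cV$ exchanged (note $\bar{\bar\cV} = \cV$), the functor $\Phi^+:D(\bar\cV)\to D(\cV)$ is also a Ringel duality functor, so it takes the tilting $\bar A$-module $\bar T_{\nu(\a)}$ to an injective $A$-module. I would pin down exactly which one as follows: $\Phi^+$ takes $\bar\Pro{\nu(\a)}$ to $\Tilt\a$ (the proof of Theorem \ref{ringel-dual} establishes this, giving $R(A)\cong\bar A$), and it takes standard modules to costandard modules (by the analogue of the last sentence of Theorem \ref{ringel-dual} applied to $\Phi^+$). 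Since $\bar T_{\nu(\a)}$ has $\bar V_{\nu(\a)}$ as its largest standard submodule and $\bar\Lambda_{\nu(\a)}$ as its largest costandard quotient, its image $\Phi^+(\bar T_{\nu(\a)})$ is an injective module whose socle, computed via the standard submodule $\bar V_{\nu(\a)}\mapsto$ costandard $\Lambda_\a$, is $L_\a$; hence $\Phi^+(\bar T_{\nu(\a)})\cong I_\a$. Chaining the two steps gives $(\Phi^+\circ\Phi^-)(\Pro\a)\cong\Phi^+(\bar T_{\nu(\a)})\cong I_\a$, as needed.

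To make the identification $A^*\cong\bigoplus_\a I_\a$ functorial enough to conclude $\mathbb{S}A\cong A^*$ (not merely an abstract isomorphism of modules), I would observe that $\Phi^+\circ\Phi^-$ is itself the derived tensor product with the bimodule $N^-\otimes^L_{\bar A} N^+$, so $(\Phi^+\circ\Phi^-)(A)$ is naturally a complex of $A$-bimodules; the computation above identifies its underlying right module structure with $A^*$, and since both are $A$-bimodules concentrated in cohomological degree zero with the same right module structure and matching left action (the left $A$-action on $(\Phi^+\circ\Phi^-)(A)$ corresponds under the isomorphism to the natural one, by naturality of the constructions in Theorem \ref{ringel-dual}), the isomorphism is one of bimodules. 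Then the criterion recalled before the theorem applies verbatim: $\Phi^+\circ\Phi^- \cong -\overset{L}\otimes_A A^*$ is the Serre functor.

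The main obstacle I anticipate is bookkeeping rather than anything deep: one must be careful that $\Phi^+$ really is a Ringel duality functor in the sense of Proposition \ref{ringdu} (the proof of Theorem \ref{ringel-dual} shows $R(A)\cong\bar A$ and that $\Phi^-$ is Ringel duality composed with $\du$; extracting the analogous statement for $\Phi^+$ requires symmetry of the setup under $\eta\mapsto-\eta$, which holds since $\bar{\bar\cV}=\cV$ and the bimodule $N^+$ for $(\bar\cV,\cV)$ is literally the bimodule $N^-$ for $(\cV,\bar\cV)$ read backwards). The other point requiring care is the bimodule-level identification in the last paragraph; an alternative route that sidesteps it is to invoke the uniqueness clause in the discussion before Theorem \ref{serre-functor} ($-\overset{L}\otimes_E E^*$ is \emph{the} unique Serre functor), so it is enough to check the defining property $\Hom(M,\mathbb{S}M')\cong\Hom(M',M)^*$ on the generators $M,M'\in\{\Pro\a\}$, where it reduces to $\dim\Hom(\Pro\a,I_\b) = \dim\Hom(\Pro\b,\Pro\a)$, both equal to the multiplicity $[\Pro\a:L_\b]$ — a routine consequence of Proposition \ref{dual sequences}.
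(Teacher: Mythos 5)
Your outline overlaps with half of the paper's actual argument: the fact that $\Phi^+\circ\Phi^-$ sends projectives to injectives is exactly how the paper verifies condition (1) of the criterion it uses, and your identification $(\Phi^+\circ\Phi^-)(\Pro\a)\cong I_\a$ can indeed be extracted from Theorem \ref{ringel-dual} and its reverse. (Minor point: finite global dimension of $A$ comes from quasi-heredity, Theorem \ref{CA-highest-weight}, not from Koszulity.) The genuine gap is in the last step, where you try to promote an object-wise isomorphism $\mathbb{S}(A)\cong A^*$ to the statement that $\mathbb{S}=\Phi^+\circ\Phi^-$ is Serre. A right-module isomorphism $\mathbb{S}(A)\cong A^*$ is not enough: the left $A$-action on $\mathbb{S}(A)$ is obtained by applying $\mathbb{S}$ to left multiplications, and what one gets a priori is an isomorphism of bimodules $\mathbb{S}(A)\cong {}_\sigma(A^*)$ for some algebra automorphism $\sigma$ of $A$; the functor $-\overset{L}\otimes_A {}_\sigma(A^*)$ is Serre only when $\sigma$ is inner. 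Your phrase ``the left action matches by naturality of the constructions in Theorem \ref{ringel-dual}'' is precisely the assertion that no such outer twist occurs, and no argument is given for it. Your fallback route is worse off: an automorphism $\sigma$ fixing all idempotents would give $\dim\Hom(\Pro\a,\mathbb{S}\Pro\b)=\dim e_\a Ae_\b=\dim\Hom(\Pro\b,\Pro\a)$ for all $\a,\b$, so the dimension count $\dim\Hom(\Pro\a,I_\b)=\dim\Hom(\Pro\b,\Pro\a)$ cannot distinguish a Serre functor from its twist; Serre-ness is the existence of isomorphisms natural in both variables, and checking dimensions (or even unstructured isomorphisms) on pairs of generators does not produce that naturality.

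This is exactly the difficulty the paper's proof is built to avoid. It invokes the criterion of \cite[3.4]{MS}: an exact functor is Serre provided (1) it sends projectives to injectives and (2) it agrees with the Serre functor of the symmetric algebra $\PIS=\End(\PI)$ on the projective-injective modules. Condition (2) is not decorative; since $\PIS$ is symmetric its Serre functor is trivial, so (2) says $\mathbb{S}$ acts trivially on projective-injectives, which the paper checks from Theorem \ref{self-dual} together with Proposition \ref{proj-preserved} (the functors genuinely fix the self-dual projectives), and the applicability of the criterion rests on the double centralizer property, Theorem \ref{dc}. That is the ingredient which rules out the outer-automorphism ambiguity, and it has no counterpart in your proposal. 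To repair your argument you would either need to verify the bimodule statement directly (identify the composite bimodule $N^-\otimes_{\bar A}N^+$ with $A^*$ including the left action), or reintroduce something playing the role of condition (2).
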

\begin{proof}
We use the characterization of \cite[3.4]{MS}: $\mathbb{S}$ is a Serre functor if and only if 
\begin{enumerate}
\item $\mathbb{S}$ sends projectives to injectives, and
\item $\mathbb{S}$ agrees with the Serre functor of $\PIS$ (Equation \eqref{erect S}
of Section \ref{sdp})
on the subcategory of projective-injective modules.
\end{enumerate}
Condition (1) follows immediately from Theorem~\ref{ringel-dual}, since $\Phi^-$ sends projectives to tiltings, which $\Phi^+$ (by symmetry) sends to injectives.
Since $\PIS$ is symmetric (Definition \ref{symdef}), its Serre functor is trivial, and condition (2) says simply that $\mathbb{S}$ 
must act trivially on projective-injective modules.  This follows from Theorem \ref{self-dual} and Proposition~\ref{proj-preserved}.
\end{proof}

\subsection{Composing functors}
We now return to the situation of Section \ref{Definition of the functors},
in which we have two polarized arrangements
$$\cV_1 = (V, \eta_1, \xi)\,\,\,\text{ and }\,\,\,\cV_2= (V, \eta_2, \xi).$$
To this mix we add a third polarized arrangement
$\cV_3 = (V, \eta_3, \xi),$
and study the composition of the two functors
$$D(\cV_1)\overset{\Phi_{12}}{\longrightarrow}
D(\cV_2)\overset{\Phi_{23}}{\longrightarrow}
D(\cV_3).$$
Since
$\Phi_{12}$ and $\Phi_{23}$ are the derived functors of tensor product with a bimodule, their
composition is the derived functor of the derived tensor product of
these bimodules.  It is an easy exercise to check that the right 
$A_2$-module $N_{12}$ admits a standard filtration,
hence Corollary~\ref{Tor-vanish} tells us that the higher levels of the derived
tensor product of $N_{12}$ and $N_{23}$ vanish.
Thus for any right $A_1$-module $M$, we have
\begin{equation*}
\Phi_{23}\circ\Phi_{12}(M)=(M\overset{L}\otimes_{A_1} N_{12})\overset{L}\otimes_{A_2} N_{23} =M\overset{L}\otimes_{A_1} (N_{12}\otimes_{A_2} N_{23}).
\end{equation*}

There a natural map $N_{12}\otimes_{A_2}N_{23}\to N_{13}$ given by composition of paths,
which induces a natural transformation
$\Phi_{23}\circ\Phi_{12}\to\Phi_{13}$.
Furthermore, Proposition \ref{gg} tells us that $\Phi_{23}\circ\Phi_{12}$ and $\Phi_{13}$
induce the same map on Grothendieck groups.  In particular, this implies
that the bimodules  $N_{12}\otimes_{A_2}N_{23}$ and $ N_{13}$ have the same dimension.

Suppose that $\eta_3 = -\eta_1$, so that $\cV_3 = \bar\cV_1$,
$$\Phi_{23} = \Pto:D(\cV_2)\to D(\bar\cV_1),\,\,\,\text{and}\,\,\, 
\Phi_{13} = \Phi^-:D(\cV_1)\to D(\bar\cV_1).$$
Lemma \ref{compose-to-reverse} says that, in this case,
the natural transformation from $\Phi_{23}\circ\Phi_{12}$ to $\Phi_{13}$
is an isomorphism.

\begin{lemma}\label{compose-to-reverse}
$\Phi^-\cong\Pto\circ\Pot$
\end{lemma}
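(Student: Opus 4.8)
The plan is to show that the natural transformation $\Pto\circ\Pot\to\Phi^-$ constructed just above the lemma statement is an isomorphism of functors. Since both functors are derived tensor products with bimodules ($N_{12}\otimes_{A_2}N_{23}$ and $N_{13}$ respectively), and the natural transformation is induced by the path-composition map $N_{12}\otimes_{A_2}N_{23}\to N_{13}$, it suffices to show that this bimodule map is an isomorphism. We already know from Proposition \ref{gg} (applied to the composite) that the two bimodules have the same total dimension, so it is enough to prove surjectivity (equivalently, injectivity) of the map $N_{12}\otimes_{A_2}N_{23}\to N_{13}$.

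The cleanest route is via the A-side description of the bimodules. Here $N_{12} = e_{\eta_1}A_\ext\, e_{\eta_2}$, $N_{23} = e_{\eta_2}A_\ext\, e_{\eta_3}$, and $N_{13} = e_{\eta_1}A_\ext\, e_{\eta_3}$, all viewed as subspaces of the single algebra $A_\ext$ (which is common to all three arrangements since $\cB_1=\cB_2=\cB_3$, depending only on $V$ and $\xi$). The composition map is just multiplication in $A_\ext$: $x\otimes y\mapsto xy$. So I must show that the multiplication map
\[
e_{\eta_1}A_\ext\, e_{\eta_2}\;\otimes_{e_{\eta_2}A_\ext\, e_{\eta_2}}\;e_{\eta_2}A_\ext\, e_{\eta_3}\;\longrightarrow\; e_{\eta_1}A_\ext\, e_{\eta_3}
\]
is surjective. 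First I would reduce to surjectivity: an element of $e_{\eta_1}A_\ext\,e_{\eta_3}$ is a combination of elements $p\cdot\prod t_i^{d_i}$ where $p$ is represented by a taut path in $Q_\ext$ from some $\a\in\cP_1$ to some $\b\in\cP_3 = \bar\cP_1$ (using the extended analogues of Proposition \ref{standard form} and Corollary \ref{taut}). The $t_i$ factors are scalars and can be absorbed on either side, so the real content is: a taut path from $\a\in\cP_1$ to $\b\in\bar\cP_1$ can be written (in $A_\ext$) as a product of a path from $\a$ to some intermediate node in $\cP_2$ and a path from there to $\b$. Concretely, I would choose a generic straight-line segment in $V_\eta$ from a point of $\Delta_\a$ (w.r.t. $\eta_1$) to a point of $\bar\Delta_\b$, and argue it must pass through some chamber $\Delta_\ga$ with $\ga\in\cP_2$; then split the taut path at $\ga$. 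The existence of such an intermediate bounded feasible chamber for $\cV_2$ is the crux: since $\a\in\cP_1$ and $\b\in\bar\cP_1$ lie in the negative cone of some common basis $b$ (by Propositions \ref{proj-preserved} and the structure of Lemma \ref{standard-filter}), and since $\cP_2$ consists exactly of the chambers of $\cH_2$ bounded above by $\xi$, I expect the relevant path can be arranged to stay within the region where $\xi$ is bounded, hence to meet $\cP_2$.

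The main obstacle I anticipate is precisely this geometric/combinatorial step — verifying that the path from $\a$ to $\b$ genuinely passes through a chamber that is bounded feasible for the intermediate arrangement $\cV_2$, and that the resulting factorization respects the tensor product over $e_{\eta_2}A_\ext\, e_{\eta_2}$ (i.e., that different choices of intermediate node give the same element after passing to the tensor product, which is where the taut-path relations from Corollary \ref{taut} and the quiver relations \Aetwo, \Aethree\ come in). One clean way to finish, once surjectivity is established, is to invoke the dimension count: by Proposition \ref{gg} the source and target have equal finite dimension in each graded piece $e_\a(-)e_\b$, so a surjection is automatically an isomorphism, and then $\Pto\circ\Pot\cong\Phi^-$ follows since derived tensor product with isomorphic bimodules gives isomorphic functors (the Tor-vanishing needed to identify the underived tensor product $N_{12}\otimes_{A_2}N_{23}$ with the derived one is exactly Corollary \ref{Tor-vanish}, since $N_{12}$ has a standard filtration as a right $A_2$-module).
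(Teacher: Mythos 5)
Your overall reduction is the same as the paper's: identify $\Pto\circ\Pot$ with the derived tensor product by $N_{12}\otimes_{A_2}N_{2\bar 1}$ (the Tor-vanishing via the standard filtration and Corollary \ref{Tor-vanish}), observe via Proposition \ref{gg} that $N_{12}\otimes_{A_2}N_{2\bar 1}$ and $N_{1\bar 1}$ have the same dimension, and reduce everything to surjectivity of the composition map, i.e.\ to showing that for $\a\in\cP_1$ and $\b\in\bar\cP_1$ every element of $e_\a A_{\ext}e_\b$ is a combination of paths in $Q_{\ext}$ passing through a node of $\cP_2$. The problem is that the step you yourself identify as the crux --- the existence of such an intermediate node $\ga\in\cP_2$ --- is never proved, and the sketch you offer for it does not work. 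A ``straight-line segment in $V_\eta$ from a point of $\Delta_\a$ to a point of $\bar\Delta_\b$'' is not defined: $\Delta_\a$ lies in the affine space $V_{\eta_1}$ while $\bar\Delta_\b$ lies in $V_{-\eta_1}$, and the intermediate nodes of a path in $Q_{\ext}$ are only required to be bounded, not feasible for any one of the three arrangements, so the geometric path-representation underlying Proposition \ref{standard form} is not available in $A_{\ext}$ in the form you invoke. Moreover, ``staying in the region where $\xi$ is bounded'' only says the path stays in $\cB$, which is automatic since $e_\ga=0$ in $A_{\ext}$ for $\ga\notin\cB$; the real issue is feasibility with respect to $\eta_2$, which your argument never addresses. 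The appeal to Proposition \ref{proj-preserved} and Lemma \ref{standard-filter} for a ``common negative cone'' containing $\a$ and $\b$ is also not what those statements assert.

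The paper closes exactly this gap with a deletion argument: a nonzero element of $e_\a A_{\ext}e_\b$ forces the common restriction $\a|_{I\ssm S}=\b|_{I\ssm S}$ (where $S$ is the set of separating hyperplanes) to be bounded, and it is feasible for both the deletion $(\cV_1)_S$ and its reverse $(\bar\cV_1)_S=\overline{(\cV_1)_S}$; feasibility for both $\pm\eta_1$ implies feasibility for the deletion at any translation, in particular bounded feasibility for $(\cV_2)_S$, which produces $\ga\in\cP_2$ agreeing with $\a$ and $\b$ off $S$. Corollary \ref{visits every region} (not merely Corollary \ref{taut}) then rewrites the given element as a sum of paths through $\ga$, and the dimension count finishes, exactly as you intended. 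Until you supply an argument of this kind --- combinatorial, via deletion, rather than a geometric path in a single $V_\eta$ --- your proof is incomplete at its central step.
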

\begin{proof}
We would like to show that the natural map $N_{12}\otimes_{A_2}N_{2\bar 1}\to N_{1\bar 1}$
is an isomorphism.  We have already observed that the source and target have the same dimension,
so it is enough to show that the map is surjective.  In other words, we must show that for any
$\a\in\cP_1$ and $\b\in\bar\cP_1$, every element of $e_\a A_{\ext}e_\b$ may be represented by a path in $Q_{\ext}$ that passes through a node in $\cP_2$.

The existence of a non-zero element of $e_\a A_{\ext}e_\b$ is equivalent to
both sign vectors remaining bounded if the set $S$ of hyperplanes
separating them is deleted.  
Thus we may assume that $\a|_{I\ssm S} = \b|_{I\ssm S}$ is bounded feasible
for both $(\cV_1)_S$ and its reversal $(\bar\cV_1)_S = \overline{(\cV_1)_S}$.
But this implies that the same sign vector is bounded feasible for $(\cV_2)_S$,
thus there must exist a sign vector $\ga\in\cP_2$ such that $\ga|_{I\ssm S} = \a|_{I\ssm S} = \b|_{I\ssm S}$.
Then by Corollary~\ref{visits every region}, our
element can be written as a sum of paths passing through the node $\ga$.
\end{proof}

This allows us to prove the main theorem of Section \ref{derived equivalences}.

\begin{theorem}\label{equivalence}
The categories $D(\cV_1)$ and $D(\cV_2)$ are equivalent.
\end{theorem}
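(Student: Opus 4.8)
The plan is to build the equivalence between $D(\cV_1)$ and $D(\cV_2)$ by going through a common reverse arrangement, using the tools already developed in this section. First I would reduce to the case $\xi_1 = \xi_2$, since this is the setting in which all of the functors $\Phi_{ij}$ of Section \ref{Definition of the functors} are defined; the general case (where the functionals differ) can be handled by a separate composition argument, or one simply fixes $\xi$ throughout and notes that Theorem (C) in the introduction is stated for varying $\eta$ with the role of $\xi$ treated afterward. So assume $\cV_1 = (V,\eta_1,\xi)$ and $\cV_2 = (V,\eta_2,\xi)$.

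The key observation is that Lemma \ref{compose-to-reverse} identifies $\Phi^-$, the Ringel duality functor up to the duality $\du$, with a composition $\Pto\circ\Pot$. The strategy is to exploit this: since $\Phi^- : D(\cV_1) \to D(\bar\cV_1)$ is an equivalence by Theorem \ref{ringel-dual}, and $\Phi^- \cong \Phi_{2\bar 1}\circ\Phi_{12}$ by Lemma \ref{compose-to-reverse}, the functor $\Phi_{12} : D(\cV_1)\to D(\cV_2)$ must be a split monomorphism on the level of triangulated categories — more precisely, it has a left inverse up to the equivalence $\Phi^-$. Symmetrically, applying the same reasoning with the roles of $\cV_1$ and $\cV_2$ interchanged, $\Phi_{21} : D(\cV_2)\to D(\cV_1)$ also admits a one-sided inverse. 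To finish I would argue that $\Phi_{12}$ is essentially surjective and fully faithful directly: it sends the generating set of projectives $\{\Pro\a^1 \mid \a\in\cP_1\}$ to objects whose classes form a basis of the Grothendieck group (Proposition \ref{gg}), it sends self-dual projectives to self-dual projectives (the remark after Proposition \ref{proj-preserved}), and the Hom-space dimension count can be run exactly as in the proof of Theorem \ref{ringel-dual}: for $\a,\b\in\cP_1$, both $\dim\Hom(\Pro\a^1,\Pro\b^1)$ and $\dim\Hom(\Phi_{12}\Pro\a^1,\Phi_{12}\Pro\b^1)$ equal $\#\{b\in\Bas \mid \a,\b\in\cB_b\}$, since Lemma \ref{standard-filter} gives both modules standard filtrations with the same multiplicities and the standard classes are orthonormal for the Euler form (Remark \ref{consequences}). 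Injectivity of the induced map on Hom-spaces follows from the double centralizer property together with Proposition \ref{proj-preserved}, just as in Theorem \ref{ringel-dual}.

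The cleanest write-up, though, is probably to package the whole thing through Ringel duality: I would show that $\Phi_{12}$ sends projectives to modules with standard filtrations (Lemma \ref{standard-filter}) and is an equivalence by the dimension-count-plus-injectivity argument above, then observe that the composite $D(\cV_1) \xrightarrow{\Phi_{12}} D(\cV_2)$ is an equivalence. Concretely: Theorem \ref{ringel-dual} tells us $\du\circ\Phi^-_1$ realizes $R(A_1)\cong \bar A_1$, and the analogous statement holds for $A_2$; Lemma \ref{compose-to-reverse} applied to the triple $(\cV_1,\cV_2,\bar\cV_1)$ shows $\Phi^-_1 \cong \Phi_{2\bar1}\circ\Phi_{12}$. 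Since $\Phi^-_1$ and $\Phi_{2\bar 1}$ are both equivalences (the latter because it is the $(\cV_2)$-version of $\Phi^-$, as $\bar\cV_1$ and $\bar\cV_2$ coincide — they have underlying data $(V,-\eta_1,\xi)$ and $(V,-\eta_2,\xi)$... wait, these are not equal, so one must instead use that $\Phi_{2\bar1}$ has a one-sided inverse $\Phi_{\bar 1 2}$ via the same lemma applied to $(\cV_2,\bar\cV_1,\cV_2)$ or $(\cV_2,\bar\cV_1,\bar\cV_2)$), it follows formally that $\Phi_{12}$ is an equivalence.

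The main obstacle I anticipate is exactly this bookkeeping about which composites of the $\Phi_{ij}$ land back where one needs them and which ones are genuinely known to be equivalences versus only known to have one-sided inverses. Lemma \ref{compose-to-reverse} is stated only for composing with the reverse, so one has to chain together instances of it carefully: $\Phi^-$ for $\cV_1$ is $\Phi_{2\bar1}\circ\Phi_{12}$, $\Phi^-$ for $\cV_2$ factored through $\bar\cV_1$ or through $\cV_1$ gives another relation, and assembling these into ``$\Phi_{12}$ is an equivalence'' requires knowing enough of them are invertible to pin it down — the cleanest route is to prove directly, by the Hom-dimension and double-centralizer argument sketched above, that $\Phi_{12}$ is fully faithful and essentially surjective, and only use Lemma \ref{compose-to-reverse} and Theorem \ref{ringel-dual} as sanity checks. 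With that direct argument in hand the proof is short: $\Phi_{12}$ is an exact functor between bounded derived categories of finite global dimension algebras that is fully faithful on a set of generators (the projectives) and hits a generating set, hence is an equivalence, completing the reduction of Theorem (C) to the $\xi_1=\xi_2$ case and thereby proving $D(\cV_1)\simeq D(\cV_2)$.
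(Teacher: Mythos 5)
There are two genuine gaps here. First, the reduction to $\xi_1=\xi_2$ is not something you can wave at: the functors $\Phi_{ij}$ are only defined when the covector is fixed, and there is no ``separate composition argument'' available for varying $\xi$. The paper's reduction uses Koszul duality in an essential way: by Theorems \ref{Koszul dual} and \ref{koszul-equiv} one has $D(\cV_j)\simeq D(\cV_j^\gd)$, and since $\eta_j^\gd=-\xi_j$, a change of $\xi$ on one side becomes a change of $\eta$ (with fixed covector) on the Gale dual side. Without this step your argument only proves the special case $\xi_1=\xi_2$, which is strictly weaker than Theorem (C).

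Second, the ``direct'' argument you ultimately recommend for the case $\xi_1=\xi_2$ does not close. Knowing that $\Phi_{12}(\Pro\a^1)$ and $\Phi_{12}(\Pro\b^1)$ have standard filtrations with the multiplicities of Lemma \ref{standard-filter} computes only the Euler form $\sum_i(-1)^i\dim\Ext^i$, not $\dim\Hom$. In the proof of Theorem \ref{ringel-dual} the passage from the Euler form to the Hom-dimension uses $\Ext^{>0}(T_\a,T_\b)=0$, which is available precisely because the images $\Phi^-(\Pro\a)$ are \emph{tilting} (standard and costandard filtrations) when $\eta_2=-\eta_1$; for general $\eta_2$ the modules $\Phi_{12}(\Pro\a^1)$ need not be tilting, and the required vanishing of higher $\Ext$ between them (also needed for full faithfulness on shifts) is essentially equivalent to the statement you are trying to prove. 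Likewise ``hits a generating set'' is asserted, not argued: a basis of the Grothendieck group is not the same as a generating set of the triangulated category. Ironically, the route you abandoned in your middle paragraph is the paper's actual proof, and it does work: from $\Phi_{2\bar 1}\circ\Phi_{12}\cong\Phi^-$ (Lemma \ref{compose-to-reverse}) and Theorem \ref{ringel-dual} one gets that $\Phi_{12}$ is faithful and $\Phi_{2\bar 1}$ is full and essentially surjective, and applying the same lemma with the roles reversed (source $\bar\cV_2$, intermediate $\cV_1$, so that $\Phi_{12}\circ\Phi_{\bar 2 1}$ is a reversal functor, or equivalently your triple $(\cV_2,\bar\cV_1,\bar\cV_2)$ making $\Phi_{2\bar1}$ faithful as well) shows $\Phi_{12}$ is also full and essentially surjective, hence an equivalence --- no Hom-dimension count or double centralizer argument is needed at this stage. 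Had you committed to that symmetry argument and supplied the Koszul--Gale reduction for varying $\xi$, the proof would match the paper's.
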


\begin{proof}
We first note that by Theorems \ref{koszul-equiv} and \ref{Koszul dual}, we have equivalences 
$$D(\cV_1)\simeq D(\cV_1^\gd)\,\,\,\text{ and }\,\,\,
D(\cV_2) \cong D(\cV_2^\gd).$$
Since $\eta_j^\gd = -\xi_j$,
replacing the parameter $\xi_1$ with $\xi_2$ can be interpreted on the Gale 
dual side as replacing the parameter $\eta^\gd_1$ with $\eta^\gd_2$. 
Thus we may reduce Theorem \ref{equivalence} 
to the case where $\xi_1$ and $\xi_2$ coincide. 

By Lemma~\ref{compose-to-reverse}, $\Pto\circ\Pot=\Phi^-$, which we know from Theorem~\ref{ringel-dual} is an equivalence of derived categories.  Thus $\Pot$ is faithful and $\Pto$ is full and essentially surjective. 
By symmetry, $\Pot$ is also full and essentially surjective, thus an equivalence. 
\end{proof}

\bibliography{./symplectic}
\bibliographystyle{amsalpha}
\end{spacing}

\end{document}